\documentclass{article}
\usepackage[utf8]{inputenc}
\usepackage[english]{babel}
\usepackage{xypic}
\usepackage[all]{xy}
\usepackage{eucal}
\usepackage{latexsym, amscd,eucal,xr,makeidx,stmaryrd,color,mathrsfs,amsmath, amsfonts,amssymb,bbm,stmaryrd,amsthm,graphicx,pstricks,xypic,comment,graphics}
\usepackage{wrapfig}
\usepackage{color,epsfig,enumerate}
\usepackage{xcolor}
\usepackage{caption}
\usepackage{hyperref}
\xyoption{2cell}

\newtheorem{theorem}{Theorem}[section]
\newtheorem{thm}[theorem]{Theorem}
\newtheorem{lemma}[theorem]{Lemma}

\newtheorem{cor}[theorem]{Corollary}
\newtheorem{prop}[theorem]{Proposition}
\newtheorem{convention}[theorem]{Convention}
\newtheorem{conjecture}[theorem]{Conjecture}
\newtheorem{insight}[theorem]{Insight}

\newtheorem{defn}[theorem]{Definition}

\theoremstyle{definition}
\newtheorem{remark}[theorem]{Remark}
\newtheorem{construction}[theorem]{Construction}
\newtheorem{example}[theorem]{Example}
\newtheorem{notation}[theorem]{Notation}

\numberwithin{equation}{section}

\topmargin=0in
\oddsidemargin=0in \evensidemargin=0in 
\textwidth=6.5in
\textheight=8.5in


\newcommand{\C}{\mathcal{C}}
\newcommand{\Spacessmash}{\mathcal{S}_*^{\wedge}}
\newcommand{\U}{\mathcal{U}}
\newcommand{\uniU}{\mathbb{U}}
\newcommand{\uniV}{\mathbb{V}}
\newcommand{\uniW}{\mathbb{W}}
\newcommand{\Sp}{\mathit{Sp}}
\newcommand{\derivedk}{\mathcal{D}(k)}
\newcommand{\Spmonoidal}{\mathit{Sp}^{\otimes}}

\newcommand{\D}{\mathcal{D}}
\newcommand{\V}{\mathcal{V}}
\newcommand{\E}{\mathcal{E}}

\newcommand{\Cat}{\mathit Cat}
\newcommand{\Fin}{\mathit Fin_*}
\newcommand{\nfin}{\langle n \rangle}
\newcommand{\mfin}{\langle m \rangle}
\newcommand{\twofin}{\langle 2 \rangle}
\newcommand{\onefin}{\langle 1 \rangle}

\newcommand{\sch}{\mathit{Sm}^{ft}(S)}
\newcommand{\aff}{\mathit{N(AffSm}^{ft}(k))}

\newcommand{\ssets}{\mathit{\hat{\Delta}}}
\newcommand{\ssetsmarked}{\mathit{\hat{\Delta}}_{+}}
\newcommand{\ssetsp}{\mathit{\hat{\Delta}}_*}

\newcommand{\M}{\mathcal{M}}
\newcommand{\N}{\mathcal{N}}
\newcommand{\Q}{\mathcal{Q}}
\newcommand{\Op}{\mathcal{O}}
\newcommand{\Trivmonoidal}{\mathcal{T}riv^{\otimes}}
\newcommand{\Triv}{\mathcal{T}riv}
\newcommand{\Assmonoidal}{\mathcal{A}ss^{\otimes}}
\newcommand{\Ass}{\mathcal{A}ss}
\newcommand{\Comm}{\mathcal{C}omm}
\newcommand{\Commmonoidal}{\mathcal{C}omm^{\otimes}}
\newcommand{\Opmonoidal}{\mathcal{O}^{\otimes}}
\newcommand{\Opprimemonoidal}{(\mathcal{O}')^{\otimes}}
\newcommand{\Cmonoidal}{\mathcal{C}^{\otimes}}
\newcommand{\Cmonoidalx}{\mathcal{C}^{\otimes}[X^{-1}]}

\newcommand{\Dmonoidal}{\mathcal{D}^{\otimes}}
\newcommand{\Emonoidal}{\mathcal{E}^{\otimes}}
\newcommand{\Lmonoidal}{\mathcal{L}_{(\Cmonoidal,X)}^{\otimes}}
\newcommand{\Lx}{\mathcal{L}_{(\Cmonoidal,X)}}
\newcommand{\Lpr}{\mathcal{L}_{(\Cmonoidal,X)}^{Pr}}
\newcommand{\Lpmonoidal}{\mathcal{L}_{(\Cmonoidal,X)}^{Pr,\otimes}}

\newcommand{\Spaces}{\mathcal{S}}
\newcommand{\Spacesp}{\mathcal{S}_*}

\newcommand{\Mot}{\mathcal{M}ot(S)}
\newcommand{\Motmonoidal}{\mathcal{M}ot(S)^{\otimes}}
\newcommand{\st}{\mathcal{S}\mathcal{H}(S)}
\newcommand{\stk}{\mathcal{S}\mathcal{H}(k)}

\newcommand{\stmonoidal}{\mathcal{S}\mathcal{H}(S)^{\otimes}}
\newcommand{\stmonoidalk}{\mathcal{S}\mathcal{H}(k)^{\otimes}}
\newcommand{\stnck}{\mathcal{S}\mathcal{H}_{nc}(k)}
\newcommand{\stncmonoidalk}{\mathcal{S}\mathcal{H}_{nc}(k)^{\otimes}}
\newcommand{\stncmonoidal}{\mathcal{S}\mathcal{H}_{nc}(S)^{\otimes}}
\newcommand{\stnc}{\mathcal{S}\mathcal{H}_{nc}(S)}

\newcommand{\iCatstable}{\mathit{Cat}_{\infty}^{\mathcal{E}x}}
\newcommand{\iCatstableidem}{\mathit{Cat}_{\infty}^{\mathcal{E}x, idem}}
\newcommand{\iCat}{\mathit{Cat}_{\infty}}
\newcommand{\iCatbig}{\mathit{ Cat}_{\infty}^{big}}

\newcommand{\Prl}{\mathcal{P}r^L}
\newcommand{\Prr}{\mathcal{P}r^R}

\newcommand{\Prlmonoidal}{\mathcal{P}r^{L, \otimes}}
\newcommand{\Prlkmonoidal}{\mathcal{P}r_k^{L, \otimes}}
\newcommand{\Prlstable}{\mathcal{P}r^L_{Stb}}

\newcommand{\hsch}{\mathcal{H}(S)}
\newcommand{\hschp}{\mathcal{H}(S)_*}

\newcommand{\hnck}{\mathcal{H}_{nc}(k)}
\newcommand{\nc}{\mathcal{N}cS}
\newcommand{\nck}{\mathcal{N}cS(k)}
\newcommand{\nckmonoidal}{\mathcal{N}cS(k)^{\otimes}}

\newcommand{\dg}{\mathcal{D}g(k)}

\newcommand{\dglp}{\mathcal{D}g^{lp}(k)}
\newcommand{\dgcc}{\mathcal{D}g^{cc}(k)}
\newcommand{\dgc}{\mathcal{D}g^{c}(k)}

\newcommand{\dgloccof}{\mathcal{D}g^{loc-cof}(k)}
\newcommand{\dgloccofmonoidal}{\mathcal{D}g^{loc-cof}(k)^{\otimes}}

\newcommand{\icategories}{(\infty,1)-categories}


\DeclareMathOperator{\Mod}{\mathit{Mod}}


\newcommand{\A}{\mathcal A}
\newcommand{\B}{\mathcal B}




\newcommand{\W}{\mathcal W}
\newcommand{\X}{\mathcal X}
\newcommand{\Y}{\mathcal Y}
\newcommand{\UU}{\mathcal U}
\newcommand{\Z}{\mathcal Z}








\begin{document}

\title{ Noncommutative Motives I: \\ \LARGE{From Commutative to Noncommutative Motives}}
\author{Marco Robalo\footnote{This project author was financially supported  by the Portuguese Foundation for Science and Technology - FCT Doctoral Grant -  SFRH / BD / 68868 / 2010  and its scientific contents are part of a wider framework developed within the project  ANR-09-BLAN-0151 }\\
\small{marco.robalo@math.univ-montp2.fr}\\
\small{I3M} \\  \small{Universit\'e de Montpellier2}}
\date{June, 2013}

\maketitle

\begin{abstract}
Let $\V$ be a symmetric monoidal model category and let $X$ be an object in $\V$. Following the results of \cite{hovey-spectraandsymmetricspectra} it is possible to construct a new symmetric monoidal model category $Sp^{\Sigma}(\V,X)$ of symmetric spectrum objects in $\V$ with respect to $X$, together with a left Quillen monoidal map $\V\to Sp^{\Sigma}(\V,X)$ sending $X$ to an invertible object.

In this paper we use the recent developments in the subject of higher algebra \cite{lurie-ha} to extend the results of \cite{hovey-spectraandsymmetricspectra}. Every symmetric monoidal model category has an underlying symmetric monoidal $(\infty,1)$-category and the first notion should be understood as a mere "presentation" of the second. Our main result is the characterization of the underlying symmetric monoidal $\infty$-category of $Sp^{\Sigma}(\V,X)$, by means of a universal property inside the world of symmetric monoidal $(\infty,1)$-categories. In the process we also extend the results of \cite{hovey-spectraandsymmetricspectra} relating the construction of ordinary spectra to the one of symmetric spectra. As a corollary, we obtain a precise universal characterization for the motivic stable homotopy theory of schemes of Morel-Voevodsky, with its symmetric monoidal structure. This characterization trivializes the problem of finding motivic monoidal realizations. 

As an application we provide a new approach to the theory of noncommutative motives by constructing a stable motivic homotopy theory for the noncommutative spaces of Kontsevich \cite{kontsevich3, kontsevich1, kontsevich2}. For that we introduce an analogue of the Nisnevich topology in the noncommutative setting. Our universal property for the classical theory for schemes provides a canonical monoidal map from the classical stable motivic theory towards these new noncommutative motives and allows us to compare the two theories.
\end{abstract}

\setcounter{tocdepth}{2}
\tableofcontents

\section{Introduction}

\subsection{Motivation}

This paper is the first part of a research project whose main goal is to compare classical algebraic geometry with the new noncommutative algebraic geometry in the sense of Kontsevich \cite{kontsevich1}. More precisely, we want to compare the motivic levels of both theories.

\subsubsection{Motives}
In the sixties, and following the works of Weil and Serre, Grothendieck constructed an "arithmetically flavored" cohomology theory for algebraic varieties. More precisely, he found a whole family of different cohomology theories, each one reflecting different arithmetic properties. The subject of motives started exactly as quest for an abelian category whose objects (the "motives") would be the possible values of a conjectural universal theory of such kind. 

At that time, cohomology theories were formulated in a rather artificial way using abelian categories as the basic input. The notion of triangulated categories appeared as an attemptive to provided a new, more natural setting for cohomology theories. Of course, the subject of motives followed these innovations \cite{MR923131} and finally, in the 90's, V. Voevodsky \cite{voevodsky-triangulatedmotives} constructed what became known as "motivic cohomology". Many good introductory references to the subject are now available \cite{motivesSeattle,MR2115000, MR2242284}, together with the  historical background given in the introduction of  \cite{cisinski-tcmm}.  \\

In the late 90s, V.Voevodsky and F. Morel brought the ideas of homotopy theory to the context of algebraic geometry \cite{voevodsky-icm, voevodsky-morel,MR1693330}. Their main goal was to mimic the fruitful techniques of algebraic topology in the algebraic context. In particular, they aimed to have something resembling the stable homotopy theory of spaces - a new setting where all cohomology theories for schemes become representable. In particular, this would allow easier definitions for the motivic cohomology, the algebraic $K$-theory, algebraic cobordism, etc, by merely providing their representing spectrums. Their construction has two main steps: the first part mimics the homotopy theory of spaces and its stabilization; the second part forces the "Tate motive" to become invertible with respect to the monoidal multiplication. The final result is known as the \emph{motivic stable homotopy theory of schemes}. Our main goal in this paper is the formulation of a precise universal property for their construction.\\

\subsubsection{Noncommutative Algebraic Geometry} 
In Algebraic Geometry, and specially after the works of Serre and Grothendieck, it became a common practice to study a scheme $X$ via its abelian category of quasi-coherent sheaves $Qcoh(X)$. The reason for this is in fact purely technical for at that time, abelian categories were the only formal background to formulate cohomology theories. In fact, the object $Qcoh(X)$ turns out to be a very good replacement for the geometrical object $X$: thanks to \cite{gabrielthesis, MR1615928} we know that $X$ can be reconstructed from $Qcoh(X)$. However, it happens that abelian categories do not provide a very natural framework for homological algebra. It was Grothendieck who first noticed that this natural framework would be, what we nowadays understand as, the homotopy theory of complexes in the abelian category. At that time, the standard way to deal with homotopy theories was to consider their homotopy categories - the formal strict inversion of the weak-equivalences. This is how we obtain the derived category of the scheme $\mathit{D}(X)$. For many reasons, it was clear that the passage from the whole homotopy theory of complexes to the derived category loses to much information. The answer to this problem appeared from two different directions. First, from the theory of dg-categories \cite{bondal-kapranov3,bondal-kapranov2,bondal-kapranov}. More recently, an ultimately, with the theory of $\infty$-categories \cite{DimitriAra,bergner-survey,lurie-htt, lurie-ha, simpson-book,toen-hab}. The first subject become very popular specially with all the advances in \cite{bondal-orlov,bondal-vandenbergh,drinfeld2, drinfeld1,kaledin,tabuada-quillen,Toen-homotopytheorydgcatsandderivedmoritaequivalences}. The second, although initiated in the 80s with the famous manuscript \cite{pursuingstacks}, only in the last ten years, and specially due to the tremendous efforts of \cite{lurie-htt, lurie-ha}, has reached a state where its full potential can be explored. Both subjects provide an appropriate way to encode the homotopy theory of complexes of quasi-coherent sheaves. In fact, the two approaches are related and, for our purposes, should give equivalent answers (see our Section \ref{linkdgspectrastable}). Every scheme $X$ (over a ring $k$) gives birth to a $k$-dg-category $L_{qcoh}(X)$ - \emph{the dg-derived category of $X$} - whose "zero level" recovers the classical derived category of $X$. For reasonable schemes, this dg-category has an essential property deeply related to its geometrical origin - it has a compact generator and the compact objects are the perfect complexes (see \cite{bondal-vandenbergh} and \cite{thomasonalgebraic}). It follows that the smaller sub-dg-category $L_{pe}(X)$ spanned by the compact objects is "affine", and enough to recover the whole $L_{qcoh}(X)$.

In his works \cite{kontsevich3, kontsevich1, kontsevich2}, Kontsevich initiated a systematic study of the dg-categories with the same formal properties of $L_{pe}(X)$, with the observation that many different examples of such objects exist in nature: if $A$ is an associative algebra then $A$ can be considered as a dg-category with a single object and we consider $L(A)$ the dg derived category of complexes of $A$-modules and take its compact objects. The same works with a differential graded algebra. The \emph{Fukaya} category of a sympletic manifold is another example \cite{kontsevich-sympletic}. There are also examples coming from complex geometry \cite{francoispetit}, representation theory, matrix factorizations (see \cite{Tobias}), and also from the techniques of deformation quantization. This variety of examples with completely different origins motivated the understanding of dg-categories as natural \emph{noncommutative spaces}. The study of these dg-categories can be systematized and the assignment $X\mapsto L_{pe}(X)$ can be properly arranged as a functor

\begin{equation}
\label{fnc}
\xymatrix{
L_{pe}:\text{Classical Schemes}/k \ar[r]& \text{Noncommutative Spaces}/k
}
\end{equation}

In fact, the functor $L_{pe}$ is defined not only for schemes but for a more general class of geometrical objects, so called \emph{derived stacks} (see \cite{toen-vaquie, Be-Fr-Na,lurie-tannakian}). They are the natural geometric objects in the theory of derived algebraic geometry of \cite{toen-vezzosi-hag1, toen-vezzosi-hag2, lurie-thesis}. For the purposes of noncommutative geometry, this fact is crucial: thanks to the results of Toën-Vaquié in \cite{toen-vaquie}, at the level of derived stacks, $L_{pe}$ admits a right adjoint, providing a canonical mechanism to construct a geometric object out of a noncommutative one.\\

Kontsevich proposes also that similarly to schemes, these noncommutative spaces should also admit a motivic theory. Our second goal in this paper is to provide a natural candidate for this theory, that extends in a natural way the theory of Morel-Voevodsky. The brigde between the two theories is a canonical extension of the map $L_{pe}$ given by our universal characterization of the theory for schemes.

\subsubsection{Our Work}
The motivic construction of Morel-Voevodsky was performed using the techniques of model category theory. Nowadays we know that a model category is a mere strict presentation of a more fundamental object - an $(\infty,1)$-category. Every model category has an underlying $(\infty,1)$-category and the last is what really matters. 
It is important to say that the need for this passage overcomes the philosophical reasons. Thanks to the techniques of \cite{lurie-htt, lurie-ha} we have the ways to do and prove things which would remain out of range only with the highly restrictive techniques of model categories.\\

The first part of our quest concerns the universal characterization of the $(\infty,1)$-category underlying the stable motivic homotopy theory of schemes, as constructed by Voevodsky and Morel, with its symmetric monoidal structure. The characterization becomes meaningful if we want to compare the motives of schemes with other theories. In our case, the goal is to conceive a theory of motives for the noncommutative spaces and to relate it to the theory of Voevodsky-Morel. The universal property proved in the first part, ensures, for free, the existence of a (monoidal) dotted arrow at the motivic level

\begin{equation}
\xymatrix{
\text{Classical Schemes}/k \ar[r]\ar[d]& \text{NC-Spaces}/k\ar[d] \\
\text{Stable Motivic Homotopy}/k\ar@{-->}[r]&\text{NC-Stable Motivic Homotopy}/k
}
\end{equation}

Let us also say that other different types of motivic theories for dg-categories are already known (see \cite{tabuada-higherktheory,tabuada-cisinski,tabuada-classicalvsnoncommutative} and \cite{tabuada-notes} for a pedagogical overview). These approaches are essentially of "`cohomological nature"'. Our method should be said "`homological" and follows the spirit of stable homotopy theory. Its main advantage is the canonical way in which we extract the dotted monoidal map. In general, this kind of monoidal maps are extremely hard to obtain only by constructive methods and the techniques of model category theory. Other important advantage is that it allows us to work over any base scheme, not necessarily a field.\\

To achieve the universal characterization we will need to rewrite the constructions of Morel-Voevodsky in the setting of $\infty$-categories. The dictionary between the two worlds is given by the techniques of \cite{lurie-htt} and \cite{lurie-ha}. In fact, \cite{lurie-htt} already contains all the necessary results for the characterization of the \emph{$\mathbb{A}^1$-homotopy theory of schemes} and its \emph{stable non-motivic} version. The problem concerns the  description of the stable motivic world with its symmetric monoidal structure. This is our main contribution. The key ingredient is the following:\\

\begin{insight}(see the Theorem \ref{maintheorem} for the precise formulation):\\
Let $\V$ be a combinatorial simplicial symmetric monoidal model category with a cofibrant unit and let $\Cmonoidal$ denote its underlying symmetric monoidal $\infty$-category. Let $X$ be a cofibrant object in $\V$ satisfying the following condition:\\

(*) the cyclic permutation of factors $\sigma:X\otimes X\otimes X\to X\otimes X\otimes X$ is equal to the identity map in the homotopy category of $\V$.\footnote{More precisely we demand the existence of an homotopy in $\V$ between the cyclic permutation and the identity.}\\

Then the underlying symmetric monoidal $\infty$-category of $Sp^{\Sigma}(\V,X)$ is the universal symmetric monoidal $(\infty,1)$-category equipped with a monoidal map from $\Cmonoidal$, sending $X$ to an invertible object.\\
\end{insight}

This extra assumption on $X$ is not new. It is already present in the works of Voevodsky (\cite{voevodsky-icm}) and it also appears in \cite{hovey-spectraandsymmetricspectra}. We must point out that we believe our result to be true even without this extra assumption on $X$. We will explain this in the Remark \ref{withoutassumption}.\\

\begin{cor}(Corollary \ref{universalpropertymotives})
Let $S$ be a base scheme and let $\sch$ denote the category of smooth separated schemes of finite type over $S$. The $(\infty,1)$-category $\st$ underlying the stable motivic homotopy theory of schemes is stable, presentable and admits a canonical symmetric monoidal structure $\stmonoidal$. Moreover, the construction of Morel-Voevodsky provides a functor $\sch^{\times}\to \stmonoidal$ monoidal with respect to the cartesian product of schemes, and endowed with the following universal property: \\

$(*)$ for any pointed presentable symmetric monoidal $(\infty,1)$-category $\Dmonoidal$, the composition map \footnote{see the notations in \ref{notations}}

\begin{equation}
Fun^{\otimes, L}(\stmonoidal, \Dmonoidal)\to Fun^{\otimes}(\sch^{\times}, \Dmonoidal)
\end{equation}

\noindent is fully faithful and its image consists of those monoidal functors $\sch^{\times}\to \Dmonoidal$ satisfying Nisnevich descent, $\mathbb{A}^1$-invariance and such that the cofiber of the image of the point at $\infty$, $\xymatrix{S\ar[r]^{\infty} &\mathbb{P}^1}$ is an invertible object in $\Dmonoidal$. Moreover, any pointed presentable symmetric monoidal $(\infty,1)$-category $\Dmonoidal$ admitting a monoidal map in this image, is necessarily stable.
\end{cor}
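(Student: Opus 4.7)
The plan is to decompose the Morel-Voevodsky construction of $\stmonoidal$ into a sequence of universal symmetric monoidal constructions inside $\Prlmonoidal$ and then splice their universal properties together. Concretely, one factors the canonical monoidal arrow $\sch^{\times}\to \stmonoidal$ as $\sch^{\times}\to \PSh(\sch)^{\otimes}\to \hsch^{\times}\to \hschp^{\wedge}\to \stmonoidal$, and recognizes each intermediate arrow as a monoidal Bousfield localization (or monoidal left adjoint) in the presentable $\infty$-categorical setting.

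The first arrow is the Yoneda embedding equipped with the Day convolution monoidal structure, which by the standard $\infty$-categorical theory (\cite{lurie-ha}) exhibits $\PSh(\sch)^{\otimes}$ as the free presentable symmetric monoidal $\infty$-category generated by $\sch^{\times}$: precomposition induces an equivalence $\Fun^{\otimes,L}(\PSh(\sch)^{\otimes},\Dmonoidal)\simeq \Fun^{\otimes}(\sch^{\times},\Dmonoidal)$ for every $\Dmonoidal\in\Prlmonoidal$. The second arrow is the symmetric monoidal Bousfield localization at the Nisnevich hypercovers together with the projections $\AA^1\times X\to X$; the corresponding classes of trivialized maps are stable under tensoring with arbitrary objects, as a consequence of the cartesian nature of the monoidal structure on $\sch$ combined with the standard stability properties of Nisnevich-local and $\AA^1$-local equivalences, so the localization is monoidal and it restricts the image of the universal property to exactly those functors satisfying Nisnevich descent and $\AA^1$-invariance. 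Passing to pointed objects is the monoidal left adjoint $\hsch^{\times}\to \hschp^{\wedge}$, universal among monoidal functors into pointed presentable symmetric monoidal $\infty$-categories; since $\Dmonoidal$ is stable, hence pointed, this step adds no further constraint on the image.

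The last step applies the main theorem (the Insight above) to a cofibrant presentation of $\PP^1$ inside a simplicial symmetric monoidal model for $\hschp$. The cyclic permutation hypothesis is Voevodsky's classical $\AA^1$-homotopy between the cyclic permutation on $\PP^1\wedge\PP^1\wedge\PP^1$ and the identity; once verified, the theorem identifies $\stmonoidal$ with the universal symmetric monoidal presentable $\infty$-category under $\hschp^{\wedge}$ in which $\PP^1$ becomes $\otimes$-invertible. In the pointed stable setting, inverting $\PP^1$ is equivalent to inverting the cofiber of $S\xrightarrow{\infty}\PP^1$, which is the condition appearing in the statement. The chief obstacle along the way is to check that the intermediate Bousfield localization is genuinely symmetric monoidal in the presentable $\infty$-categorical sense\,---\,i.e.\ that the relevant class of equivalences forms a $\otimes$-ideal with respect to Day convolution\,---\,rather than only at the level of homotopy categories. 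The genuinely substantial input, however, is the main theorem itself: without its symmetric monoidal enhancement of Hovey's construction one would only obtain universality in associative monoidal $\infty$-categories, not in $\Prlmonoidal$, and the canonical monoidal comparison map towards noncommutative motives promised in the introduction could not be extracted.
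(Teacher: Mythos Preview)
Your proposal is correct and follows essentially the same route as the paper: both decompose the canonical map as Yoneda (monoidal universal property of presheaves), then the monoidal reflective localization at Nisnevich and $\mathbb{A}^1$-equivalences, then the universal pointing $\hsch^{\times}\to\hschp^{\wedge}$, and finally the main theorem on $\otimes$-inverting the symmetric object $(\mathbb{P}^1,\infty)$. The only cosmetic difference is that the paper interposes an explicit $S^1$-stabilization step $\hschp^{\wedge}\to\hschp^{\wedge}[(S^1)^{-1}]$ before inverting $(\mathbb{P}^1,\infty)$, whereas you invert $(\mathbb{P}^1,\infty)\simeq S^1\wedge\mathbb{G}_m$ in one stroke; by the paper's own Remark on inverting products of objects these are equivalent, and the translation between ``$(\mathbb{P}^1,\infty)$ invertible'' and ``the cofiber of $S\xrightarrow{\infty}\mathbb{P}^1$ invertible'' is exactly the content of the Remark on the pointed extension formula.
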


This result trivializes the problem of finding motivic monoidal realizations.

\begin{example}
Let $S=Spec(k)$ be field of characteristic zero. The assignment $X\mapsto \Sigma^{\infty}(X(\mathbb{C}))$ provides a functor $\sch\to \Sp$ with $\Sp$ the $(\infty,1)$-category of spectra (see below). This map is known to be monoidal, to satisfy all the descent conditions in the previous corollary and to invert $\mathbb{P}^1$ in the required sense.  Therefore, it extends in a essential unique way to a monoidal map of stable presentable symmetric monoidal $(\infty,1)$-categories $\stmonoidal\to \Spmonoidal$;
\end{example}

\begin{example}
Again, let $S=Spec(k)$. Another immediate example of a monoidal motivic realization is the Hodge realization. Properly constructed, the map $X\mapsto C_{DR}(X)$ sending a scheme to its De Rham complex provides a functor  $\sch\to \derivedk$ with $\derivedk$ the $(\infty,1)$-derived category of $k$. This map is known to be monoidal with respect to the cartesian product of schemes (Kunneth formula), satisfies all the descent conditions and inverts $\mathbb{P}^1$ in the sense above. Because of the universal characterization, it extends in a essential unique way to a monoidal motivic Hodge Realization $\stmonoidal\to \mathcal{D}(k)^{\otimes}$ (where on the left we have the monoidal structure induced by the derived tensor product of complexes). See \cite{brad-thesis} for the proper construction of this map.
\end{example}

As a main application, we systematize the comparison between the commutative and noncommutative worlds. After some preliminairs on dg-categories, we introduce the $(\infty,1)$-category of smooth noncommutative spaces $\nck$ as the opposite of the $(\infty,1)$-category of idempotent dg-categories of finite type $\dg^{ft}\subseteq \dg^{idem}$ introduced by Toën-Vaquié in \cite{toen-vaquie}. By introducing an appropriate analogue for the Nisnevich topology, we construct a new stable presentable symmetric monoidal $(\infty,1)$-category $\stncmonoidal$ encoding a stable motivic homotopy theory for these noncommutative spaces. To conclude, we explain how to encode the map $X\mapsto L_{pe}(X)$ as a functor $L_{pe}$ towards $\nck$ and how our universal characterization of the stable motivic homotopy theory of schemes allows us to extend it to a monoidal colimit preserving functor

\begin{equation}
\stmonoidalk\to \stncmonoidalk
\end{equation}

\subsubsection{Further Applications}
To conclude let us provide another application for our work. Thanks to the famous theorem HKR, the \emph{Periodic Cyclic Homology $HP_{\bullet}(X)$} provides the correct noncommutative analogue of the classical de Rham cohomology. In \cite{kontsevich-pantev-katzarkov}, the authors introduced the notion of a \emph{noncommutative Hodge Structure}. They formulate the following conjecture:

\begin{enumerate}[(*)]
\item If $X$ is a "good enough" noncommutative space then $HP_{\bullet}(X)$ carries a noncommutative Hodge-Structure;
\end{enumerate}

Said in a different way, $HP_{\bullet}$ should provide a functor from noncommutative spaces to noncommutative Hodge-structures. We should then expect this functor to factor through our new noncommutative version of the motivic stable homotopy theory because of its universal property. More generally, we expect our main commutative diagram to fit in a larger one

\begin{equation}
\xymatrix{
\text{Classical Schemes}/k \ar[r]^{L_{pe}}\ar[d]\ar@/^/[rdd]^(.3){H_{DR}(-)}& \text{NC-Schemes}/k\ar[d] \ar@/^/[rdd]^(.3){HP_{\bullet}(-)}&\\
\text{Stable Motivic Homotopy}/k\ar@{-->}[r]\ar@{-->}[dr]^{univ}_{prop.}&\text{NC-Stable Motivic Homotopy}/k\ar@{-->}[dr]^{univ}_{prop.}& \\
&\text{Classical Hodge-Structures}\ar[r]&\text{NC-Hodge Structures}
}
\end{equation}

\noindent where the map from the classical to the noncommutative Hodge structures was introduced in \cite{kontsevich-pantev-katzarkov}\footnote{Of course we should only expected the part of the diagram concerning the Hodge Theory to work if we restrict to a good class of schemes over $k$}. The diagonal maps are known as the \emph{Hodge-realizations functors}: the commutative case is known to the experts (see \cite{riou-realizationfunctors} for a survey of the main results); the noncommutative case is given by the conjecture $(*)$. This conjecture can be divided in two parts: the first concerns the de Rham part (see \cite{MR2435845}) and the second is related to the Betti part. Some recent progress towards this last part is now available in \cite{Anthony-thesis}.

\subsection{Acknowledgments and Credits}
This paper is the first part of my ongoing Doctoral Thesis at the Université de Montpellier under the direction of Bertrand Toën. I want to express and emphazise my sincere admiration, gratitude and mathematical debt to him. 
For accepting me as his student, for proposing me such an amazing quest, and for, so kindly, sharing and discussing his ideas and beautiful visions with me. Moreover, I want to thank him for all the comments and suggestions improving the text here presented.\\

The story of the ideas and motivations for this work started in the spring of 2010 when they were discussed in the first ANR-Meeting - "GAD1" - in  Montpellier (ANR-09-BLAN-0151) . The discussion continued later in the summer of 2010, when a whole research project was envisioned and discussed by Bertrand, together with Gabriele Vezzosi, Michel Vaquié and Anthony Blanc. I am grateful to all of them for allowing me to dive into this amazing vision and to pursue the subject.\\

I also want to acknowledge the deep influence of the colossal works of Jacob Lurie in the subject of higher algebra. I have learned a lot from his writings and of course, this work depends heavily and continuously on his results and techniques.\\

I'm also grateful to Denis-Charles Cisinski for a very helpful conversation on the subject of motives and for explaining me his ongoing joint work with Gonçalo Tabuada.\\

I also wish to express my deep gratitude to Dimitri Ara and Georges Maltsiniotis. For all the mathematical discussions from which I have learned so much and for the friendly reception in Paris. It has also been a
pleasure to have Anthony Blanc and Benjamin Hennion as comrades in this Quest, and I want to thank them
for the all discussions along the way. Moreover, I want to acknowledge the friendly environment I found in
the Université de Montpellier.\\

Finally, for the continuous support, encouragement and love, I want to dedicate this work to Maria Ieshchenko.

\subsection{Outline of the paper}

In Section \ref{section1} we set our notations and review the main notions and tools from Higher Category Theory, together with the mechanism to pass from the world of model categories to $(\infty,1)$-categories. These tools will be used all along the paper. Section \ref{section3} is dedicated to the subject of higher algebra: following \cite{lurie-ha}, we summarize the theory of symmetric monoidal $(\infty,1)$-categories, their algebra objects and the associated theories of modules. We also add some original preliminary results and remarks needed in the following sections. The reader familiar with the subject can skip this section and consult these results later on. Section \ref{section4} forms the core of our paper. In \ref{section4-1} and following some ideas of \cite{toen-vezzosi-hag2}, we deal with the formal inversion of an object $X$ in a symmetric monoidal $(\infty,1)$-category. First we treat the case of \emph{small symmetric monoidal $(\infty,1)$-categories} and then we extend the results to the \emph{presentable} setting. In \ref{section4-2} we recall the classical notion of (ordinary) spectra, which can be defined either via a limit kind of construction or via a colimit. When applied to a presentable $(\infty,1)$-category both methods coincide. Still in this section, we recall a classical theorem (see \cite{voevodsky-icm}) which says that, under a certain symmetric condition on $X$, the formal inversion of an object in a symmetric monoidal category is equivalent to the (ordinary) category of spectra with respect to $X$. In the Corollary \ref{main5} we prove that this results also holds in the $\infty$-setting.
In \ref{section4-3} we use the results of \cite{hovey-spectraandsymmetricspectra} to compare our formal inversion
 to the more familiar notion of symmetric spectra and we prove (Theorem \ref{maintheorem}) that, again under the same extra assumption on $X$, both notions coincide. 
In Section \ref{section5} we use the results of \ref{section4}, together with the techniques of \cite{lurie-htt, lurie-ha}, to completely characterize the $\mathbb{A}^1$-homotopy theory of schemes and its associated motivic stabilization by means of a universal property inside the world of symmetric monoidal $(\infty,1)$-categories.
Finally in the Section \ref{section6}, we construct a motivic $\mathbb{A}^1$-homotopy theory for the noncommutative spaces of Kontsevich. As a corollary of the universal characterization we have, for free, a canonical monoidal map comparing the classical theory for schemes and the new noncommutative motivic side.

\section{Preliminaries I: Higher Category Theory}
\label{section1}

\subsection{Notations and Categorical Preliminaries}
\label{notations}

\subsubsection{Quasi-Categories} 
The theory of $(\infty,1)$-categories has been deeply explored over the last years and we now have many different models to access them. In this article we follow the approach of \cite{lurie-htt,lurie-ha}, using the model provided by Joyal's theory of Quasi-Categories \cite{joyal-article}. In this sense, the two notions of \emph{quasi-category} and \emph{$(\infty,1)$-category} will be identified throughout this paper. Recall that the Joyal's model structure is a combinatorial, left proper, cartesian closed model structure in the category of simplicial sets $\ssets$, for which the cofibrations are the monomorphisms and the fibrant objects are the quasi-categories - by definition, the simplicial sets $\C$ with the lifting property

\begin{equation}
\xymatrix{
\Lambda^k[n]\ar@{^{(}->}[d]\ar[r]^f& \C\\
\Delta[n]\ar@{-->}[ru]&
}
\end{equation}

\noindent for any inclusion of an inner-horn $\Lambda^k[n]\subseteq \Delta[n]$ with $0<k<n$ and any map $f$.\\

For a quasi-category $\C$, we will follow \cite{lurie-htt} and write $Obj(\C)$ for the set zero-simplexes of $\C$; given two objects $X, Y \in Obj(\C)$ we let $Map_{\C}(X,Y)$ denote the simplicial set \emph{Mapping Space between them} and finally we let $h(\C)$ denote the homotopy category of $\C$. Moreover, as in \cite{joyal-article, lurie-htt} the term \emph{categorical equivalence} will refer to a weak-equivalence of simplicial sets for the Joyal's model structure.\\ 

\subsubsection{Universes}
In order to deal with the set-theoretical issues we will follow the approach of Universes (our main reference being the Appendix by Nicolas Bourbaki in \cite{sga4}). We will denote them as $\uniU$, $\uniV$, $\uniW$, etc. Moreover, we  adopt a model for set theory where every set is \emph{artinian}. In this case, for every strongly inaccessible cardinal $\kappa$\footnote{Recall that a cardinal $\kappa$ is called strongly inaccessible if it is regular (meaning, the sum $\sum_{i}\alpha_i$ of strictly smaller cardinals $\alpha_i<\kappa$ with $i\in I$ and $card(I)<\kappa$, is again strictly smaller than $\kappa$, which is the same as saying that $\kappa$ is not the sum of cardinals smaller than $\kappa$) and if for any strictly smaller cardinal $\alpha<\kappa$, we have $2^{\alpha}<\kappa$}, the collection $\uniU(\kappa)$ of all sets of rank $< \kappa$\footnote{Recall that a set $X$ is said to have rank smaller than $\kappa$ if the cardinal of $X$ is smaller than $\kappa$ and for any succession of memberships $X_n\in X_{n-1}\in ... \in X_0= X$, every $X_i$ has cardinal smaller than $\kappa$.} is a set and satisfies the axioms of a Universe. The correspondence $\kappa\mapsto \uniU(\kappa)$ establishes a bijection between strongly inaccessible cardinals and Universes, with inverse given by $\uniU\mapsto card(\uniU)$. We adopt the \emph{axiom of Universes} which allows us to consider every set as a member of a certain universe (equivalently, every cardinal can be strictly upper bounded by a strongly inaccessible cardinal). We will also adopt the \emph{axiom of infinity}, meaning that all our universes will contain the natural numbers $\mathbb{N}$ and therefore $\mathbb{Z}$, $\mathbb{Q}$, $\mathbb{R}$ and $\mathbb{C}$.  Whenever necessary we will feel free to enlarge the universe $\uniU\in \uniV$. This is possible by the axiom of Universes. \\

Let $\uniU$ be an universe. As in \cite{sga4} we say that a mathematical object $T$ is $\uniU$-small (or simply, \emph{small}) if all the data defining $T$ is collected by sets isomorphic to elements of $\uniU$. For instance, a set is $\uniU$-small if it is isomorphic to a set in $\uniU$; a category is $\uniU$-small if both its collection of objects and morphisms are isomorphic to sets in $\uniU$; a simplicial set $X$ is $\uniU$-small if all its level sets $X_i$ are isomorphic to elements in $\uniU$, etc. A mathematical object $T$ is called \emph{essentially small} if is equivalent (in a context to be specified) to a $\uniU$-small object. A category $\C$ is called  \emph{locally $\uniU$-small} (resp. locally essentially $\uniU$-small) if its hom-sets between any two objects are $\uniU$-small (resp. essentially small).\footnote{Notice that this definition is not demanding any smallness condition on the collection of objects and therefore a locally small category does not need to be small} We define the category of $\uniU$-sets as follows: the collection of objects is $\uniU$ and the morphisms are the functions between the sets in $\uniU$. It is locally small. Another example is the category of $\uniU$-small categories $\Cat_{\uniU}$ whose objects are the $\uniU$-small categories and functors between them. Another important example is given by $\ssets_{\uniU}$ the category of $\uniU$-small simplicial sets. Again, it is locally small and, together with the Joyal's model structure (see \cite{joyal-article}) it forms a $\uniU$-combinatorial model category (in the sense of \cite{toen-vezzosi-hag1}) and its cofibrant-fibrant objects are the $\uniU$-small $(\infty,1)$-categories.

Consider now an enlargement of universes $\uniU\in \uniV$. In this case, it follows from the axiomatics that every $\uniU$-small object is also $\uniV$-small. With a convenient choice for $\uniV$, the collection of all $\uniU$-small $(\infty,1)$-categories can be organized as a $\uniV$-small $(\infty,1)$-category, $\iCat^{\uniU}$ (See \cite{lurie-htt}-Chapter 3 for the details). We have a canonical inclusion $\ssets_{\uniU} \subseteq \ssets_{\uniV}$ which is compatible with the Joyal's Model structure. Again, through a convenient enlargement of the universes $\uniU\in \uniV\in \uniW$, we have an inclusion of $\uniW$-small $(\infty,1)$-categories $\iCat^{\uniU}\subseteq \iCat^{\uniV}$\footnote{$\iCat^{\uniU}$ is $\uniV$-small and so it is also $\uniW$-small}. We say that a $\uniV$-small $(\infty,1)$-category is \emph{essentially $\uniU$-small} if it is weak-equivalent in $\ssets_{\uniV}$ to a $\uniU$-small simplicial set. Thanks to \cite[5.4.1.2]{lurie-htt}, the following conditions are equivalent for a $\uniV$-small $(\infty,1)$-category $\C$: (i) $\C$ is essentially $\uniU$-small; $(ii)$  $card(\pi_0(\C))< card(\uniU)$ and $\C$ is locally small, which means that for any two objects $X$ and $Y$ in $\C$, we have $card(\pi_i(Map_{\C}(X,Y)))< card(\uniU)$; $(iii)$ $\C$ is a $card(\uniU)$-compact object in $\iCat^{\uniV}$ (see \ref{filteredcompact} below).\\ 

Some constructions require us to control "how small" our objects are. Given a cardinal $\tau$ in the universe $\uniU$, we will say that a small simplicial set $K$ is $\tau$-small if a fibrant-replacement $\C$ of $K$ satisfies the conditions above, replacing $card(\uniU)$ by $\tau$.\\

The category of $\uniU$-small simplicial sets can also be endowed with the standard Quillen model structure (see \cite{hovey-modelcategories}) and it forms a $\uniU$-combinatorial simplicial model category in which the fibrant-cofibrant objects are the $\uniU$-small \emph{Kan-complexes}. They provide models for the homotopy types of $\uniU$-small spaces and following the ideas of the Section \ref{section1-2} we can collected them in a new $(\infty,1)$-category $\Spaces_{\uniU}$. Again we can enlarge the universe $\uniU\in\uniV$ and produce inclusions $\Spaces_{\uniU}\subseteq \Spaces_{\uniV}$.\\

Throughout this paper we will fix three universes $\uniU\in \uniV\in\uniW$ with $\uniV$ chosen conveniently large and $\uniW$, very large. In general, we will work with the $\uniV$-small simplicial sets and the $\uniU$-small objects will be refered to simply as \emph{small}. In order to simplify the notations we write $\iCat$ (resp. $\Spaces$) to denote the $(\infty,1)$-category of small $(\infty,1)$-categories (resp. spaces). With our convenient choice for $\uniV$, both of them are $\uniV$-small. The third universe $\uniW$ is assumed to be sufficiently large so that we have $\uniW$-small simplicial sets  $\iCatbig$ (resp. $\Spaces^{big}$) to encode the $(\infty,1)$-category of all the $\uniV$-small $(\infty,1)$-categories (resp. spaces).\\

\subsubsection{Fibrations of Simplicial Sets}

Let $p:X\to Y$ map of simplicial sets. We say $p$ is a \emph{trivial fibration} if it has the right-lifting property with respect to every monomorphism of simplicial sets. We say $p$ is a categorical fibration if it is a fibration for the Joyal's model structure. We say $p$ is an $inner fibration$ if it has the right-lifting property with respect to every inclusion $\Lambda^k[n]\subseteq \Delta[n]$, with $0<k<n$. We have

\begin{equation}
\{\text{trivial fibrations}\}\subseteq \{\text{categorical fibrations}\}\subseteq \{\text{inner fibrations}\}
\end{equation}

\subsubsection{Categories of Functors}
 The Joyal's model structure is cartesian closed (see \cite{joyal-article} or the Corollary 2.3.2.4 of \cite{lurie-htt}). In particular, if $\C$ and $\D$ are $(\infty,1)$-categories in a certain universe, the internal-hom in $\ssets$, $Fun(\C,\D):=\underline{Hom}_{\ssets}(\C,\D)$ is again an $(\infty,1)$-category in the same universe (See Prop. 1.2.7.3 of \cite{lurie-htt}). It provides the good notion of $(\infty,1)$-category of functors between $\C$ and $\D$;

\subsubsection{Diagrams}
Let $\C$ be an $(\infty,1)$-category and let $K$ be a simplicial set. A diagram in $\C$ indexed by $K$ is a map of simplicial sets $d:K\to \C$. We denote by $K^{\rhd}$ (resp. $K^{\lhd}$) the simplicial set $K*\Delta[1]$ (resp. $\Delta[1]*K$) where $*$ is the join operation of simplicial sets (see \cite{lurie-htt}-Section 1.2.8).\\

If $\C$ is an $(\infty,1)$-category, a commutative square in $\C$ is a diagram $d:\Delta[1]\times \Delta[1]\to \C$. This is the same as a map $\Lambda^0[2]^{\rhd}\to \C$. It is easy to check that $\Delta[1]\times \Delta[1]$ has four $0$-simplexes $A$,$B$,$C$, $D$; five non-degenerated $1$-simplexes $f$,$g$,$h$,$u$,$v$ and four non-degenerated $2$-simplexes $\alpha$, $\beta$, $\gamma$, $\sigma$, which we can picture together as

\begin{equation}
\xymatrix{
A\ar@{=}[r]^{id} \ar[dr]|*{}="N" \ar[d]_u & \ar@{=>};"N"^\gamma A \ar[dr]|*{}="M" \ar[r]^f \ar[d] & \ar@{=>};"M"^\sigma B \ar[d]^g\\
D\ar[r]^v \ar@{=>};"N"^\alpha & C \ar@{=>};"M"_\beta \ar@{=}[r]_{id} & C
}
\end{equation}

\noindent where all the inners $1$-simplexes are given by $h$. 
Since $\C$ is an $(\infty,1)$-category, we can use the lifting property to show that the data of a commutative diagram in $\C$ is equivalent to the data of two triangles

\begin{equation}
\xymatrix{ 
A \ar[d]_{u} \ar[dr]^{v\circ u}|*{}="N" & &&A \ar[r]^{f} \ar[dr]_{g\circ f}|*{}="M" & B\ar[d]^g  \\ 
D\ar[r]^v \ar@{=>};"N"^\beta & C && & C \ar@{=>}[u];"M"^\alpha                          }
\end{equation}

\noindent together with a map $r:A\to C$ and two-cells providing homotopies between  $g\circ f\simeq r\simeq v\circ u$.\\ 

\subsubsection{Comma-Categories}
If $\C$ is an $(\infty,1)$-category and $X$ is an object in $\C$, there are $(\infty,1)$-categories $\C_{/X}$ and $\C_{X/}$ where the objects are, respectively, the morphisms $A\to X$  and $X\to A$. More generally, if $p:K\to \C$ is a diagram in $\C$ indexed by a simplicial set $K$, there are $(\infty,1)$-categories $\C_{/p}$ and $\C_{p/}$ of cones (resp cocones) over the diagram. These $(\infty,1)$-categories are characterized by an universal property - see for instance \cite{lurie-htt}-Prop.1.2.9.2.

\subsubsection{Limits and Colimits}
Let $\C$ be an $(\infty,1)$-category. An object $E:\Delta[0]\to \C$ is said to be \emph{initial} (resp. \emph{final}) if for every object $Y$ in $\C$ the mapping space $Map_{\C}(E,Y)$ (resp. $Map_{\C}(Y,E)$) is weakly contractible (see the Definitions 1.2.12.1 and 1.2.12.3 and the Corollary 1.2.12.5 of \cite{lurie-htt}).\\

Let $\C$ be an $(\infty,1)$-category and let $K\to \C$ be a diagram in $\C$. A \emph{colimit} (resp. \emph{limit}) for a diagram $d:K\to \C$ is an initial (resp. final) object in the category $\C_{p/}$ (resp. $\C_{/p}$). By the universal property defining the comma-categories, this corresponds to the data of a new diagram $\bar{d}:K^{\rhd}\to \C$ (resp. $K^{\lhd}\to \C$) extending $d$ and satisfying the universal property of \cite[1.2.13.5]{lurie-htt}. Whenever appropriate, we will also use the relative notions of limits and colimits (see \cite[4.3.1.1]{lurie-htt}).\\

Following \cite[4.1.1.1, 4.1.1.8]{lurie-htt}, we say that a map of simplicial sets $\phi:K'\to K$ is \emph{cofinal} if for every $(\infty,1)$-category $\C$ and every colimit diagram $K^{\lhd}\to \C$, the composition with $\phi$, $(K')^{\lhd}\to \C$ remains a colimit diagram.\\

We will say that an $(\infty,1)$-category \emph{has all small colimits} (resp. \emph{limits}) if every diagram in $\C$ indexed by a small simplicial set has a colimit (resp. limit) in $\C$. As in the classical situation, $\C$ has all $\kappa$-small colimits (resp. limits) if and only if it has all $\kappa$-small coproducts and all pushouts exist \cite[4.4.2.6]{lurie-htt}. In particular, it has an initial (resp. final) object.\\

If $\C$ is an $(\infty,1)$-category having colimits of a certain kind, then for any simplicial set $S$, the $(\infty,1)$-category $Fun(S,\C)$ has colimits of the same kind and they can computed objectwise in $\C$ \cite[5.1.2.3]{lurie-htt}.\\

If $\C$ and $\D$ are $(\infty,1)$-categories with colimits we will denote by $Fun^L(\C,\D)$ the full subcategory of $Fun(\C,\D)$ spanned by those functors which commute with colimits.\\

We say that an ($\infty,1)$-category is \emph{pointed} it it admits an object which is simultaneously initial and final. Given an arbitrary $(\infty,1)$-category with a final object $*$, we consider the comma-category $\C_*:=\C_{*/}$. This is pointed. Moreover there is a canonical forgetful morphism $\C_*\to \C$ which commutes with limits.

\subsubsection{Subcategories}
 If $\C$ is an $(\infty,1)$-category, and $\mathcal{O}$ is a subset of objects and $\mathcal{F}$ is a subset of edges between the objects in $\mathcal{O}$, the \emph{subcategory of $\C$} spanned by the objects in $\mathcal{O}$ together with the edges in $\mathcal{F}$ is the new $(\infty,1)$-category $\C_{\mathcal{O}, \mathcal{F}}$ obtained as the pullback of the diagram

\begin{equation}
\xymatrix{
\C_{\mathcal{O}, \mathcal{F}}\ar[r]^i\ar[d]& \C\ar[d] \\
N(h(\C)_{\mathcal{O}, \mathcal{F}})\ar[r]&N(h(\C))
}
\end{equation}

\noindent where the lower map is the nerve of the inclusion of the subcategory $h(\C)_{\mathcal{O}, \mathcal{F}}$ of $h(\C)$, spanned by the objects in $\mathcal{O}$ together with the morphisms in $h(\C)$ represented by the edges in $\mathcal{F}$. The right-vertical map is the unit of the adjunction $(h, N)$. It follows immediately from the definition that $\C_{\mathcal{O}, \mathcal{F}}$ will also be an $(\infty,1)$-category.

\subsubsection{Grothendieck Construction}
We recall the existence of a \emph{Grothendieck Construction} for $(\infty,1)$-categories (See \cite{lurie-htt}-Chapter 3). Thanks to this, we can present a functor between two $(\infty,1)$-categories $\C\to \D$ as a \emph{cocartesian fibration} (see \cite{lurie-htt}-Def. 2.4.2.1) $p:\M\to \Delta[1]$ with $p^{-1}(\{0\})= \C$ and $p^{-1}(\{1\})=\D$. Using this machinery, the data of an adjunction between $\C$ and $\D$ corresponds to a \emph{bifibration} $\M\to \Delta[1]$ (see the proof of \cite[5.2.1.4]{lurie-htt} to understand how to extract a the pair of functors our a bifibration, using the model structure on marked simplicial sets);

\subsubsection{Localizations}
\label{locinfinity}
There is a theory of localizations for $(\infty,1)$-categories. If $(\C,W)$ is an $(\infty,1)$-category together with a class of morphisms $W$ we can produce a new $(\infty,1)$-category $\C[W^{-1}]$ together with a map $\C\to \C[W^-1]$ with the universal property of sending the edges in $W$ to equivalences. To construct this localization we can make use of the model structure on the marked simplicial sets of \cite{lurie-htt}-Chapter 3. Recall that every marked simplicial sets is cofibrant and the fibrant ones are precisely the pairs $\C^{\sharp}:=(\C, eq)$ with $\C$ a quasi-category and $eq$ the collection of all equivalences in $\C$. Therefore, $\C[W^{-1}]$ can be obtained as a fibrant-replacement of the pair $(\C,W)$. We recover the desired universal property from the fact that the marked structure is simplicial. Following the Construction 4.1.3.1 of \cite{lurie-ha}, this procedure can be presented in more robust terms. It is possible to construct an $(\infty,1)$-category $\mathcal{W}\iCat$ where the objects are the pairs $(\C,W)$ with $\C$ a quasi-category and $W$ a class of morphisms in $\C$. Moreover, the mapping $\C\mapsto \C^{\sharp}$ provides a fully faithful functor 

\begin{equation}
\iCat\subseteq \mathcal{W}\iCat
\end{equation}

\noindent and the upper localization procedure $(\C,W)\mapsto \C[W^{-1}]$ provides a left adjoint to this inclusion (see the Proposition 4.1.3.2 of \cite{lurie-ha}.\\

Let $\C$ be an $(\infty,1)$-category and let $\C_0$ be a full subcategory of $\C$. We say that $\C_0$ is a \emph{reflexive localization of $\C$} if the fully faithful inclusion $\C_0\subseteq \C$ admits a left adjoint $L:\C\to \C_0$. A reflexive localization is a particular instance of the notion in the previous item, with $W$ the class of edges in $\C$ which are sent to equivalences through $L$ (see the Proposition 5.2.7.12 of \cite{lurie-ha});

\subsubsection{Presheaves}
 If $\C$ is a small $(\infty,1)$-category, we define the $(\infty,1)$-category of $\infty$-presheaves over $\C$ as $\mathcal{P}(\C):=Fun(\C^{op}, \Spaces)$. It is not small anymore because $\Spaces$ is not small. It comes naturally equipped with a fully faithful analogue of the classical Yoneda's map $\C\to \mathcal{P}(\C)$, endowed with the following universal property: for any $(\infty,1)$-category $\D$ having all colimits indexed by small simplicial sets, the composition 

\begin{equation}
Fun^L(\mathcal{P}(\C), D)\to Fun(\C, \D)
\end{equation}

\noindent induces an equivalence of $(\infty,1)$-categories, where the left-side denotes the full-subcategory of all colimit preserving functors (see Theorem 5.1.5.6 of \cite{lurie-htt}).

\subsubsection{$\kappa$-filtered categories and $\kappa$-compact objects}\label{filteredcompact}
 Let $\kappa$ be a small cardinal. A simplicial set $S$ is called $\kappa$-filtered if there is an $(\infty,1)$-category $\C$ together with a categorical equivalence $\C\to S$, such that for any $\kappa$-small simplicial set $K$, any diagram $K\to \C$ admits a cocone $K^{\triangleright}\to \C$ (see the Notation 1.2.8.4 of \cite{lurie-htt}). We use the terminology \emph{filtered} when $\kappa=\omega$. Notice that if $\kappa\leq \kappa'$ and $\C$ is $\kappa'$-filtered then it is also $\kappa$-filtered.\\
 
It follows from \cite[4.2.3.11]{lurie-htt} that an $(\infty,1)$-category $\C$ has all small colimits iff there exists a regular cardinal $\kappa$ such that $\C$  has all $\kappa$-small colimits together with all $\kappa$-filtered colimits.\\

Let object $X$ in a big $(\infty,1)$-category $\C$. We say that $X$ is \emph{completely compact} if the associated map $Map_{\C}(X,-):\C\to \Spaces^{big}$ commutes with all small colimits. We say that $X$ is $\kappa$-compact (for $\kappa$ a small regular cardinal) if $Map_{\C}(X,-)$ commutes with colimits indexed by $\kappa$-filtered simplicial sets. We denote by $\C^{\kappa}$ the full subcategory of $\C$ spanned by the $\kappa$-compact objects in $\C$. We use the terminology \emph{compact} when $\kappa=\omega$. Notice that if $\kappa\leq \kappa'$ and $X$ is $\kappa$-compact it is also $\kappa'$-compact.

\subsubsection{Ind-Completion}
 Let $\C$ be a small $(\infty,1)$-category and choose a regular cardinal $\kappa$ with $\kappa<card(\uniU)$. Following the results of \cite{lurie-htt}- Section 5.3.5, it is possible to formally complete $\C$ with all small colimits indexed by small $\kappa$-filtered simplicial sets. More precisely, we can construct a new $(\infty,1)$-category $Ind_{\kappa}(\C)$ (which is not small anymore), together with a canonical map $\C\to Ind_{\kappa}(\C)$ having the following universal property: for any $(\infty,1)$-category $\D$ having all colimits indexed by a small $\kappa$-filtered simplicial set, the composition 

\begin{equation}
Fun^{\kappa}(Ind_{\kappa}(\C), D)\to Fun(\C, \D)
\end{equation}

\noindent induces an equivalence of $(\infty,1)$-categories, where the left-side denotes the full-subcategory spanned by the functors commuting with colimits indexed by a $\kappa$-filtered small simplicial set (see Theorem 5.3.5.10 of \cite{lurie-htt}). In the case $\kappa=\omega$ we write $Ind(\C):= Ind_{\omega}(\C)$.

\subsubsection{Completion with colimits}
\label{completionwithcolimits}
Following the ideas of \cite{lurie-htt}-Section 5.3.6, given an arbitrary $(\infty,1)$-category $\C$ together with a collection $\mathcal{K}$ of arbitrary simplicial sets and a collection of diagrams $\mathcal{R}=\{p_i:K_i\to \C\}$ with each $K_i\in \mathcal{K}$, we can form a new $(\infty,1)$-category $\mathcal{P}^{\mathcal{K}}_{\mathcal{R}}(\C)$ together with a canonical map $\C\to \mathcal{P}^{\mathcal{K}}_{\mathcal{R}}(\C)$ such that for any $(\infty,1)$-category $\D$, the composition map

\begin{equation}
\label{formulaaddingcolimits}
Fun_{\mathcal{K}}(\mathcal{P}^{\mathcal{K}}_{\mathcal{R}}(\C),\D)\to Fun_{\mathcal{R}}(\C,\D)
\end{equation}

\noindent is an equivalence of $(\infty,1)$-categories, where the left-side denotes the full subcategory of $\mathcal{K}$-colimit preserving functors and the right-side denotes the full-subcategory of functors sending diagrams in the collection $\mathcal{R}$ to colimit diagrams in $\D$. This allows us to formally adjoint colimits of a given type to a certain $(\infty,1)$-category. We denote by $\iCatbig(\mathcal{K})$ the (non-full) subcategory of $\iCatbig$ spanned by the $(\infty,1)$-categories which admit all the colimits of diagrams indexed by simplicial sets in $\mathcal{K}$, together with the $\mathcal{K}$-colimit preserving functors. The intersection $\iCatbig(\mathcal{K})\cap \iCat$ is denoted as $\iCat(\mathcal{K})$. In the particular case when $\mathcal{K}$ is the collection of $\kappa$-small simplicial sets, we will use the notation
$\iCat(\kappa)$.\\

If $\mathcal{K}\subseteq \mathcal{K}'$ are two collections of arbitrary simplicial sets and $\C$ is an arbitrary $(\infty,1)$-category having all $\mathcal{K}$-indexed colimits, we can let $\mathcal{R}$ be the collection of all $\mathcal{K}$-colimit diagrams in $\C$. The result of the previous paragraph $\mathcal{P}^{\mathcal{K}}_{\mathcal{R}}(\C)$ will in this particular case, be denoted as $\mathcal{P}^{\mathcal{K}'}_{\mathcal{K}}(\C)$. By ignoring the set-theoretical aspects, the universal property defining $\mathcal{P}^{\mathcal{K}'}_{\mathcal{K}}(\C)$ allows us to understand the formula $\C\mapsto \mathcal{P}^{\mathcal{K}'}_{\mathcal{K}}(\C)$ as an informal left adjoint to the canonical (non-full) inclusion of the collection of $(\infty,1)$-categories with all the $\mathcal{K}'$-indexed colimits together with the $\mathcal{K}'$-colimit preserving functors between them, into the collection of $(\infty,1)$-categories with all the $\mathcal{K}$-indexed colimits together with the $\mathcal{K}$-colimit preserving functors.

By combining the universal properties, we find that if $\mathcal{K}$ is the empty collection and $\mathcal{K}'$ is the collection of all small simplicial sets, $\mathcal{P}^{\mathcal{K}'}_{\mathcal{K}}(\C)$ is simply given by $\mathcal{P}(\C)$. In the case $\mathcal{K}$ is the empty collection and $\mathcal{K}'$ is the collection of all $\kappa$-small filtered simplicial sets (for some small cardinal $\kappa$), we obtain an equivalence $\mathcal{P}^{\mathcal{K}'}_{\mathcal{K}}(\C)\simeq Ind_{\kappa}(\C)$. Another important example is when $\mathcal{K}$ is again the empty collection and $\mathcal{K}'$ is the collection of $\kappa$-small simplicial sets. In this case we have a canonical equivalence $\mathcal{P}^{\mathcal{K}'}_{\mathcal{K}}(\C)\simeq \mathcal{P}(\C)^{\kappa}$. Following the fact that an $(\infty,1)$-category has all small colimits if and only it has $\kappa$-small colimits and $\kappa$-filtered colimits, we find a canonical equivalence between $\mathcal{P}(\C)$ and $Ind_{\kappa}(\mathcal{P}(\C)^{\kappa})$.\\

\subsubsection{Sifted Colimits and Geometric Realizations}
\label{siftedcolimits}
Following \cite[5.5.8.1]{lurie-htt}, a simplicial set $K$ is said to be \emph{sifted} if it is nonempty and if the diagonal map $K\to K\times K$ is cofinal. The main examples are given by filtered simplicial sets and by the simplicial set $N(\Delta)^{op}$ - the opposite of the nerve of the category $\Delta$ (see \cite[5.5.8.4]{lurie-htt}). \\

A simplicial object in an $(\infty,1)$-category $\C$ is, by definition, a diagram $N(\Delta)^{op}\to \C$. We say that $\C$ admits \emph{geometric realizations of simplicial objects} if every simplicial object in $\C$ has a colimit.\\

For a small $(\infty,1)$-category $\C$, we let $\mathcal{P}_{\Sigma}(\C)$ denote the formal completion of $\C$ under sifted colimits (as in the previous section). Thanks to \cite[5.5.8.14]{lurie-htt}, if $\C$ has finite coproducts, the formal completion $\mathcal{P}_{\Sigma}(\C)$ is equivalent to the completion of $Ind(\C)$ under geometric realizations of simplicial objects. Moreover, by the Corollary \cite[5.5.8.17]{lurie-htt}, if $\C$ has small colimits, a functor $\C\to \D$ commutes with sifted colimits if and only if it commutes with filtered colimits and geometric realizations.

\subsubsection{Accessibility}
\label{nc1accessibility}
 Sometimes an arbitrary $(\infty,1)$-category $\C$ is not small but it is completely determined by small information. Let $\kappa$ be a small regular cardinal. We say that a big\footnote{We can also define the notion of accessibility for the small $(\infty,1)$-categories. In this case, by the Corollary 5.4.3.6 of \cite{lurie-htt}, a small $(\infty,1)$-category is accessible iff it is idempotent complete.} $(\infty,1)$-category $\C$ is $\kappa$\emph{-accessible} if there exists a small $(\infty,1)$-category $\C^0$ together with an equivalence

\begin{equation}
Ind_{\kappa}(\C^0)\to \C
\end{equation}

By the Proposition 5.4.2.2 of \cite{lurie-htt} a big $(\infty,1)$-category is $\kappa$-accessible if and only if it is locally small, admits small $\kappa$-filtered  colimits, $\C^{\kappa}$ is essentially small and generates $\C$ under small $\kappa$-filtered colimits. In this case, by the Proposition 5.4.2.4 of loc.cit, $\C^{\kappa}$ is the idempotent completion of $\C^0$.

We say that a big $(\infty,1)$-category is \emph{accessible} if it is $\kappa$-accessible for some small regular cardinal $\kappa$. Given two small cardinals $\kappa< \kappa'$, a $\kappa$-accessible $(\infty,1)$-category is not necessarily $\kappa'$-accessible. However, by the Proposition 5.4.2.11 of \cite{lurie-htt}, this is the case if $\kappa'$ satisfies the following condition: for any cardinals $\tau<\kappa$ and $\pi<\kappa'$, we have $\pi^{\tau}<\kappa'$.\\

An important example of accessibility comes from the theory of presheaves: if $\C$ is a small $(\infty,1)$-category, $\mathcal{P}(\C)$ is accessible (see Proposition 5.3.5.12 of \cite{lurie-htt}).\\

The natural morphisms between the accessible $(\infty,1)$-categories are the functors $f:\C\to \D$ which are again determined by the small data. More precisely, if $\C=Ind_{\kappa}(\C^0$) and $\D=Ind_{\kappa}(\D^0)$ for the same cardinal $\kappa$, a functor $f$ is called $\kappa$-accessible if it preserves small $\kappa$-filtered colimits and sends $\kappa$-compact objects in $\C$ to $\kappa$-compact objects in $\D$. The crucial result is that the information of the restriction $f|_{\kappa}: \C^{\kappa}\to \D^{\kappa}$ determines $f$ in a essentially unique way (see Prop. 5.3.5.10 of \cite{lurie-htt}).

\subsubsection{Idempotent Complete $(\infty,1)$-categories}
\label{idempotentcompletecategories}
Let $\C$ be a classical category and let $X$ be an object in $\C$. A morphism $f:X\to X$ is said to be an idempotent if $f\circ f= f$. If we want to extend this notion to the setting of higher category theory, we need to specify a $2$-cell $\sigma$ rendering the diagram 

\begin{equation}
\xymatrix{ 
X \ar[d]_{f} \ar[dr]^{f}|*{}="N"&\\ 
X\ar[r]^f \ar@{=>};"N"^\sigma  & X                         }
\end{equation}

\noindent commutative. Moreover, we should be able to glue together different copies of $\sigma$ to built up a $3$-cell encoding the relation $f\circ f\circ f\simeq f$. This continues for every positive $n$. In \cite[Section 4.4.5]{lurie-htt} the author introduces a simplicial set $Idem$ suitable to encode all this kinds of coherences. It has a unique nondegenerate cell on each dimension $n\geq 0$. To give a diagram $Idem\to \C$ is equivalent to the data of an object $X \in \C$, together with a morphism $f:X\to X$ and all the expected coherences that make $f$ an idempotent.

Recall now that an object $Y\in \C$ is said to be a \emph{retract} of an object $X\in \C$ if the identity of $Y$ factors as a composition $Y\to X\to Y$. Every decomposition likes this provides a morphism $f:X\to Y\to X$ which by  \cite[4.4.5.7]{lurie-htt}, can be extended to a diagram $Idem\to \C$. It follows that if $d$ has a colimit in $\C$, this colimit is canonically equivalent to $Y$ \cite[4.4.5.14]{lurie-htt}. Following this, $\C$ is said to be \emph{idempotent complete} if every diagram $d:Idem\to \C$ has a colimit. In this case, there is a bijective correspondence between retracts and idempotents. In particular, every functor $\C\to \D$ between idempotent complete $(\infty,1)$-categories preserves colimits indexed by the simplicial set $Idem$, because the functoriality will send retracts to retracts.\footnote{We can rewrite this definition in more simpler terms. Since $\mathcal{P}(\C)$ has all colimits, we can easily see that an $(\infty,1)$-category $\C$ is idempotent complete if and only if the image of the Yoneda embedding $\C\to \mathcal{P}(\C)$ is stable under retracts.}\\

\begin{remark}
\label{kfiltrantimplyidempotent}
Since the simplicial set $Idem$ is not finite, the fact that an $(\infty,1)$-category $\C$ has all finite colimits does not imply that $\C$ is idempotent complete. However, even though $Idem$ is not filtrant, if $\kappa$ is a regular cardinal and $\C$ admits small $\kappa$-filtered colimits then $\C$ is idempotent complete \cite[4.4.5.16]{lurie-htt}.
\end{remark}

 We denote by $\iCat^{idem}$ the full subcategory of $\iCat$ spanned by the small $(\infty,1)$-categories which are idempotent. By the Proposition 5.1.4.2 of \cite{lurie-htt} every $(\infty,1)$-category $\C$ admits an idempotent completion $Idem(\C)$ given by the full subcategory of $\mathcal{P}(\C)$ spanned by the completely compact objects (which by the Prop. 5.1.6.8 of \cite{lurie-htt} are exactly the retracts of objects in the image of the Yoneda embedding). The formula $\C\mapsto Idem(\C)$ provides a left adjoint to the full inclusion 
 
\begin{equation}
\iCat^{idem}\subseteq \iCat
\end{equation}

Following the discussion in \ref{completionwithcolimits}, we can also identify $\iCat^{idem}$ with $\iCat(\mathcal{K})$ where $\mathcal{K}=\{Idem\}$. Moreover, we have a canonical equivalence of functors $\mathcal{P}^{\{Idem\}}(-)\simeq Idem(-)$. By the Lemma 5.4.2.4 of \cite{lurie-htt}, $Idem(\C)$ can also be identified with $Ind_{\kappa}(\C)^{\kappa}$, the full subcategory of $\kappa$-compact objects in $Ind_{\kappa}(\C)$, for any small regular cardinal $\kappa$.\\

Let now $\C$ be a small $(\infty,1)$-category and let $\C\to \C'$ be an idempotent completion of $\C$. Then, by the Lemma 5.5.1.3 of \cite{lurie-htt}, for any regular cardinal $\kappa$, the induced morphism $Ind_{\kappa}(\C)\to Ind_{\kappa}(\C')$ is an equivalence of $(\infty,1)$-categories. Thus, if $\C$ is a $\kappa$-accessible $(\infty,1)$-category, with $\C\simeq Ind_{\kappa}(\C_0)$ for some small $(\infty,1)$-category $\C_0$ and some regular cardinal $\kappa$, then, since $\C_0\to Ind_{\kappa}(\C_0)^{\kappa}\simeq \C^{\kappa}$ is an idempotent completion of $\C$, the canonical morphism $Ind_{\kappa}(\C^{\kappa})\to \C$ is an equivalence. The converse is immediate by definition.

\subsubsection{Presentable $(\infty,1)$-categories}

We say that an $(\infty,1)$-category $\C$ is \emph{presentable} if it is accessible and admits all colimits indexed by small simplicial sets. Again, we have a good criterium to understand if an $(\infty,1)$-category $\C$ is presentable. By the  Theorem 5.5.1.1 of \cite{lurie-htt}), the following are equivalent: $(i)$ $\C$ is presentable;  $(ii)$ there exists a small $(\infty,1)$-category $\D$ such that $\C$ is an accessible reflexive localization of $\mathcal{P}(\D)$\footnote{The reflexive localization $\C\subseteq \mathcal{P}(\D)$ is accessible if $\C$ is accessible for some cardinal. Using the universal property of the ind-completion (see the Proposition 5.5.1.2 in \cite{lurie-htt}) this is equivalent to ask for the composition $\mathcal{P}(\D)\to \C\subseteq \mathcal{P}(\D)$ to be $\kappa$-accessible for some small regular cardinal $\kappa$}; $(iii)$ $\C$ is locally small, admits small colimits and there exists a small regular cardinal $\kappa$ and a small $S$ set of $\kappa$-compact objects in $\C$ such that every object of $\C$ is a colimit of a small diagram with values in the full subcategory of $\C$ spanned by $S$.

The natural morphisms between the presentable $(\infty,1)$-categories are the colimit preserving functors. We let $\Prl$ (resp. $\mathcal{P}r^R$) denote the (non full!) subcategory of $\iCatbig$ spanned by \emph{presentable} $(\infty,1)$-categories together with colimit (resp. limit) preserving functors. As $\iCatbig$, $\Prl$ is only a $\uniW$-small $(\infty,1)$-category. By the \emph{ Adjoint Functor Theorem} (see Corollary 5.5.2.9 of \cite{lurie-htt}) a functor between presentable $(\infty,1)$-categories commutes with colimits (resp. limits) if and only if it admits a right (resp. left) adjoint and therefore we have a canonical equivalence $\Prl\simeq (\mathcal{P}r^R)^{op}$. By the Propositions 5.5.3.13 and 5.5.3.18 of \cite{lurie-htt} we know that both $\Prl$ and $\mathcal{P}r^R$ admit all small limits and the inclusions $\Prl, \mathcal{P}r^R\subseteq \iCatbig$ preserve them. In particular, colimits in $\Prl$ are computed as limits in $\mathcal{P}r^R$ using the natural equivalence $\Prl\simeq (\mathcal{P}r^R)^{\circ}$.\\

\subsubsection{$\kappa$-compactly generated $(\infty,1)$-categories}
\label{compactlygenerated1}
Although each presentable $(\infty,1)$-category is determined by small information, not all the information in the study of $\Prl$ is determined by small data. This is mainly because the morphisms in $\Prl$ are all kinds of colimit preserving functors without necessarily a compatibility condition with the small information. Again, as in the accessible setting, if we want to isolate what is determined by small information, we consider for each small regular cardinal $\kappa$, the (non-full!) subcategory $\Prl_{\kappa}\subseteq \Prl$ spanned by the presentable $\kappa$-accessible $(\infty,1)$-categories together with the colimit preserving functors that preserve $\kappa$-compact objects. By definition, we will say that an $(\infty,1)$-category is $\kappa$-compactly generated if it is an object of $\Prl_{\kappa}$. The idea that $\kappa$-compactly generated $(\infty,1)$-categories are determined by smaller information can now be made precise: by the Propositions \cite[5.5.7.8, 5.5.7.10]{lurie-htt}, the correspondence $\C\mapsto \C^{\kappa}$ sending a $\kappa$-compactly generated $(\infty,1)$-category to the full subcategory $\C^{\kappa}\subseteq \C$ spanned by the $\kappa$-compact objects, determines a fully faithful map of $(\infty,1)$-categories $\Prl_{\kappa}\to \iCatbig(\kappa)$ whose image is the full subcategory $\Cat_{\infty}(\kappa)^{idem}$ of $\iCatbig(\kappa)$ spanned by those big $(\infty,1)$-categories $\C$ with all $\kappa$-small colimits, which are essentially small and idempotent complete. Following the discussion in \ref{idempotentcompletecategories}, $\Cat_{\infty}(\kappa)^{idem}$ can be identified with $\Cat_{\infty}(\mathcal{K})$ with $\mathcal{K}$ the collection of all $\kappa$-small simplicial sets together with the simplicial set $Idem$. The construction $Ind_{\kappa}: \Cat_{\infty}(\mathcal{K})\to \Prl_{\kappa}$ provides an inverse to this map. \footnote{See also the Proposition \cite[6.3.7.9]{lurie-ha} for a direct proof of this result}. Moreover, and following the discussion in \ref{kfiltrantimplyidempotent}, in case $\kappa\geq \omega$ we can drop the idempotent considerations because the full inclusion $\Cat_{\infty}(\kappa)^{idem}\subseteq \Cat_{\infty}(\kappa)$ is an equivalence.

We will be particularly interested in $\Prl_{\omega}$, the study of the presentable $(\infty,1)$-categories of the form $Ind(\C_0)$ with $\C_0$ having all finite colimits. These are called \emph{compactly generated}.

\subsubsection{Localizations of Presentable $(\infty,1)$-categories}
The theory of presentable $(\infty,1)$-categories admits a very friendly internal theory of localizations. By the Proposition 5.5.4.15 of \cite{lurie-htt}, if $\C$ is a presentable $(\infty,1)$-category and $W$ is strongly satured class of morphisms in $\C$ generated by a set $S$ (as in \cite[5.5.4.5]{lurie-htt}), then the localization $\C[W^{-1}]$ is again a presentable $(\infty,1)$-category equivalent to the full subcategory of $\C$ spanned by the $S$-local objects and the localization map is a left adjoint to this inclusion;

\subsubsection{Postnikov Towers}
\label{nc1postnikovtowers}
 Recall that a space $X\in \Spaces$ is said to be $n$-truncated if the homotopy groups $\pi_i(X,x)$ are all trivial for $i\geq n$. It is said to be $n$-connective if all the homotopy groups $\pi_i(X,x)$ are trivial for $i< n$. If $\C$ is an $(\infty,1)$-category we say that an object $X\in \C$ is $n$-truncated if for every object $Y$ in $\C$ the mapping spaces $Map_{\C}(Y,X)$ are $n$-truncated. This notion agrees with the previous definition when $\C=\Spaces$. Let $\tau_{\leq n}\C$ denote the full subcategory of $\C$ spanned by the $n$-truncated objects. A morphism $f:X\to X'$ in $\C$ is said to exhibit $X'$ as an $n$-truncation of $X$ if for every $n$-truncated object $Y$ in $\C$ the composition with $f$ induces an homotopy equivalence $Map_{\C}(Y,X)\simeq Map_{\C}(Y,X')$. By definition a \emph{Postnikov tower} in $\C$ is a diagram $X:(N(\mathbb{Z}_{\geq 0})^{op})^{\triangleleft}\to \C$ such that for every $n\leq m$ the map $X_{\ast}\to X_n$ exhibits $X_n$ as an $n$-truncation of $X_{\ast}$. In particular, this implies that $X_n\to X_m$ exhibits $X_m$ as a $m$-truncation of $X_n$. We say that Postnikov towers converge in $\C$ if the forgetful map $Fun((N(\mathbb{Z}_{\geq 0})^{op})^{\triangleleft},\C)\to Fun(N(\mathbb{Z}_{\geq 0}),\C)$ induces an equivalence when restrict to the full subcategory spanned by the Postnikov towers. In particular, if $\C$ admits all limits, Postnikov towers converge in $\C$ if and only if every Postnikov tower is a limit diagram \cite[5.5.6.26]{lurie-htt}.\\

If $\C$ is presentable, the inclusions $\tau_{\leq n}\C\subseteq \C$ admits a left adjoint for every $n\geq 0$. This follows from the Adjoint functor theorem together with the fact that this inclusion commutes with all limits \cite[5.5.6.5]{lurie-htt}. In this case, we can find a sequence of functors

\begin{equation}
... \to \tau_{\leq 2}\C\to \tau_{\leq 1}\C\to \tau_{\leq 0}\C
\end{equation}

\noindent and Postnikov towers converge in $\C$ if and only if $\C$ is the limit of this sequence \cite[3.3.3.1]{lurie-htt}.

\subsubsection{Stable $(\infty,1)$-categories}
\label{stableinfinitycategories}
 We now discuss another important topic. In the classical setting, the notion of \emph{triangulated category} seems to be of fundamental importance. \emph{Stable $\infty$-categories} are the proper providers of triangulated structures - for any stable $\infty$-category $\C$ the homotopy category $h(\C)$ carries a natural triangulated structure, where the exact triangles rise from the fiber sequences and the shift functor is given by the suspension (see \cite[1.1.2.14]{lurie-ha}). Most of the triangulated categories are of this form. Grosso modo, a stable $\infty$-category is an $\infty$-category with a zero object, finite limits and colimits, satisfying the stronger condition that every pushout square is a pullback square and vice-versa (see Definition 1.1.1.9 and Prop. 1.1.3.4  \cite{lurie-ha}). In particular this implies that finite sums are equivalent to finite products \cite[1.1.2.9]{lurie-ha}. If $\C$ is a pointed $(\infty,1)$-category with finite colimits, one equivalent way to formulate the notion of stability is to ask for the suspension functor $X\mapsto \Sigma(X):=*\coprod_X *$ and its adjoint $Y\mapsto \Omega(Y):=*\times_X *$ to form an equivalence $\C\to \C$ (see \cite{lurie-ha}-Cor. 1.4.2.27). It is important to remark that stability is a property rather than an additional structure. The canonical example of a stable $(\infty,1)$-category is the $(\infty,1)$-category of spectra $\Sp$ which can be defined as the underlying $(\infty,1)$-category encoding the homotopy theory of $\infty$-loop spaces. It admits many other equivalent definitions (see \cite{lurie-ha}-Section 1.4.3). The appropriate maps between stable $\infty$-categories are the functors commuting with finite limits (or equivalently, with finite colimits - see Prop. 1.1.4.1 of \cite{lurie-ha}). The collection of small stable $\infty$-categories together with these functors (so called \emph{exact}) can be organized in a new $\infty$-category $\iCatstable$. Thanks to \cite[1.1.4.4]{lurie-ha} $\iCatstable$ has all small limits and the inclusion in $\iCat$ preserves them. Moreover, if $K$ is a simplicial set and $\C$ is stable then $Fun(K,\C)$ remains stable \cite[1.1.3.1]{lurie-ha}. \\

 Also important  is that any stable $(\infty,1)$-category  $\C$ comes with a natural \emph{enrichment over spectra}. More precisely the mapping spaces $Map_{\C}(X,Y)$ have a natural structure of an $\infty$-loop space. To see this we can use the fact the suspension and loop functors in $\C$ are equivalences, so that we can find a new object $X'$ with $X\simeq \Sigma (X')$ so that $Map_{\C}(X,Y)\simeq Map_{\C}(\Sigma(X'), Y)\simeq \Omega Map_{\C}(X,Y)$. Another way to make this precise is to use a universal property of the stabilization which tells us that the  composition with $\Omega^{\infty}:\Sp\to \Spaces$ induces an equivalence of $(\infty,1)$-categories $Exc_{\ast}(\C, \Sp)\simeq Exc_{\ast}(\C, \Spaces)$ ( see \cite[1.4.2.22]{lurie-ha}). In particular, this provides for  any object $X$  an essentially unique factorization of the functor $Map_{\C}(X,-):\C\to \Spaces$ as

\begin{equation}
\xymatrix{
\C\ar[rr]^{Map_{\C}(X,-)}\ar@{-->}[d]_{Map_{\C}^{Sp}(X,-)}&& \Spaces\\
\Sp \ar[urr]_{\Omega^{\infty}}&
}
\end{equation}

\noindent such that for any object $Y$,  the spectra $Map_{\C}^{Sp}(X,Y)$ can be identified with the collection of spaces  $\{Map_{\C}(X, \Sigma^n Y)\}_{n\in \mathbb{Z}}$. Moreover, and since $\Omega$ is an equivalence, it is equivalent to the family $\{Map_{\C}(\Omega^n X,Y)\}_{n\in \mathbb{Z}}$. The \emph{Ext groups} $Ext_{\C}^{i}(X,Y)$ are defined as $\pi_0(Map_{\C}(\Omega^n X,Y))$. If $i\leq 0$ these groups correspond to the homotopy groups of the mapping space $Map_{\C}(X,Y)$.\\

We can now isolate the full subcategory $\Prl_{Stb}$ of $\Prl$ spanned by those presentable $(\infty,1)$-categories which are stable (every morphism of presentable $(\infty,1)$-categories which are stable is exact). Again by \cite[1.1.4.4]{lurie-ha} and the results in the presentable setting, $\Prl_{Stb}$ has all small limits and the inclusion $\Prl_{Stb}\subseteq \Prl$ preserves them. 

We discuss now an adapted version of the Proposition \cite[1.4.4.2]{lurie-ha} that provides a very helpful characterization of presentable stable $(\infty,1)$-categories. First we introduce some terminology. Let $\C$ be an $(\infty,1)$-category with a zero object. We say that a collection $\mathcal{E}$ of objects in $\C$ \emph{generates $\C$} if the full subcategory $\mathcal{E}^{\bot}\subseteq \C$ of all objects $A$ in $\C$ such that $Map_{\C}(E,A)=\ast$ consists only of zero objects in $\C$. Let now $\kappa$ be a regular cardinal. We say that $\mathcal{E}$ is a \emph{family of $\kappa$-compact generators of $\C$ } if $\mathcal{E}$ generates $\C$ in the previous sense and each object $E\in \mathcal{E}$ is $\kappa$-compact. In particular, we will say that an object $X$ in $\C$ is a \emph{$\kappa$-compact generator of $\C$} if the family $\mathcal{E}=\{X\}$ is a family of $\kappa$-compact generators of $\C$. 

\begin{prop}
\label{cg}
Let $\C$ be a stable $(\infty,1)$-category. Then, $\C$ is presentable if and only if the following conditions are satisfied:
\begin{enumerate}[(i)]
\item $\C$ has arbitrary small coproducts \footnote{Since $\C$ is stable this is equivalent to ask for all small colimits};
\item the triangulated category $h(\C)$ is locally small; 
\item there exists a regular cardinal $\kappa$ and a family $\mathcal{E}$ (indexed by a small set) of $\kappa$-compact generators in $\C$ .

\end{enumerate}

Moreover, if $\C$ admits a family of  $\kappa$-compact generators then it is $\kappa$-compactly generated in the sense of \ref{compactlygenerated1}. 

\begin{proof}

We follow essentially the same arguments of \cite[1.4.4.2]{lurie-ha}.  For the "only if" part, by definition, there is a small $(\infty,1)$-category $\D$ and a regular cardinal $\tau$, together with an equivalence $\C\simeq Ind_{\tau}(\D)$. The formal completion of $\D$ with $\tau$-small colimits is given by $\D\to \D'=\mathcal{P}(\D)^{\tau}$. Passing to the ind-completions we obtain a map

\begin{equation}
\C\simeq Ind_{\kappa}(\D)\to Ind_{\kappa}(\mathcal{P}(\D)^{\kappa})\simeq \mathcal{P}(\D)
\end{equation}

\noindent commuting with $\tau$-filtered colimits. From the proof of \cite[5.5.1.1]{lurie-htt} we know that this map  has a left adjoint $L$ that establishes $\C$ as $\tau$-accessible reflexive localization of $\mathcal{P}(\D)$. The items $(i)$ and $(ii)$ follow immediately from this. Moreover, the composition functor

\begin{equation}
\label{mafic}
\mathcal{P}(\D)\to \C\subseteq \mathcal{P}(\D)
\end{equation}

\noindent preserves $\tau$-filtered colimits.  To prove $(iii)$ we consider the family $\mathcal{E}$ of all objects of the form $L(j(d))$  in $\C$ with $j$ the Yoneda's embedding $j:\D\to \mathcal{P}(\D)$ and $d\in \D$. It follows immediately from the Yoneda's lemma, from the fact the composition (\ref{mafic}) is $\tau$-accessible and from the fact the right adjoint of $L$ is fully-faithful that $\mathcal{E}$ is a family of $\tau$-compact generators in $\C$. The family is indexed by a small set because $\D$ is small.

For the "if" part, we consider the full subcategory $\C_{\mathcal{E}}$ of $\C$ spanned by the objects in $\mathcal{E}$, their suspensions and loopings. Inductively, we consider the successive closures under $\kappa$-small colimits. As a result we find a full subcategory $\C_{\mathcal{E}}(\kappa)$ of $\C$ closed under $\kappa$-small colimits, suspensions and loopings. Since each $E\in \mathcal{E}$ is $\kappa$-compact and the suspensions of $\kappa$-compact objects are again $\kappa$-compact \footnote{If $I$ is a $\kappa$-filtered simplicial set and $d:I\to \C$ is a diagram, we have $Map(\Sigma X, colim_I d_i)\simeq Map(X, \Omega (colim_k d))$ and since $\C$ is stable (which implies that $\Omega$ is an equivalence and therefore commutes with colimits) and $X$ is $\kappa$-compact, we find that the last space is homotopy equivalent to $Map(X, colim_I(\Omega(d_i))\simeq colim_I Map(X,\Omega(d_i))\simeq colim_I Map(\Sigma X, d_i)$} and $\kappa$-compact objects are closed under $\kappa$-small colimits, we find that $\C_{\mathcal{E}}(\kappa)$ is made of $\kappa$-compact objects and closed under $\kappa$-small colimits. It follows that the inclusion $\C_{\mathcal{E}}(\kappa)\subseteq \C$ extends to a fully-faithfull functor $F:Ind_{\kappa}(\C_{\mathcal{E}}(\kappa))\to \C$ that commutes with $\kappa$-filtrant colimits. Since $Ind_{\kappa}(\C_{\mathcal{E}}(\kappa))$ has all $\kappa$-small colimits and all $\kappa$-filtrant colimits, it has all colimits and $F$ commutes with all colimits. By the hypothesis (ii) and the Remark \cite[5.5.2.10]{lurie-htt} $F$ has a right adjoint $G$ and the fully-faithfulness implies $G\circ F\simeq Id$. We are reduce to show that for every $Y\in \C$, the adjunction map $F\circ G(Y)\to Y$ is an equivalence. For that, we consider its fiber $Z$. Since $F$ is fully-faithful, and $G$ preserves limits, we have $G(Z)\simeq \ast$ and by adjunction we find that for every object $D\in Ind_{\kappa}(\C_{\mathcal{E}}(\kappa))$ we have have $Map(F(D),Z)\simeq Map(D,G(Z))\simeq \ast$. In particular, the formula holds for any $D=E\in \mathcal{E}$ and by the definition of generating family we find that $Z$ is a zero object in $\C$ so that the counit map is an equivalence. In particular, $\C$ is a stable $\kappa$-compactly generated $(\infty,1)$-category. In the case  $\mathcal{E}$ is a family of $\omega$-compact generators, $\C$ is compactly generated and its full subcategory of compact objects is equivalent to $Idem(\C_{\mathcal{E}}(\kappa))$.
\end{proof}
\end{prop}

\begin{remark}
\label{presenseofNeeman}
The condition $(iii)$ in the Proposition \ref{cg} is equivalent to the existence of an $\alpha$-compact generator for some regular cardinal $\alpha$, not necessarily the same as $\kappa$. For the if direction, by definition, if $\C$ has a $\kappa$-compact generator, then it provides a $\kappa$-generating family with a single element. Conversely, if $\mathcal{E}=\{E_i\}_{i\in I}$ is a  $\kappa$-generating family with multiple objects, by the hypothesis $(i)$, the sum $\coprod_{i\in I}E_i$ exists in $\C$ and is an $\alpha$-compact generator of $\C$ for some $\alpha$ a regular cardinal  (let $\gamma= max\{\kappa, card(I)\}$ and choose $\alpha$ satifying the condition described in \ref{nc1accessibility}).
\end{remark}

\begin{remark}
\label{senseofNeeman}

The statement given  in \cite[1.4.4.2]{lurie-ha} is somewhat different from ours, namely because the notion of compact generator therein is stronger. More precisely, an object $X$ there is said to be a $\kappa$-compact generator if it is $\kappa$-compact and such that for any $Y\in \C$, if $\pi_0(Map(X,Y))=0$ then $Y$ is a zero object. Of course, if $X$ verifies this condition, the family $\mathcal{E}=\{X\}$ verifies our condition $(iii)$. However, the converse is not necessarily true for the same $X$ and the same cardinal. If $X$ is a $\kappa$-generator in our sense, then the infinite coproduct $\coprod_{n\in \mathbb{Z}} \Sigma^n X$ is a generator in the sense of \cite[1.4.4.2]{lurie-ha} but a priori it will only be $\kappa'$-compact for some cardinal $\kappa'\geq \kappa$. 

Our version is needed to match the familiar results coming from the classical theory of compact generators in triangulated categories. Following Neeman \cite{Neeman-triangulatedcategories}, we recall that an object $X$ in a triangulated category $T$ is said to be \emph{compact} if it commutes with infinite coproducts. Moreover, a collection of objects $\mathcal{E}$ in $T$ is said to \emph{generate} $T$ if its right-orthogonal complement $\mathcal{E}^{\bot}:=\{ A \in Ob(T): Hom_T(E[n],A)=0 ,\forall n\in \mathbb{Z}, \forall E\in \mathcal{E}\}$ consists only of zero objects in $T$. Again, $\mathcal{E}$ is said to be a \emph{family of compact generators of $T$ in the sense of Neeman} if it generates $T$ and each  $E\in \mathcal{E}$ is compact in the sense of triangulated categories. Finally, an object $X$ is said to be a \emph{compact generator of $T$} if it is compact and the set $\mathcal{E}=\{X\}$ generates $T$.

Let now $\C$ be a stable $(\infty,1)$-category and let $\mathcal{E}$ be a collection of objects in $\C$. It follows that $\mathcal{E}$ is a family of compact generators of $h(\C)$ in the sense of Neeman if and only it satisfies the condition $(iii)$ for $\kappa=\omega$. Indeed, the two notions of generator agree because $\pi_0Map_{\C}(\Sigma^n E, A)\simeq \pi_nMap_{\C}(E,A)$. Therefore, it is enough to see that an object $X$ is compact in the triangulated category $h(\C)$ if and only it is $\omega$-compact in $\C$. This follows from \cite[1.4.4.1-(3)]{lurie-ha} and from the fact that coproducts in $\C$ are the same as coproducts in $h(\C)$: if $\{X_i\}_{i\in I}$ is a collection of objects in $\C$, its coproduct $\coprod_i X_i$ in $\C$ is a coproduct in $h(\C)$  because the functor $\pi_0$ commutes with homotopy products; conversely, if $\coprod_i X_i$ is a coproduct in $\C$, by definition, this means that $\pi_0Map_{\C}(\coprod_i X_i, Z)\simeq \oplus_i \pi_{0}Map_{\C}(X_i,Z)$ holds for any $Z\in h(\C)$. In particular, this holds for all the loopings $\Omega^{n}Z$ so that the formula holds for all $\pi_n$ and $\coprod_i X_i$ is a coproduct in $\C$.
\end{remark}

This characterization allows us to detect the property of a stable $(\infty,1)$-category being compactly generated simply by studying its homotopy category. The following example is crucial to algebraic geometry and will play a fundamental role in the last part of our paper:

\begin{example}
\label{derivedinfinitycategoryofascheme}
Let $X$ be a quasi-compact and separated scheme. Its underlying $(\infty,1)$-category $\mathcal{D}(X)$ (see below) is stable  and its homotopy category  $h(\mathcal{D}(X))$ recovers the classical derived category of $X$. As proved in the Corollary 5.5 of \cite{Neeman-HomotopyLimitsTriangulated}, when $X$ is quasi-compact and separated, $h(\mathcal{D}(X))$ is equivalent to the derived category of $\mathcal{O}_X$-modules with quasi-coherent cohomology sheaves. Together with the Theorem 3.1.1 of \cite{bondal-vandenbergh}, this tells us that $\mathcal{D}(X)$ has a compact generator in the sense of Neeman and that the compact objects are the perfect complexes. Thus, by the previous discussion, $\mathcal{D}(X)$ is a stable compactly generated $(\infty,1)$-category. 
\end{example}

 We will write $\Prl_{\omega,Stb}$ to denote the full subcategory of $\Prl_{\omega}$ spanned by the stable $(\infty,1)$-categories that are compactly generated, together with the compact preserving morphisms. The equivalence $\Prl_{\omega}\to \iCat(\omega)^{idem}$ of \ref{compactlygenerated1} restricts to an equivalence $\Prl_{\omega, Stb}\to \iCatstableidem$ where the last denotes the (non full) subcategory of $\iCat^{idem}$ spanned by the small stable $\infty$-categories which are idempotent complete, together with the exact functors. This follows from the fact that the idempotent completion of a stable $(\infty,1)$-category remains stable \cite[1.1.3.7]{lurie-ha}, together with the observation that stable $(\infty,1)$-categories have all finite colimits and that exact functors preserve them.\\ 
 
\begin{remark}
In \cite{tabuada-gepner} the authors identify the subject of \emph{Topological Morita theory} with the study of the $(\infty,1)$-category $\Prl_{\omega, Stb}$. We will come back to this in \ref{linkdgspectrastable}.\\
\end{remark} 

To conclude this section, we give a useful result that will be necessary for many of the future applications we have in mind:

\begin{prop}
\label{equivalencecompactgenerators}
Let $f:\C\to \D$ be colimit preserving functor between stable presentable $(\infty,1)$-categories. Assume that
\begin{enumerate}[(i)]
\item The $(\infty,1)$-category $\C$ has a family of $\omega$-compact generators $\mathcal{E}$ in the sense of the Proposition \ref{cg}  (here we assume, without loss of generality, that $\mathcal{E}$ is closed under suspensions and loopings\footnote{We can always assume this because, as discussed in the previous footnote, suspensions of compact objects are compact.}) and $f$ is fully-faithful when restricted to the objects in this collection;
\item for any object $E\in \mathcal{E}$, the object $f(E)$ is $\omega$-compact in $\D$;
\end{enumerate} 

Then, $f$ is fully-faithful. Moreover, if the image of the collection $\mathcal{E}$ in $\D$ is a family of $\omega$-compact generators, then $f$ is an equivalence.

\begin{proof}
To start with, we observe that to assume $\mathcal{E}$ to be closed under suspensions and loopings and $f$ to be fully-faithful when restricted to the objects in $\mathcal{E}$ produces the same effects as droping the condition of stability under suspensions and loopings and asking for the naturally induced maps of spectra

\begin{equation}
Map^{Sp}_{\C}(X,Y)\to Map^{Sp}_{\D}(f(X), f(Y))
\end{equation} 

\noindent to be equivalences in $\Sp$ for any $X$ and $Y$ in $\mathcal{E}$.

Let us now explain the proof. Using the same notations of the Proposition \ref{cg},  we have $\C\simeq Ind(\C_{\mathcal{E}}(\omega))$. To deduce fully-faithfulness we prove that the restriction of $f$ to $\C_{\mathcal{E}}(\omega)$ is fully-faithful so that, by the hypothesis $(ii)$ together with \cite[5.3.5.11]{lurie-htt} we conclude that $f$ is fully-faithful. To see this, it is enough to check that $f$ is fully-faithful when restricted to each one of the subcategories in the inductive construction of $\C_{\mathcal{E}}(\omega)$ (see the proof in \cite[1.4.4.2]{lurie-ha} for the precise inductive step). Using induction, and since $\C$ is stable, it is enough to check that $f$ is fully-faithful when restricted to finite direct sums and cofibers of objects in the collection $\mathcal{E}$. For direct  sums this is immediate. Suppose now we have a cofiber sequence $X\to Y\to Z$ in $\C$ with $X$ and $Y$ in $\mathcal{E}$ and let $A$ be another object in $\mathcal{E}$. In this case, and since the functors $Map^{Sp}_{\C}(A,-)$ are exact by construction, we obtain a cofiber sequence in $\Sp$

\begin{equation}
Map^{\Sp}_{\C}(A,X)\to Map^{\Sp}_{\C}(A,Y)\to Map^{\Sp}_{\C}(A,Z)
\end{equation} 

Since $f$ commutes with small colimits, the induced sequence $f(X)\to f(Y)\to f(Z)$ is a cofiber sequence and we get a canonical diagram of cofiber sequences in $\Sp$

\begin{equation}
\xymatrix{
Map^{\Sp}_{\C}(A,X)\ar[r]\ar[d]^{\sim}& \ar[r]\ar[d]^{\sim}Map^{\Sp}_{\C}(A,Y)&\ar[d] Map^{\Sp}_{\C}(A,Z)\\
Map^{\Sp}_{\D}(f(A),f(X))\ar[r]& \ar[r]Map^{\Sp}_{\D}(f(A),f(Y))&Map^{\Sp}_{\D}(f(A),f(Z))
}
\end{equation}

\noindent where the two first vertical maps are equivalences by hypothesis. We conclude the vertical map on the right is also an equivalence. Finally, for any other cofiber sequence $U\to V\to W$ in $\C$, we conclude using the universal property of the cofiber that $Map^{\Sp}_{\C}(f(W),f(Z))\simeq Map^{\Sp}_{\C}(W,Z)$. \\

To conclude the additional statement we use the definition of generating familiy and the consequences of the Prop. \ref{cg} to reduce everything  to prove that the induced restriction  $\C_{\mathcal{E}}(\omega)\to\D_{f(\mathcal{E})}(\omega)$ is an equivalence. This follows because $f$ commutes with colimits.

\end{proof}

\end{prop}
 
\subsubsection{Localizations of Stable $(\infty,1)$-categories and Exact Sequences}
\label{exactsequencepresentable}

Our goal in this section is to prove the Proposition \ref{bondalcontext} below. Let us start by reviewing some standard terminology for triangulated categories. Let $C$ be triangulated category and let $A$ be a triangulated subcategory. We say that $A$ is thick in $C$ (also said \emph{epaisse}), if it is closed under direct summands. Moreover generally, we say that a triangulated functor $A\to C$ is cofinal if the image of $A$ is thick in $C$. Recall also that a sequence of triangulated categories $A\to C\to D$ is said to be exact if the composition is zero, the first map is fully-faithful and the inclusion from the Verdier quotient $C/A\hookrightarrow D$ is cofinal, meaning that every object in $D$ is a direct summand of an object in $B/A$.\\
 
Following \cite{tabuada-gepner}, we say that a sequence in $\Prl_{Stb}$ 

\begin{equation}
\mathcal{A}\to \C\to \D
\end{equation}

\noindent is exact if the composition is zero, the first map is fully-faithful and the diagram

\begin{equation}
\xymatrix{
\mathcal{A} \ar[d]\ar@{^{(}->}[r]& \C\ar[d]\\
\ast \ar[r] &\D
}
\end{equation}

\noindent is a pushout. Here we denote by $\ast$ the final object in $\Prl_{Stb}$. As proved in \cite[Prop. 4.5, Prop. 4.6]{tabuada-gepner}, this notion of exact sequence can be reformulated using the language of localizations: if $\phi:\mathcal{A}\hookrightarrow \C$ is a fully-faithful functor, the cofiber of $\phi$ can be identified with the accessible reflexive localization  

\begin{equation}
\xymatrix{
\D\ar@{^{(}->}[r]& \ar@/_1pc/[l]\C
}
\end{equation}

\noindent with local equivalences given by the class of edges $f$ in $\C$ with cofiber in the essential image of $\phi$. In particular, an object $x \in \C$ is in $\D$ if and only if for every object $a\in \mathcal{A}$ we have $Map_{\C}(a,x)\simeq \ast$.

\begin{remark}
\label{compactgeneratorisenough}
Let $\mathcal{A}\hookrightarrow \C\to \D$ be an exact sequence of presentable stable $(\infty,1)$-categories as above. If the homotopy category $h(\A)$ has a compact generator in the sense of Neeman, say $k\in \mathcal{A}$, then for an object $x \in \C$ to be in $\D$ it is enough to have $Map_{\C}(k,a)\simeq \ast$. This follows from the arguments in the proof of tne Proposition \ref{cg}: every object in $\mathcal{A}$ can be obtained as a colimit of suspensions of $k$.
\end{remark}

Thanks to \cite[Prop. 5.9]{tabuada-gepner} and to the arguments in the proof of \cite[Prop. 5.13]{tabuada-gepner}, this notion of exact sequence extends the notion given by Verdier in \cite{Verdier}: a sequence $\mathcal{A}\hookrightarrow \C\to \D$ in $\Prl$ is exact if and only if the sequence of triangulated functors $h(\mathcal{A})\hookrightarrow h(\C)\to h(\D)$ is exact sequence in the classical sense and the inclusion $h(\C)/h(\mathcal{A})\hookrightarrow h(\D)$ is an equivalence of triangulated categories. In the compactly generated case we have the following

\begin{prop}
\label{gepnerexact}Let $\mathcal{A}\hookrightarrow \C\to \D$  be a sequence in $\Prl_{\omega,Stb}$. The following are equivalent:
\begin{enumerate}
\item the sequence is exact;
\item the induced sequence of triangulated functors $h(\mathcal{A})\hookrightarrow h(\C)\to h(\D)$ is exact in the classical sense and the inclusion  $h(\C)/h(\mathcal{A})\hookrightarrow h(\D)$ is an equivalence;
\item the sequence of triangulated functors induced between the homotopy categories of the associated stable subcategories of compact objects $h(\mathcal{A}^{\omega})\hookrightarrow h(\C^{\omega})\to h(\D^{\omega})$ is exact in the classical sense.
\end{enumerate}
\begin{proof}

The equivalence between $1)$ and $2)$ follows from the results of \cite{tabuada-gepner} discussed above. The equivalence between $2)$ and $3)$ follows from the results of B.Keller \cite[Section 4.12, Corollary]{keller-exact} and the fact that for any compactly generated stable $(\infty,1)$-category $\C$ we can identify $h(\C^{\omega})$ with the full subcategory of compact objects (in the sense of Neeman) in $h(\C)$ (see \ref{senseofNeeman}).

\end{proof}
\end{prop}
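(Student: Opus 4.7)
The plan is to prove the two equivalences $(1)\Leftrightarrow(2)$ and $(2)\Leftrightarrow(3)$ separately, assembling them from results already summarized in Section \ref{exactsequencepresentable}.

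For $(1)\Leftrightarrow(2)$, I would start from the reformulation recalled just before the Remark \ref{compactgeneratorisenough}: a sequence $\mathcal{A}\hookrightarrow \C\to \D$ in $\Prl_{Stb}$ is exact precisely when $\D$ is realized as the accessible reflexive localization of $\C$ inverting the morphisms whose cofiber lies in the essential image of $\mathcal{A}$. This is the $\infty$-categorical avatar of the classical Verdier quotient. The desired equivalence with $(2)$ is then provided directly by the results of Tabuada and Gepner cited above, namely \cite[Prop. 4.8]{tabuada-gepner} together with the arguments in the proof of \cite[Prop. 4.12]{tabuada-gepner}: passing to homotopy categories turns an $\infty$-categorical exact sequence into a Verdier exact sequence with $h(\C)/h(\mathcal{A})\hookrightarrow h(\D)$ an equivalence; conversely, the localization data of $(2)$ lifts uniquely to a localization of $\C$ in $\Prl_{Stb}$ with cofiber $\D$.

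For $(2)\Leftrightarrow(3)$, the bridge is the identification of $h(\C^{\omega})$ with the full subcategory of compact objects of $h(\C)$ in the sense of Neeman, established in Remark \ref{senseofNeeman}, and the analogous identifications for $\mathcal{A}$ and $\D$. Under this translation, I would invoke the classical theorem of B.~Keller \cite[Section 4.12, Corollary]{keller-exact}: for compactly generated triangulated categories whose comparison functors preserve arbitrary coproducts, Verdier exactness of the ambient sequence together with the cofinal embedding of the Verdier quotient is equivalent to Verdier exactness of the induced sequence on the full subcategories of compact objects. Applying this in both directions gives $(2)\Leftrightarrow(3)$ immediately.

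The main obstacle I anticipate is matching the subtle cofinality and idempotent-completion conditions on the two sides. The $\infty$-categorical pushout $\C/\mathcal{A}$ computed in $\Prl_{Stb}$ is automatically well-behaved with respect to idempotents, whereas the naive Verdier quotient $h(\C)/h(\mathcal{A})$ typically fails to be Karoubi-closed and only embeds into $h(\D)$ cofinally. This mismatch is precisely why condition $(2)$ must demand an equivalence rather than mere exactness, and why Keller's theorem has to be stated at the level of the compact subcategories rather than as a naive quotient comparison. Once this bookkeeping is recorded, both equivalences fall out of the cited references without further computation.
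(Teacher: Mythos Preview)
Your proposal is correct and follows essentially the same approach as the paper: the equivalence $(1)\Leftrightarrow(2)$ is obtained from the results of \cite{tabuada-gepner} (specifically Propositions 4.8 and 4.12) summarized above, and $(2)\Leftrightarrow(3)$ from Keller's theorem \cite[Section 4.12, Corollary]{keller-exact} together with the identification of $h(\C^{\omega})$ with the Neeman-compact objects in $h(\C)$ from Remark~\ref{senseofNeeman}. Your closing paragraph on the cofinality/idempotent-completion bookkeeping is a helpful elaboration but not strictly needed for the argument.
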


The following result will become important in the last section of our work:

\begin{prop}
\label{bondalcontext}
Let 

\begin{equation}
\xymatrix{
& \D\ar[d]^f\\
\C\ar[r]^L& \C_0
}
\end{equation}

be a diagram in $\Prl_{\omega, Stb}$ such that

\begin{itemize}
\item The homotopy triangulated category $h(\D)$ has a compact generator in the sense of Neeman;
\item The map $L:\C\to \C_0$ is an accessible reflexive localization of $\C$ obtained by killing a stable subcategory $\mathcal{A}\subseteq \C$ such that $h(\mathcal{A})$ has a compact generator (in the sense of Neeman) and the inclusion $\A\subseteq \C$ is a map in $\Prl_{\omega, Stb}$.
\end{itemize}

Then:
\begin{enumerate}
\item the diagram admits a limit $\sigma=$

\begin{equation}
\xymatrix{
\mathcal{T} \ar[r]\ar[d] & \D\ar[d]^f\\
\C\ar[r]^L& \C_0
}
\end{equation}

\noindent in $\Prl_{\omega, Stb}$;

\item the diagram $\sigma$ remains a pullback after the (non-full) inclusion $\Prl_{\omega, Stb}\subseteq \Prl_{Stb}$;
\item the homotopy category $h(\mathcal{T})$ has compact generator in the sense of Neeman.
\end{enumerate}
\begin{proof}

We start by noticing that $\Prl_{\omega, Stb}\subseteq \Prl_{Stb}$ preserves colimits (see \cite[5.5.7.6, 5.5.7.7]{lurie-ha}). Therefore, the map $L:\C\to \C_0$ remains a Bousfield localization in the sense discussed above. We recall also that all pullbacks exists in $\Prl_{Stb}$ and thanks to \cite[1.1.4.4]{lurie-ha} and to \cite[5.5.3.13]{lurie-htt} they can be computed in $Cat_{\infty}^{big}$. In this case, let 

\begin{equation}
\xymatrix{
\V \ar[r]^p\ar[d]^q & \D\ar[d]^f\\
\C\ar[r]^L& \C_0
}
\end{equation}

\noindent be a pullback for the diagram given by $(f,L)$ in $\Prl$. Of course, we can assume that $f$ is a categorical fibration and nothing will change up to categorical equivalence (see our discussion about homotopy pullbacks in \ref{section4-2}). With this, we can actually describe $\V$ as the strict pullback $\D\times_{\C_0}\C$. It follows from the description of compact-objects in the pullback \cite[5.4.5.7]{lurie-htt} that both maps $p$ and $q$ preserve compact objects. Therefore, to achieve the proof we are reduced to show that $\V$ is $\omega$-accessible. Indeed, if this is the case, it follows from the universal property of the pullback in $\Prl_{\omega, Stb}$ and in $\Prl_{Stb}$ that $\V$ is canonically equivalent to $\mathcal{T}$. To prove that $\V$ is $\omega$-accessible we can make use of the Proposition \ref{cg}: it suffices to show that the homotopy category of $\V$ has a compact generator in the sense of Neeman. We can construct one using exactly the same arguments of \cite[Prop. 3.9]{toen-azumaya}, itself inspired in the arguments of the famous theorem of Bondal - Van den Bergh \cite[Thm 3.1.1]{bondal-vandenbergh}:\\

Let $d$ be a compact generator in $\D$, which exists as part of our assumptions.  As $f$ and $L$ are functors in $\Prl_{\omega,Stb}$, they preserve compact objects and therefore $f(d)$ is compact. As $L$ is a Bousfield localization of compactly generated stable $(\infty,1)$-categories we can use the famous result of Neeman-Thomason  \cite[Theorem 2.1]{Neeman-thomason} to deduce the existence of a compact object $c\in \C$ whose image in $\C_{0}$ is $f(d)\oplus (f(d)[1])$. The new object $d'=d\oplus (d[1])$ is again a compact generator in $h(\D)$ and since $f$ preserves colimits we conclude the existence of an object $v \in \V$ such that $p(v)=d'$ and $q(v)=c$. The Lemma 5.4.5.7 of \cite{lurie-htt} implies that $v$ is a compact object in $\V$.\\

At the same time, we use our second assumption that $\mathcal{A}$ has a compact generator $k$. Since $k$ is in $\mathcal{A}$, $L(k)$ is a zero object in $\C_{0}$. Therefore, it lifts to an object $\tilde{k}\in \V$ with $q(\tilde{k})=k$ and $p(\tilde{k})=0\in \D$. To deduce that $\tilde{k}$ is a compact object in $\V$ we observe that for any $z$ in $\V$, the mapping space in the pullback is given by the formula

\begin{eqnarray}
\label{nc1chikititas}
Map_{\V}(\tilde{k}, z)\simeq Map_{\C}(k,q(z))\times_{Map_{\C_0}(L(k),L(q(z)))} Map_{\D}(p(\tilde{k}),p(z))\\
\simeq Map_{\C}(k,q(z))\times_{Map_{\C_0}(0,L(q(z)))} Map_{\D}(0,p(z))\\
\simeq Map_{\C}(k,q(z))\times_{\ast}\ast \simeq Map_{\C}(k,q(z))
\end{eqnarray}

\noindent so that, since $q$ commutes with colimits, $\tilde{k}$ is compact in $\V$ if and only if $k$ is compact in $\C$.  The last is true because of our hypothesis that the inclusion $\A\subseteq \C$ preserves compact objects.\\

We claim that the sum $v\oplus \tilde{k}$ is a compact generator of $\V$. Obviously, as a finite sum of compacts, it is compact. We are left to check that it is a generator of $h(\V)$. In other words, we have to prove that for an arbitrary object $z$ in $\V$, if $z$ is right-orthogonal to the sum $v\oplus \tilde{k}$ in $h(\V$), then it is a zero object. Notice that $z$ is right-orthogonal to the sum if and only if it is right-orthogonal to $v$ and $\tilde{k}$ at the same time. In particular, the formula (\ref{nc1chikititas}) implies that $z$ is right-orthogonal to $\tilde{k} \in h(\V)$ if and only if $q(z)$ is right-orthogonal to $k$ in $\C$. Since $k$ is a compact generator of $h(\A)$ (by assumption), it follows from the Remark \ref{compactgeneratorisenough} that $q(z)$ is right-orthogonal to $k$ if and only if $q(z)$ is $L$-local, meaning that it is in $\C_0$ and we have $i\circ L(q(z))\simeq q(z)$, where $i$ is the fully faithfull right adjoint of $L$. Let us assume that $z$ is right-orthogonal to $\tilde{k}$. Then, this discussion implies that

\begin{equation}
Map_{\V}(v,z)\simeq Map_{\C}(c,q(z))\times_{Map_{\C_0}(f(d'),f(q(z))} Map_{\D}(d',p(z))
\end{equation}

\noindent and using the fact that $q(z)\simeq i\circ L(q(z))$, it becomes

\begin{equation}
\simeq Map_{\C}(L(c),L(q(z))\times_{Map_{\C_0}(f(d'),f(q(z))} Map_{\D}(d',p(z))\simeq Map_{\D}(d', p(z))
\end{equation}

We conclude that if $z$ is orthogonal to $\tilde{k}$ and $v$ at the same time, then $p(z)$ is orthogonal to d'. However, by construction, d$'$ is again a compact generator of $h(\D)$ so that $p(z)$ is zero in $\D$. Since we have $q(z)\simeq i\circ L (q(z))\simeq i\circ f\circ p(z)$, this implies that $q(z)$ is also zero in $\C$. Using the Lemma 5.4.5.5 of \cite{lurie-htt}, we find that $z$ is a zero object in $\D$. This concludes the proof.

\end{proof}
\end{prop}

\begin{remark}
\label{bondalcontextforfamilycompactgenerators} The proof of the Proposition \ref{bondalcontext} works mutatis-mutandis if we replace the hypothesis of solo compact generators in $\A$ and $\D$ by the existence of compact generating families. More precisely, and using the same arguments and notations, if $\mathcal{E}_{\D}=\{d_i\}_{i\in I}$ and $\mathcal{E}_{\A}=\{k_j\}_{j\in J}$ are families of compact generators respectively in $\D$ and in $\A$, we can prove that the family $\{\tilde{k}_j\oplus v_i\}_{(i,j)\in I\times J)}$ is a family of compact generators in $\mathcal{T}$.
\end{remark}

In particular, we have the following immediate corollary:

\begin{cor}
\label{exactsequencesarestrict}
Let $\sigma=$

\begin{equation}
\xymatrix{
\mathcal{A} \ar[d]\ar@{^{(}->}[r]& \C\ar[d]\\
\ast \ar[r] &\C_0
}
\end{equation}

\noindent be an exact sequence in $\Prl_{\omega, Stb}$ such that $h(\A)$ admits a family of compact generators in the sense of Neeman. Then, the diagram $\sigma$ is a pullback in $\Prl_{\omega, Stb}$

\begin{proof}
This is the degenerated case of \ref{bondalcontext} (together with the Remark \ref{bondalcontextforfamilycompactgenerators}) where $\D=0$. The inclusion $\A\subseteq \C$ admits a canonical factorization through the pullback, which, by the arguments in  \ref{bondalcontext} and \ref{bondalcontextforfamilycompactgenerators}, sends the generating family of $\A$ to a generating family.  The conclusion now follows from the Proposition \ref{equivalencecompactgenerators}.
\end{proof}
\end{cor}

\subsubsection{$t$-structures} 
\label{Tstructures}
\emph{$t$-structures} are an important tool in the study of triangulated categories. Following  \cite[Section 1.2.1]{lurie-ha} they extend in a natural way to the setting of stable $(\infty,1)$-categories: A $t$-structure in a stable $(\infty,1)$-category $\C$ is the data of a $t$-structure in the homotopy category $h(\C)$. Given a $t$-structure $(h(\C)_{\leq 0}, h(\C)_{\geq 0})$ in $h(\C)$, we denote by $\C_{\leq 0}$ (resp. $\C_{\geq 0}$) the full subcategory of $\C$ spanned by the objects in $h(\C)_{\leq 0}$ (resp. $h(\C)_{\geq 0}$). Moreover, we will write $\C_{\leq n}$ (resp. $\C_{\geq n}$) to denote the image of $\C_{\leq 0}$ (resp. $\C_{\geq 0}$) under the functor $\Sigma^n$. Recall also that an object $X \in \C$ is said to be \emph{connective with respect to the $t$-structure} if it belongs to $\C_{\geq 0}$.\\

It follows from the axioms for a $t$-structure that for every $n\in \mathbb{Z}$ the inclusion $\C_{\leq n}\subseteq \C$ admits a left adjoint $\tau_{\leq n}$ \cite[1.2.1.5]{lurie-ha} and the inclusion $\C_{\geq n} \subseteq \C$ admits a right adjoint $\tau_{\geq n}$ and these two adjoints are related by the existence of a cofiber/fiber sequence

\begin{equation} 
\label{cofibertruncation}
\xymatrix{
\tau_{\geq n}(X)\ar[r]\ar[d]& X\ar[d]\\
\ast \ar[r]& \tau_{\leq n-1}(X)
}
\end{equation}

Moreover, for every $m,n\in \mathbb{Z}$ they are related by a natural equivalence 

\begin{equation}\tau_{\leq m}\circ \tau_{\geq n}\simeq\tau_{\geq n}\circ \tau_{\leq m}\end{equation}

\noindent (see \cite[1.2.1.10]{lurie-ha}).

\begin{remark}
\label{truncationsandtstructures}
The two notations $\tau_{\leq n}:\C\to \tau_{\leq n}\C$ (after \ref{nc1postnikovtowers}) and $\tau_{\leq n}:\C\to \C_{\leq n}$ are not compatible. However, they are compatible with restricted to $\C_{\geq 0}$ and we have $\tau_{\leq n}(\C_{\geq 0})\simeq \C_{\geq 0}\cap \C_{\leq n}$ (See \cite[1.2.1.9]{lurie-ha}).
\end{remark}

By definition, the \emph{heart} of the $t$-structure is the full subcategory $\C^{\heartsuit}$ spanned by the objects in the interesection $\C_{\leq o}\cap \C_{\geq 0}$. It follows from the axioms that $\C^{\heartsuit}$ is equivalent to the nerve of $h(\C^{\heartsuit})$. Given an object $X \in \C$ we denote by $\mathbb{H}_n(X)$ the object of $\C^{\heartsuit}$ obtained by shifting the object $\tau_{\leq n}\tau_{\geq n}(X) \in \C_{\leq n}\cap \C_{\geq n}$.\\

\begin{remark}
\label{fibertruncation}
The cofiber/fiber sequence (\ref{cofibertruncation}) implies that if $X$ is already in $\C_{\leq n}$ (which means that $X\simeq \tau_{\leq n}(X))$ we a have pushout diagram

\begin{equation} 
\label{cofibertruncation2}
\xymatrix{
\Sigma^n \mathbb{H}_n(X)=\tau_{\geq n}\tau_{\leq n}(X)\ar[d]\ar[r]& X\ar[d]\\
\ast \ar[r]& \tau_{\leq n-1}(X)
}
\end{equation}
\end{remark}

The data of a $t$-structure in stable $(\infty,1)$-category $\C$ is completely caracterized by the data of the reflexive localization $\C_{\leq 1}\subseteq \C$ \cite[1.2.1.16]{lurie-ha}. Following this, if $\C$ is an accessible $(\infty,1)$-category we say that the $t$-structure is \emph{accessible} if this localization is accessible. Moreover, we say that the $t$-structure is compatible with filtered colimits if the inclusion $\C_{\leq 0}\subseteq \C$ also commutes with filtered colimits.\\

If $\C$ and $\C'$ are stable $(\infty,1)$-categories carrying $t$-structures, we say that a functor $f:\C\to \C'$ is \emph{right $t$-exact} if it is exact and carries $\C_{\leq 0}$ to $(\C')_{\leq 0}$. Respectively, we say that $f$ is \emph{left $t$-exact} if it is exact and carries $\C_{\geq 0}$ to $(\C')_{\geq 0}$.\\

To conclude this section we recall the notions of left and right completeness. A $t$-structure in $\C$ is said to be \emph{left-complete} if the canonical map from $\C$ to the homotopy limit $\widehat{C}:=lim_{n}\C_{\leq n}$ of the diagram

\begin{equation}
\xymatrix{
... \ar[r]& \C_{\leq 2}\ar[r]^{\tau_{\leq 1}}& \C_{\leq 1}\ar[r]^{\tau_{\leq 0}}&\C_{\leq 0}\ar[r]^{\tau_{\leq -1}}&...
}
\end{equation}

\noindent is an equivalence. A dual definition gives the notion of \emph{right-completeness}. In general this limit is again a stable $(\infty,1)$-category and its objects can be identify with families $X=\{X_i\}_{i\in \mathbb{Z}}$ such that $X_i \in \C_{\leq i}$ and $\tau_{\leq n}X_i\simeq X_n$ for every $n\leq i$. It admits a natural $t$-structure where $X$ is in the positive subcategory if each $X_i$ is in $\C_{\geq 0}$. This $t$-structure makes the canonical map $\C\to lim_{n}\C_{\leq n}$ both left and right $t$-exact. Moreover, the restriction $\C_{\leq 0}\to (\widehat{\C})_{\leq 0}$ is an equivalence \cite[1.2.1.17]{lurie-ha}. In general the difference between $\C$ and $\widehat{\C}$ lays exactly in the connective part. This difference disappears if the $t$-structure is left-complete: the restriction $\C_{\geq 0}\to \widehat{\C}_{\geq 0}\simeq lim_n (\C_{\leq n}\cap \C_{\geq 0})$ is an equivalence. Thanks to the Proposition 1.2.1.19 in \cite{lurie-ha}, a $t$-structure is known to be left-complete if and only if the subcategory $\cap_{n}\C_{\geq n}\subseteq \C$ consists only of zero objects.

\begin{remark}
\label{postnikovtowersconvergewithtstructure}
If $\C$ is a stable $(\infty,1)$-category with a left-complete $t$-structure then Postnikov towers converge in $\C_{\geq 0}$. This follows from the definition of left-completeness and from the Remark \ref{truncationsandtstructures}.
\end{remark}

Again a classical example of a stable $(\infty,1)$-category with a $t$-structure is the $(\infty,1)$-category of spectra $\Sp$ \cite[1.4.3.4, 1.4.3.5, 1.4.3.6]{lurie-ha} where $\Sp_{\geq 0}$ is the full subcategory spanned by the spectrum objects $X$ such that $\pi_n(X)=0, \forall n\leq 0$. It is both right and left complete and its heart is equivalent to the nerve of the category of abelian groups.\\

\subsubsection{Homological Algebra}
 The subject of homological algebra can be properly formulated using the language of stable $(\infty,1)$-categories. If $A$ is a Grothendieck $k$-abelian category, we can obtain the classical unbounded derived category of $A$ has the homotopy category of an $(\infty,1)$-category $\mathcal{D}(A)$. By the main result of \cite{hovey-modelstructureonsheaves} the category of unbouded chain complexes $Ch(A)$ admits a model structure for which the weak-equivalences are the quasi-isomorphisms of complexes and the cofibrations are the monomorphisms (this is usually called the \emph{injective model structure}). We define $\mathcal{D}(A)$ as $(\infty,1)$-category \emph{underlying} this model structure (see Section \ref{section1-2} below). It is stable \cite[Prop. 1.3.5.9]{lurie-ha} and the pair of full subcategories $(\mathcal{D}(A)_{\leq 0}, \mathcal{D}(A)_{\geq 0})$ respectively spanned by the objects whose homology groups vanish in positive degree (resp. negative), determines a right-complete $t$-structure \cite[1.3.5.21]{lurie-ha}. This $t$-structure is not left-complete in general.

If $X$ is a scheme, we know from \cite{har66} that $A=Qcoh(X)$ is Grothendieck abelian. The $(\infty,1)$-category of the Example \ref{derivedinfinitycategoryofascheme} is $\mathcal{D}(A)$.\\

In \cite[Section 1.3]{lurie-ha} the author describes several alternative approaches to access this $(\infty,1)$-category and its subcategory spanned by the right-bounded complexes. We will not review these results here.

\subsection{From Model Categories to $(\infty,1)$-categories}
\label{section1-2}

\subsubsection{Model categories and $\infty$-categories}
Model categories were invented (see \cite{quillen}) as axiomatic structures suitable to perform the classical notions of homotopy theory. They have been extensively used and developed (see \cite{hovey-modelcategories,hirschhorn} for an introduction) and still form the canonical way to introduce/present homotopical studies. A typical example is the homotopy theory of schemes which provides the motivation for this paper.
The primitive ultimate object associated to a model category $\M$ is its homotopy category $h(\M)$ which can be obtained as a localization of $\M$ with respect to the class $W$ of weak-equivalences. This localization should be taken in the world of categories. The problems start when we understand that $h(\M)$ lacks some of the interesting homotopical information contained in $\M$ up to such a point that it is possible to have two model categories which are not equivalent but their homotopy categories are equivalent (see \cite{duggerexample}). This tells us that $h(\M)$ is not an ultimate invariant and that in order to do homotopy theory we should not abandon the setting of model categories. But this brings some troubles. To start with, the theory of model categories is not "closed"  meaning that, in general, the collection of morphisms between two model categories does not provide a new model category. Moreover, the theory is not suitable adapted to the consideration of general homotopy theories with monoidal structures, their associated theories of homotopy algebra-objects and modules over them. 

The quest to solve these problems is one of the possible motivations for the subject of $(\infty,1)$-categories. Every model category should have an associated $(\infty,1)$-category which should work as an ultimate container for the homotopical information in $\M$. In particular, the information about the homotopy category. The original motivation for the subject had its origins in the famous manuscript of A. Grothendieck \cite{pursuingstacks}. In the last few years there were amazing developments and the reader has now available many good references for the different directions \cite{ bergner-survey, simpson-book, lurie-htt,DimitriAra, rezk-homotopytheoryofhomotopytheories}.\\
Back to our discussion, the key idea is that every model category $\M$ hides an $(\infty,1)$-category and this $(\infty,1)$-category encodes all the "`homotopical information"' contained in $\M$. The key idea dates to the works of Dwyer-Kan \cite{dwyer-kan-simpliciallocalizationofcategories,dwyer-kan-homotopyfunctioncomplexes} who found out that by performing the "`simplicial localization of $\M$"' - meaning a localization in the world of simplicial categories - instead of the usual localization in the setting of ordinary categories, the resulting object would contain all the interesting homotopical information and, in particular, the classical homotopy category of $\M$ appearing in the "ground" level of this localization. The meaning of the preceding technique became clear once it was understood that simplicial categories are simply one amongst many other possible models for the theory of $(\infty,1)$-categories. Another possible model is provided by the theory of Joyal's quasi-categories, which was extensively developed in the recent years \cite{lurie-htt}. The method to assign an $(\infty,1)$-category to a model category $\M$ reproduces the original idea of Dwyer and Kan - Start from $\M$, see $\M$ as a trivial $(\infty,1)$-category and perform the localization of $\M$ with respect to the weak-equivalences - not in the world of usual categories - but in the world of $(\infty,1)$-categories. The resulting object will be refer to as the \emph{underlying $\infty$-category of the model category } $\M$. For a more detailed exposition on this subject we redirect the reader to the exposition in \cite{toen-hab}.

For our purposes we need to understand that the nerve functor $N:\Cat\to \ssets$ provides the way to see a category as a trivial quasi-category. By definition, if $\M$ is model category with a class of weak-equivalences $W$, the underlying $(\infty,1)$-category of $\M$ is the localization $N(\M)[W^{-1}]$ obtained in the setting of $(\infty,1)$-categories using the process described in \ref{locinfinity}. Moreover, the universal property of this new object implies that its associated homotopy category $h(N(\M)[W^{-1}])$ recovers the classical localization. In particular, $N(\M)[W^{-1}]$ and $N(\M)$ have essentially the same objects. The main technical result which was originally discovered by Dwyer and Kan is

\begin{prop}\label{dwyer-kan}(\cite{lurie-ha}-Prop. 1.3.4.20  )\\
Let $\M$ be a simplicial model category\footnote{Assume the existence of functorial factorizations}. Then there exists an equivalence of $(\infty,1)$-categories between the underlying $\infty$-category of $\M$ and the $(\infty,1)$-category $N_ {\Delta}(\M^{\circ})$ where $N_{\Delta}$ is the simplicial nerve construction (see Def. 1.1.5.5 of \cite{lurie-htt}) and $\M^{\circ}$ denotes the full simplicial subcategory of $\M$ of cofibrant-fibrant objects. In other words we have an isomorphism

\begin{equation}
N(\M)[W^{-1}]\simeq N_ {\Delta}(\M^{\circ})
\end{equation}

\noindent in the homotopy category of simplicial sets with the Joyal model structure \cite{joyal-article}.
\end{prop}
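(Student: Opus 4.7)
The plan is to factor the desired equivalence through the intermediate object $N(\M^{\circ})[W^{-1}]$ and establish two independent comparisons: first, that the inclusion of bifibrant objects induces an equivalence $N(\M^{\circ})[W^{-1}] \simeq N(\M)[W^{-1}]$, and second, that $N_{\Delta}(\M^{\circ})$ already models $N(\M^{\circ})[W^{-1}]$.

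For the first comparison, I would use the existence of functorial cofibrant-fibrant replacement in $\M$ to produce a functor $R: \M \to \M^{\circ}$ equipped with a natural zig-zag of weak equivalences between $R(X)$ and $X$. Since natural weak equivalences descend to natural equivalences in any $\infty$-categorical localization at $W$, the functor $R$ induces a retraction (up to natural equivalence) of the inclusion $N(\M^{\circ})[W^{-1}] \to N(\M)[W^{-1}]$, proving it is an equivalence. This argument is formal once one has the universal property of localization described in \ref{locinfinity}.

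For the second comparison, the input is the fact that for bifibrant $X, Y$ the simplicial mapping space $\mathrm{Map}_{\M}(X,Y)$ is a Kan complex, and the model-categorical Whitehead theorem guarantees that weak equivalences between bifibrant objects coincide with simplicial homotopy equivalences. Thus the canonical comparison $N(\M^{\circ}) \to N_{\Delta}(\M^{\circ})$ carries edges in $W$ to equivalences, and by the universal property it factors through $N(\M^{\circ})[W^{-1}] \to N_{\Delta}(\M^{\circ})$. To show this map is an equivalence, one invokes the Quillen equivalence between the Bergner model structure on simplicial categories and the Joyal model structure on simplicial sets, under which $\M^{\circ}$ is already fibrant (its mapping spaces being Kan complexes). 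An alternative route is to verify the universal property of the localization directly on mapping spaces, using that the derived mapping spaces in $N(\M)[W^{-1}]$ between bifibrant objects are computed by $\mathrm{Map}_{\M}(X,Y)$.

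The main obstacle lies in the second step: making rigorous the claim that $N_{\Delta}(\M^{\circ})$ satisfies the universal property of the $\infty$-categorical localization. This requires bridging two distinct models for $(\infty,1)$-categories, namely simplicial categories and quasi-categories, and invoking the classical Dwyer--Kan theorem identifying the hammock localization $L^H(\M,W)$ with $\M^{\circ}$ as a simplicial category for a simplicial model category $\M$. The full argument combines the compatibility between Bergner fibrant replacements and the functor $N_{\Delta}$ with a careful identification of mapping spaces, and this is where most of the genuine technical content of the proposition resides.
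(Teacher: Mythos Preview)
The paper does not actually prove this proposition: it is stated with an explicit citation to \cite{lurie-ha}, Proposition~1.3.4.20, and no argument is given in the text beyond the remark that it generalizes Quillen's classical comparison. There is therefore nothing in the paper itself to compare your proposal against.

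That said, your two-step strategy---reducing first to $N(\M^{\circ})[W^{-1}]$ via functorial cofibrant-fibrant replacement, and then identifying this with $N_{\Delta}(\M^{\circ})$ using the Dwyer--Kan computation of mapping spaces together with the Bergner/Joyal comparison---is essentially the route taken in the cited reference. Your assessment of where the technical weight lies (the second step, and specifically the identification of derived mapping spaces in the localization with the simplicial hom of $\M^{\circ}$) is accurate; Lurie handles this via his machinery of marked simplicial sets rather than by invoking the hammock localization directly, but the underlying idea is the same.
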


This statement provides an $\infty$-generalization of the fundamental result by Quillen (see \cite{quillen}) telling us that the localization $Ho(\M)$ is equivalent to the naive homotopy theory of cofibrant-fibrant objects.
By combining this result with the Theorem 4.2.4.1 of \cite{lurie-htt}, we find a dictionary between the classical notions of homotopy limits and colimit in $\M$ (with $\M$ simplicial) and limits and colimits in the underlying $(\infty,1)$-category of $\M$.\\

\subsubsection{Combinatorial Model Categories}
\label{combinatorialmodelcategories}

The theory of combinatorial model categories and that of presentable $(\infty,1)$-categories are equivalent. Morever, this equivalence is compatible with Bousfield localizations:

\begin{prop}(\cite{lurie-htt}-Propositions A 3.7.4 and A.3.7.6)
\label{mainforcombi}
Let $\C$ be a big $(\infty,1)$-category. Then, $\C$ is presentable if and only if there exists a big $\uniU$-combinatorial simplicial model category $\M$ such that $\C$ is the underlying $(\infty,1)$-category of $\M$. Moreover, if $\M$ is left-proper, Bousfield localizations of $\M$ \footnote{with respect to a class of morphisms of small generation} correspond bijectively to accessible reflexive localizations of $\C$ (see our Notations).
\end{prop}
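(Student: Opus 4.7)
The plan is to treat this as a summary of Lurie's result, breaking it into the two biconditionals and reducing each to standard inputs. Throughout I identify the underlying $(\infty,1)$-category of a simplicial model category $\M$ with $N_\Delta(\M^\circ)$ via Proposition \ref{dwyer-kan}, so all computations can be done on the simplicially enriched subcategory of cofibrant-fibrant objects.

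For the ``if'' direction of the first statement, given a $\uniU$-combinatorial simplicial model category $\M$, I would verify the three conditions of Proposition \ref{cg}-style presentability (i.e. criterion $(iii)$ of the characterization of presentable $(\infty,1)$-categories). Existence of small colimits in $N_\Delta(\M^\circ)$ follows from the existence of homotopy colimits in $\M$, combined with the dictionary between homotopy (co)limits and $(\infty,1)$-(co)limits supplied by \cite[4.2.4.1]{lurie-htt}. Local smallness follows because mapping spaces in $N_\Delta(\M^\circ)$ are the derived simplicial mapping spaces of $\M$, which are small. Accessibility follows from the fact that combinatoriality provides a regular cardinal $\kappa$ for which $\M$ is $\kappa$-presentable and cofibrantly generated; the subcategory of $\kappa$-compact cofibrant-fibrant objects gives a small generating family, and a cofinality argument (as in \cite[A.3.7.6]{lurie-htt}) shows the entire category is generated under $\kappa$-filtered colimits by this subfamily.

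For the converse, I would use the characterization of presentable $(\infty,1)$-categories as accessible reflexive localizations of presheaf $(\infty,1)$-categories: there exists a small $\D$ and an accessible localization $L\colon \mathcal{P}(\D)\to \C$. The category of simplicial presheaves $\Fun(\D^{op},\ssets)$ equipped with the projective (or injective) model structure is combinatorial and simplicial, and Dugger's presentation theorem together with Proposition \ref{dwyer-kan} identifies its underlying $(\infty,1)$-category with $\mathcal{P}(\D)$. Jeff Smith's theorem on left Bousfield localizations of combinatorial left-proper model categories (see \cite[A.3.7.3]{lurie-htt}) applied to a small generating set $S$ for the localizing class produces a combinatorial simplicial model category $\M_S$ whose underlying $(\infty,1)$-category is the full subcategory of $S$-local objects, and the latter coincides with $\C$ by the generation assumption.

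For the second statement, the input is again Smith's construction: given a small-generation class of morphisms in $\M$ (with $\M$ left-proper and combinatorial), left Bousfield localization yields a new combinatorial left-proper simplicial model category $\M'$ whose fibrant objects are the local ones. The passage to underlying $(\infty,1)$-categories sends the Quillen adjunction $\M\rightleftarrows \M'$ to an adjunction $\C\rightleftarrows \C'$ whose right adjoint is fully faithful with essential image the full subcategory of local objects, i.e.\ $\C'$ is an accessible reflexive localization of $\C$ in the sense of \ref{locinfinity}. Conversely, any accessible reflexive localization $\C\to \C_0$ is determined by a small set of generating local equivalences (using accessibility together with \cite[5.5.4.15]{lurie-htt}), and one can transport this set back to morphisms in $\M$ and apply Smith's theorem. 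The main obstacle, and the part I expect to require the most care, is matching up the two notions of ``small generation'' — producing the generating set of local equivalences in $\M$ from the accessibility data on the $(\infty,1)$-categorical side, and checking that the resulting Bousfield localization of $\M$ really has $\C_0$ as its underlying $(\infty,1)$-category. This is essentially the content of \cite[A.3.7.4, A.3.7.6]{lurie-htt}, whose arguments I would follow directly.
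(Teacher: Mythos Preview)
The paper does not provide its own proof of this proposition: it is stated purely as a citation of Lurie's \cite[A.3.7.4, A.3.7.6]{lurie-htt}, and the text immediately proceeds to discuss consequences. Your proposal is therefore not being compared against an original argument but against a reference, and your sketch is a reasonable outline of how Lurie's proof runs (Dugger's presentation of presentable $(\infty,1)$-categories via simplicial presheaves, Smith's theorem for the existence of Bousfield localizations, and the dictionary of Proposition~\ref{dwyer-kan} and \cite[4.2.4.1]{lurie-htt} for transporting (co)limits). You even acknowledge as much in your final sentence. There is no gap to flag; if anything, your write-up is more detailed than what the paper offers.
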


This has many important consequences. To start with, if $\xymatrix{\M\ar@<+.7ex>[r]^{F}& \ar@<+.7ex>[l]^{G} \N}$ is a Quillen adjunction between combinatorial model categories then it induces an adjunction between the underlying $(\infty,1)$-categories. To see this, remember from our preliminairs that the localization $N(\M)[W^{-1}]$ can be obtained as a fibrant-replacement for the pair $(N(\M), W)$ in the model category of marked simplicial sets. Under the combinatorial hypothesis, $\M$ admits cofibrant and fibrant replacement functors and of course, they preserve weak-equivalences. If we let $\M^{c}$ denote the full subcategory of $\M$ spanned by the cofibrant and $W_c$ the weak-equivalences between them, we will have an inclusion of marked simplicial sets $(N(\M^c),W_c)\subseteq (N(\M),W)$ together with a map in the inverse direction provided by the cofibrant-replacement functor (the same applies for the subcategories of fibrant, resp. cofibrant-fibrant, objects). By the universal property of the localization, these two maps provide an equivalence of $(\infty,1)$-categories $N(\M^c)[W_c^{-1}]\simeq N(\M)[W^{-1}]$. Back to the Quillen adjunction $(F,G)$, the Ken Brown's lemma provides a well-defined map of marked simplicial sets

\begin{equation}
(N(\M^c), W_c)\to (N(\N^c),W_c')
\end{equation}

\noindent and therefore, a new one $N(\M^c)[W_c^{-1}]\to N(\N_c)[W_c'^{-1}]$ through the choice of fibrant-replacements in the model category of marked simplicial sets. It is the content of
 \cite[1.3.4.21]{lurie-ha} that if the initial Quillen adjunction is an equivalence then this map is will also be.

Thanks to the results of \cite{dugger-combinatorial} we know that every combinatorial model category is Quillen equivalent (by a zig-zag) to a simplicial combinatorial model category. The previous discussion implies that the underlying $(\infty,1)$-category of a combinatorial model category is always presentable. In particular, it admits all limits and colimits which, again by the results  of \cite{dugger-combinatorial} together with the Theorem 4.2.4.1 of \cite{lurie-htt}, can be computed as homotopy limits and homotopy colimits in $\M$, namely, an object $X\in \M$ is an homotopy limit (resp. colimit) of a diagram $I\to \M$ if and only if it is a limit (resp. colimit) in $N(\M)[W^{-1}]$ of the composition $N(I)\to N(\M)\to N(\M)[W^{-1}]$, now in the sense of $(\infty,1)$-categories (see \cite[1.3.4.23 and 1.3.4.24]{lurie-ha}).

 Moreover, combining the Theorem 4.2.4.4 of \cite{lurie-htt} again with the main result of \cite{dugger-combinatorial} we find that for any combinatorial model category $\M$ and small category $I$, there is an equivalence  

\begin{equation}
N(\M^I)[W_{levelwise}^{-1}]\simeq Fun(N(I),N(\M)[W^{-1}])
\end{equation}

In particular, for a left Quillen map between combinatorial model categories, the map induced between the underlying $(\infty,1)$-categories (as above) commutes with colimits. The presentability, together with the adjoint functor theorem (Corollary 5.5.2.9 of \cite{lurie-ha}) implies the existence of a right adjoint $N(\M^c)[W_c^{-1}]\leftarrow N(\N_c)[W_c'^{-1}]$ which we can describe explicitly as the composition 

\begin{equation}
\xymatrix{
N(\N^c)\ar[r]^P& N(\N^{cf})\ar[r]^G & N(\M^{f})\ar[r]^Q & N(\M^{cf})\ar@{^{(}->}[r]& N(\M^{c})  
}
\end{equation}

where $P$ is a fibrant replacement functor in $\N$ and $Q$ is a cofibrant replacement functor in $\M$. 

In the simplicial case the underlying adjunction can be obtained with simpler technology (see the Proposition 5.2.4.6 of \cite{lurie-htt} defining $\bar{F}(X)$ as a fibrant replacement of $F(X)$ and $\bar{G}(Y)$ via a cofibrant replacement of $G(Y)$.\\

\subsubsection{Compactly Generated Model Categories}
\label{compactlygeneratedmodelcategories}

The following discussion will be usefull in the last part of this work. Let $\M$ be a model category. Recall that an object $X$ in $\M$ is said to be \emph{homotopically finitely presented} if the mapping space functor $Map(X,-)$ commutes with homotopy filtered colimits. Recall also that if $\M$ is cofibrantly generated with $I$ a set of generating cofibrations, then $X$ is said to be a strict finite $I$-cell if there exists a finite sequence of morphisms in $\M$

\begin{equation}
X_0=\emptyset \to X_1\to ...\to X_n=X
\end{equation}

\noindent such that for any $i$, we have a pushout square

\begin{equation}
\xymatrix{
X_i\ar[r]& X_{i+1}\\
\ar[u] A \ar[r]^s& \ar[u]B
}
\end{equation}

\noindent with $s\in I$. Recall also that $\M$ is said to be \emph{compactly generated } if it is cellular and there is a set of generating cofibrations (resp. trivial cofibrations) $I$ (resp. $J$) whose domains and codomains are cofibrant and (strictly) $\omega$-compact and (strictly) $\omega$-small with respect to the whole category $\M$. We have the following result

\begin{prop}(\cite{toen-vaquie} Prop. 2.2)
\label{compactlygeneratedmodelcategoriesprop}
Let $\M$ be a compactly generated model category. Then any object is equivalent to a filtered colimit of strict finite $I$-cell objects. Moreover, if the (strict) filtered colimits in $\M$ are exact, an object $X$ is homotopically finitely presented if and only if it is a retract of a strict finite $I$-cells object.
\end{prop}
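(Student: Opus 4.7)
\medskip

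\noindent\textbf{Proof Proposal.} The plan is to reduce everything to the small object argument and the interaction between filtered colimits and the $\omega$-compactness/$\omega$-smallness hypotheses on the domains and codomains of $I$.

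For the first assertion, I would start with an arbitrary object $X\in \M$ and apply the small object argument associated to $I$ to produce a cofibrant replacement $QX\to X$ which is an $I$-cell complex, i.e.\ a (possibly transfinite) composition of pushouts of maps in $I$. Since the domains and codomains of $I$ are strictly $\omega$-compact and $\omega$-small with respect to the whole category, one can organize the sub-cell-subcomplexes of $QX$ into a filtered diagram whose filtered colimit (taken strictly in $\M$) is $QX$ itself. Each object of this diagram is, by construction, a strict finite $I$-cell. Since strict filtered colimits of cofibrant objects model homotopy filtered colimits in a cellular model category, $X\simeq QX$ is equivalent to a filtered (homotopy) colimit of strict finite $I$-cell objects.

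For the second assertion, suppose first that strict filtered colimits in $\M$ are exact, so that they model homotopy filtered colimits. Assume $X$ is a retract of a strict finite $I$-cell $Y$. Then $Map(X,-)$ is a retract of $Map(Y,-)$ in diagrams of spaces, so it suffices to show $Map(Y,-)$ commutes with homotopy filtered colimits. I would induct on the length of the finite $I$-cell filtration $Y_0=\emptyset \to Y_1\to \cdots \to Y_n = Y$; at each step we have a pushout along some map $A\to B$ in $I$, and the functor $Map(-, Z)$ sends it to a homotopy pullback of spaces. Filtered colimits of spaces are exact, so they commute with finite homotopy pullbacks, and the base case reduces to the claim that $Map(A,-)$ and $Map(B,-)$ commute with homotopy filtered colimits, which follows from the assumption that $A$ and $B$ are cofibrant, strictly $\omega$-compact and $\omega$-small, together with the exactness of strict filtered colimits in $\M$.

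For the converse, suppose $X$ is homotopically finitely presented. By the first part we can write $X\simeq \mathrm{hocolim}_i\, X_i$ with each $X_i$ a strict finite $I$-cell, indexed by a filtered category. Applying the assumption to this colimit gives
\[
Map(X,X)\;\simeq\; Map\bigl(X,\mathrm{hocolim}_i X_i\bigr)\;\simeq\; \mathrm{hocolim}_i\, Map(X,X_i).
\]
The class $[\mathrm{id}_X]\in \pi_0 Map(X,X)$ must therefore come from some $\pi_0 Map(X, X_i)$, which produces a map $X\to X_i$ whose composition with $X_i\to X$ is homotopic to $\mathrm{id}_X$. This exhibits $X$ as a retract of $X_i$ in the homotopy category, which is what is meant by a retract of a strict finite $I$-cell object. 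The main subtlety I expect is the inductive step in the pushout argument: one must ensure that strict filtered colimits really commute with the homotopy pullbacks computing mapping spaces out of a pushout, which is precisely where the exactness hypothesis on filtered colimits together with strict $\omega$-smallness of the cells is used.
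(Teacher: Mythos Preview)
The paper does not actually prove this statement: it is quoted verbatim from \cite{toen-vaquie} (their Proposition~2.2) and used as a black box, so there is no ``paper's own proof'' to compare against. Your sketch is essentially the standard argument from that reference --- the small object argument to exhibit a cofibrant replacement as a filtered colimit of finite sub-cell complexes, the inductive pushout/pullback step using exactness of filtered colimits in spaces, and the retract-via-identity trick --- and it is correct as outlined.
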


This proposition, together with the results of \cite{lurie-ha} described in the last section, implies that if $\M$ is a combinatorial compactly generated model category where (strict) filtered colimits are exact, then the compact objects in the presentable $(\infty,1)$-category $N(\M)[W^{-1}]$ are exactly the homotopicaly finitely presented objects in $\M$. Moreover, we have a canonical equivalence $N(\M)[W^{-1}]\simeq Ind((N(\M)[W^{-1}])^{\omega})$ (consult our Notations).

\section{ Preliminaries II - Higher Algebra}
\label{section3}

Our goal in this section is to review the fundaments of the subject of \emph{higher algebra} as developed in the works of J. Lurie in \cite{lurie-ha}. We collect the main notions and results and provide some new observations and tools needed in the later sections of the paper.

\subsection{$\infty$-Operads and Symmetric Monoidal $(\infty,1)$-categories}
\label{section3-1}

The (technical) starting point of higher algebra is the definition of a symmetric monoidal structure on a $(\infty,1)$-category (see Section \ref{link2} for the philosophical motivations). The guiding principle is that a symmetric monoidal $(\infty,1)$-category is the data of an $(\infty,1)$-category together with an operation $\C\times \C\to \C$, a unit object $\Delta[0]\to \C$ and a collection of commutative diagrams providing associative and commutative restrains. There are three main reasons why a precise definition is difficult using brute force: $(i)$ we don't know how to precise explicitly the whole list of diagrams; $(ii)$ these diagrams are expected to be interrelated; $(iii)$ in higher category theory the data of a commutative diagram is not a mere collection of vertices and edges: commutativity is defined by the existence of higher cells. The first and second problem exist already in the classical setting. The third problem makes the higher setting even more complicated for now to give $(i)$ is to specify higher cells in $\iCat$ and $(ii)$ is to write down explicit relations between them.\\

In this work we will follow the approach of \cite{lurie-ha} where a symmetric monoidal $(\infty,1)$-category is a particular instance of the notion of $\infty$-operad. In order to understand the idea, we recall that both classical operads and classical symmetrical monoidal categories can be seen as particular instances of the more general notion of colored operad (also known as "`multicategory"'). At the same time, classical symmetric monoidal categories can be understood as certain types of diagrams of categories indexed by the category $\Fin$ of pointed finite sets. Using the \emph{Grothendieck-Construction}, we can encode this diagramatic definition of a symmetric monoidal category in the form of a category cofibered over $\Fin$ with an additional property - the fiber over a finite set $\nfin$ is equivalent to the $n$-th power of the fiber over $\onefin$ (follow the notations below). Moreover, by weakening this form, it is possible to reproduce the notion of a coloured operad in this context. This way - operads, symmetric monoidal structures and coloured operads - are brought to the same setting: everything can be written in the world of "things over $\Fin$". The book \cite{leinster-book} provides a good introduction to these ideas. \\

In \cite{dendroidal1, dendroidal2} the authors explore another approach to the theory of $(\infty,1)$-operads. The key observation is that the theory of simplicial sets is not enough to capture the structure of a multicategory. To correct this failure they propose the notion of dendroidal sets, for which they prove the existence of an appropriate homotopy theory. More recently in \cite{dendroidal3} the authors established an equivalence between this approach and the theory developed by J. Lurie in \cite{lurie-ha}. \\

\subsubsection{$\infty$-operads}
In order to provide the formal definitions we need to recall some of the terminology introduced in \cite{lurie-ha}. First,we write $\nfin\in N(\Fin)$ to denote the finite set $\{0,1,..., n\}$ with $0$ as the base point and $\nfin^+$ to denote its subset of non-zero elements. A morphism $f:\nfin\to \mfin$ will be called \emph{inert} if for each $i\in \mfin^+$, $f^{-1}(\{i\})$ has exactly one element. Alternatively, a map $f$ is inert iff it is surjective and the induced map $\nfin-f^{-1}(\{0\})\to \mfin^+$ is a bijection. Notice that the canonical maps $\nfin\to \langle 0 \rangle$ are inert. Moreover, for each $i\in \nfin^+$, we write $\rho^i:\nfin \to \onefin$ for the inert map sending $i$ to $1$ and everything else to $0$. We say that $f$ is \emph{active} if $f^{-1}(\{0\})=\{0\}$.

\begin{defn}(\cite{lurie-ha}- Definition 2.1.1.10)\\
An $\infty$-operad is an $\infty$-category $\Opmonoidal$ together with a map $p:\Opmonoidal\to N(\Fin)$ satisfying the following list of properties:
\begin{enumerate}
\item For every inert morphism $f:\mfin\to \nfin$ and every object $C$ in the fiber $\Opmonoidal_{\mfin}:=p^{-1}(\{\mfin\})$, there exists a $p$-coCartesian morphism (see Definition of \cite{lurie-htt}) $\bar{f}:C\to \bar{C}$ lifting $f$. In particular, $f$ induces a functor $f_{!}:\Opmonoidal_{\mfin}\to \Opmonoidal_{\nfin}$;
\item Given objects $C$ in $\Opmonoidal_{\mfin}$ and $C'$ in $\Opmonoidal_{\nfin}$ and a morphism $f:\mfin \to \nfin$ in $\Fin$, we write $Map_{\Opmonoidal}^f(C,C')$ for the disjoint union of those connected components of $Map_{\Opmonoidal}(C,C')$ which lie over $f\in Map_{N(\Fin)}(\mfin,\nfin):= Hom_{Fin}(\mfin,\nfin)$.

We demand the following condition: whenever we choose $p$-coCartesian morphisms $C' \to C'_i$ lifting the inert morphisms $\rho^i:\nfin\to \onefin$ for $1\leq i\leq n$ (these liftings exists by $(1)$), the induced map

\begin{equation}
Map_{\Opmonoidal}^f(C,C')\to \prod Map_{\Opmonoidal}^{\rho^i\circ f}(C,C'_i)
\end{equation}

is an homotopy equivalence of spaces;

\item For each $n\geq 0$, the functors $\rho_{!}^i:\Opmonoidal_{\nfin}\to \Op$ (where $\Opmonoidal$ denotes the fiber over $\onefin$) induced by the inert maps $\rho^i$ through condition $(1)$, induce an equivalence of $(\infty,1)$-categories $\Opmonoidal_{\nfin}\to \Op^n$. In particular, for $n=0$ we have $\Opmonoidal_{\langle 0 \rangle}\simeq \Delta[0]$.
\end{enumerate}

\end{defn}

Notice that with this definition any $\infty$-operad $\Opmonoidal\to N(\Fin)$ is a categorical fibration. From now, we will make an abuse of notation and write $\Opmonoidal$ for an $\infty$-operad $p:\Opmonoidal\to N(\Fin)$, ommiting the structure map to $N(Fin)$. We denote the fiber over $\onefin$ by $\Op$ and refer to it as the \emph{underlying $\infty$-category} of $\Opmonoidal$. The objects of $\Op$ are called the \emph{colours} or \emph{objects} of the $\infty$-operad $\Opmonoidal$. To illustrate the definition, condition $(3)$ tells us that any object $C\in \Opmonoidal$ living over $\nfin$ can be identified with a unique (up to equivalence) collection $(X_1,X_2,...,X_n)$ where each $X_i$ is an object in $\Op$. Moreover, if $C\to C'$ is a coCartesian morphism in $\Opmonoidal$ lifting an inert morphism $\nfin\to \mfin$ and if $C=(X_1,X_2,...,X_n)$ then $C'$ corresponds (up to equivalence) to the collection $(X_{f^{-1}(\{1\})},...,X_{f^{-1}(\{m\})})$. In other words, coCartesian liftings of inert morphisms $C=(X_1,X_2,...,X_n)\to C'$ in $\Opmonoidal$ correspond to the selection of $m$ colours (without repetition) out of the $n$ presented in $C$.
Finally, if $C=(X_i)_{1\leq i\leq n}$ and $C'= (X'_j)_{1\leq i\leq m}$ are objects in $\Opmonoidal$, condition $(2)$ tells us that 

\begin{equation}
Map_{\Opmonoidal}((X_i)_{1\leq i\leq n}, (X'_j)_{1\leq i\leq m})\simeq \prod_j Map_{\Opmonoidal}((X_i)_{1\leq i\leq n}, X'_j)
\end{equation}

Let $p:\Opmonoidal\to N(\Fin)$ be an $\infty$-operad. We say that a morphism $f$ in $\Opmonoidal$ is \emph{inert} if its image in $N(\Fin)$ is inert and $f$ is $p$-coCartesian. We say that $f$ is active if $p(f)$ is active. By the Proposition 2.1.2.4 of \cite{lurie-ha}, the collections $(\{\text{inert morphism}\}, \{\text{active morphisms}\})$ form a (\emph{strong}) \emph{factorization system} in $\Opmonoidal$ (Definition 5.2.8.8 of \cite{lurie-htt}).\\

The simplest example of an $\infty$-operad is the identity map $N(\Fin)\to N(\Fin)$. Its underlying $(\infty,1)$-category corresponds to $\Delta[0]$. It is called the \emph{commutative $\infty$-operad} and we use the notation $\Commmonoidal=N(\Fin)$. Another simple example is the \emph{trivial} $\infty$-operad $\Trivmonoidal$. By definition, it is given by the full subcategory of $N(\Fin)$ of all objects $\nfin$ together with the inert morphisms.\\

More generally, there is a mechanism - the so-called \emph{operadic nerve} $N^{\otimes}(-)$ - to produce an $\infty$-operad out of \emph{simplicial coloured operad whose mapping spaces are Kan-complexes}.

\begin{construction}
\label{monoidaltilde}
If $\A$ is a simplicial coloured operad, we construct a new simplicial category $\tilde{\A}$ as follows: the objects of $\tilde{\A}$ are the pairs $(\nfin, (X_1,..., X_n))$ where $\nfin$ is an object in $\Fin$ and $(X_1,..., X_n)$ is a sequence of colours in $\A$. The mapping spaces are defined by the formula

\begin{equation}
Map_{\tilde{\A}}((X_1,..., X_n), (Y_1,..., Y_m)):= \coprod_{f:\nfin\to \mfin} \prod_{i=1}^{m} Map_{\A}((X_{\alpha})_{\alpha\in f^{-1}(\{i\})}, Y_i)
\end{equation}

If $\A$ is enriched over Kan complexes, it is immediate that $\tilde{\A}$ is a fibrant simplicial category. Following the Definition 2.1.1.23 of \cite{lurie-ha} we set $N^{\otimes}(\A):= N_{\Delta}(\tilde{\A})$. In this case (see \cite[2.1.1.27]{lurie-ha}) the canonical projection $\pi:N^{\otimes}(\A)\to N(\Fin)$ is an $(\infty,1)$-operad. In particular, this mechanism works using a classical operad as input.\\

\end{construction}

\begin{example}
\label{associativeoperad}
This mechanism can be used to construct the \emph{associative operad} $\Assmonoidal$. Following the Definition 4.1.1.3 of \cite{lurie-ha}), we let $\textbf{Ass}$ be the multicategory with one color $\textbf{a}$ and having as set of operations $Hom(\{\textbf{a}\}_{I}, \textbf{a})$ the set of total order relations on $I$. In other words, an operation 

\begin{equation}
\underbrace{(\textbf{a},..., \textbf{a})}_{n}\to \textbf{a}
\end{equation}

\noindent consists of a choice of a permutation of the n-factors. We can now understand $\textbf{Ass}$ as enriched over constant Kan-complexes and applying the Construction \ref{monoidaltilde} we find a fibrant simplicial category $\widetilde{\textbf{Ass}}$ whose simplicial nerve is by definition, the associative $\infty$-operad $\Assmonoidal$. 

Explicitly, the objects of $\Assmonoidal$ can be identified with the objects of $N(\Fin)$. Morphisms $f:\nfin\to \mfin$ are given by the choice of a morphisms in $N(\Fin)$, $\nfin\to \mfin$ together with the choice of a total order on each $f^{-1}(\{j\})$ for each $j\in \mfin^{\circ}$. With this description,it is obvious that $\Assmonoidal$ comes equipped with a map towards $N(\Fin)$ obtain by forgetting the total orderings.

\end{example}

\begin{example}
\label{enoperads}
The associative operad represents the first element in a distinguished family of $\infty$-operads: for any natural number $n\in \mathbb{N}$, we can construct a fibrant simplicial colored operad \cite[Def. 5.1.0.2]{lurie-ha} whose simplicial nerve $\mathbb{E}_n^{\otimes}$ is called the $\infty$-operad of \emph{little $n$-cubes}. For every $n\geq 0$ the objects of $\mathbb{E}_n^{\otimes}$ are the same objects of $N(\Fin)$ and in particular it only has one color. When $n=1$, there is an equivalence $\mathbb{E}_1^{\otimes}\simeq \Ass$. For every $n\in \mathbb{N}$, there is a natural map of $\infty$-operads $\mathbb{E}_n^{\otimes}\to \mathbb{E}_{n+1}^{\otimes}$ and by \cite[Cor. 5.1.1.5]{lurie-ha},
the colimit of the sequence (in the $(\infty,1)$-category of operads described in the next section)

\begin{equation}
\mathbb{E}_0^{\otimes}\to \mathbb{E}_{1}^{\otimes}\to \mathbb{E}_2^{\otimes}\to ...
\end{equation}

\noindent is the commutative operad $\Commmonoidal$.
\end{example}

\subsubsection{The $(\infty,1)$-category of $\infty$-operads}
By definition, a map of $\infty$-operads is a map of simplicial sets $\Opmonoidal\to \Opprimemonoidal$ over $N(\Fin)$, sending inert morphisms to inert morphisms. Following \cite{lurie-ha}, we write $Alg_{\Op}(\Op')$ to denote the full subcategory of $Fun_{N(\Fin)}(\Opmonoidal, \Opprimemonoidal)$ spanned by the maps of $\infty$-operads.\\

The collection of $\infty$-operads can be organized in a new $(\infty,1)$-category $\mathit{Op}_{\infty}$ which can be obtained as the simplicial nerve of the fibrant simplicial category whose objects are the $(\infty,1)$-operads and the mapping spaces are the maximal Kan-complexes inside $Alg_{\Op}(\Op')$. According to the Proposition 2.1.4.6 of \cite{lurie-ha}, there is a model structure in the category of marked simplicial sets over $N(\Fin)$ which has $\mathit{Op}_{\infty}$ as its underlying $(\infty,1)$-category.

\subsubsection{Symmetric Monoidal $(\infty,1)$-categories}
We say that a map of $\infty$-operads $q:\Cmonoidal\to \Opmonoidal$ is a \emph{fibration of $\infty$-operads} (respectively \emph{coCartesian fibration of $\infty$-operads}) if it is a categorical fibration (resp. coCartesian fibration) of simplicial sets (see Definition 2.4.2.1  of \cite{lurie-htt}).

\begin{defn}
Let $\Opmonoidal$ be an $\infty$-operad. An \emph{$\Op$-monoidal $(\infty,1)$-category} is the data of an $\infty$-operad $\Cmonoidal\to N(\Fin)$ together with a coCartesian fibration of $\infty$-operads $\Cmonoidal\to \Opmonoidal$. A \emph{symmetric monoidal $(\infty,1)$-category} is a $\Comm$-monoidal $(\infty,1)$-category.
\end{defn}

Let $p:\Cmonoidal\to N(\Fin)$ be a symmetric monoidal $(\infty,1)$-category. As for general $\infty$-operads, we  denote by $\C$ the fiber of $p$ over $\onefin$ and refer to it as the \emph{underlying $(\infty,1)$-category} of $\Cmonoidal$. To understand how this definition encodes the usual way to see the monoidal operation, we observe that if $f:\nfin\to \mfin$ is an active morphism in $N(\Fin)$ and $C=(X_1,..., X_n)$ is an object in the fiber of $\nfin$  (notation: $\Cmonoidal_{\nfin}$), by the definition of a coCartesian fibration, there exists a $p$-coCartesian lift of $f$, $\tilde{f}:C\to C'$ where we can identify $C'$ with a collection $(Y_1,..., Y_m)$, with each $Y_i$ an object in $\C$. The coCartesian property motivates the identification

\begin{equation}
Y_i= \bigotimes_{\alpha\in f^{-1}(\{i\})}X_{\alpha}
\end{equation}

\noindent where the equality should be understood only in the philosophical sense. When applied to the active morphisms $\langle 0 \rangle \to \onefin$ and $\twofin\to \onefin$ we obtain functors $\mathit{1}:\Delta[0]\to \C$ and $\otimes:\C\times \C\to \C$. We will refer to the first as the \emph{unit} of the monoidal structure. The second recovers the usual multiplication. By playing with the other active morphisms we  recover the usual data defining a symmetric monoidal structure. The coherences will appear out of the properties characterizing the coCartesian lifts.\\

It is an important observation that these operations endow the homotopy category of $\C$ with symmetric monoidal structure in the classical sense. \\

\begin{example}
\label{classicalsymmetricmonoidal}
Let $\C$ be a classical symmetric monoidal category. By regarding it as a trivial simplicial coloured operad and using the Construction \ref{monoidaltilde} we obtain an $\infty$-operad $N^{\otimes}(\C)\to N(\Fin)$ which is a symmetric monoidal $(\infty,1)$-category whose underlying $(\infty,1)$-category is equivalent to the nerve $N(\C)$.
\end{example}
 
We recommend the reader to follow the highly pedagogical exposition in \cite{lurie-ha}, which is also a good introduction to the classical theory. \\

\subsubsection{Monoidal Functors} Let $p:\Cmonoidal\to \Opmonoidal$ and $q:\Dmonoidal\to \Opmonoidal$ be $\Op$-monoidal $\icategories$ and let $F:\Cmonoidal\to \Dmonoidal$ be a map of $\infty$-operads over $\Opmonoidal$. To simplify things let us consider $\Op=\Comm$. For any object object $C=(X,Y)$ in the fiber over $\twofin$, the definitions allow us to extract a natural morphism in $\D$

\begin{equation}F(X)\otimes F(Y)\to F(X\otimes Y)\end{equation}

\noindent which in general have no need to be an isomorphism. In other words, operadic maps correspond to \emph{lax} monoidal functors. In the general situation, the full compatibility between the monoidal structures is equivalent to ask for $F$ to send $p$-coCartesian morphisms in $\Cmonoidal$ to $q$-coCartesian morphisms in $\Dmonoidal$. These are called \emph{$\Op$-monoidal functors} and we write $Fun^{\otimes}_{\Op}(\C,\D)$ to denote their full subcategory inside

\begin{equation} Fun_{\Opmonoidal}(\Cmonoidal,\Opmonoidal):= \Opmonoidal\times_{Fun(\Cmonoidal,\Opmonoidal)}Fun(\Cmonoidal,\Dmonoidal)
\end{equation}

Following the Remark 2.1.3.8 of \cite{lurie-ha}, an $\Op$-monoidal functor $\Cmonoidal\to \Dmonoidal$ is an equivalence of $\infty$-categories if and only if the map induced between the underlying $\infty$-categories $\C\to \D$ is an equivalence.\\

\subsubsection{Objectwise product on diagram categories}

 Let $p:\Cmonoidal\to N(\Fin)$ be a symmetric monoidal $(\infty,1)$-category. Given an arbitrary simplicial set $K$, and similarly to the classical case, we can hope for the existence of a monoidal structure in $Fun(K,\C)$ defined objectwise, meaning that the product of two functors $f$, $g$ at an object $k\in K$ should be given by the product of $f$ and $g$ at $\kappa$, in $\Cmonoidal$. Indeed, there exists such a structure $Fun(K, \C)^{\otimes}$, defined as the homotopy pullback of the diagram of $\infty$-categories

\begin{equation}
\xymatrix{
&Fun(K, \Cmonoidal)\ar[d]\\
N(\Fin)\ar[r]^{\delta}&Fun(K, N(\Fin))
}
\end{equation}

\noindent where the vertical map corresponds to the composition with $p$ and the map $\delta$ sends an object $\nfin$ to the constant diagram in $N(\Fin)$ with value $\nfin$. By the Proposition 3.1.2.1 of \cite{lurie-htt}, the composition map $Fun(K, \Cmonoidal) \to Fun(K, N(\Fin))$ is also a coCartesian fibration and therefore since object in the diagram is fibrant, the homotopy pullback is given by the strict pullback. Moreover, since the natural map $Fun(K, \C)^{\otimes}\to N(\Fin)$ is a cocartesian fibration because it is the pull-back of a cocartesian fibration. Notice that the underlying $(\infty,1)$-category of $Fun(K, \C)^{\otimes}$ is equivalent to $Fun(K, \C)$ by the formulas

\begin{eqnarray}
Fun(K,\C)^{\otimes}\times_{N(\Fin)}^h \Delta[0]\simeq Fun(K,\Cmonoidal)\times^h_{Fun(K,N(\Fin))} N(\Fin)\times^h_{N(\Fin)}\Delta[0]\simeq \\
\simeq Fun(K, \Cmonoidal)\times^h_{Fun(K, N(\Fin))} \Delta[0]\\
\simeq Fun(K,\C)
\end{eqnarray}

In fact this constructions holds if we consider $\Cmonoidal\to \Opmonoidal$ to be any coCartesian fibration of operads (see the Remark 2.1.3.4 of \cite{lurie-ha}).

\subsubsection{Subcategories closed under the monoidal product}
\label{subcategoriesclosedunderproduct}
If $p:\Cmonoidal\to N(\Fin)$ is a symmetric monoidal $(\infty,1)$-category with underlying category $\C$, whenever we have $\C_0\subseteq \C$ a full subcategory of $\C$ we can ask if the monoidal structure $\Cmonoidal$ can be restricted to a new one $(\C_0)^{\otimes}$ in $\C_0$. By the Proposition 2.2.1.1 and the Remark 2.2.1.2 of \cite{lurie-ha}, if $\C_0$ is stable under equivalences (meaning that if $X$ is an object in $\C_0$ and $X\to Y$ (or $Y\to X$) is an equivalence in $\C$, then $Y$ is in $\C_0$) and if $\C_0$ is closed under the tensor product $\C\times \C\to \C$ and contains the unit object, then we have that restriction of $p$ to the full subcategory $(\C_0)^{\otimes}\subseteq \Cmonoidal$ spanned by the objects $X=(X_1,..., X_n)$ in $\Cmonoidal$ where each $X_i$ is in $\C_0$, is again a coCartesian fibration. Of course, the inclusion $\C_0^{\otimes}\subseteq \Cmonoidal$ is a monoidal functor. Moreover, if the inclusion $\C_0\subseteq \C$ admits a right adjoint $\tau$, it can be naturally extended to a map of $\infty$-operads $\tau^{\otimes}: \Cmonoidal\to \C_{0}^{\otimes}$. In particular, for any $\infty$-operad $\Opmonoidal$, $\tau^{\otimes}$ gives a right adjoint to the canonical inclusion

\begin{equation}Alg_{\Op}(\C_0)\hookrightarrow Alg_{\Op}(\C)\end{equation}

(see \ref{section3-2} below).

\subsubsection{Monoidal Reflexive Localizations}
\label{mfl}
Let again $p:\Cmonoidal\to N(\Fin)$ be a symmetric monoidal $(\infty,1)$-category. In the sequence of the previous topic, we can find situations in which a full subcategory $\C_0\subseteq \C$ is not stable under the product in $\C$ but we can still define a monoidal structure in $\C_0$. We say that a subcategory $\Dmonoidal\subseteq \Cmonoidal$ is a \emph{monoidal reflexive localization of $\Cmonoidal$} if the inclusion $\Dmonoidal\subseteq \Cmonoidal$ admits a left adjoint map of $\infty$-operads $L^{\otimes}:\Cmonoidal\to\Dmonoidal $ with $L^{\otimes}$ a monoidal map. By the Proposition 2.2.1.9 of \cite{lurie-ha}, if $\C_0$ is a reflexive localization of $\C$ and the localization  satisfies the condition: \\

$(*)$ for every $L$-equivalence $f:X\to Y$ in $\C$ (meaning that $L(f)$ is an equivalence) and every object $Z$ in $\C$, the induced map $X\otimes Z\to Y\otimes Z$ is again a $L$-equivalence (see the Definition 2.2.1.6, Remark 2.2.1.7 in \cite{lurie-ha}). \\

\noindent then the full subcategory $\C_0^{\otimes}$ of $\Cmonoidal$ defined in the previous topic, becomes a monoidal reflexive localization of $\Cmonoidal$. However, and contrary to the previous situation, the inclusion $\C_0^{\otimes} \subseteq \Cmonoidal$ will only be lax monoidal.

\begin{remark}
\label{reflexivelocalizationalgebras}
If $\C_{0}^{\otimes}\subseteq \Cmonoidal$ is a monoidal reflexive localization, then for any $\infty$-operad $\Opmonoidal$ the category of algebras $Alg_{\Op}(\C_0)$ is a reflexive localization of $Alg_{\Op}(\C)$. (see \ref{section3-2} below for the theory of algebras).
\end{remark}

\subsubsection{Cartesian and Cocartesian Symmetric Monoidal Structures}

We recall the analogues of two classical situations. If $\C$ is a category with finite products and a final object then the operation (-$\times$-) gives birth to a symmetric monoidal structure in $\C$. In \cite[Section 2.4.1]{lurie-ha} the author provides a mechanism that allows us to extend this classical situation to the $\infty$-setting. For any $(\infty,1)$-category $\C$ we can construct a new $(\infty,1)$-category $\C^{\times}$ equipped with a map to $N(\Fin)$ \cite[2.4.1.4]{lurie-ha} being this map a symmetric monoidal structure if and only if $\C$ admits finite products \cite[2.4.1.5]{lurie-ha}. More generally, and thanks to the results of \cite[2.4.1.6, 2.4.1.7 and 2.4.1.8]{lurie-ha} a symmetric monoidal $(\infty,1)$-category $\Cmonoidal$ is said to be \emph{Cartesian} if the underlying $\infty$-category of $\C$ admits finite products and we have a monoidal equivalence $\Cmonoidal\simeq \C^{\times}$ which is the identity on $\C$. Moreover, the construction $\C^{\times}$ is characterized by a universal property related to the preservation of products: if $\C$ and $\D$ are $(\infty,1)$-categories with finite products then the space of monoidal maps $\C^{\times}\to \D^{\times}$ is homotopy equivalent to the space of functors $\C\to \D$ that preserve products. \\

The second classical situation is that of a category with finite sums and an initial object. In the $\infty$-categorical setting we can apply the preceding argument to the opposite category of $\C$ to deduce the existence of a monoidal structure induced by the disjoint sums in $\C$. In \cite[Section 2.4.3]{lurie-ha} the author provides an independent description of this monoidal structure. Again, from any $(\infty,1)$-category $\C$ we can construct (see \cite[2.4.3.1]{lurie-ha}) a simplicial set $\C^{\coprod}$ together with a map to $N(\Fin)$ which we can prove to be always an $\infty$-operad \cite[2.4.3.3]{lurie-ha}. Finally, and as explained in the Remark \cite[2.4.3.4]{lurie-ha} this $\infty$-operad is a symmetric monoidal $(\infty,1)$-category if and only if and $\C$ has finite sums and an initial object.  With this, we say that an $\infty$-operad is \emph{cocartesian} if it is equivalent to one of the form $\C^{\coprod}$ for some $(\infty,1)$-category $\C$. The assignemt $\C\mapsto \C^{\coprod}$ has a universal property \cite[Thm 2.4.3.18]{lurie-ha}: for any symmetric monoidal $(\infty,1)$-category $\D^{\otimes}$ any map $\C\to CAlg(\D)$ can be lifted in a essentially unique way to a lax monoidal functor $\C^{\coprod}\to \D^{\otimes}$.

Finally, if $\C$ is an $(\infty,1)$-category with direct sums and a zero object, the cartesian and cocartesian symmetric monoidal structures are canonically equivalent by means of the description in \cite[2.4.3.19]{lurie-ha}.\\

In the next section (Remarks \ref{algebrascartesian} and \ref{algebrascocartesian}) we will review how the theory of algebras in a cartesian/ cocartesian structure admits a much more simpler description than in the general case.\\

\subsubsection{Monoid objects}
If $\C^{\times}\to N(\Fin)$ is a cartesian symmetric monoidal $(\infty,1)$-category, for each $\infty$-operad $p:\Opmonoidal\to N(\Fin)$, an \emph{$\Op$-monoid object in $\C$} is a functor $F:\Opmonoidal\to \C$ satisfying the usual Segal condition: for each object $C=(x_1,..., x_n)$ in $\Opmonoidal$ with $x_1,..., x_n$ in $\Op$ and given $p$-coCartesian liftings $\tilde{\rho}^i:(x_1,..., x_n)\to x_i$ for the inert morphisms $\rho^i$ in $N(\Fin)$, the induced product map $F(C)\to \prod_i F(X_i)$ is an equivalence in $\C$. The collection of $\Op$-monoid objects in $\C$ can be organized in a new $(\infty,1)$-category $Mon_{\Op}(\C)$.

\subsection{Algebra Objects}
\label{section3-2}

\subsubsection{Algebras over an $(\infty,1)$-operad}
Let

\begin{equation}
\xymatrix{
&\Cmonoidal\ar[d]^p\\
\Op'^{\otimes}\ar[r]^f&\Opmonoidal
}
\end{equation}

\noindent be a diagram of $\infty$-operads with $p$ a fibration of $\infty$-operads. We denote by $Fun_{\Opmonoidal}(\Op'^{\otimes},\Cmonoidal)$ the strict pullback

\begin{equation}
\xymatrix{
&Fun(\Opmonoidal, \Cmonoidal)\ar[d]\\
\Delta[0]\ar[r]^(0.4){Id_{\Opmonoidal}}&Fun(\Opmonoidal, \Opmonoidal)
}
\end{equation}

\noindent whose vertices correspond to the dotted maps rendering the diagram commutative

\begin{equation}
\xymatrix{
&\Cmonoidal\ar[d]^p\\
\Op'^{\otimes}\ar[r]^f\ar@{-->}[ru]&\Opmonoidal
}
\end{equation}

By construction, $Fun_{\Opmonoidal}(\Opmonoidal,\Cmonoidal)$ is an $(\infty,1)$-category and following the Definition 2.1.3.1 of \cite{lurie-ha}, we denote by $Alg_{\Op'/\Op}(\C)$ its full subcategory spanned by the maps of $\infty$-operads defined over $\Opmonoidal$. We refer to it as the \emph{$\infty$-category of $\Op'$-algebras of $\C$}. In the special case when $f=Id$, we will simply write $Alg_{/\Op}(\C)$ to denote this construction. In the particular case  $\Opmonoidal=N(\Fin)$, this construction recovers the $\infty$-category of maps of $\infty$-operads $Alg_{\Op'}(\C)$ defined in the previous section. If both $\Op'^{\otimes}=\Opmonoidal=N(\Fin)$, the $\infty$-category $Alg_{\Comm}(\C)$ can be identified with the $\infty$-category of sections $s: N(Fin)\to \Cmonoidal$ of the structure map $p:\Cmonoidal\to N(\Fin)$, which send inert morphisms to inert morphisms. This condition forces every $s(\nfin)$ to be of the form $(X,X,...,X)$ for some object $X$ in $\C$. Moreover, the image of the active morphisms in $N(\Fin)$ will produce maps $X\otimes X\to X$ and $1\to X$ endowing $X$ in $\C$ with the structure of a commutative algebra. The cocartesian property is the machine that produces coherence diagrams. As an example, to extract the first associative restrain we consider the image through $s$ of the diagram

\begin{equation}
\xymatrix{
\langle 3 \rangle \ar[r] \ar[d]&\twofin\ar[d]\\
\twofin\ar[r]& \onefin
}
\end{equation}

\noindent of active maps in $N(\Fin)$. Since $s(\onefin)$ lives in the fiber over $\onefin$, the cocartesian property will ensure the existence of a uniquely determined (up to homotopy) new commutative square in $\C$

\begin{equation}
\xymatrix{
X\otimes X\otimes X\ar@{-->}[r]\ar@{-->}[d] \ar@{-->}[dr]|*{}="N"& \ar@{=>};"N" X\otimes X\ar@{-->}[d]\\
X\otimes X\ar@{-->}[r]\ar@{=>};"N" & X
}
\end{equation}

The commutativity restrain follows from the commutativity of the diagram
\begin{equation}
\xymatrix{
\langle 2 \rangle \ar[r]\ar[d]& \onefin\\
\langle 2 \rangle \ar[ru]
}
\end{equation}
\noindent in $N(\Fin)$, where the vertical map is permutation.\\

These are called \emph{commutative algebra objects of } $\C$ and we write $CAlg(\C):=Alg_{/\Comm}(\C)$. In particular, it follows from the description $\Commmonoidal \simeq colim_k E_k^{\otimes}$ that $CAlg(\C)$ is equivalent to $lim_k Alg_{E_k}(\C)$.\\

If $\Op'^{\otimes}=\Assmonoidal\to \Opmonoidal=N(\Fin)$ is the associative operad, the associated algebra-objects in $\Cmonoidal$ can be identified with the data of an object $X$ in $\C$ together with a unit and a multiplication satisfying the usual associative coherences which are extracted as explained in the previous discussion. The main difference is that the permutation of factors is no longer a map in $\Assmonoidal$ so that the commutativity restrains disappears. We set the notation $Alg(\C):= Alg_{\Ass}(\C)$. It follows that the composition with $\Assmonoidal\to N(\Fin)$ produces a forgetful map $CAlg(C)\to Alg(\C)$.\\

\begin{example}
\label{classicalalgebras}
Let $\C$ be a classical symmetric monoidal category. As explained in the Example \ref{classicalsymmetricmonoidal}, the nerve $N(\C)$ adquires the structure of a symmetric monoidal $(\infty,1)$-category. It follows that $CALg(N(\C))$ and $Alg(N(\C))$ can be identified, respectively, with the nerves of the classical categories of strictly commutative (resp. associative) algebra objects in $\C$ in the classical sense. 
\end{example}

Another important situation is the case when $\Op'^{\otimes}=\Trivmonoidal$ the trivial operad for which, as expected, we have a canonical equivalence $Alg_{/\Triv}(\C)\simeq \C$ (see \cite[2.1.3.5]{lurie-ha}).\\

The theory of algebras becomes much simpler in the case of cartesian and cocartesian monoidal structures. The following two remarks collect some of these aspects:

\begin{remark}
\label{algebrascartesian}
If $\Cmonoidal$ is cartesian symmetric monoidal $(\infty,1)$-category, we have a canonical map relating the theory of algebras with the theory of monoids described in the previous section

\begin{equation}Alg_{/\Op}(\C)\to Mon_{\Op}(\C)\end{equation}

By \cite[2.4.2.5]{lurie-htt} this map is an equivalence. We will use this in the next section.
\end{remark}

\begin{remark}
\label{algebrascocartesian}
As in the classical situation if $\C$ is a category with finite sums $\coprod$ then every object  $X$ in $\C$ carries admits a unique structure of commutative algebra, where the codiagonal map $X\coprod X\to X$ is the multiplication. In the $\infty$-setting this situation has its analoge for any Cocartesian $\infty$-operad as a consequence of the fact that Cocartesian $\infty$-operads are determined by their underlying $(\infty,1)$-categories in a very strong sense. More precisely (see \cite[2.4.3.16]{lurie-ha}), for any unital generalized $\infty$-operad $\Opmonoidal$ and any Cocartesian $\infty$-operad $\C^{\coprod}$ the restriction map $Alg_{\Op}(\C)\to Fun(\Op, \C)$ is an equivalence of $(\infty,1)$-categories. In particular, when $\Opmonoidal$ is the commutative or the associative operad\footnote{These are both unital operads}, the evaluation functors $Alg(\C)\to \C$ and $CAlg(\C)\to \C$ are equivalences so that the forgetful map

\begin{equation}
\xymatrix{
CAlg(\C)\ar[rr]\ar[dr]^{\sim}&&Alg(\C)\ar[dl]_{\sim}\\
&\C&
}
\end{equation}

\noindent is also an equivalence. In particular, by choosing an inverse to $CAlg(\C)\to \C$ we find a precise way to reproduce the classical situation.
\end{remark}

\subsubsection{Symmetric Monoidal $(\infty,1)$-categories as commutative algebras in $\iCat$ and Monoidal Localizations}
\label{remarkcat}
Let us consider the $(\infty,1)$-category of small $(\infty,1)$-categories $\iCat$ (see Chapter 3 of \cite{lurie-htt}). The cartesian product endows $\iCat$ with a symmetric monoidal structure $\iCat^{\otimes}$ which can be obtained as the operadic nerve of the combinatorial simplicial model category of marked simplicial sets with the cartesian model structure. The objects of $\iCat^{\otimes}$ are the finite sequences of $(\infty,1)$-categories $(\C_1,..., \C_n)$ and the morphisms $(\C_1,..., \C_n)\to (\D_1,..., \D_m)$ over a map $f:\nfin\to \mfin$ are given by families of maps 

\begin{equation}\prod_{j\in f^{-1}(\{i\})} C_j\to D_i\end{equation}

\noindent with $1\leq i\leq m$. Using the Grothendieck-construction of Theorem 3.2.0.1 of \cite{lurie-htt}, the objects of $CAlg(\iCat)\simeq Mon_{\Comm}(\iCat)$ can be identified with the small symmetric monoidal $(\infty,1)$-categories and the maps of algebras are identified with the monoidal functors (see Remark 2.4.2.6 of \cite{lurie-htt}). The same idea works if we replace $\iCat$ by $\iCatbig$. These examples will play a vital role throughout this paper.\\

\begin{remark}
\label{monoidalenvelope}
Thanks to \cite[2.2.4.9]{lurie-ha} the forgetful functor $CAlg(\iCat)\to Op_{\infty}$ admits left adjoint $Env^{\otimes}$. Given an $\infty$-operad $\Opmonoidal$, the symmetric monoidal $(\infty,1)$-category $Env^{\otimes}(\Opmonoidal)$ is called the \emph{monoidal envelope of $\Opmonoidal$}.
\end{remark}

The theory in \ref{mfl} can be extended to localizations which are not necessarily reflexive. Recall from our preliminairs that the formula $(\C,W)\mapsto \C[W^{-1}]$ provides a left adjoint to the fully-faithful map
$\iCat\subseteq \mathcal{W}\iCat$. This makes $\iCat$ a reflexive localization of $\mathcal{W}\iCat$. The last carries a natural monoidal structure given by the cartesian product of pairs which extends the cartesian product in $\iCat$. By the Proposition 4.1.3.2 in \cite{lurie-ha} the formula $(\C,W)\mapsto \C[W^{-1}]$ commutes with products and therefore fits in the conditions of the previous item, providing a monoidal functor 

\begin{equation}
\mathcal{W}\iCat^{\times}\to \iCat^{\times}
\end{equation}

\noindent thus providing a left adjoint to the inclusion

\begin{equation}CAlg(\iCat)\subseteq CAlg(\mathcal{W}\iCat)\end{equation}

We can identify the objects in $CAlg(\mathcal{W}\iCat)$ with the pairs $(\Cmonoidal,W)$ where $\Cmonoidal$ is a symmetric monoidal $(\infty,1)$-category and $W$ is collection of edges in the underlying $\infty$-category $\C$, together with the condition that the operations $\otimes^n:\C^n \to \C$ send sequences of edges in $W$ to a new edge in $W$. The previous adjunction is telling us that every time we have a symmetric monoidal $(\infty,1)$-category $\Cmonoidal$ together with a collection of edges $W$ which is compatible with the operations, then there is natural symmetric monoidal structure $\C^{\otimes}[W^{-1}]^{\otimes}$ in the localization $\C[W^{-1}]$. Plus, the localization functor is monoidal and has the obvious universal property.

Notice that the condition $(*)$ in \ref{mfl} is exactly asking for the edges $W= L-equivalences$ to satisfy the compatibility in the present discussion.

\subsubsection{Change of Operad and Free Algebras}
Let us consider now a diagram of $\infty$-operads.

\begin{equation}
\xymatrix{
&&\Cmonoidal\ar[d]^p\\
\Op_2^{\otimes}\ar[r]^{\alpha}&\Op_1^{\otimes}\ar[r]^f&\Opmonoidal
}
\end{equation}

\noindent with $p$ a fibration. Composition with $\alpha$ produces a  forgetful functor

\begin{equation}
Alg_{\Op_1/\Op}(\C)\to Alg_{\Op_2/\Op}(\C)
\end{equation}

The main result of \cite{lurie-ha} Section 3.1.3 is that, under some mild hypothesis on $\Cmonoidal$\footnote{These mild conditions hold for any symmetric monoidal $(\infty,1)$-category \emph{compatible with all small colimits} (see below) and this will be enough for our present purposes.}, we can use the theory of \emph{operadic Kan extensions} (see \cite{lurie-ha} - Section 3.1.2) to ensure the existence of a left adjoint $F$ to this functor (Corollary 3.1.3.5 of \cite{lurie-ha}). For each algebra $X\in Alg_{\Op_2/\Op}(\C)$,  $F(X)$ can be understood as the \emph{ free $\Op_1$-Algebra generated by the  $\Op_2$ algebra $X$} \cite[Def. 3.1.3.1]{lurie-ha}: for each color $b\in \Op_2$, the value of $F(X)$ at $b$ is given by the operadic $p$-colimit of the diagram consisting of all active morphisms over $b\in \Op_2^{\otimes}$, whose source is in the image of $\alpha$.\\ 

 The Construction 3.1.3.7 and the Prop. 3.1.3.11 of \cite{lurie-ha} provide a very precise description of this construction in the case where $\Op_2$ is the trivial operad and $\Op_{1}$ is the associative or the commutative operad. Given a trivial algebra $X$ in $\C$, for the first \cite[Prop. 4.1.1.14]{lurie-ha} we obtain

\begin{equation}
F(X)(\onefin)=\coprod_{n\geq 0}X^{\otimes n}
\end{equation}

\noindent while the second (see the Example 3.1.3.12 of \cite{lurie-ha}) is obtained from the previous formula by killing the action of the permutation groups

\begin{equation}
F(X)(\onefin)=\coprod_{n\geq 0}(X^{\otimes n}/\Sigma_n)
\end{equation}

\subsubsection{Colimits of algebras}
Another important feature of the $\infty$-categories $Alg_{/\Op}(\C)$ is the existence of limits and colimits. The first exist whenever they exist in $\C$ and can be computed using the forgetful functor (Prop. 3.2.2.1 -\cite{lurie-ha}). The existence of colimits needs a more careful discussion. In order to make the colimit of algebras an algebra we need to ask for a certain compatibility of the monoidal structure with colimits in $\C$. This observation motivates the notion of an \emph{$\Op$-monoidal $(\infty,1)$-category compatible with $\mathcal{K}$-indexed colimits}, with $\mathcal{K}$ a given collection of simplicial sets (see Definition 3.1.1.18 and Variant 3.1.1.19 of \cite{lurie-ha}). The definition demands the existence of $\mathcal{K}$-colimits on each $\C_x$ (for each $x\in \Op$), and also that the multiplication maps associated to the monoidal structure preserve all colimits indexed by the simplicial sets $K\in \mathcal{K}$, separately in each variable. The main result (Corollary 3.2.3.3 of \cite{lurie-ha}) is that if $\Cmonoidal$ is an $\Op$-monoidal $(\infty,1)$-category compatible with $\mathcal{K}=\{k-\text{small simplicial sets}\}$-colimits and if $\Opmonoidal$ is an essentially $\kappa$-small $\infty$-operad, then $Alg_{\Op}(\C)$ admits $\kappa$-small colimits. However and in general, contrary to limits, colimits cannot be computed using the forgetful functor to $\C_x$ for each color $x\in \Op$. \\

\subsubsection{Tensor product of Algebras}
\label{tensorproductalgebras}

Let $q:\Cmonoidal\to N(\Fin)$ be an $\infty$-operad. Thanks to \cite[3.2.4.1, 3.2.4.3]{lurie-ha}, for any $\infty$-operad $\Opmonoidal$ the category of algebras $Alg_{\Op}(\C)$ can be endowed again with the structure of $\infty$-operad $p:Alg_{\Op}(\C)^{\otimes}\to N(\Fin)$. Moreover, a morphism $\alpha$ in $Alg_{\Op}(\C)^{\otimes}$ is $p$-cocartesian if and only if for each color $x\in \Op$ its image through the evaluation functor $e_x: Alg_{\Op}(\C)^{\otimes}\to \Cmonoidal$ is $q$-cocartersian. In particular, $Alg_{\Op}(\C)^{\otimes}$ is a symmetric monoidal $(\infty,1)$-category if and only if $\Cmonoidal$ is, and in this case the evaluation functors $e_x$ are symmetric monoidal. In other words, the category of algebras inherits a tensor product given by the tensor operation in the underlying category $\C$. In particular, for any morphism of $\infty$-operads $\mathcal{O}'^{\otimes}\to \Opmonoidal$, since the forgetful functor $f:Alg_{\Op}(\C)\to Alg_{\mathcal{O}'}(\C)$ is defined over the evaluation functors $e_x$, it extends to a monoidal map $Alg_{\Op}(\C)^{\otimes}\to Alg_{\mathcal{O}'}(\C)^{\otimes}$.\\

\begin{remark}
\label{last1}
By the universal property of the simplicial set $Alg_{\Op}(\C)^{\otimes}$ (see \cite[Const. 3.2.4.1]{lurie-ha}), any  monoidal functor $\Cmonoidal\to \Dmonoidal$  between symmetric monoidal $(\infty,1)$-categories, extends to a monoidal functor between the symmetric monoidal categories of algebras $Alg_{\Op}(\C)^{\otimes}\to Alg_{\Op}(\D)^{\otimes}$. As a corollary of the previous discussion, for every color $x\in \Op$, the evaluation maps $e_x$ provide a commutative diagram of monoidal functors 

\begin{equation}
\xymatrix{
Alg_{\Op}(\C)^{\otimes}\ar[r]\ar[d]^{e_x}& Alg_{\Op}(\D)^{\otimes}\ar[d]^{e_x}\\
\Cmonoidal \ar[r] & \Dmonoidal
}
\end{equation}
\end{remark}

As in the classical case, if $\Op=\Comm$, this monoidal structure is coCartesian (Prop. 3.2.4.7 of \cite{lurie-ha}). More generally, for $\Op=\mathbb{E}_k$ there is a formula relating this monoidal structure with coproducts in $\C$ \cite[Theorem 5.1.5.1]{lurie-ha}.

\subsubsection{Tensor Product of $\infty$-operads}

The $(\infty,1)$-category of $\infty$-operads admits a symmetric monoidal structure where the tensor product of two operads $\Opmonoidal$ and $\Opprimemonoidal$ is characterized (\cite[2.2.5.3]{lurie-ha}) by the data of a map of simplicial sets $f:\Opmonoidal\times \Opprimemonoidal \to\Opmonoidal\otimes \Opprimemonoidal$ with the following universal property: for any $\infty$-operad $\Cmonoidal$, composition with $f$ induces an equivalence 

\begin{equation}
Alg_{(\Opmonoidal\otimes \Opprimemonoidal)_{\onefin}}(\C)\simeq Alg_{\Op}(Alg_{\mathcal{O}'}(C))
\end{equation}

\noindent where $Alg_{\mathcal{O}'}(C)$ is viewed with the operadic structure of the previous section. In particular, the unit is the trivial operad.

This monoidal structure can be defined at the level of marked simplicial sets over $N(\Fin)$ and can be seen to be compatible with the model structure therein \cite[2.2.5.7, 2.2.5.13]{lurie-ha}. Moreover, it is compatible with the natural inclusion

\begin{equation}
Cat_{\infty}\hookrightarrow Op_{\infty}
\end{equation}

\noindent so that the cartesian product of $(\infty,1)$-categories is sent to this new product of operads \cite[Prop. 2.2.5.15]{lurie-ha}.\\

An important application is the description of the $\infty$-operad $\mathbb{E}_{i+j}^{\otimes}$ as the tensor product of $\mathbb{E}_{i}^{\otimes}$ with $\mathbb{E}_{j}^{\otimes}$ \cite[5.1.2.2]{lurie-ha}. In particular, this characterizes an $\mathbb{E}_{n}^{\otimes}$-algebra $X$ in a symmetric monoidal $(\infty,1)$-category $\Cmonoidal$ has a collection of $n$ different associative multiplications on $X$, monoidal with respect to each other.

\subsubsection{Transport of Algebras via Monoidal functors}
\label{transportofalgebras}

Let 

\begin{equation}
\xymatrix{
\Cmonoidal\ar[rr]^{f}\ar[rd]^p&&\ar[ld]^q \Dmonoidal\\ 
&\Opmonoidal&}
\end{equation}

\noindent be a morphism of $(\infty,1)$-operads (not necessarily monoidal) with both $p$ and $q$ given by fibrations of $(\infty,1)$-operads. In this case, for any morphism of $(\infty,1)$-operads $\Opprimemonoidal\to \Opmonoidal$, $f$ induces a composition map
\begin{equation}
f_*: Alg_{\Op'/\Op}(\C)\to Alg_{\Op'/\Op}(\D)
\end{equation}

This is because the composition of maps of $\infty$-operads is again a map of $\infty$-operads.\\

Assume now $f$ is a monoidal functor between symmetric monoidal $(\infty,1)$-categories. Following the discussion in \ref{tensorproductalgebras}, for any $\infty$-operad $\Opmonoidal$ we can use the universal property of \cite[3.2.4.1]{lurie-ha} to obtain an induced map of $\infty$-operads between symmetric monoidal $(\infty,1)$-categories $Alg_{\Op}(\C)^{\otimes}\to Alg_{\Op}(\D)^{\otimes}$. Again by the arguments exposed in \ref{tensorproductalgebras} we can easily deduce that this last map is monoidal.

\subsubsection{Symmetric Monoidal Structures and Compatibility with Colimits}
\label{compatiblewithcolimitsmonoidal}
As mentioned in \ref{remarkcat} the objects of $CAlg(\iCat)$ can be identified with the (small) symmetric monoidal $(\infty,1)$-categories. We have an analogue for the (small) symmetric monoidal $(\infty,1)$-categories compatible with $\mathcal{K}$-indexed colimits: as indicated in \ref{notations}, given an arbitrary $(\infty,1)$-category $\C$ together with a collection $\mathcal{K}$ of arbitrary simplicial sets and a collection $\mathcal{R}$ of diagrams indexed by simplicial sets in $\mathcal{K}$, we can fabricate a new $(\infty,1)$-category $\mathcal{P}^{\mathcal{K}}_{\mathcal{R}}(\C)$ with the universal property described by the formula (\ref{formulaaddingcolimits}). We can now use this mechanism to fabricate a monoidal structure in $\iCat(\mathcal{K})$ induced by the cartesian structure of $\iCat$. Given two small $(\infty,1)$-categories $\C$ and $\C'$ having all the $\mathcal{K}$-indexed colimits, we consider the collection $\mathcal{R}=\mathcal{K}\boxtimes\mathcal{K}$ of all diagrams $p:K\to \C\times \C'$ such that $K\in\mathcal{K}$ and $p$ is constant in one of the product components, and define a new $(\infty,1)$-category $\C\otimes\C':= \mathcal{P}^{\mathcal{K}}_{\mathcal{K}\boxtimes\mathcal{K}}(\C\times \C')$. By construction it admits all the $\mathcal{K}$-indexed colimits and comes equipped with a map $\C\times \C'\to \C\otimes\C'$ endowed with the following universal property: for any $(\infty,1)$-category $\D$ having all the $\mathcal{K}$-indexed colimits, the composition map

\begin{equation}
\label{equationlala}
Fun_{\mathcal{K}}(\C\otimes \C', \D)\to Fun_{\mathcal{K}\boxtimes\mathcal{K}}(\C\times \C', \D)
\end{equation}

\noindent is an equivalence. The right-side denotes the category of all $\mathcal{K}$-colimit preserving functors and the left-side denotes the category spanned by the functors preserving $\mathcal{K}-$colimits separately in each variable.

We can now use this operation to define a symmetric monoidal structure in $\iCat(\mathcal{K})$. 
For that, we start with $\iCat^{\otimes}\to N(\Fin)$ the cartesian monoidal structure in $\iCat$, and we consider the (non-full) subcategory $\iCat(\mathcal{K})^{\otimes}$ whose objects are the finite sequences $(\C_1,...,\C_n)$ where each $\C_i$ admits all $\mathcal{K}$-indexed colimits together with those morphisms $(\C_1,..,\C_n)\to (\D_1,...,\D_m)$ in $\iCat^{\otimes}$ over some $f:\nfin\to \mfin$, which correspond to a family of maps

\begin{equation}
\prod_{j\in f^{-1}(\{i\})}\C_j\to \D_i
\end{equation}

\noindent given by functors commuting with $\mathcal{K}$-indexed colimits separately in each variable. We can now use the universal property described in the previous paragraph to prove that $\iCat(\mathcal{K})^{\otimes}$ is a cocartesian fibration: given a morphism $f:\nfin\to \mfin$ and a sequence of $(\infty,1)$-categories $X=(\C_1,..., \C_n)$ having all the $\mathcal{K}$-indexed colimits, a cocartesian lifting for $f$ at $X$ in $\iCat(\mathcal{K})^{\otimes}$ is given by the family of universal maps

\begin{equation}
\prod_{j\in f^{-1}(\{i\})}\C_j\to \D_i:= \mathcal{P}^{\mathcal{K}}_{\boxtimes_{j\in f^{-1}(\{i\}}\mathcal{K}}(\prod_{j\in f^{-1}(\{i\}}\C_j)
\end{equation}

\noindent which we know commutes with $\mathcal{K}$-indexed colimits separately in each variable. Moreover, it follows from this formula that the canonical inclusion $\iCat(\mathcal{K})^{\otimes}\to \iCat^{\otimes}$ is a lax-monoidal functor (see the Proposition 6.3.1.3 and the Corollary 6.3.1.4 of \cite{lurie-ha} for the full details).\\

Finally, the objects of $CAlg(\iCat(\mathcal{K}))$ can be naturally identified with the symmetric monoidal $(\infty,1)$-categories compatible with $\mathcal{K}$-colimits.\\

More generally, given two arbitrary collections of simplicial sets $\mathcal{K}\subseteq \mathcal{K}'$, it results from the universal properties defining the monoidal structures that the inclusion

\begin{equation}
\iCatbig(\mathcal{K}') \subseteq \iCatbig(\mathcal{K})
\end{equation}

\noindent is lax monoidal and its (informal) left adjoint $\C\mapsto \mathcal{P}^{\mathcal{K}^{'}}_{\mathcal{K}}(\C)$ (see \ref{completionwithcolimits}) is monoidal. In other words, for every inclusion $\mathcal{K}\subseteq \mathcal{K}^{'}$ of collections of simplicial sets, if $\Cmonoidal$ is a symmetric monoidal $(\infty,1)$-category compatible with all $\mathcal{K}$-colimits, the $(\infty,1)$-category $\mathcal{P}^{\mathcal{K}^{'}}_{\mathcal{K}}(\C)$ inherits a canonical symmetric monoidal structure $\mathcal{P}^{\mathcal{K}^{'}}_{\mathcal{K}}(\C)^{\otimes}$ compatible with all the $\mathcal{K}'$-indexed colimits. Moreover, the canonical functor $\C\to \mathcal{P}^{\mathcal{K}^{'}}_{\mathcal{K}}(\C)$ extends to a monoidal functor $\Cmonoidal\to \mathcal{P}^{\mathcal{K}^{'}}_{\mathcal{K}}(\C)^{\otimes}$ and, again by ignoring the set-theoretical aspects, the previous adjunction extends to a new one (see the Proposition $6.3.1.10$ of \cite{lurie-ha} for the correct statement) 

\begin{equation}
\xymatrix{
CAlg(\iCat^{big}(\mathcal{K}')) \ar[r]_i & CAlg(\iCat^{big}(\mathcal{K})) \ \ar@/_/[l]
}
\end{equation}

\begin{example}
\label{monoidalstructurepresheaves}
In the particular case when $\mathcal{K}$ is empty and $\mathcal{K}'$ is the collection of all small simplicial sets, this tells us that if $\Cmonoidal$ is a small symmetric monoidal $(\infty,1)$-category, the $\infty$-category of $\infty$-presheaves on $\C$ inherits a natural symmetric monoidal structure $\mathcal{P}^{\otimes}(\C)$, commonly called the \emph{convolution product}. Moreover, the Yoneda's map is monoidal and satisfies the following universal property: for any symmetric monoidal  $(\infty,1)$-category $\Dmonoidal$ with all small colimits, the natural map given by composition

\begin{equation}Fun^{\otimes,L}(\mathcal{P}(\C),\D)\to Fun^{\otimes}(\C,\D)\end{equation}

\noindent is an equivalence.
\end{example}

\begin{example}
Following the discussion in \ref{idempotentcompletecategories}, $\iCat(\{Idem\})$ can be identified with $\iCat^{idem}$. In this case, the previous discussion endows $\iCat^{idem}$ with a symmetric monoidal structure and the idempotent-completion $Idem(-)$ is a monoidal left adjoint to the inclusion $\iCat^{idem}\subseteq \iCat$.
\end{example}
 
\begin{remark}
Let $\Cmonoidal$ be a symmetric monoidal $(\infty,1)$-category. We say that $\C$ is \emph{closed} if for each object $X \in \C$ the map $(-\otimes X):\C\to \C$ has a  right adjoint. In other words, $\Cmonoidal$ is closed if and only if for any objects $X$ and $Y$ in $\C$ there is an object $X^Y$ and a map $Y^X\otimes X\to Y$ inducing an homotopy equivalence $Map_{\C}(Z\otimes X, Y)\simeq Map_{\C} (Z,Y^X)$.
If $\Cmonoidal$ is closed symmetric monoidal $(\infty,1)$-category and its underlying $\infty$-category $\C$ has all small colimits, then $\Cmonoidal$ is a symmetric monoidal $(\infty,1)$-category compatible with all small colimits. 
An important example is the cartesian symmetric monoidal structure on $\iCat$, where the right adjoint to $(-\times X)$ is provided by the construction $Fun(X,-)$. The colimits exist in $\iCat$ because it is presentable.
\end{remark}

\subsection{Modules over an Algebra}
\label{section3-3}

We now recall the theory of module-objects over an algebra-object. We mimic the classical situation: in a symmetric monoidal category $\C$, each algebra-object $A$ has an associated theory of modules $Mod_A(\C)$ and under some nice assumptions on $\C$, this new category has a natural monoidal product. This provides an example of a more general object - a collection of $\infty$-operads indexed by the objects of a small $(\infty,1)$-category.

\subsubsection{Generalized $(\infty,1)$-operads and Operadic families}
\label{generalizedoperads}

We start with a review of the appropriate language to formulate the notion of a family of $\infty$-operads indexed by the collection of objects of an $(\infty,1)$-category $\B$. The theory of modules provides an example, with $\B=CAlg(\C)$ for a symmetric monoidal $(\infty,1)$-category $\C$.\\

In \cite{lurie-ha}, the author develops two equivalent ways to formulate this idea of family. The first is the notion of a \emph{$\B$-operadic family} (See Definition 2.3.2.10 of \cite{lurie-ha}). It consists of categorical fibration $p:\Opmonoidal\to \B\times N(\Fin)$ such that 

\begin{enumerate}
\item for each object $b\in \B$, the fiber $\Opmonoidal\times_{\B\times N(\Fin)}\{b\}\to N(\Fin)$ is an $\infty$-operad. In particular, we can identify an object $X$ in the fiber of $(b,\nfin)$ as a sequence $(b; (X_1,..., X_n))$ by choosing $p$-cocartesian liftings $X\to X_i$ of the canonical morphisms $\rho_i:\nfin\to \onefin$;

\item For any $Z=(b'; Z_1,...,Z_m)\in \Opmonoidal$, $X=(b, X_1,..., X_n)$ and every pair of morphisms $(u,f):(b',\mfin)\to (b,\nfin)$ in $\C\times N(\Fin)$, we ask for the canonical map

\begin{equation}
Map_{\Opmonoidal}^{u,f}(Z,X)\to \prod_{i=1}^{n} Map_{\Opmonoidal}^{u, \rho_i\circ }(Z, X_i)
\end{equation}

to be an equivalence. In this notation, $Map_{\Opmonoidal}^{u,f}$ denotes the connected component of $Map_{\Opmonoidal}$ of all morphisms lying over $(u,f)$.

\end{enumerate}

The second notion is that of a \emph{generalized $\infty$-operad} (\cite{lurie-ha}-Defn 2.3.2.1). It is given by the data of an $(\infty,1)$-category $\Opmonoidal$ equipped with a map $q:\Opmonoidal\to N(\Fin)$ such that: 

\begin{enumerate}

\item For any object $X$ over $\nfin$ and any inert morphism $f:\nfin\to \mfin$, there is a $q$-cocartesian lifting $\tilde{f}:X\to X'$ of $f$. In particular, these induce functors $f_{!}:\Opmonoidal_{\nfin}\to \Opmonoidal_{\mfin}$ and if $\mfin=\langle 0 \rangle$ we find a canonical map $\Opmonoidal\to \Opmonoidal_{\langle 0 \rangle}$. Let $X$ be a object over $\nfin$. Choose $X\to X_i$ a $p$-cocartesian lifting for each $\rho_i$. Moreover, choose a $p$-cocartesian lifting $X\to X_0$ for the canonical map $\nfin\to \langle 0 \rangle$ and $p$-cocartesian liftings $X_i\to X_{i,0}$ for the null map $\onefin\to \langle 0 \rangle$. Because of the cocartesian property, the diagram

\begin{equation}
\xymatrix{
 &X_1\ar[ddr]&\\
 &X_2\ar[dr]&\\
X\ar[uur] \ar[ur]\ar[dr]&\vdots& X_0\simeq X_{i,0}\\
 &X_n\ar[ur]&\\
}
\end{equation}

commutes in $\Opmonoidal$.

\item For each $\nfin$, the natural map 

\begin{equation}
\Opmonoidal_{\nfin}\to \underbrace{\Opmonoidal_{\onefin}\times_{\Opmonoidal_{\langle 0 \rangle}}.... \times_{\Opmonoidal_{\langle 0 \rangle}}\Opmonoidal_{\onefin}}_{n}
\end{equation}

induced by the morphisms $\rho_i:\nfin\to \onefin$, is an equivalence. This condition is weaker than the condition in the definition of an $\infty$-operad for it does not force $\Opmonoidal_{\langle 0 \rangle}$ to be contractible. This second axiom allows us to identify an object $X$ over $\nfin$ with a sequence of objects $(X_1,..., X_n)$ in $\Opmonoidal_{\onefin}$ living over the same (up to equivalence) object $X_0\in \Opmonoidal_{\langle 0 \rangle}$ and motivates the notation $X=(X_0; X_1,..., X_n)$.

\item Let $X=(X_0; X_1,..., X_n)$ and $Z=(Z_0; Z_1,..., Z_m)$ be objects in $\Opmonoidal$. For any $f:\mfin\to \nfin$, we ask for the canonical map 

\begin{equation}
Map^f_{\Opmonoidal}(Z,X)\to (\prod_{I=1}^n Map_{\Opmonoidal}^{\rho_i\circ f} Map_{\Opmonoidal}(Z,X_i))\times_{Map_{\Opmonoidal}^{0:\nfin\to \langle 0 \rangle}(Z, X_0) } Map_{\Opmonoidal_{\langle 0 \rangle}}(Z_0, X_0)
\end{equation}

to be an equivalence.
 
\end{enumerate}

According to the Proposition 2.3.2.11 of \cite{lurie-ha}, the two notions are equivalent: if $q:\Opmonoidal\to \B\times N(\Fin)$ is a $\B$-operadic family, the composition with the projection towards $N(\Fin)$ is a generalized $\infty$-operad and the canonical projection $\Opmonoidal_{\langle 0 \rangle}\to \B$ is a trivial Kan fibration. Conversely, if $p:\Opmonoidal\to N(\Fin)$ is a generalized $\infty$-operad, the product of $p$ with the canonical projection $\Opmonoidal\to \Opmonoidal_{\langle 0 \rangle}$ is a $\Opmonoidal_{\langle 0 \rangle}$-operadic family. These two constructions are mutually inverse. Notice also that if $\B=\Delta[0]$, we recover the notion of $\infty$-operad. \\

Let $p:\Opmonoidal\to N(\Fin)$ be a generalized $\infty$-operad. We say that a morphism in $\Opmonoidal$ is \emph{inert} if it is $p$-cocartesian and its image in $N(\Fin)$ is inert. If $\Opmonoidal$ and $\Opprimemonoidal$ and generalized $\infty$-operads, we say that a map of simplicial sets $f:\Opmonoidal\to \Opprimemonoidal$ is a map of generalized $\infty$-operads if it is defined over $N(\Fin)$ and sends inert morphisms to inert morphisms. According to the Remark 2.3.2.4 of \cite{lurie-ha}, there is a left proper, combinatorial simplicial model structure in the category of marked simplicial sets over $N(\Fin)$ having the generalized $\infty$-operads as cofibrant-fibrant objects. We denote by $\mathit{O}p^{gn}_{\infty}$ its underlying $(\infty,1)$-category. According to the Corollary 2.3.2.6 of \cite{lurie-ha}, the model structure for $\infty$-operads is a Bousfield localization of this model structure for generalized $\infty$-operads. At the level of the underlying $(\infty,1)$-categories this is the same as saying that $\mathit{O}p_{\infty}$ is a reflexive localization of $\mathit{O}p^{gn}_{\infty}$. The inclusion understands an $\infty$-operad as generalized $\infty$-operad whose indexing category is $\Delta[0]$.\\

In the language of $(\infty,1)$-categories, the relation between the two notions of operadic families can now be understood by using an adjunction: the assignment $\Opmonoidal\mapsto \Opmonoidal_{\langle 0 \rangle}$ sending a generalized $\infty$-operad to its fiber over $\langle 0 \rangle$ can be understood as a functor $F:\mathit{O}p^{gn}_{\infty}\to \iCat$ and according to the Proposition 2.3.2.9 of \cite{lurie-ha}, the map sending an $(\infty,1)$-category $\B$ to the generalized $\infty$-operad $\B\times N(\Fin)$ is a fully-faithful right adjoint of $F$. In this language, $\Opmonoidal\to \B\times N(\Fin)$ is an operadic family if and only it is a fibration of $\infty$-operads and its adjoint morphism $\Opmonoidal_{\langle 0 \rangle}\to \B$ is a trivial Kan fibration.

\subsubsection{The $(\infty,1)$-category of modules over an algebra-object}
\label{module-objects}

Let $\Cmonoidal\to \Opmonoidal$ be a fibration of $\infty$-operads. Following the results of \cite{lurie-ha} Section 3.3, by assuming a \emph{coherent} condition on the $\infty$-operad $\Opmonoidal$ (see Def. 3.3.1.9 in \cite{lurie-ha}), it is possible to construct a $Alg_{/\Op}(\C)$-operadic family $Mod^{\Op}(\C)^{\otimes}\to Alg_{/\Op}(\C)\times \Opmonoidal$ whose fiber over an algebra $A$

\begin{equation}
Mod^{\Op}_A(\C)^{\otimes}\to \Opmonoidal
\end{equation}

\noindent can be understood as a theory of $A$-modules. We will not reproduce here the details of this construction. Let us just say that if $\Opmonoidal$ is \emph{unital} (see Def. 2.3.1.1 of \cite{lurie-ha}) \footnote{In particular coherent operads are unital}, the category $Alg_{/\Op}(\C)\times \Opmonoidal$ can be described by means of a nice universal property in the homotopy theory of simplicial sets over $\Opmonoidal$ with the Joyal's model structure (See the Notation 3.3.3.5 and the Remarks 3.3.3.6 and 3.3.3.7 in \cite{lurie-ha}). By enlarging a bit this universal property we can define a new simplicial set $Mod^{\Op}(\C)^{\otimes}$ over $\Opmonoidal$ (see the Construction 3.3.3.1, and the Definition 3.3.3.8 in \cite{lurie-ha}), together with a canonical map

\begin{equation}
Mod^{\Op}(\C)^{\otimes}\to Alg_{/\Op}(\C)\times \Opmonoidal
\end{equation}

\noindent which by the Remark 3.3.3.16 of \cite{lurie-ha} is a fibration of generalized $\infty$-operads. Moreover, by the Theorem 3.3.3.9 of loc. cit, if $\Opmonoidal$ is coherent, then for each algebra $A$, the fiber $Mod_A^{\Op}(\C)^{\otimes}:=Mod^{\Op}(\C)^{\otimes}\times_{Alg_{/\Op}(\C)} \{A\}\to \Opmonoidal$ is a fibration of $\infty$-operads.

\subsubsection{Monoidal Structures in Categories of Modules}
\label{monoidalstructuremodules}
Under some extra conditions on $\Cmonoidal$, it is possible to prove that $Mod_A^{\Op}(\C)^{\otimes}$ is actually a  $\Op$-monoidal structure, with $A$ as a unit object. These sufficient conditions are already visisible in the classical situation: If $\C$ is the category of abelian groups with the usual tensor product, and $A$ is a (classical) commutative ring (a commutative algebra object in $\C$), then the tensor product of $A$-modules $M$ and $N$ is by definition, the colimit of

\begin{equation}
\label{productmodules}
\xymatrix{M\otimes A\otimes N \ar@<+.7ex>[r] \ar@<-.7ex>[r] & M\otimes N}
\end{equation}

\noindent where $\otimes$ denotes the tensor product of abelian groups and the two different arrows correspond, respectively, to the multiplication on $M$ and $N$. In order for this pushout to be a new $A$-module we need to assume that $\otimes$ commutes with certain colimits. By the combination of the Corollary 3.1.1.21 with the Theorem 3.4.4.3 of \cite{lurie-ha}, if $\Cmonoidal$ is a symmetric monoidal $(\infty,1)$-category compatible with $\kappa$-small colimits (for $\kappa$ an uncountable regular cardinal) and if $\Opmonoidal$ is a $\kappa$-small coherent $\infty$-operad, then the map $\Mod_A^{\Op}(\C)^{\otimes}\to \Opmonoidal$ is a coCartesian fibration of $\infty$-operads which is again compatible with $\kappa$-small colimits. In particular this result is valid for algebras over the $\infty$-operad $\mathbb{E}_k^{\otimes}$, for any $k\geq 0$, because it is known to be coherent (see \cite[Theorem 5.1.1.1]{lurie-ha} for the general case and \cite[Example 3.3.1.12]{lurie-ha} for the commutative operad).

\subsubsection{Limits and Colimits of Modules}
\label{limitscolimitsmodules}
Another important feature of the module-categories $\Mod_A^{\Op}(\C)^{\otimes}_{x}$ (for each $x\in \Op$) is the existence of limits, which can be computed directly on each $\C_x$ using the forgetful functor (Thm 3.4.3.1 of \cite{lurie-ha}). The existence of colimits requires again the compatibility of the monoidal structure with colimits on each $\C_x$. If $\Cmonoidal$ is compatible with $\kappa$-small colimits, then by the Corollary 3.4.4.6 of \cite{lurie-ha}, colimits in $\Mod_A^{\Op}(\C)^{\otimes}$ can be computed in the underlying categories $\C_x$ by means of the forgetful functors $\Mod_A^{\Op}(\C)^{\otimes}_x\to \C_x$, for each color $x\in \Op$.

\subsubsection{Algebra-objects in the category of modules}
We also recall another important result relating algebra objects in $\Mod_A^{\Op}(\C)^{\otimes}$ and algebras $B$ in $\C$ equipped with a map of algebras $A\to B$.

\begin{prop}\label{algmod-alg}(Corollary 3.4.1.7 of \cite{lurie-ha})
Let $\Opmonoidal$ be a coherent operad and let $\Cmonoidal\to \Opmonoidal$ be a fibration of $\infty$-operads. Let $A\in Alg_{/\Op}(\C)$ be an $\Op$-algebra object of $\C$. Then the obvious map

\begin{equation}
\label{quasiquasiquasi}
Alg_{/\Op}(\Mod_A^{\Op}(\C))\to Alg_{/\Op}(\C)_{A/}
\end{equation}

\noindent is a categorical equivalence (where $Alg_{/\Op}(\C)_{A/}$ denotes the $(\infty,1)$-category of objects $B$ in $Alg_{\Op}(\C)$ equipped with a map of algebras $A\to B$ - see \cite{joyal-article} or Prop. 1.2.9.2 of \cite{lurie-htt} ).

In particular, if $\Cmonoidal$ is coCartesian fibration compatible with all small colimits, $Alg_{/\Op}(\C)_{A/}$ inherits a monoidal structure from $\Mod_A^{\Op}(\C)^{\otimes}$ and by the discussion above this structure is cocartesian.
\end{prop}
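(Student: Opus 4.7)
My plan is to construct the comparison functor directly from the simplicial-set construction of $\Mod^{\Op}(\C)^{\otimes}$ and then verify it is a categorical equivalence by checking fibrancy and contractibility of fibers. Recall that $\Mod^{\Op}(\C)^{\otimes}$ is defined so that, for any simplicial set $X \to \Opmonoidal$, maps $X \to \Mod^{\Op}(\C)^{\otimes}$ over $\Opmonoidal\times Alg_{/\Op}(\C)$ correspond to maps $X \times_{\Opmonoidal} Fun(\Delta[1],\Opmonoidal)^{si} \to \Cmonoidal$ which send inert edges to inert edges, together with the prescribed restriction along the inclusion of null-morphisms into semi-inert morphisms that encodes the fixed algebra $A$. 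The construction of the comparison map then comes from the diagonal inclusion $\Opmonoidal \hookrightarrow Fun(\Delta[1],\Opmonoidal)^{si}$ sending an object $x$ to the identity morphism on $x$: pre-composing a module-algebra datum with this diagonal produces an inert-preserving section $\Opmonoidal \to \Cmonoidal$, i.e.\ an $\Op$-algebra $B$ in $\C$, while the restriction to $Fun(\Delta[1],\Opmonoidal)^{null}\hookrightarrow Fun(\Delta[1],\Opmonoidal)^{si}$ produces the structure map $A\to B$ exhibiting $B$ as an object of $Alg_{/\Op}(\C)_{A/}$.

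\textbf{Step two.} Having built the comparison, I would rephrase the problem: $Alg_{/\Op}(\Mod_A^{\Op}(\C))$ is equivalent to the full subcategory of $Fun_{\Opmonoidal}(\Opmonoidal, \Mod^{\Op}(\C)^{\otimes})$ spanned by the inert-preserving sections over the identity of $\Opmonoidal$ and over $A\in Alg_{/\Op}(\C)$. By the defining universal property recalled above, this is equivalent to the full subcategory of
\begin{equation}
Fun_{\Opmonoidal}\bigl(\Opmonoidal\times_{\Opmonoidal} Fun(\Delta[1],\Opmonoidal)^{si},\ \Cmonoidal\bigr) \simeq Fun_{\Opmonoidal}\bigl(Fun(\Delta[1],\Opmonoidal)^{si},\ \Cmonoidal\bigr)
\end{equation}
spanned by those functors that send inert morphisms to inert morphisms and whose restriction along the inclusion of null-morphisms is $A$. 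On the other hand, $Alg_{/\Op}(\C)_{A/}$ admits a similar description using the pushout $\Opmonoidal\coprod_{Fun(\Delta[1],\Opmonoidal)^{null}} Fun(\Delta[1],\Opmonoidal)^{null}$, and the comparison functor is induced by pulling back along the evident map between these two indexing diagrams.

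\textbf{Step three.} The verification that this is an equivalence reduces to showing that the map of indexing diagrams is cofinal in the sense relevant to inert-preserving sections; equivalently, that the forgetful map $Mod^{\Op}(\C)^{\otimes}\to \overline{Mod}^{\Op}(\C)^{\otimes}\times_{{}^{p}Alg_{/\Op}(\C)}(\Opmonoidal\times\{A\})$ is a trivial fibration. This is where coherence of $\Opmonoidal$ is essential: coherence (Definition 3.3.1.9 of \cite{lurie-ha}) is exactly the condition guaranteeing that the canonical map $\theta:\overline{Mod}^{\Op}(\C)^{\otimes}\to{}^{p}Alg_{/\Op}(\C)$ is a categorical fibration and its fibers behave correctly, so that pulling back along the equivalence $\delta:\Opmonoidal\times Alg_{/\Op}(\C)\to{}^{p}Alg_{/\Op}(\C)$ preserves the relevant categorical properties. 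Passing to algebras commutes with the comparison because of the universal property of the category of sections.

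\textbf{Main obstacle.} The principal difficulty is the bookkeeping required to match the two universal properties. In particular, the map from $Alg_{/\Op}(\Mod_A^{\Op}(\C))$ to $Alg_{/\Op}(\C)_{A/}$ is naturally phrased in terms of diagrams over $Fun(\Delta[1],\Opmonoidal)^{si}$ while the comma category $Alg_{/\Op}(\C)_{A/}$ is phrased in terms of sections over $\Opmonoidal$ with an extra map in the undercategory direction. Exhibiting these as two presentations of the same homotopy-theoretic datum is where the formal content of the coherence hypothesis is used, via the factorization system $(\text{inert},\text{active})$ and the analysis of semi-inert morphisms. Once this identification is established, the equivalence in the statement is essentially automatic, and the implication for the coCartesian case follows because the induced monoidal structure on $Alg_{/\Op}(\C)_{A/}$ can be read off from that on $Mod_A^{\Op}(\C)^{\otimes}$ via the same comparison.
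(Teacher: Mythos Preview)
The paper does not give its own proof of this proposition: it is stated purely as a citation of Corollary 3.4.1.7 in Lurie's \emph{Higher Algebra}, with no argument supplied. So there is nothing in the paper to compare your proposal against beyond the reference itself.

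Your sketch is in the right spirit---the comparison functor is indeed built from the universal property of $\Mod^{\Op}(\C)^{\otimes}$ as a simplicial set over $\Opmonoidal$, and the diagonal embedding of $\Opmonoidal$ into $Fun(\Delta[1],\Opmonoidal)^{si}$ together with the restriction to null morphisms does pick out the underlying algebra and the map from $A$. However, several details are garbled. In Step two, your description of $Alg_{/\Op}(\C)_{A/}$ via the ``pushout $\Opmonoidal\coprod_{Fun(\Delta[1],\Opmonoidal)^{null}} Fun(\Delta[1],\Opmonoidal)^{null}$'' is not meaningful as written (the two feet of the pushout are identical), and it is not how the undercategory is actually modelled in Lurie's argument. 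In Step three, the reduction you state---that a certain forgetful map to a fiber product is a trivial fibration---is not the reduction Lurie uses; his Corollary 3.4.1.7 is deduced from Proposition 3.4.1.3, which shows that the forgetful functor $Alg_{/\Op}(\Mod_A^{\Op}(\C))\to Alg_{/\Op}(\C)$ is a left fibration and identifies it with the projection from the undercategory. Coherence enters earlier, to guarantee that $\Mod_A^{\Op}(\C)^{\otimes}\to\Opmonoidal$ is a fibration of $\infty$-operads, not at the point you invoke it. If you want to carry out the argument yourself rather than cite it, you should follow the left-fibration strategy of 3.4.1.3 rather than the trivial-fibration claim you wrote.
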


Under the same conditions, it is also true \cite[Cor. 3.4.1.9]{lurie-ha} that for any algebra $\widetilde{B}\in Alg_{/\Op}(\Mod_A^{\Op}(\C))$ the canonical map

\begin{equation}
Mod_{\widetilde{B}}^{\Op}(Mod_A^{\Op}(\C))^{\otimes}\to Mod_{B}^{\Op}(\C)^{\otimes}.
\end{equation}
is an equivalence of $\infty$-operads, with $B$ the image of $\widetilde{B}$ through (\ref{quasiquasiquasi}). 

\subsubsection{Modules over associative algebras}
\label{modulesoverassociativealgebras}
We now review the particular situation over the $\infty$-operad $\Assmonoidal$. Let $\Cmonoidal$ be a monoidal $(\infty,1)$-category. Whenever given an associative algebra $A$ in $\Cmonoidal$, it is possible to introduce two new constructions $LMod_A(\C)$, $RMod_A(\C)$ encoding, respectively, the theories of left and right modules over $A$. Their objects are pairs $(A,M)$ where $A$ is an object in $Alg(\C)$ and $M$ is an object in $\C$ equipped with a left (resp. right) action of $A$.  This idea can be made precise with the construction of two new $\infty$-operads $\mathcal{L}\mathcal{M}^{\otimes}$ and $\mathcal{R}\mathcal{M}^{\otimes}$ fabricated to shape left-modules (see Definitions 4.2.1.7 and 4.2.1.13 of \cite{lurie-ha}), respectively, right-modules. Let us overview the mechanism for left-modules. Grosso modo, $\mathcal{L}\mathcal{M}^{\otimes}$ is the operadic nerve of a classical operad $LM$ constructed to have two colours $\textbf{a}$ and $\textbf{m}$ and a unique morphism 

\begin{equation}
\xymatrix{
(\underbrace{\textbf{a}, \textbf{a}, ..., \textbf{a}}_{n}, \textbf{m})\ar[r]&\textbf{m}.
}
\end{equation}

\noindent for each $n\in \mathbb{N}$, these being the only morphisms where the color $\textbf{m}$ appears. Moreover, the full subcategory spanned by the color $\textbf{a}$ recovers the associative operad. At the same time, the projection sending the two colors $(\textbf{a},\textbf{m})$ in $\mathcal{L}\mathcal{M}^{\otimes}$ to the unique color in $\Assmonoidal$, determines a fibration of $\infty$-operads. 

The idea now is to consider algebra-objects $s\in LMod(\C):=Alg_{\mathcal{L}\mathcal{M}/\Ass}(\C)$. From such an object $s$ we can extract an associative algebra-object in $\C$, $s|_{\Assmonoidal}\in Alg(\C) $, an object $s(\textbf{m})=M$ in $\C$ and a multiplication map $(A\otimes M\to M):=s((\textbf{a},\textbf{m})\to \textbf{a})$ which with the help of the cocartesian property of $\Cmonoidal\to \Assmonoidal$ satisfies all the coherences defining the usual module-structure (see the Example 4.2.1.18 of \cite{lurie-ha}). If we fix $A$ an associative algebra object in $\C$, we obtain $LMod_A(\C)$ - the left-modules in $\C$ over the algebra $A$ - as the fiber over $A$ of the canonical map $LMod(\C)\to Alg(\C)$ induced by the composition with the inclusion $\Assmonoidal\subseteq \mathcal{L}\mathcal{M}^{\otimes}$.\\

Given a pair of associative algebras $A$ and $B$, it is also possible to perform a third construction ${}_ABMod_B(\C)$ encoding the theory of bimodules over the pair $(A,B)$ (left-modules over $A$ and right-modules over $B$). Again, the construction starts with the fabrication of an $\infty$-operad $\mathcal{B}\mathcal{M}^{\otimes}$ whose algebras in $\C$ are identified with bimodules (see Definitions 4.3.1.6 and 4.3.1.12 of \cite{lurie-ha}). By construction there are inclusions of $\infty$-operads 

\begin{equation}\xymatrix{  
\mathcal{A}ss^{\otimes}\ar@{^{(}->}[r]^{()^{+}}&\mathcal{L}\mathcal{M}^{\otimes}\ar@{^{(}->}[r]&\mathcal{B}\mathcal{M}^{\otimes}& \ar@{_{(}->}[l] \mathcal{R}\mathcal{M}^{\otimes}&\mathcal{A}ss^{\otimes}\ar@{_{(}->}[l]_{()_{-}}
} 
\end{equation}

\noindent which implies the existence of forgetful functors

\begin{equation}
\xymatrix{LMod_A(\C)& \ar[l] {}_ABMod_B(\C)\ar[r]&RMod_{B}(\C)}
\end{equation}

\noindent which, in general, are not equivalences. By the Theorem $4.3.4.28$ of \cite{lurie-ha} this new theory of modules is related to the general theory by means of a canonical equivalence

\begin{equation}
\xymatrix{ Mod^{\Ass}_A(\C)\ar[r]^{\sim}& {}_ABMod_A(\C)}
\end{equation}

\noindent where $Mod^{\Ass}_A(\C)$ is by definition, the underlying $\infty$-category of $Mod^{\Ass}_A(\C)^{\otimes}$ (the general construction). Under some general conditions, for any triple $(A,B,C)$ of associative algebras in $\C$ it is possible to fabricate a map of $(\infty,1)$-categories

\begin{equation}
{}_ABMod_B(\C)\times {}_BBMod_C(\C)\to {}_AMod_C(\C)
\end{equation}

\noindent encoding a \emph{relative tensor product} (see Def. 4.3.5.10 and Example 4.3.5.11 of \cite{lurie-ha}). It can be understood as a generalization of the formula (\ref{productmodules}), replacing it by the geometric realization of a whole simplicial object $Bar_B(M,N)_{\bullet}$ given informally by the formula 

\begin{equation}
Bar_B(M,N)_n= M\otimes B^n\otimes N
\end{equation}

\noindent (see \cite[Construction 4.3.5.7, Theorem 4.3.5.8]{lurie-ha}). By the Proposition 4.3.6.12 of \cite{lurie-ha}, if $\C$ satisfies the condition\\

$(***)$ it admits geometric realizations of simplicial objects and the tensor product preserves geometric realizations of simplicial objects, separately in each variable;\\

\noindent then, the fibration of $\infty$-operads $Mod^{\Ass}_A(\C)^{\otimes}\to \Assmonoidal$ (obtained by the general methods) is an $\Ass$-monoidal $(\infty,1)$-category with the monoidal structure given by the relative tensor product. Moreover, if $\C$ admits all small colimits and the tensor product is compatible with them on each variable, the equivalence $Mod^{\Ass}_A(\C)\simeq {}_ABMod_A(\C)$ will send the existing abstract-nonsense-monoidal structure on $Mod^{\Ass}_A(\C)$ provided by the general discussion in \ref{monoidalstructuremodules} to the relative tensor product.\\

\begin{remark}
As mentioned before, the theory of left-modules, resp. right-modules, does not have to be equivalent to the general theory (as we will see in the next section, this is true in the commutative case). For this reason it is convenient to have a theory of limits and colimits specific for left, resp. right, modules. See the Corollaries 4.2.3.5 and 4.2.3.5 of \cite{lurie-ha}.
\end{remark}

\subsubsection{Modules over commutative algebras}
\label{modulescommutative}
Finally, if $A$ is a commutative algebra in a symmetric monoidal $(\infty,1)$-category $\Cmonoidal$,  the forgetful map $Mod_A(\C):=Mod_A^{\Comm}(\C)\to Mod_A^{\Ass}(\C)$ fits in a commutative diagram

\begin{equation}
\xymatrix{
&Mod_A(\C)\ar[d]\ar[ddr]\ar[ddl]&\\
&Mod_A^{\Ass}(\C)\ar[d]^{\sim}&\\
LMod_A(\C)&{}_ABMod_A(\C)\ar[r]\ar[l]&RMod_A(\C)
}
\end{equation} 

\noindent and by the Proposition $4.4.1.4$ and the Corollary $4.4.1.6$ of \cite{lurie-ha}, the diagonal arrows are equivalences. Moreover, by the Theorem 4.4.2.1 of \cite{lurie-ha}, if $\C$ satisfies (***), then the $\infty$-operad  $Mod_A(\C)^{\otimes}:=Mod_A^{Comm}(\C)^{\otimes}$ is a symmetric monoidal $(\infty,1)$-category and its tensor product can be identified with the composition

\begin{equation}
\xymatrix{Mod_A(\C)\times Mod_A(\C)\ar[r]& {}_ABMod_A(\C)\times {}_ABMod_A(\C)\ar[r]^{\otimes_A} & {}_ABMod_A(\C)\ar[d]\\
&& LMod_A(\C)\simeq Mod_A(\C)}
\end{equation}

Moreover, if $\C$ admits all small colimits, this monoidal structure agrees with the therefore existing abstract-nonsense structure provided by the discussion in \ref{monoidalstructuremodules}.

\subsubsection{Base Change}
\label{basechange}
We now review the procedure of base change. If $\Cmonoidal$ be symmetric monoidal $(\infty,1)$-category and  $f:A\to B$ is a morphism of commutative algebras, the pre-composition with $f$ produces a forgetful functor

\begin{equation}
f_*: Mod_B(\C)\to Mod_{\A}(\C)
\end{equation}

\noindent which in general is not a monoidal functor. Assuming $\C$ satisfies the condition $(***)$, the relative tensor product discussed in the previous section provides monoidal structures in $Mod_A(\C)$ and $Mod_B(\C)$. The Theorem \cite[Theorem 4.4.3.1]{lurie-ha} enhances this result with the additional fact that $p:Mod(\C)^{\otimes}\to CAlg(\C)\times N(\Fin)$ is a cocartesian fibration. The construction of $p$-cocartesian liftings is achieved using the relative tensor product construction: every morphism of algebras $f:A\to B$ admits a $p$-cocartesian lifting which we can informaly describe with the formula

\begin{equation}M\mapsto L_{A\to B}(M):=M\otimes_A B\end{equation}

Using the Grothendieck construction, this formula assembles to a left adjoint to the forgetful functor $f_*$. Moreover, the properties of the relative tensor product in \cite[4.3.5.9]{lurie-ha} imply that this left adjoint is monoidal.\\

It is also evident by the nature of the construction (obtained via cocartesian liftings) that for any composition $A\to B\to C$ and for any $\A$-module $\M$, there are natural equivalences $(M\otimes_A C)\simeq (M\otimes_A B)\otimes_B C$.

\subsubsection{Transport of Modules via a monoidal functor}
\label{changeofmodulesundermonoidalfunctor}

Our goal in this section is to explain how given $f:\Cmonoidal\to \Dmonoidal$ a monoidal functor between symmetric monoidal $(\infty,1)$-categories, we can associate to every commutative algebra $A\in CAlg(\C)$ a natural map

\begin{equation}
\xymatrix{Mod_A(\C)\ar[r]^{f_A}& Mod_{f(A)}(\D)}
\end{equation}

\noindent and that under some nice hypothesis on $f$, $\Cmonoidal$ and $\Dmonoidal$ this new map will again be monoidal with respect to the monoidal structure on modules described in \ref{monoidalstructuremodules} and \ref{modulescommutative}. Moreover, we want to see that if $A\to B$ is map of commutative algebras, then the diagram

\begin{equation}
\xymatrix{
Mod_A(\C)\ar[r]^{f_A}\ar[d]^{-\otimes_A B}& Mod_{f(A)}(\D)\ar[d]^{-\otimes_{f(A)} f(B)}\\
Mod_B(\C)\ar[r]^{f_B}& Mod_{f(B)}(\D)
}
\end{equation}

\noindent commutes. Here the vertical maps are the base change maps of \ref{basechange}.\\

We start with the construction of the maps $f_A$. For that, we recall that the generalized operads $Mod(\C)^{\otimes}$ and $Mod(\D)^{\otimes}$ are defined in terms of a universal property as simplicial sets over $N(\Fin)$ (See \cite[Construction 3.3.3.1, Definition 3.3.3.8]{lurie-ha}). Using this universal property we can deduce that $f$ induces a map $F:Mod(\C)^{\otimes}\to Mod(\D)^{\otimes}$ sending inert morphims to inert morphisms, and fitting in a commutative diagram

\begin{equation}
\xymatrix{
Mod(\C)^{\otimes}\ar[r]^F\ar[d]^p& Mod(\D)^{\otimes}\ar[d]^q\\
CAlg(\C)\times N(\Fin)\ar[r]^{f_*\times Id}& CAlg(\D)\times N(\Fin)
}
\end{equation}

\noindent where the map $f_*$ is the transport of algebras explained in \ref{transportofalgebras}. We obtain the maps $f_A$ as the restriction of $F$ to the fiber over $A$. In the commutative case, the \cite[Theorem 4.4.3.1]{lurie-ha} explained in the previous section tells us that if $\C$ and $\D$ both satisfy $(***)$, then both maps $p$ and $q$ are cocartesian fibrations. Our goal follows immediately from the following property

\begin{prop}
Let $\Cmonoidal$ and $\Dmonoidal$ be symmetric monoidal $(\infty,1)$-categories such that both $\C$ and $\D$ both admit geometric realizations of simplicial objects and the tensor product preserves them on each variable. Let $f:\Cmonoidal\to \Dmonoidal$ be a monoidal functor commuting with geometric realizations of simplicial objects. Then, the induced map map $F$ sends $p$-cocartesian morphisms to $q$-cocartesian morphisms.
\begin{proof}
Because of \cite[Lemma 2.4.2.8]{lurie-htt}, we are reduced to show that $F$ sends locally $p$-cocartesian morphisms in $Mod(\C)^{\otimes}$ to locally $q$-cartesian morphisms in $Mod(\D)^{\otimes}$. Thanks to the Lemma \cite[4.4.3.4]{lurie-ha}, we are reduced to show that the induced maps

\begin{equation}
\xymatrix{
Mod_A(\C)^{\otimes}\ar[dr]^{p_A}\ar[rr]&& Mod_{f(A)}(\D)^{\otimes}\ar[dl]^{q_A}\\
&N(\Fin)&
}
\end{equation}

\noindent and 

\begin{equation}
\xymatrix{
Mod(\C)\ar[d]\ar[r]& Mod(\D)\ar[d]\\
CAlg(\C)\ar[r]& CAlg(\D)
}
\end{equation}

\noindent both send locally cocartesian morphisms to locally cocartesian morphisms. By the inspection of the proofs of \cite[4.4.2.1]{lurie-ha} for the first case and \cite[4.4.3.6, 4.3.7.14]{lurie-ha} for the second, we conclude that everything is reduced to show that $f$ sends the relative tensor product in $\C$ to the relative tensor product in $\D$. By inspection of \cite[Construction 4.3.5.7 and Theorem 4.3.5.8]{lurie-ha} we conclude that this follows immediately from our assumptions on $f$.

\end{proof}
\end{prop}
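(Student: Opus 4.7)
The plan is to follow the two-step reduction suggested by the structure of the module construction, and reduce the cocartesian preservation property to the single substantive input, namely that $f$ preserves relative tensor products.

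First, I would reduce from cocartesian to locally cocartesian morphisms. The map $p\colon Mod(\C)^{\otimes}\to CAlg(\C)\times N(\Fin)$ is a cocartesian fibration by \cite[Theorem 4.4.3.1]{lurie-ha}, and similarly for $q$. By \cite[Lemma 2.4.2.8]{lurie-htt}, a morphism is $p$-cocartesian if and only if it is locally $p$-cocartesian with respect to $p$ and its image is locally cocartesian over $CAlg(\C)\times N(\Fin)$. Thus it suffices to prove that $F$ preserves locally cocartesian edges. Invoking \cite[Lemma 4.4.3.4]{lurie-ha} (which describes locally cocartesian edges in the module fibration in terms of the fiberwise structure and the base change along a morphism of algebras), the problem splits into two independent verifications: $(a)$ for each commutative algebra $A\in CAlg(\C)$, the induced map on fibers $Mod_A(\C)^{\otimes}\to Mod_{f(A)}(\D)^{\otimes}$ over $N(\Fin)$ sends locally cocartesian edges to locally cocartesian edges; and $(b)$ the induced map $Mod(\C)\to Mod(\D)$ over the map $CAlg(\C)\to CAlg(\D)$ sends locally cocartesian edges (i.e.\ the base change morphisms) to locally cocartesian edges.

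For $(a)$, by inspection of the proof of \cite[Theorem 4.4.2.1]{lurie-ha}, the locally cocartesian edges in $Mod_A(\C)^{\otimes}$ over an active morphism in $N(\Fin)$ are realized by the relative tensor product $\otimes_A$. Hence the problem for $(a)$ is equivalent to showing that the natural comparison map $f(M)\otimes_{f(A)} f(N)\to f(M\otimes_A N)$ is an equivalence in $\D$ for any $A$-modules $M$ and $N$. For $(b)$, an analogous analysis via the proofs of \cite[4.4.3.6]{lurie-ha} and \cite[4.3.7.14]{lurie-ha} identifies the locally cocartesian liftings of a map of algebras $A\to B$ with the relative tensor product base change $M\mapsto M\otimes_A B$, so the problem reduces to showing that $f(M)\otimes_{f(A)} f(B)\simeq f(M\otimes_A B)$.

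Both $(a)$ and $(b)$ therefore reduce to a single statement: $f$ preserves the formation of relative tensor products. This is where I expect the actual content to lie. By \cite[Construction 4.3.5.7]{lurie-ha}, the relative tensor product $M\otimes_A N$ is defined as the geometric realization $|Bar_A(M,N)_{\bullet}|$ of the two-sided bar construction, whose $n$-th term is $M\otimes A^{\otimes n}\otimes N$, with face and degeneracy maps built from the multiplication of $A$ and the $A$-actions on $M$ and $N$. Since $f$ is monoidal, it sends $Bar_A(M,N)_{\bullet}$ levelwise to the corresponding simplicial object $Bar_{f(A)}(f(M), f(N))_{\bullet}$ in $\D$ (the face and degeneracy maps are preserved because they are built from the multiplication and the unit, which $f$ respects up to coherent equivalence). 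Finally, because $f$ commutes with geometric realizations of simplicial objects by hypothesis, we obtain
\[
f(M\otimes_A N)=f(|Bar_A(M,N)_{\bullet}|)\simeq |f(Bar_A(M,N)_{\bullet})|=|Bar_{f(A)}(f(M),f(N))_{\bullet}|=f(M)\otimes_{f(A)}f(N).
\]
The hard (or at least least mechanical) part of this argument is the careful bookkeeping needed to match the locally cocartesian edges in the modular construction with the bar construction; once this identification is in place, the proof is a direct invocation of the two hypotheses on $f$.
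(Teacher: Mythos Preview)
Your proposal is correct and follows essentially the same approach as the paper's proof: the same reduction via \cite[Lemma 2.4.2.8]{lurie-htt} to locally cocartesian edges, the same splitting via \cite[Lemma 4.4.3.4]{lurie-ha} into the fiberwise and base-change cases, the same references \cite[4.4.2.1]{lurie-ha} and \cite[4.4.3.6, 4.3.7.14]{lurie-ha} to identify the locally cocartesian edges with relative tensor products, and the same appeal to the bar construction \cite[Construction 4.3.5.7, Theorem 4.3.5.8]{lurie-ha}. Your write-up is slightly more explicit than the paper's in unpacking the bar-construction step, but the argument is the same.
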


\subsection{Endomorphisms Objects}
\label{endomorphismsobjects}

In this section we review the theory of endomorphism objects as developed in \cite{lurie-ha}-Sections 6.1 and 6.2. Let $\Cmonoidal$ be a monoidal $(\infty,1)$-category. As reviewed in the section \ref{modulesoverassociativealgebras}, to every object $A\in Alg(\C)$ we can associate a new $(\infty,1)$-category $LMod_A(\C)$ whose objects provide objects $m$ in $\C$ equipped with a multiplication $A\otimes m\to m$ satisfying the usual coherences for modules. We can now generalize the notion of an $A$-module to include objects $m$ belonging to any $(\infty,1)$-category $\M$ where $\C$ acts. More precisely, recall that $\C$ can be understood as an object in $Alg(\iCat)$ and therefore $\C$ itself admits a theory of left-modules $LMod_{\C}(\iCat)$ - the objects of this $(\infty,1)$-category can be understood as $(\infty,1)$-categories $\M$ equipped with an action $\bullet:\C\times \M\to \M$ satisfying the standard coherences for module-objects in $\iCat$. We generalize the notion of an $A$-module to include objects $m\in M$ endowed with a multiplication $A\bullet m\to m$ satisfying the standard coherences for modules in $\M$. This can be made precise as follows. Let $\overline{\M}$ be an object in $LMod_{\C}(\iCat)$. Explicitly, $\overline{\M}$ is a map of $\infty$-operads 

\begin{equation}
\xymatrix{
&\iCat^{\times}\ar[d]\\
\mathcal{L}\mathcal{M}^{\otimes}\ar[r]\ar[ru]^{\overline{M}}&N(\Fin)
}
\end{equation}

\noindent whose restriction to $\Assmonoidal\subseteq \mathcal{L}\mathcal{M}^{\otimes}$ is the monoidal $(\infty,1)$-category $\Cmonoidal$ and whose evaluation at the color $\textbf{m}$ is another $(\infty,1)$-category $\M$. Since $\iCat^{\times}$ is cartesian, we have an equivalence $Alg_{\mathcal{L}\mathcal{M}}(\iCat)\simeq Mon_{\mathcal{L}\mathcal{M}}(\iCat)$ and therefore we can use the Grothendieck construction to present the diagram $\overline{\M}$ in the format of a cocartesian fibration of $\infty$-operads $\Opmonoidal_{\overline{\M}}\to \mathcal{L}\mathcal{M}^{\otimes}$ where we recover $\Opmonoidal_{\overline{\M}}\times_{\mathcal{L}\mathcal{M}^{\otimes}}\Assmonoidal\simeq \Cmonoidal$ and $\Opmonoidal_{\overline{\M}}\times_{\mathcal{L}\mathcal{M}^{\otimes}}\{\textbf{m}\}\simeq \M$ and the action $\bullet:\C\times \M\to \M$ is again extracted from the cocartesian property.

In this setting, an object $\overline{m}\in LMod(\C,\M):= Alg_{/\mathcal{L}\mathcal{M}}(\Op_{\overline{M\}}})$ provides exactly the concept we seek: the restriction $\overline{m}|_{\Assmonoidal}$ is an algebra-object in $\Opmonoidal_{\overline{M}}\times_{\mathcal{L}\mathcal{M}^{\otimes}}\Assmonoidal\simeq \Cmonoidal$; the value at $\textbf{m}$ is an object $m$ in $\Opmonoidal_{\overline{M}}\times_{\mathcal{L}\mathcal{M}^{\otimes}}\{\textbf{m}\}\simeq \M$ and the image of canonical morphism $(\textbf{a}, \textbf{m})\to \textbf{m}$ provides, via the cocartesian property of $\Opmonoidal_{\overline{M}}\to \mathcal{L}\mathcal{M}^{\otimes}$, a map $A\bullet m\to m$ which, also because of the cocartesian property, will satisfies all the standard coherences we seek.\\

There are canonical projections $LMod(\C,\M)\to Alg(\C)$ and $LMod(\C,\M)\to \M$ induced, respectively, by the composition with the inclusion $\Assmonoidal\subseteq \mathcal{L}\mathcal{M}^{\otimes}$ and the inclusion $\{\textbf{m}\}\subseteq \mathcal{L}\mathcal{M}^{\otimes}$ (see Def. 4.2.1.13 of \cite{lurie-ha}). For each associative algebra $A$ in $\C$, the fiber $LMod_A(\C,\M):=LMod(\C,\M)\times_{Alg(\C)}\{A\}$ gathers the collection of objects $m$ in $\M$ endowed with a left action of $A$ satisfying the standard coherences of being a module-object. Similarly, for each object $m\in \M$, the fiber $LMod(\C,\M)\times_{\M} \{m\}$ codifies all the different ways in which the object $m$ can be endowed the structure of an $A$-module, for some associative algebra $A$ in $\C$.

\begin{remark}
If $\Cmonoidal\to \Assmonoidal$ is a monoidal $(\infty,1)$-category, the tensor operation provides $\C$ with the structure of a $\C$-module. To make this precise we recall that $\Cmonoidal$ can be understood as an algebra-object in $\iCat$ and since this last is cartesian, we can encode $\Cmonoidal$ as a diagram $\Assmonoidal\to \iCat$. 
We define $\overline{\M}$ as the composition 

\begin{equation}
\mathcal{L}\mathcal{M}^{\otimes}\to \Assmonoidal \to \iCat 
\end{equation}

\noindent where the first map is the canonical projection. Again because $\iCat$ is cartesian, this map provides an $\mathcal{L}\mathcal{M}^{\otimes}$-algebra in $\iCat$ which we can see exhibits $\C$ as a module over itself. Finally, by performing the Grothendieck construction on $\overline{\M}$ we find a canonical equivalence 

\begin{equation}
\Opmonoidal_{\overline{\M}}\simeq \Cmonoidal\times_{\Assmonoidal} \mathcal{L}\mathcal{M}^{\otimes}
\end{equation}

Using the definitions we can easily find an equivalence

\begin{equation}LMod(\C)\simeq LMod(\C,\M=\C)\end{equation}

\end{remark}

\begin{remark}
This construction uses the data of symmetric monoidal $(\infty,1)$-category $\Cmonoidal$ and a module $\M$ over it as initial ingredients. However, the defining ingredient is the data of the cartesian fibration $\Opmonoidal_{\overline{M}}\to \mathcal{L}\mathcal{M}^{\otimes}$. Dropping the cocartesian condition we can reproduce the situation with the data of fibration of $\infty$-operads $p:\Opmonoidal\to \mathcal{L}\mathcal{M}^{\otimes}$. This gives rise to what in \cite{lurie-ha}-Definition 4.2.1.12 is called a \emph{weak enrichment} of $\M:= \Opmonoidal\times_{\mathcal{L}\mathcal{M}^{\otimes}} \{\textbf{m}\}$ over $\Cmonoidal:=\Opmonoidal\times_{\mathcal{L}\mathcal{M}^{\otimes}} \Assmonoidal$.
\end{remark}

We now have the following important result:

\begin{prop}(see \cite{lurie-ha}-Corollary 6.1.2.42)
Let $\C$ be a monoidal $(\infty,1)$-category and $\overline{\M}$ be an object in $LMod_{\C}(\iCat)$ with $\overline{\M}|_{\Assmonoidal}=\Cmonoidal$. Let $m$ be an object in $\M=\overline{\M}(\textbf{m})$. Then, the canonical map $p: LMod(\C,\M)\times_{\M}\{m\}\to Alg(\C)$ is a right fibration (in particular it is a cartesian fibration and its fibers are $\infty$-groupoids).
\end{prop}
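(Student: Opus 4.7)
The plan is to exhibit the map $p$ as a pullback of a simpler structural projection and then apply stability of right fibrations under pullback. More precisely, combining the two forgetful maps $LMod(\C,\M)\to Alg(\C)$ and $LMod(\C,\M)\to \M$ (the first induced by the inclusion $\Assmonoidal\subseteq \mathcal{L}\mathcal{M}^{\otimes}$, the second by the inclusion $\{\textbf{m}\}\subseteq \mathcal{L}\mathcal{M}^{\otimes}$) we obtain a map
\begin{equation}
q: LMod(\C,\M) \to Alg(\C) \times \M.
\end{equation}
Our map $p$ is then the pullback of $q$ along the inclusion $Alg(\C)\times\{m\}\hookrightarrow Alg(\C)\times\M$. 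Since right fibrations are stable under pullback, it is enough to show that $q$ is a right fibration.

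To verify that $q$ is a right fibration, I would check that it is an inner fibration and that every edge of $LMod(\C,\M)$ is $q$-cartesian (equivalently, by \cite[2.4.2.4]{lurie-htt}, that $q$ is a cartesian fibration whose fibers are Kan complexes). Inner-fibrancy follows from the description of $LMod(\C,\M)$ as the full subcategory of sections $\mathcal{L}\mathcal{M}^{\otimes}\to \Opmonoidal_{\overline{\M}}$ that preserve inert morphisms, together with the fact that $\Opmonoidal_{\overline{\M}}\to\mathcal{L}\mathcal{M}^{\otimes}$ is a categorical fibration. The point where the cocartesian hypothesis $\overline{\M}\in LMod_{\C}(\iCat)$ enters is in showing that arbitrary edges are $q$-cartesian: given an edge $(A,m)\to(A',m')$ in $Alg(\C)\times\M$ and a module structure $\overline{(A',m',A'\bullet m'\to m')}$ lifting the target, the cartesian lift is obtained by transporting the action along the given edge, using the cocartesian lift of the active morphism $(\textbf{a},\textbf{m})\to\textbf{m}$ in $\mathcal{L}\mathcal{M}^{\otimes}$ provided by the hypothesis on $\Opmonoidal_{\overline{\M}}\to\mathcal{L}\mathcal{M}^{\otimes}$.

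The main obstacle is the verification that \emph{every} edge of $LMod(\C,\M)$ is $q$-cartesian, not only those obtained by the transport-of-action construction above. This requires unwinding the definition of $LMod(\C,\M)$ and using the inert/active factorization system on $\mathcal{L}\mathcal{M}^{\otimes}$ to reduce the check to the active morphisms $(\textbf{a}^{\otimes n},\textbf{m})\to\textbf{m}$, where the cocartesian property of $\Opmonoidal_{\overline{\M}}\to\mathcal{L}\mathcal{M}^{\otimes}$ (which encodes the fact that the action is genuinely an action, rather than a merely weak enrichment) forces the desired uniqueness of lifts up to contractible choice. Once this is established, the fibers of $p$ — which are the fibers of $q$ over points of the form $(A,m)$ — are Kan complexes, which gives the parenthetical statement in the claim.
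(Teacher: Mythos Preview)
Your approach has a genuine gap: the map $q: LMod(\C,\M)\to Alg(\C)\times\M$ is \emph{not} a right fibration in general, so the pullback argument cannot get off the ground. To see this, restrict $q$ to $\{A\}\times\M$ for a fixed algebra $A$: you obtain the forgetful functor $LMod_A(\C,\M)\to\M$. If $q$ were a right fibration, so would this be; in particular it would be a cartesian fibration. But already in the classical case $\C=\M=\mathrm{Vect}_k$, $A=k[x]$, the forgetful functor from $k[x]$-modules to vector spaces admits no cartesian lifts: given a surjection $g\colon k^2\to k$ and the target equipped with $x=0$, the $k[x]$-module structures on $k^2$ making $g$ equivariant form a two-parameter family with no terminal object. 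Your proposed ``transport of action'' does not produce a module structure on the \emph{source} $m$ from one on the target $m'$ along $g\colon m\to m'$; the composite $A\bullet m\to A'\bullet m'\to m'$ lands in $m'$, not $m$.

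The paper (following Lurie) proceeds differently, and the difference is essential. Rather than trying to control the $\M$-direction globally, one fixes $m$ from the outset and constructs an auxiliary monoidal $(\infty,1)$-category $\C^{+}[m]$ whose objects are pairs $(X,\eta\colon X\bullet m\to m)$. One then proves that $Alg(\C^{+}[m])\to Alg(\C)$ is a right fibration, and separately that there is an equivalence $LMod(\C,\M)\times_{\M}\{m\}\simeq Alg(\C^{+}[m])$ over $Alg(\C)$. The point is that once $m$ is frozen, the variation happens only in the algebra variable, where restriction of scalars does supply the required cartesian lifts; the obstruction you run into disappears precisely because the $\M$-coordinate has been pinned down before passing to algebras.
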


In \cite{lurie-ha}-Section 6.1.2, the author proves this result by constructing a new monoidal $(\infty,1)$-category $\C^{+}[m]$ whose objects can be identified with pairs $(X,\eta)$ where $X$ is an object in $\C$ and $\eta:X\bullet m\to m$ is a morphisms in $M$. The canonical map $Alg(\C^+[m])\to Alg(\C)$ is a right fibration (Prop. 6.1.2.39 of \cite{lurie-ha} ) and the conclusion follows from the existence of an equivalence of right-fibrations $LMod(\C,\M)\times_{\M}\simeq Alg(\C^{+}[m])$ (Corollary 6.1.2.40 of \cite{lurie-ha}).\\

In the context of the previous result, we say that the object $m$ admits a \emph{classifying object for endomorphisms} if the right fibration $p$ is representable. Because of \cite{lurie-htt}-Theorem 4.4.4.5, this amounts to the existence an algebra-object $End(m)\in Alg(\C)$ and an equivalence of right fibrations $LMod(\C,\M)\times_{\M}\{m\}\simeq Alg(\C)_{/End(m)}$ over $Alg(\C)$. In this case, for each associative algebra-object $A$ in $\C$ we have a canonical homotopy equivalence
\begin{equation}
Map_{Alg(\C)}(\A,End(m))\simeq \{A\}\times_{\Assmonoidal} LMod(\C,\M)\times_{\M}\{m\}
\end{equation}
In other words, morphisms of algebras $A\to End(m)$ correspond to $A$-module structures on $m$.\\

\begin{remark}
\label{existenceofendomorphismsobject}
Following the arguments in the proof of the previous result, and due to the Corollary 3.2.2.4 of \cite{lurie-ha}, the existence of a classifying object for endomorphisms for $m$ can be deduced from the existence of a final object in $Alg(\C^{+}[m])$.\\ 
\end{remark}

We will be mostly interested in finding classifying objects for endomorphisms in the case when $\C=\M$ is $\iCat$ with the cartesian product. In other words, we want to have, for any monoidal $(\infty,1)$-category $A\in Alg(\C=\iCat)$ and any $(\infty,1)$-category $m\in \M=\iCat$, a new monoidal $(\infty,1)$-category $End(m)$ such that the space of categories $m$ left-tensored over $A$ is homotopy equivalent to the space of monoidal functors $A\to End(m)$. As expected, $End(m)$ exists: it can be canonically identified with the $(\infty,1)$-category of endofunctors of $m$ - $Fun(m,m)$ - equipped with the \emph{strict} monoidal structure $End(M)^{\otimes}\to \Assmonoidal$ induced by the composition of functors. See the Notation 6.2.0.1 and the Proposition 6.2.0.2 of \cite{lurie-ha} for the construction of this monoidal structure. Finally, the fact that $End(m)^{\otimes}$ has the required universal property follows from the universal property of $Fun(m,m)$ as an internal-hom in $\iCat$, from the Proposition 6.1.2.39 and the Corollary 3.2.2.4 of \cite{lurie-ha} (see the Remark 6.2.0.5 of loc.cit). 

\subsection{Idempotent Algebras}
\label{idemalgebras}

In this section we review the theory of idempotents as developed in \cite{lurie-ha}- Section 6.3.2.
Let $\Cmonoidal$ be a symmetric monoidal $(\infty,1)$-category with unit $1$ and let $E$ be an object in $\C$. A morphism $e:1\to E$ is said to be \emph{idempotent} if the product morphism $id_E\otimes e:  E\otimes 1\to E\otimes E$ is an equivalence. Since $\Cmonoidal$ is symmetric this is equivalent to ask for $e\otimes id_E$ to be an equivalence. We write $(E,e)$ to denote an idempotent. The first important result concerning idempotents is that a pair $(E,e)$ is an idempotent if and only if the product functor $E\otimes-: \C\to \C$ makes  $\C_E$ - the essential image of $(E\otimes-)$ - a full reflexive subcategory of $\C$ (see the Proposition 6.3.2.4 of \cite{lurie-ha}). Notice that $\C_E$ equals the full subcategory of $\C$ spanned by those objects in $\C$ which are stable under products with $E$. By the Proposition 6.3.2.7 of loc. cit, this localization is monoidal and therefore $\C_E$ inherits a symmetric monoidal structure $\C_E^{\otimes}$ where the unit object is $E$ and the product map $(E\otimes-)$ extends to a monoidal map $\Cmonoidal \to \C_{E}^{\otimes}$. Its right adjoint (the inclusion) is lax-monoidal and therefore induces an inclusion

\begin{equation}
CAlg(\C_E)\to CAlg(\C)
\end{equation}

\noindent and since $E$ is the unit in $\C_E$ we can use this inclusion to  equip $E$ with the structure of a commutative algebra in $\C$ for which the multiplication map $E\otimes E\to E$ is an equivalence. In fact, by the Proposition 6.3.2.9 of \cite{lurie-ha}, there is a perfect matching between idempotents and commutative algebras whose multiplication map is an equivalence (these are called \emph{idempotent-algebras}). More precisely, if we denote by $CAlg^{idem}(\C)$ the full subcategory of commutative algebra objects in $\C$ whose multiplication map $A\otimes A\to A$ is an equivalence, the natural composition 

\begin{equation}
CAlg^{idem}(\C)\subseteq CAlg(\C)\simeq CAlg(\C)_{1/}\to \C_{1/}
\end{equation}

\noindent sending an commutative algebra object $A$ to its unit $1\to A$ morphism, is fully-faithful and its image consists exactly of the idempotent objects in $\C$.\\ 

The main result for idempotents can be stated as follows:

\begin{prop}(\cite{lurie-ha}-Prop.6.3.2.10)
Let $\Cmonoidal$ be a symmetric monoidal $(\infty,1)$-category and let $(E,e)$ be an idempotent which we now known can be given by the unit of a commutative algebra object $A$ in $\C$ (which is unique of to equivalence). Then, the natural forgetful map $Mod_A(\C)^{\otimes}\to \Cmonoidal$ induces an equivalence $\Mod_A(\C)^{\otimes}\to \C_E^{\otimes}$.
\end{prop}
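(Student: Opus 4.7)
The plan is to exhibit both $\Mod_A(\C)^{\otimes}$ and $\C_E^{\otimes}$ as a common symmetric monoidal reflexive localization of $\Cmonoidal$. First, I recall that by the general theory of algebras and modules, the forgetful functor $U:\Mod_A(\C)\to \C$ admits a left adjoint $F$ (the free $A$-module functor) whose composite $U\circ F$ is canonically equivalent to the endofunctor $A\otimes-$ of $\C$. By the discussion in \ref{idemalgebras}, this endofunctor is precisely the reflector of the reflexive localization $\C_E\subseteq\C$.

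The key step is to show that $U$ takes values in $\C_E\subseteq\C$ and is fully faithful. Given $M\in\Mod_A(\C)$, the unit axiom for the action $\alpha:A\otimes M\to M$ reads $\alpha\circ(e\otimes M)=\mathrm{id}_M$, exhibiting $U(M)$ as a retract of $A\otimes U(M)$ in $\C$. By naturality of the unit $\eta:\mathrm{id}\to A\otimes-$ of the reflection, this retract upgrades to a retract in the arrow category $\Fun(\Delta[1],\C)$ of the morphism $\eta_{U(M)}:U(M)\to A\otimes U(M)$ by the morphism $\eta_{A\otimes U(M)}:A\otimes U(M)\to A\otimes A\otimes U(M)$. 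Since $A\otimes U(M)$ lies in the essential image of $A\otimes-$, its unit $\eta_{A\otimes U(M)}$ is an equivalence, and hence so is its retract $\eta_{U(M)}$. This shows $U(M)\in\C_E$; equivalently, the counit $\alpha=(\eta_{U(M)})^{-1}$ is invertible, so the adjunction $(F,U)$ has an invertible counit and $U$ is fully faithful. Essential surjectivity onto $\C_E$ is immediate: for $X\in\C_E$ we have $X\simeq A\otimes X=U(F(X))$. Hence $U$ induces an equivalence of underlying $\infty$-categories $\Mod_A(\C)\simeq\C_E$.

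For the monoidal enhancement, note that both $\Mod_A(\C)^{\otimes}$ (with the relative tensor product $-\otimes_A-$ of \ref{modulescommutative}) and $\C_E^{\otimes}$ (of \ref{idemalgebras}) come equipped with monoidal functors out of $\Cmonoidal$ whose underlying functor on $\C$ is the same endofunctor $A\otimes-$. In particular, both monoidal reflexive localizations invert exactly the same class of edges in $\C$, namely those morphisms $f$ such that $A\otimes f$ is an equivalence. By the uniqueness of symmetric monoidal reflexive localizations with a prescribed class of local equivalences (see \ref{mfl} and \ref{remarkcat}), the two monoidal structures are canonically equivalent and the induced equivalence of underlying categories is the one of the previous paragraph.

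The main obstacle is this last monoidal identification: the tensor products on the two sides are defined by a priori different recipes --- the relative tensor product $-\otimes_A-$ via a Bar construction, and the restriction of $\otimes$ to $\C_E$ via the monoidal reflector. Matching them requires observing that, for $M,N\in\Mod_A(\C)$, the simplicial object $\mathrm{Bar}_A(M,N)_\bullet$ becomes constant up to equivalence once one uses that $\alpha_M$ and $\alpha_N$ are invertible (Step 2) and that $A\otimes A\to A$ is an equivalence; equivalently, one appeals directly to the universal property that characterizes monoidal reflective localizations, bypassing the explicit computation with the Bar resolution.
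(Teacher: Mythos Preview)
The paper does not supply its own proof of this proposition: it is stated with a direct citation to \cite{lurie-ha} and no argument is given. So there is nothing in the paper to compare your approach against line by line.

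That said, your argument is essentially sound. The proof that $U:\Mod_A(\C)\to\C$ is fully faithful with essential image $\C_E$ is correct: the retract trick showing that $\eta_{U(M)}$ is an equivalence, hence that the action map $\alpha$ is its inverse, is exactly the right idea. For the monoidal upgrade, your strategy---identify both $\Mod_A(\C)^{\otimes}$ and $\C_E^{\otimes}$ as the monoidal reflective localization of $\Cmonoidal$ inverting the maps $f$ with $A\otimes f$ an equivalence---is clean and correct in spirit. One comment: your final paragraph about the Bar construction is a distraction and introduces an unnecessary hypothesis (existence of geometric realizations, which the statement does not assume). The point is that the forgetful map $\Mod_A(\C)^{\otimes}\to\Cmonoidal$ is a map of $\infty$-operads which you have already shown is fully faithful on underlying categories; since the fibers over $\langle n\rangle$ are products of the underlying category, the operad map is fully faithful, and its essential image is the full sub-$\infty$-operad on the objects of $\C_E$, which is precisely $\C_E^{\otimes}$. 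This bypasses any need to compute relative tensor products explicitly, and is closer to how Lurie argues in \cite[6.3.2.10]{lurie-ha}.
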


\subsection{Presentability}
\label{section3-4}

The results in this paper depend crucially on the presentability of the closed cartesian symmetric monoidal $(\infty,1)$-category $\iCat$ (see Prop. 3.1.3.7 and Cor. 3.1.4.4  of \cite{lurie-htt}). By the Proposition 5.5.4.15 of \cite{lurie-htt}, the theory of presentable $(\infty,1)$-categories admits a very friendly theory of localizations: every localization with respect to a set of morphisms admits a description by means of local objects and, conversely, every (small) local theory is a localization. This feature will play a vital role in the proceeding sections where we shall work with presentable symmetric monoidal $(\infty,1)$-categories.\\

Let $\mathcal{K}$ be the collection of all small simplicial sets. By definition (see Def. 3.4.4.1 of \cite{lurie-ha}) a \emph{presentable $\Op$-monoidal $(\infty,1)$-category} is an $\Op$-monoidal $(\infty,1)$-category compatible with $\mathcal{K}$-colimits such that for each color $x\in \Op$, the fiber $\C_x$ is presentable. In this case, it is a corollary of the \emph{Adjoint Functor Theorem} that $\Cmonoidal$ is necessary closed.

\begin{remark}
\label{presentableremark}
Let $\Opmonoidal$ be a small coherent $(\infty,1)$-operad. Let $\Cmonoidal$ be a presentable $\Op$-monoidal $(\infty,1)$-category. By \cite[3.2.3.5]{lurie-ha} $Alg_{/\Op}(\C)$ is a presentable $(\infty,1)$-category and by \cite[3.4.4.2]{lurie-ha} $Mod_A^{\Op}(\C)^{\otimes}$ is presentable $\Op$-monoidal.
\end{remark}

The following monoidal version of the adjoint functor theorem will be useful to us in the future:

\begin{prop}
\label{monoidaladjoint}
Let $f^{\otimes}:\Cmonoidal\to \Dmonoidal$ be a monoidal functor between symmetric monoidal $(\infty,1)$-categories $\pi_1:\Cmonoidal\to N(\Fin)$ and $\pi_2: \Dmonoidal\to N(\Fin)$.  By the adjoint functor theorem $f^{\otimes}_{\onefin}$ has a right adjoint  $g:\D\to \C$. Then, $g$ can be extended to a lax-monoidal functor $g^{\otimes}:\Dmonoidal\to \Cmonoidal$.
\begin{proof}
Using the Grothendieck construction, we can encoded the data of the monoidal functor $f^{\otimes}$ as a cocartesian fibration $p:\mathcal{A}\to \Delta[1]\times N(\Fin)$ with $p^{-1}(\{0\})=\Cmonoidal$ and $p^{-1}(\{1\})=\Dmonoidal$. To extend $g$ to a monoidal functor we are reduced to show  that for every $Y=(Y_1,.... Y_n)\in \Dmonoidal_{\nfin}$ we can find a $p$-cartesian morphism $Y'\to Y$ over the edge $(0, \nfin)\to (1,\nfin)$ in $\Delta[1]\times N(\Fin)$ with $\nfin\to \nfin$ the identity map. The existence of $g$ provides these $p$-cartesian morphisms when $\nfin=\onefin$. When $n\geq 2$ we make use of the case $n=1$ using the fact that in symmetric monoidal $(\infty,1)$-category $\pi_2:\Dmonoidal\to N(\Fin)$,  an object $Y=(Y_1,.... Y_n)\in \Dmonoidal_{\nfin}$  is a $\pi_2$-limit diagram in $\Dmonoidal$ of  the objects $Y_1, ..., Y_n$ in $\Dmonoidal_{\onefin}$ (see \cite[2.1.2.11]{lurie-ha}). Using this definition we set $Y'$ as the $\pi_1$-limit diagram in $\Cmonoidal$ of $g(Y_1),..., g(Y_n)$ (which exists due to presentability) and by the same argument we can identify it with  $(g(Y_1),..., g(Y_n))\in \Cmonoidal_{\nfin}$. We are now reduced to define a $p$-cartesian morphism $\alpha:Y'\to Y$ over $0\to 1$.  By the definition of $\mathcal{A}$ and because $f$ is in particular lax monoidal,  such a morphism can be uniquely identified with a morphism $(f(g(Y_1),..., f(g(Y_n))\to (Y_1,..., Y_n)$ in $\Dmonoidal$. We define $\alpha$ as the product of the counit morphisms $f(g(Y_i))\to Y_i$ given by the adjunction $(f,g)$. Finally,  we show that our choice makes $\alpha$ $p$-cartesian. For that, let $X=(X_1,...., X_m)$ be an object in $\Cmonoidal_{\mfin}$ and let $u:\mfin\to \nfin$ be a morphism in $\N(\Fin)$. We have

\begin{eqnarray}
Map_{\Cmonoidal}^u( X,Y') \simeq \prod_{1\leq i\leq n} Map_{\Cmonoidal}^{\rho_i\circ u} ((\{X_j\}_{j\in u^{-1}(i)},g(Y_i))\simeq \\
 \prod_{1\leq i\leq n} Map_{\C} (\otimes_{j\in u^{-1}(i)} X_j, g(Y_i))\simeq  \prod_{1\leq i\leq n} Map_{\D} (f(\otimes_{j\in u^{-1}(i)} X_j), Y_i)\simeq \\
 \prod_{1\leq i\leq n} Map_{\D} (\otimes_{j\in u^{-1}(i)}f( X_j), Y_i) \simeq\prod_{1\leq i\leq n} Map_{\Dmonoidal}^{\rho_i\circ u} ((\{f(X_j)\}_{j\in u^{-1}(i)},Y_i)\simeq \\
Map_{\mathcal{A}}^{(01)}(X, Y)
\end{eqnarray}

where the first equivalence follows because $\Cmonoidal$ is an $\infty$-operad, the second because $\Cmonoidal$ is symmetric monoidal, the third 
because $(f,g)$ is an adjunction, the fourth because $f$ is  monoidal, the fifth because $\Dmonoidal$ is an $\infty$-operad and the final follows from the definition of $\mathcal{A}$.

\end{proof}
\end{prop}

\subsubsection{The Monoidal Structure in $\Prl$}
Following the notations from \cite{lurie-htt} we write $\Prl$ for full subcategory of $\iCatbig(\mathcal{K})$ (with $\mathcal{K}$ denoting the collection of all simplicial sets) spanned by the presentable $(\infty,1)$-categories together with the colimit preserving functors. By \cite[6.3.1.14]{lurie-ha}, $\Prl$ is closed under the monoidal structure in $\iCatbig(\mathcal{K})^{\otimes}$ described by the formula (\ref{equationlala}) and therefore inherits a symmetric monoidal structure $(\Prl)^{\otimes}$: if $\C_0$ and $\C_0'$ are two small $\infty$-categories, the tensor product $\mathcal{P}(\C_0)\otimes \mathcal{P}(\C_0')$ is given by $\mathcal{P}(\C_0\times \C_0')$. More generally, if $\C$ and $\C'$ are two presentable $(\infty,1)$-categories and $S$ is a small collection of morphism in $\C$, the product $(S^{-1}\C)\otimes \C'$ is the localization $T^{-1}(\C\otimes \C')$ where $T$ is the image of the collection $S\times \{id_X\}_{X\in Obj(\C')}$ via the canonical morphism $\C\times \C'\to \C\otimes \C'$. By the Theorem 5.5.1.1 of \cite{lurie-htt} this is enough to describe any product and also to conclude that the unit object is the $(\infty,1)$-category of spaces $\mathcal{S}=\mathcal{P}(*)$.

The objects in $CAlg(\Prl)$ can now be identified with the presentable symmetric monoidal $(\infty,1)$-categories. Plus, this symmetric monoidal structure is closed: for any pair of presentable $(\infty,1)$-categories $\A$ and $\B$, the $(\infty,1)$-category $Fun^L(\A,\B)$ of colimit-preserving functors $\A\to \B$ is again presentable and provides an internal-hom object in $\Prlmonoidal$ (see the Remark 6.3.1.17 of \cite{lurie-ha}). Since $\Prl$ admits all small colimits (by the combination of the Corollary 5.5.3.4 and the Theorem 5.5.3.18 of \cite{lurie-htt}), we conclude that $(\Prl)^{\otimes}$ is a symmetric monoidal structure compatible with all small colimits. \\

The following result will also be important to us:

\begin{prop}
\label{presentablehaveendomorphismobject}
The symmetric monoidal $(\infty,1)$-category $\Prlmonoidal$ admits classifying objects for endomorphisms: for each presentable $(\infty,1)$-category $M$, the $(\infty,1)$-category $End^L(\M)$ of colimit-preserving endomorphisms of $M$ is the underlying $(\infty,1)$-category of a presentable monoidal $(\infty,1)$-category $End^L(M)^{\otimes}\to \Assmonoidal$ whose monoidal operation is determined the composition of functors. Moreover, for any presentable symmetric monoidal $(\infty,1)$-category, we have a canonical homotopy equivalence

\begin{equation}
Maps_{Alg(\Prl)}(\Cmonoidal, End^L(M)^{\otimes})\simeq \{\Cmonoidal\}\times_{\Assmonoidal} LMod(\Prl, \Prl)\times_{\Prl}\{M\}
\end{equation}

\begin{proof}
For the part that concerns the monoidal structure on $End^L(M)$, we know from the Notation 6.2.0.1 and the Proposition 6.2.0.1 of \cite{lurie-ha} that $End(M)$ admits a monoidal structure $End(M)^{\otimes}\to \Assmonoidal$ where the fiber over $\nfin$ is isomorphic to $\prod_{\nfin}Fun(M,M)$. We take $End^L(M)^{\otimes}$, by definition, the full subcategory of $End(M)^{\otimes}$ spanned by those sequences $(f_1,..., f_n)$ where each $f_i$ is a colimit-preserving endofunctor of $M$. The fact that the composition $q:End^L(M)^{\otimes}\subseteq End(M)^{\otimes}\to \Assmonoidal$ is still a cocartesian fibration follows immediately from the fact that the composition of colimit-preserving functors is colimit-preserving. It follows also that this monoidal structure is strictly associative because this holds for $End(M)^{\otimes}$ (see Notation 6.2.0.1 of \cite{lurie-ha}).

To prove that this monoidal structure is presentable (see Definition 3.4.4.1 of \cite{lurie-ha}) it suffices to observe that: $(i)$ since $M$ is presentable, $End^L(M)$ is also presentable (See \cite{lurie-htt}-Prop. 5.5.3.8); $(ii)$ since the colimits in $End^L(M)$ are computed objectwise in $M$ (\cite{lurie-htt}-Cor. 5.1.2.3 ) and the objects in $End^L(M)$ are, by definition, colimit-preserving functors, the cocartesian fibration $q$ is compatible with small colimits (See \cite{lurie-ha}-Definition 3.1.1.18).

To conclude, the fact that $End^L(M)$ provides a classifying object for endomorphisms results from the same arguments as in the Remark 6.2.0.5 of \cite{lurie-ha}: since $End^L(M)$ has the property of internal-hom object in $\Prl$, it provides a final object in $(\Prl)^{+}[\Prl]$. The Corollary 3.2.2.4 \cite{lurie-ha} applied to $End^L(M)^{\otimes }$ concludes the proof.

\end{proof}
\end{prop}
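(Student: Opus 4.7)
The plan is to construct $End^L(M)^{\otimes}$ as a sub-monoidal-structure of the monoidal $(\infty,1)$-category $End(M)^{\otimes}\to\Assmonoidal$ already provided by Notation 6.2.0.1 and Proposition 6.2.0.2 of \cite{lurie-ha}, whose fiber over $\nfin$ is $\prod_{\nfin}Fun(M,M)$ and whose tensor operation is strict composition. First, I would define $End^L(M)^{\otimes}\subseteq End(M)^{\otimes}$ as the full subcategory spanned by those sequences $(f_1,\dots,f_n)$ of endofunctors in which each $f_i$ preserves small colimits. Since the composition of colimit-preserving functors is again colimit-preserving, this full subcategory is closed under the monoidal operation and contains the unit, so the restriction of the cocartesian fibration $End(M)^{\otimes}\to\Assmonoidal$ to $End^L(M)^{\otimes}$ is still a cocartesian fibration of $\infty$-operads (apply the criteria in \ref{subcategoriesclosedunderproduct}). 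In particular this yields a monoidal $(\infty,1)$-category whose underlying $(\infty,1)$-category is $Fun^L(M,M)$, the internal hom in $\Prlmonoidal$.

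Second, I would check presentability. The underlying $(\infty,1)$-category $End^L(M)=Fun^L(M,M)$ is presentable because $(\Prl)^{\otimes}$ is a closed symmetric monoidal structure whose internal hom between presentable $(\infty,1)$-categories is the presentable $(\infty,1)$-category of colimit-preserving functors (Remark 6.3.1.17 of \cite{lurie-ha}). Compatibility with small colimits reduces to showing that for any small diagram $\{f_i\}$ in $Fun^L(M,M)$ and any $g\in Fun^L(M,M)$, the natural maps $(colim_i\,f_i)\circ g\to colim_i(f_i\circ g)$ and $g\circ(colim_i\,f_i)\to colim_i(g\circ f_i)$ are equivalences. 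Since colimits in functor categories are computed objectwise in $M$ (\cite[5.1.2.3]{lurie-htt}) and each $g$ is colimit-preserving by assumption, both verifications are immediate.

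Third, I would derive the universal property using the framework of classifying objects for endomorphisms from Section 6.1.2 of \cite{lurie-ha}. Applied to the symmetric monoidal $(\infty,1)$-category $\Prl^{\otimes}$ with the module $\Prl\in LMod_{\Prl}(\iCat^{big})$ (given by the self-action) and an object $M\in\Prl$, the right fibration
\begin{equation}
LMod(\Prl,\Prl)\times_{\Prl}\{M\}\to Alg(\Prl)
\end{equation}
is representable by an associative algebra $End^L(M)^{\otimes}$ provided that $Alg(\Prl^{+}[M])$ has a final object (Remark \ref{existenceofendomorphismsobject}). By the very construction of $\Prl^{+}[M]$ (see Notation 6.1.2.37 of \cite{lurie-ha}), a final object corresponds to a presentable $(\infty,1)$-category equipped with a universal colimit-preserving action on $M$, which is exactly what is encoded by the internal hom $Fun^L(M,M)$ together with its evaluation $Fun^L(M,M)\times M\to M$. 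The main obstacle will be to identify the algebra obtained from this abstract representability argument with the monoidal structure $End^L(M)^{\otimes}$ constructed in the first step; this requires checking that the two monoidal structures (one coming from composition of functors, one from the abstract universal property) agree, which I would handle by invoking the same strategy as Remark 6.2.0.5 of \cite{lurie-ha}, namely applying Corollary 3.2.2.4 of \cite{lurie-ha} to the explicit monoidal structure $End^L(M)^{\otimes}$ and using that its unit object (the identity functor) together with the evaluation map represent a final object of $Alg(\Prl^{+}[M])$.
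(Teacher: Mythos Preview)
Your proposal is correct and follows essentially the same approach as the paper: construct $End^L(M)^{\otimes}$ as the full subcategory of $End(M)^{\otimes}$ on colimit-preserving sequences, verify presentability via the internal-hom property of $\Prl$ and objectwise computation of colimits, and deduce the universal property by the Remark 6.2.0.5 argument together with Corollary 3.2.2.4 of \cite{lurie-ha}. The paper's proof is slightly terser on the identification step you flag as the ``main obstacle,'' but handles it exactly as you suggest.
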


\subsubsection{The Monoidal Structure in $\Prl_{\kappa}$}
Let $\kappa$ be a regular cardinal. Following \cite[6.3.7.11]{lurie-ha}), the monoidal structure in $\Prl$ restricts to a monoidal structure in the (non-full) subcategory $\Prl_{\kappa}\subset \Prl$. Moreover, if $\mathcal{K}$ denotes the collection of $\kappa$-small simplicial sets together with the simplicial set $Idem$, the equivalence 

\begin{equation}
Ind_{\kappa}:\iCat(\mathcal{K})\to \Prl_{\kappa}
\end{equation}

\noindent of the discussion in \ref{compactlygenerated1} is compatible with the monoidal structures (where on the left side we consider the monoidal structure described in \ref{compatiblewithcolimitsmonoidal}). 

To see this we use the fact the monoidal structure in $\Prl$ is the restriction of the monoidal structure described in \ref{compatiblewithcolimitsmonoidal} for the $(\infty,1)$-category of big $(\infty,1)$-categories with all colimits together with colimit preserving functors. The discussion in the same section implies also that $Ind_{\kappa}$ is monoidal, so that the product of $\kappa$-compactly generated in $\Prl$ is again compactly generated. Moreover, if $x$ is a $\kappa$-compact object in $\C$ and $y$ is a $\kappa$-compact object in $\C'$, their product $x\otimes y$ is a $\kappa$-compact object in the product $\C\otimes \C'$ and the collection of $\kappa$-compact objects in $\C\otimes \C'$ is generated by the objects of this form under $\kappa$-small colimits. This implies that if $\C$ and $\C'$ and $\D$ are $\kappa$-compactly generated, the equivalence in (\ref{equationlala}) restricts to an equivalence between the full subcategory of $Fun_{\mathcal{K}}(\C\otimes \C', \D)$ spanned by those functors which preserve $\kappa$-compact objects and the full subcategory of $Fun_{\mathcal{K}\boxtimes\mathcal{K}}(\C\times \C', \D)$ spanned by the functors which preserve $\kappa$-compact objects separately in each variable.

Let now $\Prlkmonoidal$ denote the (non-full) subcategory of $\Prlmonoidal$ spanned by the objects $(\C_1,...,\C_n)$ where each $\C_i$ is a $\kappa$-compactly generated $(\infty,1)$-category, together with the maps $(\C_1,..,\C_n)\to (\D_1,...,\D_m)$ over some $f:\nfin\to \mfin$ corresponding to those families of functors 

\begin{equation}\{u_i:\prod_{j\in f^{-1}(\{i\})} \C_j\to \D_i\}_{i\in \{1,...,m\}}\end{equation}

\noindent in $\iCat^{big}$ where each functor commutes with colimits separately in each variable and sends $\kappa$-compact objects to $\kappa$-compact objects, separately in each variable. It follows from the restriction of the equivalence in (\ref{equationlalala}) to the subcategories of compact preserving functors, that if $f:\nfin\to \mfin$ is a map in $N(\Fin)$ and $X=(\C_1,..., \C_n)$ is a sequence of $\kappa$-compactly generated $(\infty,1)$-categories, then the map in $\Prl$ corresponding to the family of universal functors

\begin{equation}
\prod_{j\in f^{-1}(\{i\})}\C_j\to \D_i:= \mathcal{P}^{\mathcal{K}}_{\boxtimes_{j\in f^{-1}(\{i\}}\mathcal{K}}(\prod_{j\in f^{-1}(\{i\}}\C_j)
\end{equation}

\noindent is in $\mathcal{P}r^{L, \otimes}_k$ (because it commutes with colimits separately in each variable and preserves compact objects separately in each variable) and provides a cocartesian lift to $f$ at $X$. It follows that the non-full inclusion $\mathcal{P}r^{L}_{\kappa}\subseteq \Prl$ is monoidal.\\

\subsection{Dualizable Objects}
\label{dualizable}

We will recall the notion of duals. If $\Cmonoidal$ is a symmetric monoidal $(\infty,1)$-category with a unit $\mathit{1}$, we say that an object $X$ is dualizable, or that it has a dual, if there exist an object $\check{X}$ together with morphisms 

\begin{equation}
\xymatrix{
\mathit{1}\ar[r]^{\alpha_X}& \check{X}\otimes X && X\otimes \check{X}\ar[r]^{\beta_X}& \mathit{1}
}
\end{equation}

\noindent such that the compositions

\begin{equation}
\xymatrix{
X\simeq X\otimes \mathit{1}\ar[rr]^{Id_X\otimes \alpha_X}&& X\otimes \check{X}\otimes X\ar[rr]^{\beta_X\otimes Id_X}&& \mathit{1}\otimes X\simeq X\\ 
\check{X}\simeq \mathit{1}\otimes \check{X}\ar[rr]^{\alpha_X\otimes Id_{\check{X}}}&& \check{X}\otimes X \otimes \check{X} \ar[rr]^{Id_{\check{X}}\otimes \beta_X}&& \check{X}\otimes \mathit{1}\simeq \check{X}
}
\end{equation}

\noindent are homotopic to the identity maps in $\C$. These restrains are equivalent to ask that for any pair of objects $Y$ and $Z$ in $\C$, the multiplication with the dual induces an homotopy equivalence

\begin{equation}
Map_{\C}(X\otimes Y, Z)\simeq Map_{\C}(Y,\check{X}\otimes Z)
\end{equation}

In particular, if $\Cmonoidal$ admits internal-hom objects and $X$ has a dual, then we have for every object $Y$ in $\C$, a canonical equivalence $Y^X\simeq \check{X}\otimes Y$.\\

\subsection{Stability}
\label{section3-5}

\subsubsection{Stable Monoidal $(\infty,1)$-categories}

Let $\iCatstable$ denote the $(\infty,1)$-category of small stable $\infty$-categories together with the exact functors. The inclusion $\iCatstable\subseteq \iCat$ preserves finite products (as a result of the Theorem 1.1.1.4 of \cite{lurie-ha}) and therefore $\iCatstable$ inherits a symmetric monoidal structure $(\iCatstable)^{\otimes}$ induced from the cartesian structure in $\iCat$. By definition (see Def. 8.3.4.1 of \cite{lurie-ha}) a \emph{stable $\Op$-monoidal $\infty$-category} is an $\Op$-monoidal $\infty$-category $q:\Cmonoidal\to \Opmonoidal$ such that for each color $X\in \Op$, the fiber $\C_{X}$ is a stable $\infty$-category and the monoidal operations are exact separately in each variable. In particular, the monoidal structure commutes with finite colimits. The small stable symmetric monoidal $\infty$-categories can be identified with commutative algebra objects in $(\iCatstable)^{\otimes}$.\\

If $\Cmonoidal\to \Opmonoidal$ is an $\Op$-monoidal $\infty$-category compatible with all colimits, then it is stable $\Op$-monoidal if and only if for each color $x\in \Op$ the fiber $\C_x$ is stable. This is obvious because, by definition, the monoidal structure preserves colimits on each variable and therefore is exact on each variable. These will be called \emph{stable presentable $\Op$-monoidal $\infty$-categories}. We know that the stable presentable $(\infty,1)$-categories form a full subcategory of $\Prlstable$ of $\Prl$ and by \cite[6.3.2.10, 6.3.2.18]{lurie-ha}  it is closed under the tensor structure in $\Prl$. Moreover, following \cite[6.3.1.17]{lurie-ha}, if $\C$ and $\D$ are stable presentabled $(\infty,1)$-categories, $Fun^L(\C,\D)$ is again stable presentable so that the monoidal structure in $\Prlstable$ is closed. 
We can identify the stable presentable symmetric monoidal $(\infty,1)$-categories with the objects in $CAlg(\Prlstable)$.\\

\begin{remark}
Let $\C$ be a stable $\Op$-monoidal $(\infty,1)$-category compatible with all colimits. Then, for any $A\in Alg_{/\Op}(\C)$ the symmetric monoidal $(\infty,1)$-categories $Mod_A^{\Op}(\C)^{\otimes}$ is stable. This follows immediately from the fact that for each colour $x\in \Op$,  pushouts and pullbacks in $Mod_A^{\Op}(\C)_x$ are computed in $\C_x$ by means of the forgetful functor  $Mod_A^{\Op}(\C)^{\otimes}_x\to \C_x$ (which is conservative). Moreover, since $Mod_A^{\Op}(\C)^{\otimes}$ is again compatible with colimits, the multiplication maps of the monoidal structure are exact on each variable. Notice however that the same is not true for algebras because colimits are not computed directly as colimits in the underlying category.
\end{remark}

The canonical example of a stable symmetric monoidal $(\infty,1)$-category is the $(\infty,1)$-category of spectra $\Sp$ with the smash product structure. One way to obtain this monoidal structure is to prove that $\Sp$ is an idempotent object in $\Prlmonoidal$ \cite[Prop. 6.3.2.18]{lurie-ha}. Our results in this paper provide an alternative way to obtain this monoidal structure. We will return to this in the Example \ref{examplemonoidalstructurespectra}.\\

To conclude this section we provide an helpful characterization of compact generators in categories of modules over a stable presentable $(\infty,1)$-category. 

\begin{prop}
\label{compactgeneratorscategoriesofmodules}
Let $\Cmonoidal$ be a stable presentable symmetric monoidal $(\infty,1)$-category.  Suppose that its underlying $(\infty,1)$-category $\C$ admits a family $\mathcal{E}=\{E_i\}_{i\in I}$ of $\kappa$-compact generators in the sense of \ref{stableinfinitycategories}. Then, for any commutative algebra object $A$ in $\C$, the family $\{A\otimes E_i\}_{i\in I}$ is a family of $\kappa$-compact generators in the $(\infty,1)$-category $Mod_{A}(\C)$ (this makes sense because by the previous remark the category of modules is stable).
\begin{proof}
By definition, $A\otimes E_i$ is the image of $E_i$ under the base-change monoidal functor $(-\otimes A):\Cmonoidal\to Mod_{A}(\C)^{\otimes}$. This functor is a left adjoint to the forgetful functor. The result follows immediately from this adjunction, together with the fact the forgetful functor is conservative and commutes with colimits (\cite[3.4.4.6]{lurie-ha}).
\end{proof}
\end{prop}

\subsubsection{Compatibility with $t$-structures}
\label{monoidalt}

Let now $\Cmonoidal$ be a stable symmetric monoidal $(\infty,1)$-category and suppose that $\C$ is equipped with a $t$-structure $((\C)_{\leq 0}, (\C)_{\geq 0})$. Following \cite[2.2.1.3]{lurie-ha} we say that the monoidal structure is compatible with the $t$-structure if the full subcategory $\C_{\geq 0}$ contains the unit object and is closed under the tensor product. In this case, it inherits a symmetric monoidal structure. Moreover, the truncation functors $\tau_{\leq n}: (\C)_{\geq 0}\to (\C)_{\geq 0}$ are monoidal \cite[2.2.1.8]{lurie-ha} and in particular, the subcategories $(\C_{\geq 0}\cap \C_{\leq n})$ are monoidal reflexive localizations of $\C_{\geq 0}$ \cite[2.2.1.10]{lurie-ha}. In particular, the heart $\C^{\heartsuit}$ inherits a symmetric monoidal structure and the zero-homology functor $\mathbb{H}_0:\C_{\geq 0}\to \C^{\heartsuit}$ is monoidal.\\

Given an $\infty$-operad $\Opmonoidal$, we write $Alg_{\Op}(\C)^{cn}$ for the full subcategory of $Alg_{\Op}(\C)$ spanned by the algebra objects whose underlying object in $\C$ is in $\C_{\geq 0}$. Since $\C_{\geq 0}$ inherits a monoidal structure, we have a fully-faithfull map $Alg_{\Op}(\C_{\geq 0})\subseteq Alg_{\Op}(\C)$ and its image can be identified with $Alg_{\Op}(\C)^{cn}$. It follows from the discussion in \ref{subcategoriesclosedunderproduct} that the right adjoint $\tau_{\geq 0}: \C\to \C_{\geq 0}$ extends to a right adjoint to the inclusion

\begin{equation}
Alg_{\Op}(\C)^{cn}\hookrightarrow Alg_{\Op}(\C)
\end{equation}

Assume now that the $t$-structure is left complete. In this case we have an equivalence 
$\C_{\geq 0}\simeq lim_n (\C_{\geq 0}\cap \C_{\leq n})$. In fact this equivalence in $\iCat$ lifts to an equivalence in $CAlg(\iCat)$ through the forgetful functor $CAlg(\iCat)\to \iCat$. Indeed, the functors $\tau_{\leq n}:\C_{\geq 0}\to \C_{\geq 0}\cap \C_{\leq n}$ are monoidal and limits in $CAlg(\iCat)$ are computed in $\iCat$ by means of the same forgetful map. In particular, since the forgetful map $CAlg(\iCat)\subseteq Op_{\infty}$ has a left adjoint (see \ref{monoidalenvelope}), it commutes with limits so that $\C_{\geq 0}^{\otimes}$ is the limit of $(\C_{\geq 0}\cap \C_{\leq n})^{\otimes}$ in $Op_{\infty}$. In particular, for any $\infty$-operad $\Opmonoidal$, we have an equivalence

\begin{equation}
\label{chule}
Alg_{\Op}(\C)^{cn}\simeq lim_n Alg_{\Op}(\C_{\geq 0}\cap \C_{\leq n})
\end{equation}

If we assume that $\C$ is presentable, then $Alg_{\Op}(\C)$ will also be presentable and in particular the subcategory of $n$-truncated objects $\tau_{\leq n}Alg_{\Op}(\C)$ is a reflexive localization of $Alg_{\Op}(\C)$. Moreover, since the truncation functor given by the $t$-structure is monoidal, it exhibits $Alg_{\Op}(\C_{\geq 0}\cap \C_{\leq n})$ also a reflexive localization of $Alg_{\Op}(\C)$ (see \ref{reflexivelocalizationalgebras}) so that the two subcategories are equivalent. Together with the equivalence (\ref{chule}) this implies that Postnikov towers converge in $Alg_{\Op}(\C)^{cn}$.\\

Again, an important example is the $(\infty,1)$-category of spectra $\Spmonoidal$ \cite[Lemma 8.1.1.7]{lurie-ha}. More generally, for any connective $\mathbb{E}_{k+1}$-algebra $R$ in $\Sp$, the category of left modules $LMor_R(\Sp)$ inherits a natural left-complete $t$-structure \cite[8.1.1.10,8.1.1.13]{lurie-ha} together with a compatible $\mathbb{E}_k$-monoidal structure \cite[8.1.2.5,8.1.3.15]{lurie-ha}.

\subsection{From symmetric monoidal model categories to symmetric monoidal $(\infty,1)$-categories}
\label{link2}

\subsubsection{The (monoidal) link}
\label{monoidalink}

The link described in the Section \ref{section1-2} can now be extend to the world of monoidal structures. Recall that a model category $\M$ equipped with a monoidal structure $\otimes$ is said to be a \emph{monoidal model category} if  the monoidal structure is closed, the tensor functor is a left-Quillen bifunctor and the unit of the monoidal structure is a cofibrant object in $\M$. The main idea is that\\

\textit{Every \emph{symmetric monoidal model category} "presents" a symmetric monoidal $(\infty,1)$-category}.\\

Following the Example $4.1.3.6$ of \cite{lurie-ha}, if $\M$ is a symmetric monoidal model category (see Definition 4.2.6 of \cite{hovey-modelcategories}) then the underlying $\infty$-category of $\M$ inherits a canonical symmetric monoidal structure which we denote here as $N(\M)[W^{-1}]^{\otimes}\to N(\Fin)$. It can be obtained as follows: first recall that in a symmetric monoidal model category, the product of cofibrant objects is again cofibrant and by the Ken Brown's Lemma, the product of weak-equivalences between cofibrant objects is again a weak-equivalence. This implies that the full subcategory of cofibrant objects in $\M$ inherits a monoidal structure and we can regard it as a simplicial coloured operad $(\M^c)^{\otimes}$ enriched over constant simplicial sets. This way,
its operadic nerve $N^{\otimes}((\M^c)^{\otimes})\to N(\Fin)$ provides a trivial $\infty$-operad which furthermore is a symmetric monoidal $(\infty,1)$-category with underlying $\infty$-category equivalent to $N(M^c)$ (see the Example \ref{classicalsymmetricmonoidal}). Since the restriction of the monoidal structure to the cofibrant objects preserves weak-equivalences, we can understand the pair $(N^{\otimes}((\M^c)^{\otimes}),W)$ as an object in $CAlg(\mathcal{W}\iCat)$ and we define the underlying symmetric monoidal $(\infty,1)$-category of $\M$ as the monoidal localization (see \ref{remarkcat})

\begin{equation}N(\M)[W^{-1}]^{\otimes}:= N^{\otimes}((\M^c)^{\otimes})[W_c^{-1}]^{\otimes}\end{equation} 

It follows from the definitions that its underlying $\infty$-category is canonically equivalent to the underlying $\infty$-category of $\M$. Moreover, it comes canonically equipped with a universal monoidal functor $N^{\otimes}((\M^c)^{\otimes})\to N(\M)[W^{-1}]^{\otimes}$.

 At the same time, if $\M$ comes equipped with a \emph{compatible simplicial enrichment}, then $\M^\circ$, although not a simplicial monoidal category (because the product of fibrant objects is not fibrant in general), can be seen as the underlying category of a simplicial coloured operad $(\M^{\circ})^{\otimes}$ where the colours are the cofibrant-fibrant objects in $\M$ and the operation space is given by 
 
\begin{equation}
Map_{(\M^{\circ})^{\otimes}}(\{X_i\}_{i\in I}, Y):= Map_{\M}(\bigotimes_{i} X_i, Y)
\end{equation}

\noindent which is a Kan-complex because $Y$ is fibrant and the product of cofibrant objects is cofibrant. With this, we consider the $\infty$-operad given by the operadic nerve $N^{\otimes}((\M^{\circ})^{\otimes})$. By the Proposition $4.1.3.10$ of \cite{lurie-ha}, this $\infty$-operad is a symmetric monoidal $(\infty,1)$-category and the product of cofibrant-fibrant objects $X, Y$ is given by the choice of a trivial cofibration $X\otimes Y\to Z$ providing a fibrant replacement for the product in $\M$. Moreover, the  Corollary 4.1.3.16 of \cite{lurie-ha} provides an $\infty$-symmetric-monoidal-generalization of the Proposition \ref{dwyer-kan}: The symmetric monoidal $(\infty,1)$-category $N(M)[W^{-1}]^{\otimes}$ is monoidal equivalent to $N^{\otimes}((\M^{\circ})^{\otimes})$.

A particular instance of this is when $\M$ is a cartesian closed combinatorial simplicial model category with a cofibrant final object. In this case, it is a symmetric monoidal model category with respect to the product and we can consider its operadic nerve $N^{\otimes}((\M^{\circ})^{\times})$. From the Example 2.4.1.10 of \cite{lurie-ha}, this is equivalent to a Cartesian structure in the underlying $\infty$-category of $\M$ -  $N_{\Delta}(\M^{\circ})^{\times}$.\\

A monoidal left-Quillen map (\cite{hovey-modelcategories}-Def. 4.2.16) between monoidal model categories
induces a monoidal functor between the underlying symmetric monoidal $(\infty,1)$-categories. This is because the monoidal localization was constructed as a functor $CAlg(\mathcal{W}\iCat)\to CAlg(\iCat)$. In the simplicial case we can provide a more explicit construction:

\begin{construction}
Let $\M\to \N$ be a monoidal left-Quillen functor between two combinatorial simplicial symmetric monoidal model categories. Let $G$ be its right adjoint. We construct a monoidal map between the associated operadic nerves

\begin{equation}
\xymatrix{
N^{\otimes}((\M^{\circ})^{\otimes})\ar[rd]\ar[rr]^{F^{\otimes}}&& N^{\otimes}((\N^{\circ})^{\otimes})\ar[dl]\\
&N(\Fin)&
}
\end{equation}

For that, we consider the simplicial category $\A$ whose objects are the triples $(i, \nfin, (X_1,...,X_n))$ with $i\in\{0,1\}$, $\nfin\in \Fin$ and $X_1$,..., $X_n$ are objects in $\M$ if $i=0$ and in $\N$ if $i=1$. The mapping spaces

\begin{equation}
Map_{\A}((0, \nfin, (X_1,...,X_n)),(j, \mfin, (Y_1,...,Y_n)))
\end{equation}

\noindent are defined to be the mapping spaces in $\tilde{M}$\footnote{consult the notation in the Construction \ref{monoidaltilde}} (resp. $\tilde{N})$  if $i,j=0$ (resp. $i,j=1$). If $i=1$ and $j=0$, we declare it to be empty and finally, if $i=0$ and $j=1$, we defined it by the formula

\begin{equation}
Map_{\A}((0, \nfin, (X_1,...,X_n)),(j, \mfin, (Y_1,...,Y_n))):=Map_{\tilde{N}}(( \nfin, (F(X)_1,...,F(X)_n)),(\mfin, (Y_1,...,Y_n)))
\end{equation}

\noindent which by the adjunction $(F,G)$ and the fact that $F$ is strictly monoidal, are the same as 

\begin{equation}
Map_{\tilde{M}}(( \nfin, (X_1,...,X_n)),(\mfin, (G(Y)_1,...,G(Y)_n)))
\end{equation}

The composition is the obvious one induced from $\M$ and $\N$.  We consider the full simplicial subcategory $\A^{\circ}$ spanned by the objects $(i, \nfin, (X_1,...,X_n))$ where each $X_i$ is cofibrant-fibrant (respectively in $\M$ or $\N$ depending on the value of $i$). It follows that $\A^{\circ}$ is enriched over Kan-complexes (because $\M$ and $\N$ are simplicial model categories) and therefore its simplicial nerve is an $(\infty,1)$-category. Moreover, it admits a canonical projection $p:N_{\Delta}(\A^{\circ})\to N(\Fin)\times \Delta[1]$ whose fibers

\begin{equation}
\{0\}\times_{N(\Fin)\times \Delta[1]} N_{\Delta}(\A^{\circ})\simeq N^{\otimes}((\M^{\circ})^{\otimes})
\end{equation}
\noindent and 
\begin{equation}
\{1\}\times_{N(\Fin)\times \Delta[1]} N_{\Delta}(\A^{\circ})\simeq N^{\otimes}((\N^{\circ})^{\otimes})
\end{equation}

\noindent recover the operadic nerves of $\M$ and $\N$, respectively.
\end{construction}

\begin{prop}
\label{inducedmonoidafunctor}
The projection $p:N_{\Delta}(\A^{\circ})\to N(\Fin)\times \Delta[1]$ is a cocartesian fibration of $(\infty,1)$-categories. 
\begin{proof}
We follow the arguments in the proof of the Proposition 4.1.3.15 in \cite{lurie-ha}. We have to prove that for any edge $u:(i,\nfin)\to (j,\mfin)$ in $N(\Fin)\times \Delta[1]$ and any object $C:=(i, \nfin, (X_1,..., X_n))$ over $(i,\nfin)$, there is a cocartesian lift $\tilde{u}$ of $u$ starting at $C$. Notice that any such morphism $u$ is consists of a pair $(i\to i', f:\nfin\to \mfin)$ with $i\to i'$ an edge in $\Delta[1]$ and $f$ a morphism in $\Fin$.

Since we already know that both fibers  $N^{\otimes}((\M^{\circ})^{\otimes})$ and $N^{\otimes}((\N^{\circ})^{\otimes})$ are symmetric monoidal $(\infty,1)$-categories, our task is reduced to the case $u:(i=0, \nfin)\to (j=1, \mfin)$ which is determined by the second componenent $f:\nfin\to \mfin$.

Given an object $C:=(0, \nfin, (X_1,..., X_n))$ over $(0,\nfin)$ we have to find a new object $C':=(1, \mfin, (\tilde{X}_1,..., \tilde{X}_m))$ together with a cocartesian morphism in $N_{\Delta}(\A^{\circ})$

\begin{equation}\tilde{u}:C\to C'\end{equation}

\noindent defined over $u$. Recall that by definition, the connected component of the mapping space 

\begin{equation}Map_{N_{\Delta}(\A^{\circ})}((i,\nfin,(X_0,..., X_n)),(j,\mfin,(Y_1,..., Y_m)))\end{equation} 

\noindent spanned by the maps which are defined over $u$ was defined to be the mapping space 

\begin{equation}\prod_{i\in \mfin}Map_{\N^{\circ}}(\otimes_{\alpha\in f^{-1}\{i\}} F(X_{\alpha}), \tilde{Y}_i)\end{equation}

With this in mind, we define $\tilde{X}_i$ to be a fibrant replacement for the product

\begin{equation}
u_i:\otimes_{\alpha\in f^{-1}\{i\}} F(X_{\alpha})\to \tilde{X}_i
\end{equation}

\noindent where each $u_i$ is the trivial cofibration that comes out from the device of the model structure providing the functorial factorizations. Finally, we take $\tilde{u}$ to be the point in $Map_{N_{\Delta}(\A^{\circ})}((0,(X_0,..., X_1)),(1,(\tilde{X}_1,..., \tilde{X}_m)))$ corresponding the product of the trivial cofibrations $u_i$. Notice that each $\tilde{X}_i$ is cofibrant-fibrant in $\N$ because the product of cofibrants is cofibrant. We are now reduced to the task of proving that $\tilde{u}$ is a cocartesian morphism. In our situation, this is equivalent to say that
for any morphism $u'=(id_1, f'):(1,\nfin)\to (1,\langle k \rangle)$ in $\Delta[1]\times N(\Fin)$ and any object $C'':=(1,\langle k \rangle, (Z_1, ..., Z_k))$ over $(1,\langle k \rangle)$, the composition map with $\tilde{u}$

\begin{equation}
Map^{u'}_{N_{\Delta}(\A^{\circ})}((1,\nfin,(\tilde{X}_0,..., \tilde{X}_n)),(1,\langle k \rangle, (Z_1, ..., Z_k)))\to 
Map^{u'\circ u}_{N_{\Delta}(\A^{\circ})}((0,\nfin,(X_0,..., X_1)),(1,\langle k \rangle, (Z_1, ..., Z_k)))
\end{equation}

\noindent is a weak-equivalence of simplicial sets (here we denote by $Map^{u'}_{N_{\Delta}(\A^{\circ})}(-,-)$ the directed component of $Map_{N_{\Delta}(\A^{\circ})}(-,-)$ of those maps which are defined over $u'$).

It is immediate from the definitions that the previous map is no more than the map

\begin{eqnarray}
\prod_{j\in (1,\langle k \rangle)}Map_{N_{\Delta}(\N^{\circ})}(\otimes_{i\in (f')^{-1}(\{j\})} \tilde{X}_i, Z_j)\to  \prod_{j\in (1,\langle k \rangle)}Map_{N_{\Delta}(\N^{\circ})}(\otimes_{\sigma\in (f'\circ f)^{-1}(\{j\})} F(X_{\sigma}), Z_j)\simeq
\\
\simeq \prod_{j\in (1,\langle k \rangle)}Map_{N_{\Delta}(\N^{\circ})}(\otimes_{i\in (f')^{-1}(\{j\})}(\otimes_{\alpha\in (f)^{-1}(\{i\})} F(X_{\alpha})), Z_j)
\end{eqnarray}

\noindent where the last isomorphism follows from the natural identification of the two products $\otimes_{\sigma\in (f'\circ f)^{-1}(\{j\})} F(X_{\sigma})$ and $\otimes_{i\in (f')^{-1}(\{j\})}(\otimes_{\alpha\in (f)^{-1}(\{i\})} F(X_{\alpha}))$. Finally, we can see that this previous map is the one obtained by the product of the pos-composition with the trivial cofibrations $u_i$. Since the monoidal structure is given by a Quillen bifunctor, the product of trivial cofibrations is a trivial cofibration and therefore the map between the mapping spaces is a trivial fibration and so a weak-equivalence. To conclude, the product of trivial fibrations is always a trivial fibration.
\end{proof}
\end{prop}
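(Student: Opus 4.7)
The goal is to verify the cocartesian lifting condition of \cite[2.4.2.1]{lurie-htt} for the projection $p$. An edge in $N(\Fin)\times\Delta[1]$ is a pair $u=(i\to i', f:\nfin\to\mfin)$. Since the fibers of $p$ over $\{0\}$ and $\{1\}$ are, respectively, $N^{\otimes}((\M^{\circ})^{\otimes})$ and $N^{\otimes}((\N^{\circ})^{\otimes})$, both already known to be symmetric monoidal $(\infty,1)$-categories by the Proposition 4.1.3.10 of \cite{lurie-ha}, the cocartesian lifts already exist whenever $i=i'$. Thus the only case that requires attention is $u=(0\to 1, f:\nfin\to\mfin)$, and we must produce, for every object $C=(0,\nfin,(X_1,\dots,X_n))$ over the source, a lift $\tilde{u}:C\to C'$ of $u$ which is $p$-cocartesian.

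\emph{Construction of the candidate.} Given $C$ as above, set $Y_i:=\bigotimes_{\alpha\in f^{-1}(\{i\})}F(X_\alpha)$ in $\N$ for each $i\in\mfin^{+}$. Since $F$ is left Quillen, each $F(X_\alpha)$ is cofibrant, and since the monoidal product is a left Quillen bifunctor between simplicial symmetric monoidal model categories, $Y_i$ is cofibrant. Apply the functorial factorization to write $u_i:Y_i\to\tilde{X}_i$ as a trivial cofibration followed by a fibration to the terminal object, yielding a cofibrant--fibrant object $\tilde{X}_i\in\N$. Define $C':=(1,\mfin,(\tilde{X}_1,\dots,\tilde{X}_m))$ and let $\tilde{u}:C\to C'$ be the point in $\mathrm{Map}^{u}_{N_{\Delta}(\A^{\circ})}(C,C')\simeq\prod_{i\in\mfin^{+}}\mathrm{Map}_{\N^{\circ}}(Y_i,\tilde{X}_i)$ given by the tuple $(u_1,\dots,u_m)$.

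\emph{Verification that $\tilde{u}$ is $p$-cocartesian.} By the characterization in \cite[2.4.1.10]{lurie-htt} (for inner fibrations over an $(\infty,1)$-category) it suffices to check that for every further morphism $u'=(\mathrm{id}_1,f':\mfin\to\langle k\rangle)$ and every object $C''=(1,\langle k\rangle,(Z_1,\dots,Z_k))$ the composition map
\begin{equation}
\mathrm{Map}^{u'}_{N_{\Delta}(\A^{\circ})}(C',C'')\longrightarrow \mathrm{Map}^{u'\circ u}_{N_{\Delta}(\A^{\circ})}(C,C'')
\end{equation}
is a weak equivalence. Unwinding the definition of the mapping spaces in $\A^{\circ}$ and using the canonical identification $\bigotimes_{\sigma\in(f'\circ f)^{-1}(\{j\})}F(X_\sigma)\simeq\bigotimes_{i\in(f')^{-1}(\{j\})}Y_i$, this map becomes the product, over $j\in\langle k\rangle^{+}$, of precomposition by $\bigotimes_{i\in(f')^{-1}(\{j\})}u_i$. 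Since each $u_i$ is a trivial cofibration between cofibrant objects and the monoidal product is a left Quillen bifunctor, the iterated tensor $\bigotimes_{i\in(f')^{-1}(\{j\})}u_i$ is again a trivial cofibration between cofibrant objects; precomposition by it into the fibrant target $Z_j$ therefore induces a trivial fibration of simplicial mapping spaces. The product of trivial fibrations is a trivial fibration, which gives the desired equivalence.

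\emph{Expected difficulties.} The only technically delicate step is the last one: the natural identification between the two ways of computing the iterated product via $f$ and $f'\circ f$ must be made genuinely compatible with the simplicial enrichment (so that the composition $\tilde{u}'\circ\tilde{u}$ really is the one described), and one must carefully invoke the Quillen bifunctor axiom iteratively in several variables. This is routine but bookkeeping-heavy; everything else is a formal consequence of the left-Quillen monoidal hypotheses on $F$.
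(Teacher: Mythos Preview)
Your proposal is correct and follows essentially the same approach as the paper: the same reduction to the case $(0\to 1, f)$, the same construction of $\tilde{X}_i$ as a fibrant replacement via a trivial cofibration $u_i$, and the same verification that precomposition by $\bigotimes_i u_i$ induces a trivial fibration on mapping spaces using the Quillen bifunctor axiom and the simplicial model category structure. The paper's proof is organized identically; your closing remark about the bookkeeping of the iterated product identification is apt but, as you note, routine.
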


Finally, we can now extract the monoidal functor $F^{\otimes}$ using the Proposition 5.2.1.4 of \cite{lurie-htt}. It is also clear from the proof of the Proposition \ref{inducedmonoidafunctor} that the underlying functor of $F^{\otimes}$ is the map $\bar{F}$ described in the Proposition 5.2.4.6 in \cite{lurie-htt} which we can identify with the composition of $F$ with a fibrant replacement functor in $\N$.

\subsubsection{Strictification of Algebras and Modules}
\label{strictificationalgebras}

In some very specific cases the theory of algebras can be performed directly within the setting of model categories. In other words, it admits a strictification. An important result of \cite{schwede-shipley-algebrasandmodulesinmonoidalmodelcategories} (Theorem 4.1) is that if $\M$ is a combinatorial monoidal model category satisfying the \emph{monoid axiom} (Definition 3.3 of \cite{schwede-shipley-algebrasandmodulesinmonoidalmodelcategories}), then the category $Alg(\M)$ of strict associative algebra objects in $\M$ admits a new combinatorial model structure where:

\begin{itemize}
\item a map in $Alg(\M)$ is a weak-equivalence if and only if it is a weak-equivalence in $\M$;
\item a map in $Alg(\M)$ is a fibration if and only if it is a weak-equivalence in $\M$;
\item the forgetful functor $Alg(\M)\to \M$ is a right-Quillen map that preserves cofibrant objects.
\item this model structure in $Alg(\M)$ is simplicial if the model structure in $\M$ is.
\end{itemize}

Using this results, we can create a comparison map between the underlying $(\infty,1)$-category of $Alg(\M)$ and the $(\infty,1)$-category of algebra-objects in the underlying $(\infty,1)$-category of $\M$. More precisely, using the fact the forgetful functor $Alg(\M)\to \M$ preserves cofibrant objects, we have natural inclusions $Alg(\M)^c\subseteq Alg(\M^c)\subseteq Alg(\M)$ which preserve weak-equivalences. Passing to the localizations (in the sense of \ref{locinfinity}), the cofibrant-replacement functor produces equivalences $N(Alg(\M)^c)[W^{-1}_{c_{alg}}]\simeq N(Alg(\M^c))[W^{-1}_{c}]\simeq N(Alg(\M))[W^{-1}]$ where $W_{c_{alg}}$ denotes the class of weak-equivalences between cofibrant algebras and $W_{c}$ is the class of weak-equivalences between algebras whose underlying objects in $\M$ are cofibrant. Finally, using the fact that the localization map
$N(\M^c)\to N(\M^c)[W_c^{-1}]$ is monoidal, it provides a map $Alg(N(\M^c))\to Alg(N(\M^c)[W_c^{-1}])$ which sends weak-equivalences in $\M$ between cofibrant objects to equivalences. The universal property of the localization provides a canonical map

\begin{equation}
\label{last2}
\xymatrix{
N(Alg(\M)^c)[W^{-1}_{c_{alg}}]\simeq Alg(N(\M^c))[W_c^{-1}]\ar@{-->}[r]& Alg(N(\M^c)[W_c^{-1}])\\
Alg(N(\M^c))\ar[u] \ar[ur]&
}
\end{equation}

\noindent rendering the diagram homotopy commutative. By the Theorem $4.1.4.4$ of \cite{lurie-ha}, if $\M$ is a combinatorial monoidal model category and either $(a)$ all objects are cofibrant or $(b)$ $\M$ is left-proper, the cofibrations are generated by the cofibrations between cofibrant objects and $\M$ is symmetric and satisfies the monoid axiom, then, this canonical map is an equivalence of $(\infty,1)$-categories. In the next section we will see this result applied to the theory of differential graded algebras.\\

\begin{remark}
\label{last3}
This strictification result can be extended to a monoidal functor. More precisely, recall from \ref{tensorproductalgebras} that the category of algebras inherits a monoidal structure induced from the one in the base monoidal category. As in the Remark \ref{last1}, the functor $Alg(N(\M^c))\to Alg(N(\M^c)[W_c^{-1}])$ extends to a monoidal functor and using the monoidal localization of \ref{remarkcat} we can also promote the map in (\ref{last2}) to a monoidal functor.
\end{remark}

There is also a strictification result for bimodules over associative algebras. Given two strictly associative algebra objects $A$ and $B$ in a combinatorial monoidal model category $\M$, we can set a model structure in the classical category of bimodules in $\M$, $BiMod(A,B)(\M)$ , for which the weak-equivalences $W_{Mod}$ are given by the weak-equivalences of $\M$ \cite[Prop. 4.3.3.15]{lurie-ha} and by the Theorem 4.3.3.17 of \cite{lurie-ha} we have

\begin{equation}
N(BiMod(A,B)(\M))[W_{Mod}^{-1}]\simeq _ABMod_B(N(\M)[W^{-1}])
\end{equation}

A similar result holds for commutative algebras (Thm 4.4.4.7 of \cite{lurie-ha}) whenever the strict theory admits an appropriate model structure (as in the Prop. 4.4.4.6 of \cite{lurie-ha}). In particular, it works also for modules over commutative algebras.\\

\begin{remark}
In the general situation, there are no model structures for algebras or modules. This is exactly one of the main motivations to develop a theory of algebras and modules within the more fundamental setting of $(\infty,1)$-categories. The theory of motives is one of those important cases where model category theory does not work.
\end{remark}

\begin{remark}
Recall that an $(\infty,1)$-category is presentable iff there exists a combinatorial simplicial model category $\M$ such that $\C$ is the underlying $\infty$-category of $\M$ (which means, $\C\simeq  N_{\Delta}(\M^{\circ})$) (see Prop. A.3.7.6 of \cite{lurie-htt}). There is a similar statement for presentable monoidal $(\infty,1)$-categories, replacing the simplicial nerve by the operadic nerve (see \cite[4.1.4.9]{lurie-ha} for a sketch of proof).
\end{remark}

\subsection{Higher Algebra over a classical commutative ring $k$}
\label{complexes}

The discussion in this section will be important in the last part of our paper. Let $k$ be a (small) commutative ring and denote by $Mod(k)$ the ordinary category of small sets endowed with the structure of module over $k$. We will write $Ch(k)$ to denote the big category of (unbounded) chain complexes of small $k$-modules. This is a Grothendieck abelian category. Recall also the existence of a symmetric tensor product of complexes given by the formula
 
\begin{equation}
(E\otimes E')_n:= \bigoplus_{i+j=n} (E_i\otimes_k E_j)
\end{equation}

\noindent where $\otimes_k$ denotes the tensor product of $k
$-modules. This monoidal structure is closed, with internal-hom $\underline{Hom}_{Ch(k)}(E,E')$ given by the formula

\begin{equation}
\underline{Hom}_{Ch(k)}(E,E')_n:= \prod_i Hom_k(E_i, E_{i+n})
\end{equation}

\noindent where the differential $d_n: \underline{Hom}_{Ch(k)}(E,E')_n\to \underline{Hom}_{Ch(k)}(E,E')_{n+1}$ sends a family $\{f_i\}$ to the family $\{d\circ f_i - (-1)^nf_{i+1}\}$.\\

The category $Ch(k)$ carries a left proper combinatorial model structure \cite[Theorem 2.3.11]{hovey-modelcategories} where the weak-equivalences are the quasi-isomorphisms of complexes, the fibrations are the surjections (and so every object is fibrant). We will call it the projective model structure on complexes. The cofibrant complexes (see the Lemma 2.3.6 and the Remark 2.3.7 of \cite{hovey-modelcategories}) are the \emph{DG-projective complexes}. In particular, every cofibrant complex is a complex of projective modules (and therefore flat) and any bounded below complexes of projective $k$-modules is cofibrant. Moreover, by the Proposition 4.2.13 of loc.cit, this model structure is compatible with the tensor product of complexes and so $Ch(k)$ is a closed symmetric monoidal model category. Following \ref{monoidalink}, the proper way to encode the study of complexes of $k$-modules up to quasi-equivalences is the underlying $(\infty,1)$-category $\mathcal{D}(k)$ of the model category $Ch(k)$. This is a particular case of the Example \ref{derivedinfinitycategoryofascheme} with $X=Spec(k)$. In particular, $\derivedk$ is stable with a compact generator $k$ and 
with compact objects the perfect complexes. Moreover, because the model structure is compatible with the tensor product of complexes, $\derivedk$ acquires a symmetric monoidal structure $\derivedk^{\otimes}$ (as explained in \ref{monoidalink}).\\

\begin{remark}
This method to obtain $\derivedk$ is not the one described in \ref{notations}. This is because the projective model structure does not agree with the injective one. However, since the weak-equivalences are the same, the resulting $(\infty,1)$-categories obtained by localization are equivalent.
\end{remark}

We now review the theory of algebra objects over $k$. By definition, a strict \emph{dg-algebra} over $k$ is a strictly associative algebra-object in $Ch(k)$ with respect to the tensor product of complexes. We will denote the category of dg-algebras as $Alg(Ch(k))$. As explained in the Example \ref{classicalalgebras}, the nerve $N(Alg(Ch(k)))$ is equivalent to $Alg(N(Ch(k))$ so that the notations are coherent. Thanks to \cite[Thm 4.1]{schwede-shipley-algebrasandmodulesinmonoidalmodelcategories} the model structure in $Ch(k)$ extends to a model structure in $Alg(Ch(k))$  with fibrations and weak-equivalences given by the underlying fibrations and quasi-isomorphisms of complexes\footnote{This model structure is left proper if $k$ is a field}. This model structure satisfies the condition $(b)$ of the previous section (see \cite[8.1.4.5]{lurie-ha}). In this case, denoting its underlying $(\infty,1)$-category by $N(Alg(Ch(k))^{c})[W_{c}^{-1}]$, the strictification result provides an equivalence

\begin{equation}
\label{strictificationdga}
\xymatrix{N(Alg(Ch(k))^c)[W_{c}^{-1}]\ar[r]^>>{\sim}& Alg(\derivedk)}
\end{equation}.

\begin{remark}
\label{modelstructurecdga}
The situation for commutative algebras is not so satisfatory. In general the model structure on complexes does not extend to the category of strictly commutative algebra objects in $Ch(k)$. However, if $k$ contains the field of rational numbers $\mathbb{Q}$, the model structure extends \cite[Prop. 8.1.4.11]{lurie-ha} and the strictification result holds \cite[8.1.4.7]{lurie-ha}. Writing $CDGA_k$ to denote its the underlying $(\infty,1)$-category, the canonical map given by the universal property of the localization
\begin{equation}
CDGA_k\to CAlg(\derivedk)
\end{equation}
is an equivalence.\\

\end{remark}

The $(\infty,1)$-category $\derivedk$ carries a natural right-complete $t$-structure where $\derivedk_{\geq 0}$ is the full subcategory spanned by the complexes with zero homology in negative degree. Its heart is the category of modules over $k$ and the functor $\mathbb{H}_n:\C\to \C^{\heartsuit}$ corresponds to the classical $nth$-homology functor $H_n$. This $t$-structure is also left-complete. Indeed, this follows because $k$ is a generator in $\derivedk$ and using the the formula $H_i(X)\simeq \pi_i(Map_{\derivedk}(k,X)), \forall i\geq 0$ we see that if all the homology groups of an object $X$ are zero so is $X$. Moreover, the monoidal structure in $\derivedk$ is compatible with this $t$-struture (it follows directly from the general Kunneth formula for complexes, or alternatively, using the same methods as in \cite[8.1.1.7]{lurie-ha}). Following the discusion in \ref{monoidalt}, the left-completness implies that for any $\infty$-operad $\Opmonoidal$, we have $\tau_{\leq n}Alg_{\Op}(\derivedk)^{cn}\simeq Alg_{\Op}(\derivedk_{\geq 0}\cap \derivedk_{\leq n})$ and that Postnikov towers converge

\begin{equation}
Alg_{\Op}(\derivedk)^{cn}\simeq lim_{n} Alg_{\Op}(\derivedk_{\geq 0}\cap \derivedk_{\leq n})
\end{equation}

In particular, the heart $\derivedk^{\heartsuit}=\derivedk_{\geq 0}\cap \derivedk_{\leq 0}$ inherits a symmetric monoidal structure which we can identify with the classical tensor product of $k$-modules. In the associative (resp. commutative) case the category of algebras $\tau_{\leq 0}Alg(\derivedk)^{cn}$ (resp. $\tau_{\leq 0}CAlg_{\Ass}(\derivedk)^{cn}$) can be identified with the nerve of the classical category of associative (resp. commutative $k$-algebras). Moreover, since the map $\mathbb{H}_0:\C_{\geq 0}\to \C^{\heartsuit}$ is monoidal, it extends to a map of algebras $\mathbb{H}_0:Alg_{\Op}(\C)^{cn}\to Alg(\C^{\heartsuit})$ so that if $A$ is a connective associative (resp. commutative) algebra object in $\derivedk$, $\mathbb{H}_0(A)$ is an associative (resp. commutative) algebra in the classical sense.\\

As in the non-connective case, the theory of connective algebras admits a strictification result. More precisely, $Alg(\derivedk)^{cn}$ is equivalent to the underlying $(\infty,1)$-category $SR_k$ of a simplicial model structure in the category of simplicial associative algebras over $k$, where the with equivalences are the maps of simplicial algebras inducing a weak-equivalence between the underlying simplicial sets \cite[8.1.4.18]{lurie-ha}. 

\begin{remark}
As in \ref{modelstructurecdga}, if $k$ contains the field of rational numbers, $CAlg(\derivedk)^{cn}$ is equivalent to the underlying $(\infty,1)$-category $SCR_k$ of a simplicial model structure in the category of simplicial commutative $k$-algebras, with weak-equivalences given by the weak-equivalences between the underlying simplicial sets \cite[8.1.4.20]{lurie-ha}. In fact, the model structure for simplicial commutative algebras exists for any ring $k$ and it can be proved that $SCR_k$ is equivalent to the completion of the ordinary category of commutative $k$-algebras of the form $k[X_1,..., X_n]$, $n\geq 0$, under sifted colimits \cite[4.1.9]{lurie-structuredspaces}.
\end{remark}

\begin{remark}
The study of higher algebra over a commutative ring $k$ can be understood as a small part of the much vaster subject of higher algebra in the $(\infty,1)$-category of spectra $\Sp$. Indeed, we can understand a commutative ring $k$ as $0$-truncated connective commutative algebra object in $\Spmonoidal$ and using the same ideas as in \cite{schwede-shipley-modules} we can deduce an equivalence of stable presentable symmetric monoidal $(\infty,1)$-categories $Mod_k(Sp)^{\otimes}\simeq \derivedk^{\otimes}$ defined by sending a complex $E$ to the mapping spectrum subjacent to $Map_{\derivedk}(k,E)$ (see \cite[8.1.2.6, 8.1.2.7,8.1.2.13]{lurie-ha}). Moreover, the category of modules $Mod_{k}(\Sp)$ inherits a left-complete $t$-structure induced from the one in $\Sp$ (see \cite[8.1.1.13]{lurie-ha}) and we can easily check that the formula $E\mapsto Map_{\derivedk}(k,E)$ is compatible with the $t$-structures. In particular, this implies that for any $\infty$-operad $\Opmonoidal$, we have an equivalences $Alg_{\Op}(\derivedk)\simeq Alg_{\Op}(\Sp)_{k/}$ and $Alg_{\Op}(\derivedk)^{cn}\simeq Alg_{\Op}(Sp)^{cn}_{k/}$.
\end{remark}

\subsection{Cotangent Complexes and Square-Zero Extensions}
\label{cotangent}
In the last part of the paper we construct a functor $L_{pe}$ connecting the classical theory of theory of schemes to the noncommutative world. One of the steps (see Prop. \ref{commutativesmoothimpliesfinitetype}) requires the following noncommutative analogue of \cite[Prop. 2.2.2.4]{toen-vezzosi-hag2} and \cite[8.4.3.18]{lurie-ha}:

\begin{lemma}
\label{cotangentcompact}
Let $A$ be an object in $Alg(\derivedk)^{cn}$. The following are equivalent:

\begin{enumerate}[1)]
\item $A$ is a compact in $Alg(\derivedk)$;
\item $\mathbb{H}_0(A)$ is a finitely presented associative algebra over $k$ and the cotangent complex $\mathbb{L}_{A}$ is a compact object in $Mod_A^{\Ass}(\derivedk)$;
\end{enumerate}
\end{lemma}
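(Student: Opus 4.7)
This is the noncommutative counterpart of the results in \cite[Prop. 2.2.2.4]{toen-vezzosi-hag2} and \cite[Thm 8.4.3.18]{lurie-ha}, and I would transpose their strategy to the associative setting, replacing $\mathbb{E}_\infty$-algebras by $\mathbb{E}_1$-algebras. Two pieces of machinery already recorded earlier in the paper drive the argument: the cotangent complex formalism together with its description through the split square-zero extension (so that derivations into $M$ are equivalent to sections of $A\oplus M \to A$ in $Alg(\derivedk)_{/A}$), and the convergence of Postnikov towers in $Alg(\derivedk)^{cn}$ provided by the left-completeness of the $t$-structure on $\derivedk$ discussed in \S\ref{complexes}.

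The direction $(1) \Rightarrow (2)$ is the easy one. For the cotangent complex, I would use the natural fiber sequence
\[
Map_{Mod_A^{\Ass}(\derivedk)}(\mathbb{L}_A, M) \longrightarrow Map_{Alg(\derivedk)}(A, A\oplus M) \longrightarrow Map_{Alg(\derivedk)}(A, A),
\]
the fiber taken over $\mathrm{id}_A$. Since the split square-zero extension functor $M \mapsto A\oplus M$ commutes with filtered colimits in $M$, and $Map_{Alg(\derivedk)}(A,-)$ commutes with filtered colimits by hypothesis, so does the left-hand side; thus $\mathbb{L}_A$ is compact. For finite presentation of $\mathbb{H}_0(A)$, observe that the inclusion $Alg(\derivedk^{\heartsuit}) \hookrightarrow Alg(\derivedk)^{cn}$ commutes with filtered colimits (these are detected on underlying complexes, where filtered colimits preserve discreteness thanks to compatibility of the $t$-structure on $\derivedk$ with filtered colimits), so its left adjoint $\mathbb{H}_0$ preserves compacts. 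Identifying $Alg(\derivedk^{\heartsuit})$ with the nerve of the ordinary category of associative $k$-algebras, compact objects are by standard $1$-categorical algebra exactly the finitely presented ones.

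For $(2) \Rightarrow (1)$ the strategy is to reconstruct $A$ from a finite cell presentation, using the compact cotangent complex to bound both the number of cells and the range of degrees in which they appear. I would first prove, by induction on $n$, that each truncation $\tau_{\leq n}A$ is compact in $Alg(\derivedk)^{cn}$. The base case uses that a finitely presented classical $k$-algebra is a finite iterated pushout starting from the free $\mathbb{E}_1$-algebra $k\{x_1,\dots,x_r\}$ (which is compact since it is the left adjoint image of the compact object $k^{\oplus r}\in \derivedk$), along relation-imposing maps of the form $k\{y\} \to k$. The inductive step exploits the fact that $\tau_{\leq n}A$ is a square-zero extension of $\tau_{\leq n-1}A$ by $\mathbb{H}_n(A)[n]$, classified by a derivation $\mathbb{L}_{\tau_{\leq n-1}A} \to \mathbb{H}_n(A)[n+1]$, with compactness propagating through base change of cotangent complexes along the truncation maps.

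The main obstacle is bridging the gap between compactness of every $\tau_{\leq n}A$ and compactness of $A$ itself, because $A$ is the \emph{limit} of its Postnikov tower rather than a filtered colimit. The standard remedy, imported from the commutative proofs cited above, is to construct approximations $A' \to A$ by finite cell $\mathbb{E}_1$-algebras whose relative cotangent complex becomes arbitrarily connective; the compactness of $\mathbb{L}_A$ forces a finite resolution of $\mathbb{L}_A$ by compact (hence finitely generated free) $A$-modules, which in turn allows the cell construction to terminate after finitely many stages, exhibiting $A$ as a retract of a finite cell algebra. Once this retract description is secured, compactness of $A$ in $Alg(\derivedk)$ follows from the stability of compact objects under finite colimits and retracts.
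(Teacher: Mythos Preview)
Your direction $(1)\Rightarrow(2)$ is essentially the paper's argument and is fine.

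For $(2)\Rightarrow(1)$ there is a genuine gap in your inductive step, and the overall architecture diverges from the paper. You claim that each truncation $\tau_{\leq n}A$ is compact in $Alg(\derivedk)^{cn}$, with the inductive step handled by ``compactness propagating through base change of cotangent complexes along the truncation maps''. But the square-zero extension expresses $\tau_{\leq n}A$ as a \emph{pullback}, and pullbacks do not preserve compactness; to run this you would need control over $\mathbb{L}_{\tau_{\leq n-1}A}$ or over $\mathbb{H}_n(A)$ as a module, neither of which follows from compactness of $\mathbb{L}_A$ itself. The paper avoids this entirely: it never proves that the truncations of $A$ are compact. Instead it fixes $A$ and shows, by induction on $n$, that $Map_{Alg(\derivedk)}(A,-)$ commutes with filtered colimits when restricted to $n$-truncated targets (``almost compactness''). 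The induction decomposes each \emph{target} $C_i$ via its Postnikov tower, so that the fiber of $Map(A,C_i)\to Map(A,\tau_{\leq n-1}C_i)$ is a loop space of $Map_{Mod_A^{\Ass}}(\mathbb{L}_A,\mathbb{H}_n(C_i)[n+1])$ by Remark~\ref{squarezeropullback}; here it is precisely the compactness of $\mathbb{L}_A$ (not of any cotangent complex of a truncation of $A$) that makes the fiber term commute with filtered colimits.

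The passage from almost compact to compact is also handled differently. Rather than building a finite cell model of $A$, the paper observes that since $\mathbb{L}_A$ is a finite cell bimodule there is an $n_0$ with $\pi_0 Map_{Mod_A^{\Ass}}(\mathbb{L}_A,M)=0$ whenever $M$ is concentrated above degree $n_0$; via the same fiber sequence this yields $\pi_0 Map(A,C)\simeq \pi_0 Map(A,\tau_{\leq n_0}C)$ for every connective $C$, and then almost compactness at level $n_0$ finishes the job. Your cell-attachment sketch in the final paragraph is a legitimate alternative route (this is closer to Lurie's proof of \cite[8.4.3.18]{lurie-ha}), but it stands on its own and does not need your intermediate induction on truncations of $A$; if you pursue it, you should drop that induction and instead build the approximations $A'\to A$ directly, using compactness of $\mathbb{L}_A$ to bound the number and degrees of cells.
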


In order to prove this we need to say what is the cotangent complex of a connective dg-algebra. This is a particular instance of a more general notion. Following \cite{francis-thesis} we recall how to define the \emph{cotangent complex} of an $\Op$-algebra-object in a stable symmetric monoidal $(\infty,1)$-category $\Cmonoidal$ compatible with colimits.\\

Let $\Cmonoidal$ be a stable symmetric monoidal $(\infty,1)$-category compatible with colimits. Let $\Opmonoidal$ be a $\kappa$-small coherent $\infty$-operad and let $A\in Alg_{\Op}(\C)$ be an algebra-object in $\C$. Given a module-object $M\in Mod^{\Op}_A(\C)$ and using the hypothesis that the monoidal structure is compatible with colimits we can guess that the direct sum $A\oplus M$ comes naturally equipped with the structure an $\Op$-algebra-object in $\C$ where the multiplication is determined by

\begin{equation}
(A\oplus M)\otimes (A\oplus M)\simeq (A\otimes A)\oplus (A\otimes M)\oplus (A\otimes M)\oplus (M\otimes M)\to A\oplus M
\end{equation}

\noindent where in the last step we use the multiplication $A\otimes A\to A$, the module action $A\otimes M\to M$ and the zero map $M\otimes M\to M$. This new $\Op$-algebra-object comes naturally equipped with a morphism of $\Op$-algebras $A\oplus M\to A$ which we can informally describe via the formula $(a,m)\to a$. Its fiber can be naturally identified with the module $M$. This construction should give rise to a functor

\begin{equation}\label{pretaa}
Mod_A^{\Op}(\C)\to Alg_{\Op}(\C)_{./A}
\end{equation}

In \cite{francis-thesis}-Theorem 3.4.2 the author provides a precise way to perform this construction, proving that for any stable symmetric monoidal $(\infty,1)$-category $\Cmonoidal$ compatible with colimits and any coherent $\infty$-operad $\Opmonoidal$, there is a canonical equivalence

\begin{equation}
Stab(Alg_{\Op}(\C)_{./A})\simeq Fun_{\Op}(\Op,Mod_{A}^{\Op}(\C))
\end{equation}

\noindent for any $\Op$-algebra $A$ in $\C$ (see also \cite[8.3.4.13]{lurie-ha}). In particular, if the operad only has one color, we have an equivalence between the category of modules and the stabilization. Also in this case, this equivalence recovers the functor in (\ref{pretaa}) as the delooping functor $\Omega^{\infty}$ (See Section \ref{usualspectra} for an explanation of the notations).\\

By definition, a derivation of $A$ into $M$ is the data of a morphism of $\Op$-algebras $A\to A\oplus M$ over $A$. It is an easy exercice to see that this notion recovers the classical definition using the Leibniz rule. We set $Der(A,M):= Map_{Alg_{\Op}(\C)_{./A}}(A, A\oplus M)$ to denote the space of derivations with values in $M$. The formula $M\mapsto Der(A,M)$ provides a functor $(Mod_{A}^{\Op}(\C))^{op}\to \Spaces$ which, through the Grothendieck construction, corresponds to a left fibration over $Mod_{A}^{\Op}(\C)$. By definition, the (absolute) \emph{ cotangent complex of $A$} is an object $\mathbb{L}_{A}\in Mod_{A}^{\Op}(\C)$ which makes this left fibration representable. In other words, if it has the universal property

\begin{equation}Map_{Mod_{A}^{\Op}(\C)}(\mathbb{L}_{A}, M)\simeq Map_{Alg_{\Op}(\C)_{./A}}(A, A\oplus M)\end{equation}

\noindent which allows us to understand the formula $A\mapsto \mathbb{L}_{A}$ as a left adjoint $L_A$ to the functor in (\ref{pretaa}), evaluated in $A$. In particular, if $\C$ is presentable this left adjoint exists because of the adjoint functor theorem together with the fact that (\ref{pretaa}) commutes with limits \cite[Lemma 3.1.3]{francis-thesis}. Moreover, under the equivalence between modules and the stabilization of algebras, $L_{A}$ can be identified with the suspension functor $\Sigma^{\infty}$.\\

\begin{example}
\label{noncommutativecotangentcomplex}
When applied to the example $\Cmonoidal=\derivedk^{\otimes}$ and for $\mathbb{E}_1\simeq \Ass$, this definition recovers the classical associative cotangent complex introduced by Quillen and studied in \cite{lazarev}, given by the kernel of the multiplication map $A\otimes_k A^{op}\to A$ in the $(\infty,1)$-category $ Mod_A^{\Ass}(\derivedk)$. Recall also that $Mod_A^{\Ass}(\derivedk)$ is equivalent to $_ABMod_A(\derivedk)$ which by the discussion in \ref{strictificationalgebras} is equivalent to the underlying $(\infty,1)$-category of the model category of strict $A$-bimodules in the model category of complexes $Ch(k)$. This example will play an important role in the last section of this paper.
\end{example}

\begin{remark}
\label{basechangecotangent}
The construction of cotangent complexes is well-behaved with respect to base-change. If $f:A\to A'$ is a morphism of $\Op$-algebras we can put together the functors $A\oplus-$ and $A'\oplus-$ in a diagram 

\begin{equation}
\xymatrix{
Mod_A^{\Op}(\C)\ar[r]^{A\oplus -}& Alg_{\Op}(\C)_{./A} \\
Mod_{A'}^{\Op}(\C)\ar[u]^{For}\ar[r]^{A'\oplus -}& Alg_{\Op}(\C)_{./A'}\ar[u]_{(-\times_{A'} A)}
}
\end{equation}

\noindent where $For$ is the map that considers an $A'$-module as an $A$-modules via $f$ and the map $(-\times_A' A)$ is obtained by computing the fiber product of a morphism $C\to A'$ with respect to $f$. The fact that this diagram commutes follows from the equivalence relating modules and the stabilization of algebras and from the definition of \emph{tangent bundle} studied in \cite[Section 8.3.1]{lurie-ha}. Moreover, the commutativity of this diagram implies the commutativity of the diagram associated to the left adjoints

\begin{equation}
\xymatrix{
Mod_A^{\Op}(\C)\ar[d]^{A'\otimes_A -}&\ar[l]^{L_A}\ar[d]^{f\circ -} Alg_{\Op}(\C)_{./A} \\
Mod_{A'}^{\Op}(\C)&\ar[l]^{L_{A'}} Alg_{\Op}(\C)_{./A'}
}
\end{equation}

\noindent where now $A'\otimes_A -$ is the base change with respect to $f$ and the $(f\circ-)$ is the map obtained by composing with $f$. In particular, we find that $A'\otimes_A \mathbb{L}_{A}$ is equivalent to $L_{A'}$ evaluated at $f:A\to A'$.
\end{remark}

\begin{remark}
The notion of cotangent complex has a relative version. For any $\Op$-algebra $R$, the $(\infty,1)$-category $Mod_R^{\Op}(\C)$ is again a stable symmetric monoidal $(\infty,1)$-category compatible with colimits. In particular, under the equivalence $Alg_{\Op}(Mod_R^{\Op}(\C))\simeq Alg_{\Op}(\C)_{R/.}$, for any $R$-algebra $f:R\to A$ the previous discussion provides a functor

\begin{equation}
\label{functorplus}
Mod_{A}^{\Op}(\C)\simeq Mod_A^{\Op}(Mod_R^{\Op}(\C))\to Alg_{\Op}(Mod_R^{\Op}(\C))_{./A}\simeq (Alg_{\Op}(\C)_{R/.})_{./A}
\end{equation}

\noindent sending a $A$-module $M$ to the $R$-algebra $A\oplus M$ defined over $A$. The \emph{relative cotangent complex} of $f:R\to A$ is by definition the absolute cotangent complex of $f$ as an algebra-object in $Alg_{\Op}(Mod_R^{\Op}(\C))\simeq (Alg_{\Op}(\C)_{R/.})_{./A}$. This definition recovers the absolute version when $R$ is the unit object. In what follows we will only need the absolute case.
\end{remark}

In \cite[Theorem 3.1.10]{francis-thesis} the author provides a characterization of $\mathbb{L}_{A}$ for any $\mathbb{E}_n$-algebra $A$ in a stable presentable symmetric monoidal $(\infty,1)$-category $\Cmonoidal$ such that $\C$ is generated under small colimits by the unit: $\Sigma^n(\mathbb{L}_{A})$ is the cofiber of the canonical map $Free(1)\to A$ in $Mod_A^{\mathbb{E}_n}(\C)$, where $1$ is the unit of the monoidal structure and $Free: \C\to Mod_A^{\mathbb{E}_n}(\C)$ is the left adjoint to the forgetful functor $Mod_A^{\mathbb{E}_n}(\C)\to \C$. This adjoint exists because colimits of modules are computed in $\C$ (See also \cite[Theorem 8.3.5.1]{lurie-ha}).\\

The notion of derivation can be presented using the idea of a \emph{square-zero extension}. If $d:A\to A\oplus M$ is a derivation, we fabricate a new $\Op$-algebra $\tilde{A}$ as the pullback in $Alg_{\Op}(\C)$
 
\begin{equation}
\label{estrudes}
\xymatrix{
\tilde{A}\ar[r]^f \ar[d]& A\ar[d]^{d}\\
A\ar[r]^{d_00}& A\oplus M
}
\end{equation}

\noindent where $d_0:A\to A\oplus M$ is the zero derivation $a\mapsto (a, 0)$. Since the functor $Alg_{\Op}(\C) \to \C$ preserves limits, the diagram (\ref{estrudes}) provides a pullback diagram in $\C$ and given a morphism $\ast \to A$ in $\C$, we can identify the fiber $\tilde{A}\times_A \ast$ in $\C$ with the loop $\Omega(M)$. Indeed, we have a pullback in $\C$

\begin{equation}
\label{estrudes2}
\xymatrix{
\tilde{A}\times_A \ast \ar[r]^f \ar[d]& (A\times_A \ast )\simeq \ast\ar[d]^{d}\\
\ast\simeq (A\times_A \ast) \ar[r]^{d_0}& (A\oplus M)\times_A \ast 
}
\end{equation}

\noindent and since the fiber of the canonical map $A\oplus M\to A$ can be identified with $M$, we find $\tilde{A}\times_A \ast\simeq \Omega(M)$.\\

A morphism of algebras $B\to A$ is said to be a \emph{square-zero extension of $A$ by $\Omega(M)$} if there is a derivation $d$ of $A$ with values in $M\simeq \Sigma(\Omega(M))$ such that $B\simeq \tilde{A}$. Thanks to \cite[Theorem 8.1.4.26]{lurie-ha} if $\Cmonoidal$ is a stable presentable $\mathbb{E}_k$-monoidal $(\infty,1)$-category with a compatible $t$-structutre, then the formula $(A\to A\oplus M)\mapsto (f:\tilde{A}\to A)$ establishes an equivalence between the theory of derivations and the subcategory of $Fun(\Delta[1],Alg_{\mathbb{E}_k}(\C))$ spanned by the square-zero extensions (see \cite[Section 8.4.1]{lurie-ha} for a precise formulation).\\

\begin{remark}
\label{squarezeropullback}
In the presence of a square-zero extension (\ref{estrudes}), every $\Op$-algebra $B$ induces a pullback diagram of spaces

\begin{equation}
\label{nunca1}
\xymatrix{
Map_{Alg_{\Op}(\C)}(B, \tilde{A})\ar[r]\ar[d]&\ar[d] Map_{Alg_{\Op}(\C)}(B,A)\\
Map_{Alg_{\Op}(\C)}(B,A)\ar[r]&Map_{Alg_{\Op}(\C)}(B,A\oplus M)
}
\end{equation}

Let $\phi:B\to A$ be a morphism of algebras. It follows that we can describe the fiber of the morphism
$Map_{Alg_{\Op}(\C)}(B, \tilde{A})\to Map_{Alg_{\Op}(\C)}(B,A)$ over the point corresponding to $\phi$ with the help of the cotangent complex of $B$. More precisely, we observe first that the mapping space $Map_{Alg_{\Op}(\C)_{./A}}(B,A\oplus M)$ (where $B$ is defined over $A$ via $\phi$) fits in a pullback diagram

\begin{equation}
\xymatrix{
Map_{Alg_{\Op}(\C)_{./A}}(B,A\oplus M)\ar[r]\ar[d]& Map_{Alg_{\Op}(\C)}(B,A\oplus M)\ar[d]\\
\Delta[0]\ar[r]^{\phi}& Map_{Alg_{\Op}(\C)}(B,A)
}
\end{equation}

\noindent where the right vertical map is the composition with the canonical map $A\oplus M\to A$. By tensoring with 
$(-\times_{Map_{Alg_{\Op}(\C)}(B,A)}\Delta[0])$ the diagram (\ref{nunca1}) produces a new pullback diagram

\begin{equation}
\label{nunca2}
\xymatrix{
Map_{Alg_{\Op}(\C)}(B, \tilde{A})\times_{Map_{Alg_{\Op}(\C)}(B,A)}\Delta[0]\ar[r]\ar[d]&\ar[d] Map_{Alg_{\Op}(\C)}(B,A)\times_{Map_{Alg_{\Op}(\C)}(B,A)}\Delta[0]\simeq \Delta[0]\\
Map_{Alg_{\Op}(\C)}(B,A)\times_{Map_{Alg_{\Op}(\C)}(B,A)}\Delta[0]\simeq \Delta[0]\ar[r]& Map_{Alg_{\Op}(\C)_{./A}}(B,A\oplus M)
}
\end{equation}

\noindent so that the fiber $Map_{Alg_{\Op}(\C)}(B, \tilde{A})\times_{Map_{Alg_{\Op}(\C)}(B,A)}\Delta[0]$ becomes the space of paths in $Map_{Alg_{\Op}(\C)_{./A}}(B,A\oplus M)$ between the point $\xymatrix{B\ar[r]^{\phi}& A\ar[r]^{d} &A\oplus M}$ and the point $\xymatrix{B\ar[r]^{\phi}& A\ar[r]^{d_0} &A\oplus M}$. To conclude, we can use the adjunctions of the Remark \ref{basechangecotangent} to find equivalences

\begin{equation}
Map_{Alg_{\Op}(\C)_{./A}}(B,A\oplus M)\simeq Map_{Mod_A^{\Op}(\C)}(L_A(\phi),M)\simeq Map_{Mod_A^{\Op}(\C)}(A\otimes_B \mathbb{L}_B,M)\simeq Map_{Mod_B^{\Op}(\C)}(\mathbb{L}_B,For(M))
\end{equation}

\noindent so that we find an equivalence

\begin{equation}
Map_{Alg_{\Op}(\C)}(B, \tilde{A})\times_{Map_{Alg_{\Op}(\C)}(B,A)}\Delta[0]\simeq \Omega_{0, d\circ \phi} Map_{Mod_B^{\Op}(\C)}(\mathbb{L}_B,For(M))
\end{equation}
\end{remark}

We now collect the last ingredient to prove the Lemma \ref{cotangentcompact}:

\begin{thm}(Lurie \cite[Corollary 8.4.1.28]{lurie-ha})
\label{postnikov}
Let $\Cmonoidal$ be a stable presentable symmetric monoidal $(\infty,1)$-category equipped with a compatible $t$-structure (in the sense of \ref{monoidalt}). Then for every $k\geq 0$ and any algebra $A\in Alg_{\mathbb{E}_k}(\C)^{cn}$ the morphisms in the Postnikov tower

\begin{equation}
...\to \tau_{\leq 2}A\to \tau_{\leq 1} A\to \tau_{\leq 0} A
\end{equation}

\noindent are square-zero extensions. More precisely, and following the Remark \ref{fibertruncation}, for every $n\geq 0$ the truncation map $\tau_{\leq n} A\to \tau_{\leq n-1}A$ is a square-zero extension of $\tau_{\leq n-1}A$ by a module-structure in $\mathbb{H}_n(A)[n]$. This is equivalent to the existence of a derivation $d_{n}:\tau_{\leq n-1} A\to \tau_{\leq n-1} A\oplus \mathbb{H}_n(A)[n+1]$ and a pullback diagram of algebras

\begin{equation}
\xymatrix{
\tau_{\leq n} A\ar[r]\ar[d]&\tau_{\leq n-1} A\ar[d]^{d_n}\\
\tau_{\leq n-1} A\ar[r]&\tau_{\leq n-1} A\oplus \mathbb{H}_n(A)[n+1]
}
\end{equation}

\end{thm}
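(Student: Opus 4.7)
The plan is to proceed by induction on $n\geq 0$, the base case being vacuous. For the inductive step, we fix a connective $\mathbb{E}_k$-algebra $A$ and we want to exhibit the truncation map $\tau_{\leq n}A \to \tau_{\leq n-1}A$ as a square-zero extension. Thanks to the equivalence between the theory of square-zero extensions and the theory of derivations (cited from \cite[8.1.4.26]{lurie-ha} just above the statement), this reduces to producing a derivation $d_n\colon \tau_{\leq n-1}A \to \tau_{\leq n-1}A \oplus \mathbb{H}_n(A)[n+1]$ together with an equivalence, in the category of $\mathbb{E}_k$-algebras, between $\tau_{\leq n}A$ and the classifying pullback
\begin{equation}
\xymatrix{
\tilde{A} \ar[r] \ar[d] & \tau_{\leq n-1} A \ar[d]^{d_n}\\
\tau_{\leq n-1} A \ar[r]^(.4){d_0}& \tau_{\leq n-1}A \oplus \mathbb{H}_n(A)[n+1].
}
\end{equation}

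The first step is to identify the correct module. Working in $\C$ (not in algebras), the cofiber/fiber sequence recalled in Remark \ref{fibertruncation} shows that the fiber of $\tau_{\leq n}A \to \tau_{\leq n-1}A$ is equivalent to $\mathbb{H}_n(A)[n]$ in $\C$. To promote $\mathbb{H}_n(A)[n]$ to a $\tau_{\leq n-1}A$-module in a way that is compatible with the eventual square-zero extension, I would exploit the compatibility of the $t$-structure with the monoidal product together with the fact that the truncation functor $\tau_{\leq n-1}\colon \C_{\geq 0}\to \C_{\geq 0}$ is monoidal. Combined with \ref{limitscolimitsmodules}, this yields a factorization of the multiplication of $\tau_{\leq n}A$ through a $\tau_{\leq n-1}A$-bimodule structure on the kernel, i.e.\ an $\mathbb{E}_k$-module structure on $\mathbb{H}_n(A)[n]$ over $\tau_{\leq n-1}A$.

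The hard part is constructing the derivation $d_n$ and checking that the resulting square-zero extension is genuinely $\tau_{\leq n}A$. The strategy is to work in the relative setting over $\tau_{\leq n-1}A$: consider the full subcategory $\mathcal{E}_n \subseteq Alg_{\mathbb{E}_k}(\C)^{cn}_{/\tau_{\leq n-1}A}$ spanned by those maps $B\to \tau_{\leq n-1}A$ whose fiber in $\C$ lies in $\C_{\geq n}\cap \C_{\leq n}$. The analogous subcategory of modules over $\tau_{\leq n-1}A$ is $(\C_{\geq n}\cap \C_{\leq n})$-valued; by the identification of the stabilization of $Alg_{\mathbb{E}_k}(\C)_{/\tau_{\leq n-1}A}$ with modules over $\tau_{\leq n-1}A$ (as recorded in \ref{cotangent}, following \cite{francis-thesis}), together with the nilpotence forced by the bounded-below-and-above condition on the fiber, I would show that the functor $M\mapsto \tau_{\leq n-1}A \oplus M$ restricts to an equivalence between $n$-truncated $n$-connective modules and $\mathcal{E}_n$. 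Under this equivalence, $\tau_{\leq n}A$ (viewed in $\mathcal{E}_n$ via the truncation map) corresponds to some module, which by the fiber computation must be $\mathbb{H}_n(A)[n]$. Unwinding, the classifying derivation valued in the shifted module $\mathbb{H}_n(A)[n+1]$ is the suspension of the map that witnesses this equivalence, and the pullback diagram defining the square-zero extension coincides with the canonical pullback presenting $\tau_{\leq n}A$ as the fiber of the obstruction class.

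The main obstacle is really the identification of the subcategory $\mathcal{E}_n$ with a subcategory of modules. One expects this to follow from abstract deformation theory: any algebra whose augmentation ideal squares to zero (forced here by having support in a single degree and being paired with a connective algebra) is a trivial square-zero extension. Making this precise in the $\mathbb{E}_k$-setting requires the interaction between the $t$-structure, the module category, and the tangent bundle construction, and is where the bulk of Lurie's proof of \cite[8.4.1.28]{lurie-ha} resides; once this identification is available, the rest of the argument reduces to transporting the canonical augmentation across the equivalence.
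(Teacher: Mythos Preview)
The paper does not prove this theorem: it is stated as a result of Lurie and attributed to \cite[Corollary 8.4.1.28]{lurie-ha}, with no argument given. It is used as a black box in the proof of Lemma~\ref{cotangentcompact}. So there is no ``paper's own proof'' to compare your proposal against.

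That said, your sketch is in the spirit of Lurie's actual argument in \emph{Higher Algebra}: the key point is indeed that an $\mathbb{E}_k$-algebra whose augmentation ideal is concentrated in a single degree above zero is automatically a square-zero extension, because the multiplication on the ideal is forced to vanish for $t$-structure reasons. Your identification of the relevant subcategory $\mathcal{E}_n$ with a category of modules is the right shape, though as you acknowledge, making this precise is exactly where the work lies and where one must invoke Lurie's general machinery on the tangent bundle and square-zero extensions rather than re-derive it. If you intend to include a proof in your own write-up, you should either cite Lurie directly (as the paper does) or be prepared to fill in the equivalence between $\mathcal{E}_n$ and the module category carefully, since that step is not a formality.
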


We have now all the ingredients to prove the lemma.\\

\textit{Proof of the Lemma \ref{cotangentcompact}}:
We follow the same methods as in \cite[Prop. 2.2.2.4]{toen-vezzosi-hag2}. We first prove that $1)$ implies $2)$.\\

The fact that $\mathbb{H}_0(A)$ is finitely presented as an associative algebra follows from the fact that $\mathbb{H}_0$ commutes with colimits (it is a left adjoint), together with the fact that $\pi_0$ commutes with colimits in the $(\infty,1)$-category of spaces. The fact that $\mathbb{L}_{A}$ is compact follows from the universal property of the cotangent complex together with the following facts:

\begin{enumerate}[i)]
\item As explained before, the functor $(A\oplus-)$ of (\ref{pretaa}) can be identified with a delooping functor $\Omega^{\infty}$. Therefore it commutes with filtered colimits;
\item by assumption, $A$ is compact;
\end{enumerate}

We now prove that $2)$ implies $1)$. To start with, we observe that since $A$ is by assumption connective, it is enough to check that $A$ is compact in the full subcategory $Alg(\derivedk)^{cn}$ spanned by the connective objects. Indeed, recall from \ref{monoidalt} that the truncation functor $\tau_{\leq 0}$ is a right adjoint to the inclusion $Alg(\derivedk)^{cn}\subseteq Alg(\derivedk)$. We can easily check that $\tau_{\leq 0}$ commutes with filtered colimits (because the homology groups commute with filtered colimits) so that for any filtered system $\{\C_i\}_{i\in I}$ in $Alg(\derivedk)$ we have

\begin{equation}
Map_{Alg(\derivedk)}(A, colim_I C_i)\simeq Map_{Alg(\derivedk)^{cn}}(A,\tau_{\leq 0} colim_I C_i)\simeq Map_{Alg(\derivedk)^{cn}}(A, colim_I \tau_{\leq 0}C_i)
\end{equation}

\noindent so that $A$ is compact in $Alg(\derivedk)^{cn}$ if and only if is is compact in $Alg(\derivedk)$.\\

We start now by proving that $A$ is almost compact, meaning that $A$ is compact with respect to any filtered system in $Alg(\derivedk)^{cn}_{\leq n}$, for every $n\geq 0$. We proceed by induction. The case $n=0$ follows by the hypothesis. Let us suppose we know this is true for $n-1$ and prove it for $n$. Let $\{C_i\}_{i\in I}$ be a filtered system in $Alg(\derivedk)^{cn}_{\leq n}$. The discussion in \ref{complexes} together with the Theorem \ref{postnikov} implies that for each $i$, $C_i$ admits a Postnikov decomposition 

\begin{equation}
C_i= \tau_{\leq n}(C_i)\to \tau_{\leq n-1}(C_i)\to ... \to \tau_{\leq 0}(C_i)
\end{equation}

\noindent where each morphism is a square-zero extension providing a pullback diagram 

\begin{equation}
\xymatrix{
C_i=(C_i)_{\leq n}\ar[r]\ar[d]&(C_i)_{\leq n-1} \ar[d]^{d_n}\\
(C_i)_{\leq n-1}\ar[r]& (C_i)_{\leq n-1}\oplus \mathbb{H}_{n}(C_i)[n+1]
}
\end{equation}

\noindent in $Alg(\derivedk)^{cn}$ where the lower horizontal map is the zero map and right vertical map corresponds to the canonical  derivation $d_n$ associated to the square-zero extension $C_i\to \tau_{\leq n-1}C_i$. This diagram induces a pullback diagram of spaces

\begin{equation}
\xymatrix{
Map_{Alg(\derivedk)}(A,C_i)\ar[r]\ar[d]&Map_{Alg(\derivedk)}(A,\tau_{\leq n-1}(C_i)) \ar[d]\\
Map_{Alg(\derivedk)}(A,(C_i)_{\leq n-1})\ar[r]& Map_{Alg(\derivedk)}(A,\tau_{\leq n-1}(C_i)\oplus\mathbb{H}_{n}(C_i)[n+1])
}
\end{equation}

\noindent and the Remark \ref{squarezeropullback} implies that the fiber of the map 

\begin{equation}
\xymatrix{
Map_{Alg(\derivedk)}(A,C_i)\ar[r]&Map_{Alg(\derivedk)}(A,\tau_{\leq n-1}(C_i))
}
\end{equation}

\noindent over a map $u:A\to\tau_{\leq n-1}(C_i)$ is given by the space of paths in $Map_{Mod_{A}^{\Ass}}(\mathbb{L}_{A}, \mathbb{H}_{n}(C_i)[n+1])$ between the zero derivation and the point corresponding to the composition $d_n\circ u$. This reduces everything to the analysis of the diagram

\begin{equation}
\xymatrix{
colim_I \Omega_{0,d_n\circ u} Map_{Mod_A^{\Ass}(\derivedk)}(\mathbb{L}_{A},\mathbb{H}_{n}(C_i)[n+1])\ar[r]\ar[d]&\ar[d]\Omega_{0,d_n\circ u} Map_{Mod_A^{\Ass}(\derivedk)}(\mathbb{L}_{A},\mathbb{H}_{n}(colim_I C_i)[n+1])\\
colim_I Map_{Alg(\derivedk)}(A, C_i)\ar[r]\ar[d] &\ar[d]Map_{Alg(\derivedk)}(A, colim_I C_i)\\
colim_I Map_{Alg(\derivedk)}(A, \tau_{\leq n-1}(C_i))\ar[r] & Map_{Alg(\derivedk)}(A, \tau_{\leq n-1}(colim_I C_i) )
}
\end{equation}

We observe that

\begin{enumerate}[a)]
\item The left column is a fiber sequence because filtered colimits are exact in the $(\infty,1)$-category of spaces. For the same reason, there is an equivalence between the top left entry in the diagram and 

\begin{equation}\Omega_{0,d_n\circ u} colim_I Map_{Mod_A^{\Ass}(\derivedk)}(\mathbb{L}_{A},\mathbb{H}_{n}(C_i)[n+1])\end{equation}

\item The right column is also a fiber sequence. This follows from the result of \ref{postnikov} and the Remark \ref{squarezeropullback} applied to the colimit algebra $colim_I C_i$;

\item The top entry on the right is equivalent to 

\begin{equation}\Omega_{0,d_n\circ u} Map_{Mod_A^{\Ass}(\derivedk)}(\mathbb{L}_{A},colim_I\mathbb{H}_{n}( C_i)[n+1])\end{equation}. 

This is because the functor $\mathbb{H}_{n}$ is equivalent to the classical $nth$-homology functor and therefore commutes with filtered colimits.

\end{enumerate}

Finally, the induction hypothesis together with the fact that $(-)_{\leq n}$ is a left adjoint (and therefore commutes with colimits), implies that the lower horizontal arrow is an equivalence. The assumption that $\mathbb{L}_{A}$ is compact implies that the top horizontal map is also an equivalence. It follows that the middle one is also an equivalence. This proves that $A$ is almost compact in $Alg(\derivedk)^{cn}$.\\

We now complete the proof by showing that $A$ is compact. Since the $(\infty,1)$-category $Mod_{A}^{\Ass}((\derivedk))$ is equivalent to the underlying $(\infty,1)$-category of the model structure on strict $A$-bimodules in $Ch(k)$ (see \ref{noncommutativecotangentcomplex}) and the last is compactly generated in the sense of \ref{compactlygeneratedmodelcategories}, the Proposition \ref{compactlygeneratedmodelcategoriesprop} implies that $\mathbb{L}_A$ is a compact object in $Mod_{A}^{\Ass}((\derivedk))$ if and only if it is given by a finite strict cell object in the model category of bimodules. In this case, with our hypothesis that $\mathbb{L}_A$ is compact, we can find a natural number $n_0\geq 0$ such that for any object $M\in Mod_{A}^{\Ass}(\derivedk)$ concentrated in degrees strictly bigger than $n_0$ we have $\pi_0Map_{Mod_{A}^{\Ass}(\derivedk)}(\mathbb{L}_A, M)\simeq 0$. In particular, for any connective algebra $C$, the kernel of the map $C\to\tau_{\leq n_0}(C)$ is concentrated in degree $n_0+1$ and the fiber sequence of the Remark \ref{squarezeropullback} implies

\begin{equation}
\label{atlast}
\pi_0 Map_{Alg(\derivedk)}(A,C)\simeq \pi_0 Map_{Alg(\derivedk)}(A,\tau_{\leq n_0}(C)) 
\end{equation}

We now use this to show that $A$ is compact. Let $\{C_i\}_{i\in I}$ be a filtered system in $Alg(\derivedk)^{cn}$. Using the fact that $\pi_n$ commutes with filtered homotopy colimits of spaces and that $Alg(\derivedk)^{cn}$ admits all limits (it is a co-reflexive localization of $Alg(\derivedk)$), we are reduced to show that the natural map

\begin{equation}
colim_I \pi_0 Map_{Alg(\derivedk)}(A, \Omega^n C_i)\to \pi_0 Map_{Alg(\derivedk)}(A, colim_I \Omega^n C_i)
\end{equation}

\noindent is an equivalence. We show that the formula is true for any filtered system of algebras $\{U_i\}_{i\in I}$, because we have a commutative diagram

\begin{equation}
\xymatrix{
colim_I \pi_0 Map_{Alg(\derivedk)}(A, U_i)\ar[r]\ar[d]^{\sim}& \pi_0 Map_{Alg(\derivedk)}(A, colim_I U_i)\ar[d]^{\sim}\\
colim_I \pi_0 Map_{Alg(\derivedk)}(A, \tau_{\leq n_0}(U_i))\ar[r]^{\sim}& \pi_0 Map_{Alg(\derivedk)}(A, colim_I \tau_{\leq n_0}(U_i))
}
\end{equation}

\noindent where the vertical arrows are equivalences because of (\ref{atlast}) together with fact that $\tau_{\leq n_0}$ is a left adjoint, and the lower horizontal map is an equivalence because $A$ is almost compact. This concludes the proof.

\hfill $\Box$

This completes our preliminairs.

\section{Inversion of an Object in a Symmetric Monoidal $(\infty,1)$-category and the Relation with Symmetric Spectrum Objects}
\label{section4}

We finally come to the main section of our paper. In \ref{section4-1} we deal with the formal inversion of an object in a symmetric monoidal $(\infty,1)$-category. First we deal with the situation for small $(\infty,1)$-categories (Propositions \ref{prop1} and \ref{main}) and then we extend the result to the presentable setting (Prop. \ref{main3}). This method allow us to invert any object and the result is endowed with the expected universal property. In \ref{section4-2} we deal with the notion of spectrum-objects. Our main result (Cor. \ref{main5}) is that if the object we want to invert satisfies a symmetric condition then the underlying $(\infty,1)$-category of the formal inversion is nothing but the stabilization with respect to the chosen object.
Finally, in \ref{section4-3} we prove our main theorem (see \ref{maintheorem}), which ensures that the familiar construction of symmetric spectrum objects with respect to a given symmetric object $X$ together with the convolution product, is the "model category" incarnation of our $\infty$-categorical phenomenom of inverting $X$.

\subsection{Formal inversion of an object in a Symmetric Monoidal $(\infty,1)$-category}
\label{section4-1}
Let $\Cmonoidal$ be a symmetric monoidal $(\infty,1)$-category and let $X$ be an object in $\C$. We will say that $X$ is invertible with respect to the monoidal structure if there is an object $X^*$ such that $X\otimes X^*$ and $X^*\otimes X$ are both equivalent to the unit object. Since the monoidal structure is symmetric, it is enough to have one of these conditions. It is an easy observation that this condition depends only on the monoidal structure induced on the homotopy category $h(\C)$, because equivalences are exactly the isomorphisms in $h(\C)$. Alternatively, we can see that an object $X$ in $\C$ is invertible if and only if the map "multiplication by $X$" = $(X\otimes -): \C\to \C$ is an equivalence of $(\infty,1)$-categories. Indeed, if $X$ has an inverse $X^*$ then the maps $(X\otimes -)$ and $(X^* \otimes -)$ are inverses since the coherences of the monoidal structure can be used to fabricate the homotopies. Conversely, if $(X\otimes -)$ is an equivalence, the essential subjectivity provides an object $X^*$ such that $X\otimes X^*\simeq \mathit{1}$. The symmetry provides an equivalence $\mathit{1}\simeq X^*\otimes X$.\\

Our main goal is to produce from the data of $\Cmonoidal$ and $X$, a new symmetric monoidal $(\infty,1)$-category $\Cmonoidalx$ together with a monoidal map $\Cmonoidal\to \Cmonoidalx$ sending $X$ to an invertible object and universal with respect to this property. In addition, we would like this construction to hold within the world of presentable symmetric monoidal $(\infty,1)$-categories. Our steps follow the original ideas of \cite{toen-vezzosi-hag2}, where the authors studied the inversion of an element in a strictly commutative algebra object in a symmetric monoidal model category.\\ 

We start by analyzing the theory for a small symmetric monoidal $(\infty,1)$-category $\Cmonoidal$. In this case, and following the Remark \ref{remarkcat}, $\Cmonoidal$ can be identified with an object in $CAlg(\iCat)$. The objects of $Mod_{\C}(\iCat)$ can be identified with the $(\infty,1)$-categories endowed with an "action" of $\C$ and we will refer to them simply as \emph{$\Cmonoidal$-Modules}. By the Proposition \ref{algmod-alg} , $CAlg(Mod_{\Cmonoidal}(\iCat))$ is equivalent to $CAlg(\iCat)_{\Cmonoidal/}$ where the objects are the small symmetric monoidal $(\infty,1)$-categories $\Dmonoidal$ equipped with a monoidal map $\Cmonoidal\to \Dmonoidal$. We denote by $CAlg(\iCat)_{\Cmonoidal/}^X$ the full subcategory of $CAlg(\iCat)_{\Cmonoidal/}$ spanned by the algebras $\Cmonoidal\to \Dmonoidal$ whose structure map sends $X$ to an invertible object. The main observation is that the objects in $CAlg(\iCat)_{\Cmonoidal/}^X$ can be understood as local objects in $CAlg(\iCat)_{\Cmonoidal/}$ with respect to a certain set of morphisms: there is a forgetful functor 

\begin{equation}
CAlg(\iCat)_{\Cmonoidal/}\simeq CAlg(Mod_{\Cmonoidal}(\iCat))\to Mod_{\Cmonoidal}(\iCat)
\end{equation}

\noindent and since $\iCat^{\times}$ is a presentable symmetric monoidal $(\infty,1)$-category, this functor admits a left adjoint $Free_{\Cmonoidal}(\C)$ assigning to each $\Cmonoidal$-module $\D$ the free commutative $\Cmonoidal$-algebra generated by $\D$. We will denote by $\mathcal{S}_X$ the collection of morphisms in $CAlg(\iCat)_{\Cmonoidal/}$ consisting of the single morphism 

\begin{equation}
\xymatrix{
Free_{\Cmonoidal}(\C)\ar[rr]^{Free_{\Cmonoidal}(X\otimes-)}&&Free_{\Cmonoidal}(\C)
}
\end{equation}

\noindent where $\C$ is understood as a $\Cmonoidal$-module in the obvious way using the monoidal structure. We prove the following

\begin{prop}
\label{prop1}
Let $\Cmonoidal$ be a symmetric monoidal $(\infty,1)$-category. Then the full subcategory $CAlg(\iCat)_{\Cmonoidal/}^X$ coincides with the full subcategory of $CAlg(\iCat)_{\Cmonoidal/}$ spanned by the $\mathcal{S}_X$-local objects. Moreover, since $\iCat^{\times}$ is a presentable symmetric monoidal $(\infty,1)$-category, the $(\infty,1)$-categories $CAlg(\iCat)$ and $CAlg(\iCat)_{\Cmonoidal/}$ are also presentable (see Corollary 3.2.3.5 of \cite{lurie-ha}) and the results of the Proposition 5.5.4.15 in \cite{lurie-htt} follow. We deduce the existence a left adjoint $\Lmonoidal$

\begin{equation}
\xymatrix{
CAlg(\iCat)_{\Cmonoidal/}^{\mathcal{S}_X-local}=CAlg(\iCat)_{\Cmonoidal/}^X  \ar@{^{(}->}[rr] && CAlg(\iCat)_{\Cmonoidal/}  \ar@/_2pc/[ll]_{\Lmonoidal}
}
\end{equation}

In particular, the data of this adjunction provides the existence of a symmetric monoidal $(\infty,1)$-category  $\Lmonoidal(\Cmonoidal)$ equipped with a canonical monoidal map $f:\Cmonoidal\to \Lmonoidal(\Cmonoidal)$ sending $X$ to an invertible object.
\begin{proof}
The only thing to check is that both subcategories coincide. Let $\phi:\Cmonoidal\to \Dmonoidal$ be a $\C$-algebra where $X$ is sent to an invertible object. By the definition of the functor $Free_{\Cmonoidal}(\C)$ we have a commutative diagram

\begin{equation}
\label{diagramx1}
\xymatrix{
Map_{CAlg(\iCat)_{\Cmonoidal/}}(Free_{\Cmonoidal}(\C), \Dmonoidal)\ar[r] \ar[d]^{\sim}& Map_{CAlg(\iCat)_{\Cmonoidal/}}(Free_{\Cmonoidal}(\C), \Dmonoidal)\ar[d]^{\sim}\\
Map_{Mod_{\Cmonoidal}(\iCat)}(\C, \D) \ar[r] & Map_{Mod_{\Cmonoidal}(\iCat)}(\C, \D) \\
}
\end{equation}

\noindent where the lower horizontal map is described by the formula $\alpha\mapsto \alpha\circ (X\otimes -)$. Since $\phi$ is monoidal, the diagram commutes

\begin{equation}
\xymatrix{\C\ar[rr]^{(X\otimes -)} \ar[d]^{\phi}&& \C\ar[d]^{\phi}\\ \D \ar[rr]_{(\phi(X)\otimes -)}&& \D}
\end{equation}

\noindent and the lower map is in fact homotopic to the one given by the formula $\alpha\mapsto (\phi(X)\otimes-)\circ \alpha$.
Since $\phi(X)$ is invertible in $\Dmonoidal$, there exists an object $\lambda$ in $\D$ such that the maps $\phi(X)\otimes -)$ and $(\lambda\otimes -)$ are inverses and therefore the lower map in (\ref{diagramx1}), and as a consequence the top map, are isomorphisms of homotopy types.\\

Let now $\Cmonoidal\to\Dmonoidal$ be a $\Cmonoidal$-algebra, local with respect to $\mathcal{S}_X$. In particular, the map 

\begin{equation}Map_{Mod_{\Cmonoidal}(\iCat)}(\C, \D) \to Map_{Mod_{\Cmonoidal}(\iCat)}(\C, \D)\end{equation}

\noindent induced by the composition with $(X\otimes -)$ is an isomorphism of homotopy types and in particular we have $\pi_0(Map_{Mod_{\Cmonoidal}(\iCat)}(\C, \D)) \simeq \pi_0( Map_{Mod_{\Cmonoidal}(\iCat)}(\C, \D))$. We deduce the existence of a dotted arrow 
\begin{equation}
\xymatrix{\D\ar[d]^{\phi}\ar[r]^{X\otimes-}& \D\ar@{..>}[ld]^{\alpha}\\ \D}
\end{equation}
\noindent rendering the diagram of modules commutative and since $\alpha$ is a map of $\Cmonoidal$-modules and $\phi$ is monoidal we find $\phi(\mathit{1})\simeq\alpha(X\otimes \mathit{1})\simeq \phi(X)\otimes \alpha(\mathit{1})$. Using the symmetry we find that  $\alpha(\mathit{1}\otimes X)\simeq \alpha(\mathit{1})\otimes\phi(X)\simeq \mathit{1}$ which proves that $\phi(X)$ has an inverse in $\Dmonoidal$.
\end{proof}
\end{prop}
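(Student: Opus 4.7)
The second assertion of the proposition is automatic from the first: once we identify $CAlg(\iCat)_{\Cmonoidal/}^X$ with a class of local objects, presentability of $CAlg(\iCat)_{\Cmonoidal/}$ together with \cite[5.5.4.15]{lurie-htt} immediately produces the reflective localization $\Lmonoidal$ and, in particular, exhibits $\Lmonoidal(\Cmonoidal)$ with the asserted monoidal map from $\Cmonoidal$. So the entire content is to prove the set-theoretic equality of the two full subcategories, which is a "two inclusions" argument based on a single universal property.

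The key input is the adjunction $Free_{\Cmonoidal} \dashv \text{forget}$ between $Mod_{\Cmonoidal}(\iCat)$ and $CAlg(\iCat)_{\Cmonoidal/}$. For any $\Cmonoidal$-algebra $\phi: \Cmonoidal \to \Dmonoidal$, this adjunction yields a commutative square identifying
\[
Map_{CAlg(\iCat)_{\Cmonoidal/}}(Free_{\Cmonoidal}(X\otimes-), \Dmonoidal)
\]
with the map $Map_{Mod_{\Cmonoidal}(\iCat)}(\C,\D) \to Map_{Mod_{\Cmonoidal}(\iCat)}(\C,\D)$ given by precomposition with $(X\otimes -): \C \to \C$. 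The second crucial observation is that, because $\phi$ is monoidal, this precomposition map is equivalent to the map given by postcomposition with the endofunctor $(\phi(X)\otimes -): \D \to \D$, reducing everything to whether the latter is an equivalence in $Mod_{\Cmonoidal}(\iCat)$.

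For the first inclusion, I would take $\phi \in CAlg(\iCat)_{\Cmonoidal/}^X$, so that $\phi(X)$ is invertible in $\Dmonoidal$ with inverse $\lambda$. Then $(\phi(X)\otimes-)$ and $(\lambda\otimes-)$ are mutually inverse endofunctors of $\D$ (the homotopies come from the monoidal coherences), so the postcomposition map above is an equivalence; hence $\phi$ is $\mathcal{S}_X$-local. For the reverse inclusion, given a $\mathcal{S}_X$-local $\phi$, the map induced by $(X\otimes-)$ is an equivalence on $\pi_0$, so $\mathrm{id}_{\D} \in \pi_0 Map_{Mod_{\Cmonoidal}(\iCat)}(\D, \D)$ lifts to some $\alpha$ with $\alpha \circ (\phi(X)\otimes -) \simeq \mathrm{id}_{\D}$. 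Evaluating at the unit and using symmetry (as in the excerpt) yields $\phi(X) \otimes \alpha(\mathit{1}) \simeq \mathit{1}$, exhibiting $\alpha(\mathit{1})$ as an inverse to $\phi(X)$.

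The only place requiring any care is the passage from "precomposition with $(X\otimes-)$ in the module category" to "postcomposition with $(\phi(X)\otimes-)$", which uses that $\phi$ preserves the tensor product up to coherent homotopy rather than strictly; this is not a serious obstacle because we are working with mapping spaces in $Mod_{\Cmonoidal}(\iCat)$ and the homotopy is precisely the monoidal coherence data packaged into $\phi$. The argument never leaves the $\pi_0$-level after that reduction, so the rest is essentially an abstract-nonsense repackaging of the classical fact that an element of a symmetric monoidal category is invertible iff multiplication by it is an equivalence.
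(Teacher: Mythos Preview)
Your approach is essentially identical to the paper's: reduce both inclusions, via the free--forgetful adjunction, to the question of whether precomposition with $(X\otimes-)$ on $Map_{Mod_{\Cmonoidal}(\iCat)}(\C,\D)$ is an equivalence, and identify this with postcomposition by $(\phi(X)\otimes-)$ using the module structure.

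There is, however, a slip in your reverse inclusion. The $\mathcal{S}_X$-locality condition controls the mapping space $Map_{Mod_{\Cmonoidal}(\iCat)}(\C,\D)$, not $Map_{Mod_{\Cmonoidal}(\iCat)}(\D,\D)$; so you cannot lift $\mathrm{id}_{\D}$ as you wrote. What you should lift is the structural map $\phi \in \pi_0 Map_{Mod_{\Cmonoidal}(\iCat)}(\C,\D)$ itself: surjectivity on $\pi_0$ gives a $\Cmonoidal$-module map $\alpha:\C\to\D$ with $\alpha\circ(X\otimes-)\simeq\phi$. Evaluating at the unit then gives $\phi(\mathit{1})\simeq\alpha(X\otimes\mathit{1})$, and $\Cmonoidal$-linearity of $\alpha$ (the action of $\C$ on $\D$ being through $\phi$) yields $\alpha(X\otimes\mathit{1})\simeq\phi(X)\otimes\alpha(\mathit{1})$, exhibiting $\alpha(\mathit{1})$ as the desired inverse. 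This is exactly what the paper does; once you replace $\mathrm{id}_{\D}$ by $\phi$ and $Map(\D,\D)$ by $Map(\C,\D)$, your argument and the paper's coincide.
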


We will now study the properties of the base change along the morphism $\Cmonoidal\to \Lmonoidal(\Cmonoidal)$. In order to establish some insight, let us point out that everything fits in a commutative diagram

\begin{equation}
\xymatrix{
CAlg(\iCat)_{\Lmonoidal(\Cmonoidal)/}\simeq \ar[d]CAlg(Mod_{\Lmonoidal(\Cmonoidal)}(\iCat))\ar[r]& CAlg(Mod_{\Cmonoidal}(\iCat))  \simeq \ar[d]CAlg(\iCat)_{\Cmonoidal/}\\
Mod_{\Lmonoidal(\Cmonoidal)}(\iCat)\ar[r]^{f_*} &Mod_{\Cmonoidal}(\iCat)
}
\end{equation}

\noindent where the horizontal arrows are induced by the forgetful map given by the composition with $\Cmonoidal\to \Lmonoidal(\Cmonoidal)$ and the vertical arrows are induced by the forgetful map produced by the change of $\infty$-operads $\Trivmonoidal\to \Commmonoidal$. Since $\iCat$ with the cartesian product is a presentable symmetric monoidal $(\infty,1)$-category, there is a base change functor 

\begin{equation}\xymatrix{Mod_{\Lmonoidal(\Cmonoidal)}(\iCat)\ar[r]& Mod_{\Cmonoidal}(\iCat) \ar@/_2pc/[l]_{\Lx)}} \end{equation}

\noindent and by the general theory we have an identification of $f_*(\Lx(M))\simeq M\otimes_{\Cmonoidal} \Lmonoidal(\Cmonoidal)$ given by the tensor product in $Mod_{\Cmonoidal}(\iCat)$. This map is monoidal and therefore induces a left adjoint

\begin{equation}\xymatrix{  CAlg(\iCat)_{\Lmonoidal(\Cmonoidal)/}\ar[r]&  \ar@/_2pc/[l]_{\widetilde{\mathcal{L}}_{(\Cmonoidal,X)}} CAlg(\iCat)_{\Cmonoidal/}}\end{equation}

\noindent and the diagram

\begin{equation}
\label{diagram}
\xymatrix{
CAlg(\iCat)_{\Lmonoidal(\Cmonoidal)/}\simeq \ar[d]CAlg(Mod_{\Lmonoidal(\Cmonoidal)}(\iCat))&\ar[l]_{\widetilde{\mathcal{L}}_{(\Cmonoidal,X)}} CAlg(Mod_{\Cmonoidal}(\iCat))  \simeq \ar[d]CAlg(\iCat)_{\Cmonoidal/}\\
Mod_{\Lmonoidal(\Cmonoidal)}(\iCat) &\ar[l]^{\Lx} Mod_{\Cmonoidal}(\iCat)
}
\end{equation}

\noindent commutes. We prove the following statement, which was originally proved in \cite{toen-vezzosi-hag2} in the context of model categories:

\begin{prop}
\label{main}
Let $\Cmonoidal$ be a small symmetric monoidal $(\infty,1)$-category and $X$ be an object in $\C$. Let $f:\Cmonoidal\to \Lmonoidal(\Cmonoidal)$ be the natural map constructed above. Then
\begin{enumerate}
\item the composition map 
\begin{equation}CAlg(\iCat)_{\Lmonoidal(\Cmonoidal)/}\to CAlg(\iCat)_{\Cmonoidal/}\end{equation}
\noindent is fully faithful and its image coincides with $CAlg(\iCat)_{\Cmonoidal}^X$;
\item the forgetful functor

\begin{equation}
f_* : Mod_{\Lmonoidal(\Cmonoidal)}(\iCat)\to Mod_{\Cmonoidal}(\iCat)
\end{equation}

\noindent is fully faithful and its image coincides with the full subcategory of $Mod_{\Cmonoidal}(\iCat)$ spanned by those $\C$-modules where $X$ acts as an equivalence.
\end{enumerate}
\end{prop}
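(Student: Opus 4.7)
My plan is to derive both parts of Proposition \ref{main} from Proposition \ref{prop1}, together with the general theory of reflexive localizations and the theory of idempotent algebras from Section \ref{idemalgebras}. By Proposition \ref{prop1}, the map $f:\Cmonoidal\to \Lmonoidal(\Cmonoidal)$ is the reflection of $\Cmonoidal$ itself in the reflexive subcategory $CAlg(\iCat)_{\Cmonoidal/}^X \subseteq CAlg(\iCat)_{\Cmonoidal/}$.

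For part (1), I would argue directly. Any $\Lmonoidal(\Cmonoidal)$-algebra $\Dmonoidal$, regarded as a $\Cmonoidal$-algebra via $f$, sends $X$ to an invertible object because $f(X)$ is already invertible in $\Lmonoidal(\Cmonoidal)$ and monoidal functors preserve invertibility; so the forgetful functor lands in $CAlg(\iCat)_{\Cmonoidal/}^X$. Conversely, any $\Dmonoidal\in CAlg(\iCat)_{\Cmonoidal/}^X$ is $\mathcal{S}_X$-local, so the structure map $\Cmonoidal\to \Dmonoidal$ admits an essentially unique factorization through $\Lmonoidal(\Cmonoidal)$ by the universal property of the localization. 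The same universal property applied fiberwise to mapping spaces yields fully faithfulness, and $CAlg(\iCat)_{\Lmonoidal(\Cmonoidal)/}$ is thereby identified with $CAlg(\iCat)_{\Cmonoidal/}^X$.

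For part (2), the heart of the argument is to show that $A:=\Lmonoidal(\Cmonoidal)$ is an \emph{idempotent} commutative $\Cmonoidal$-algebra in the sense of Section \ref{idemalgebras}, i.e.\ that the multiplication $\mu: A\otimes_{\Cmonoidal} A\to A$ is an equivalence in $Mod_{\Cmonoidal}(\iCat)$. Since $A\otimes_{\Cmonoidal} A$ is a coproduct in $CAlg(\iCat)_{\Cmonoidal/}$ (using that the monoidal structure on commutative algebras is coCartesian) and both structure maps invert $X$, it lies in $CAlg(\iCat)_{\Cmonoidal/}^X$; by part (1) it thus admits a canonical $A$-algebra structure through an essentially unique factorization of the structure map from $\Cmonoidal$. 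Combining this factorization with the retraction identity $\mu\circ \iota = \text{id}_A$ for either unit inclusion $\iota: A \to A\otimes_{\Cmonoidal} A$ forces $\mu$ and $\iota$ to be mutually inverse equivalences. Once idempotency is established, Proposition 6.3.2.10 of \cite{lurie-ha}, applied to the presentable symmetric monoidal $(\infty,1)$-category $Mod_{\Cmonoidal}(\iCat)^{\otimes}$, identifies $f_*$ with the inclusion of the full reflexive subcategory of those $\Cmonoidal$-modules $M$ for which the canonical map $M\to M\otimes_{\Cmonoidal} A$ is an equivalence.

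The final step, and the one I expect to require the most care, is to identify this condition with ``$X$ acts as an equivalence on $M$''. The implication ``if $M\simeq M\otimes_{\Cmonoidal} A$ then $X$ acts invertibly on $M$'' is immediate, since $X$ acts invertibly on $M\otimes_{\Cmonoidal} A$ via its invertible action on $A$. The reverse implication is the main obstacle; I would attack it by describing $A$ more concretely as a colimit in $Mod_{\Cmonoidal}(\iCat)$ built out of iterates of the functor $X\otimes -$ (informally, a sequential colimit of shifts by $X^{\otimes n}$), so that invertibility of the endofunctor $(X\otimes -)$ on $M$ forces the base change map $M\to M\otimes_{\Cmonoidal} A$ to be an equivalence. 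Alternatively, one may package both full subcategories as reflexive localizations of $Mod_{\Cmonoidal}(\iCat)$ and check directly that their classes of local equivalences coincide, by comparing the natural generating morphisms $M\xrightarrow{X\otimes -} M$ on one side with the generating base change maps on the other.
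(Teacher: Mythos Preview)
Your framework for both parts is essentially equivalent to the paper's. The paper isolates the notion of \emph{epimorphism} of commutative algebras (Proposition \ref{main2}) and shows that $f:\Cmonoidal\to\Lmonoidal(\Cmonoidal)$ is one; this is precisely your idempotency statement $A\otimes_{\Cmonoidal}A\simeq A$ repackaged, and invoking Lurie's Proposition 6.3.2.10 is equivalent to the paper's use of Proposition \ref{main2}. Part (1) and the easy direction of part (2) go through exactly as you describe.

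The genuine gap is in the hard direction of part (2): your proposed description of $A=\Lmonoidal(\Cmonoidal)$ as a sequential colimit of iterates of $(X\otimes-)$ is \emph{not available at this point in the paper}. That description (Proposition \ref{main41} and Corollary \ref{main5}) requires the additional hypothesis that $X$ be \emph{symmetric} (the cyclic permutation on $X^{\otimes 3}$ is homotopic to the identity), which is not assumed in Proposition \ref{main}. Without symmetry, the sequential colimit need not be a $\Cmonoidal$-module on which $X$ acts invertibly, so it need not agree with $A$. Your alternative suggestion of comparing generating sets of local equivalences is too vague to fill this gap.

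The paper's argument for this direction is quite different and worth knowing: given $M$ on which $X$ acts invertibly, one forms the free commutative $\Cmonoidal$-algebra $Free_{\Cmonoidal}(M)\simeq\coprod_{n\geq 0}Sym^n_{\Cmonoidal}(M)$. Since $X$ acts invertibly on $M$, the image of $X$ is invertible in this free algebra, so it lies in $CAlg(\iCat)_{\Cmonoidal/}^X$ and hence its underlying module satisfies $Free_{\Cmonoidal}(M)_{\langle 1\rangle}\otimes_{\Cmonoidal}A\simeq Free_{\Cmonoidal}(M)_{\langle 1\rangle}$. Using the epimorphism property $A^{\otimes_{\Cmonoidal} n}\simeq A$, one rewrites the right side as $\coprod_n Sym^n_{\Cmonoidal}(M\otimes_{\Cmonoidal}A)$, and then exploits the fact that coproducts in $\iCat$ are \emph{disjoint} to deduce that the summand-by-summand map $M\to M\otimes_{\Cmonoidal}A$ (the $n=1$ piece) is itself an equivalence.
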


A major consequence is that the left adjoint $\widetilde{\mathcal{L}}_{(\Cmonoidal,X)}$ provided by the base change is naturally equivalent to the left adjoint $\Lmonoidal$ provided by Proposition \ref{prop1}. Moreover, since the diagram (\ref{diagram}) commutes, we have the formula $\Lx(\D)\simeq\Lmonoidal(\Dmonoidal)_{\onefin}$ for any $\Dmonoidal\in  CAlg(\iCat)_{\Cmonoidal/}$.\\

In order to prove Proposition \ref{main}, we will need some preliminary steps. We start by recalling some notation: Let $\Emonoidal$ be a symmetric monoidal $(\infty,1)$-category. A morphism of commutative algebras $A\to B$ in $\E$ is called an epimorphism (see \cite{toen-vezzosi-hag2}-Definition 1.2.6.1-1) if for any commutative $A$-algebra $C$, the mapping space $Map_{CAlg(\E)}(B,C)$ is either empty or weakly contractible. In other words, the space of dotted maps of $A$-algebras

\begin{equation}
\xymatrix{
C&\\
A\ar[r]\ar[u]& B\ar@{..>}[ul]
}
\end{equation}

\noindent rendering the diagram commutative is either empty or consisting of a unique map, up to equivalence.
We can rewrite this definition in a different way. As a result of the general theory, if $\Emonoidal$ is compatible with all small colimits, the $\infty$-category $CAlg(\E)_{A/}$ inherits a coCartesian tensor product which we denote here as $\otimes_A$. In this case it is immediate the conclusion that a map $A\to B$ is an epimorphism if and only if the canonical map $B\to B\otimes_A B$ is an equivalence. Of course, this is happens if and only if the induced colimit map $B\otimes_A B\to B$ is also an equivalence. We prove the following

\begin{prop}
\label{main2}
Let $\Emonoidal$ be a symmetric monoidal $(\infty,1)$-category compatible with all small colimits and let $f:A\to B$ a morphism of commutative algebras in $\E$. The following are equivalent:
\begin{enumerate}
\item $f$ is an epimorphism;
\item The natural map $f_*:Mod_B(\E)\to Mod_A(\E)$ is fully faithful;
\end{enumerate}
Moreover, if these equivalent conditions are satisfied, the forgetful map
\begin{equation}
CAlg(\E)_{B/}\to CAlg(\E)_{A/}
\end{equation}
is also fully faithful.
\begin{proof}
With the hypothesis that the monoidal structure is compatible with colimits, the general theory gives us a base-change functor

\begin{equation}
(-\otimes_A B):Mod_A(\E) \to Mod_B(\E) 
\end{equation}

\noindent left adjoint to the forgetful map $f_*$. In this case $f_*$ will be fully faithful if and only if the counit of the adjunction is an equivalence. If the counit is an equivalence in particular we deduce that the canonical map $ B\otimes_A B\to B$ is an equivalence and therefore $A\to B$ is an epimorphism. Conversely, if $A\to B$ is an epimorphism, for any $B$-module $M$ we have

\begin{equation}
M\otimes_A B\simeq (M\otimes_B B)\otimes_A B)\simeq M\otimes_B (B\otimes_A B)\simeq (M\otimes_B B)\simeq M
\end{equation}

It remains to prove the additional statement concerning the categories of algebras. Let us consider $u:B\to U$, $v:B\to V$ two algebras over $B$. We want to prove that the canonical map

\begin{equation}
Map_{CAlg(\E)_{B/}}(U, V) \to Map_{CAlg(\E)_{A/}}(f_*(U), f_*(V)) 
\end{equation}

\noindent is an isomorphism of homotopy types. The points in $Map_{CAlg(\E)_{A/}}(f_*(U), f_*(V)))$ can be identified with commutative diagrams

\begin{equation}
\xymatrix{
&&U\ar[dd]\\
A\ar[r]^f \ar@/^/[urr]^{u\circ f} \ar@/_/[drr]_{v\circ f}&B\ar[dr]^v \ar[ur]_u&\\
&&V
}
\end{equation}

\noindent and therefore we can rewrite $Map_{CAlg(\E)_{A/}}(f_*(U), f_*(V))$ as an homotopy pullback diagram

\begin{equation}Map_{CAlg(\E)_{A/}}(B, f_*(V))\times_{Map_{CAlg(\E)_{A/}}(A, f_*(V))} Map_{CAlg(\E)_{B/}}(U, V)\end{equation}

\noindent which by the fact $A\to B$ is an epimorphism and $Map_{CAlg(\E)_{A/}}(A, f_*(V))\simeq *$, reduces to $Map_{CAlg(\E)_{B/}}(U, V)$.
\end{proof}
\end{prop}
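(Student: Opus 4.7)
My plan is to prove both equivalences via the adjunction between restriction and extension of scalars. Since $\Emonoidal$ is compatible with small colimits, the results reviewed in the preliminaries (specifically \ref{basechange}) give a base-change functor $(-\otimes_A B) : \Mod_A(\E) \to \Mod_B(\E)$ that is left adjoint to the forgetful functor $f_*$, and moreover this left adjoint carries the canonical algebra $A$ in $\Mod_A(\E)$ to the algebra $B$ in $\Mod_B(\E)$ via the structure map $f$. The key general fact is that a right adjoint is fully faithful if and only if the counit of the adjunction is an equivalence; applied to $f_*$, the counit at a $B$-module $M$ is the natural map $M \otimes_A B \to M$.

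For the implication $(2) \Rightarrow (1)$, I would specialize the counit to $M = B$, obtaining that $B \otimes_A B \to B$ is an equivalence, which is precisely the characterization of epimorphisms of commutative algebras recalled just before the statement. For the converse $(1) \Rightarrow (2)$, assuming $B \otimes_A B \simeq B$, I would chase the associativity and unit equivalences for the relative tensor product in $\Mod_A(\E)$ and $\Mod_B(\E)$ (as listed at the end of \ref{basechange}) to compute, for an arbitrary $B$-module $M$,
\[
M \otimes_A B \simeq (M \otimes_B B) \otimes_A B \simeq M \otimes_B (B \otimes_A B) \simeq M \otimes_B B \simeq M,
\]
where the middle identification is the iterated tensor-product formula. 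This shows the counit is an equivalence. The main technical point to be careful about is that these manipulations live in the $\infty$-categorical setting, so one should check that the relative-tensor equivalences really do assemble into an equivalence of the natural counit map rather than just an abstract isomorphism of objects; this follows because all the equivalences above are instances of the coherent associativity built into the symmetric monoidal structure on $\Mod_A(\E)$ and the monoidality of $(-\otimes_A B)$.

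For the "moreover" part, I would analyze the mapping spaces in $\CAlg(\E)_{A/}$ between two $B$-algebras $u : B \to U$ and $v : B \to V$, viewed as $A$-algebras via $f$. The plan is to write the mapping space $Map_{\CAlg(\E)_{A/}}(f_*(U), f_*(V))$ as an iterated comma construction, which gives a homotopy pullback
\[
Map_{\CAlg(\E)_{A/}}(f_*(U), f_*(V)) \simeq Map_{\CAlg(\E)_{B/}}(U,V) \times_{Map_{\CAlg(\E)_{A/}}(A, f_*(V))} Map_{\CAlg(\E)_{A/}}(B, f_*(V)).
\]
The lower-right mapping space is contractible since it parametrizes factorizations of $v \circ f : A \to V$ through $B$ and it already contains the point $v$; but since $f$ is an epimorphism the whole space $Map_{\CAlg(\E)_{A/}}(B, f_*(V))$ is either empty or contractible, and it is non-empty (witnessed by $v$), hence contractible. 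The base $Map_{\CAlg(\E)_{A/}}(A, f_*(V))$ is contractible by the universal property of $A$ as the initial $A$-algebra. The pullback therefore collapses to $Map_{\CAlg(\E)_{B/}}(U,V)$, which is the desired fully faithfulness.

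The main obstacle I anticipate is the bookkeeping in the "moreover" part — namely writing down the mapping space in the slice $\CAlg(\E)_{A/}$ as the stated homotopy pullback and confirming that the relevant map induced by $u$ exhibits the contractibility correctly. Everything else is essentially a formal consequence of the adjoint-functor characterization of fully faithfulness and the iterated tensor product, both of which are cleanly available from the preliminaries.
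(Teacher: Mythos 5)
Your proposal is correct and follows essentially the same route as the paper: the adjunction $(-\otimes_A B)\dashv f_*$ with the counit criterion for fully faithfulness, the specialization to $M=B$ and the iterated tensor computation for the converse, and the same homotopy-pullback decomposition of $Map_{CAlg(\E)_{A/}}(f_*(U),f_*(V))$ for the ``moreover'' part. Your added remark that $Map_{CAlg(\E)_{A/}}(B,f_*(V))$ is nonempty (witnessed by $v$) and hence contractible is a small but worthwhile precision that the paper leaves implicit.
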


The following is the main ingredient in the proof of the Proposition \ref{main}.

\begin{prop}
Let $\Cmonoidal$ be a small symmetric monoidal $\infty$-category and let $X$ be an object in $\C$. Then,
the canonical map $\Cmonoidal \to \Lmonoidal(\Cmonoidal)$ is an epimorphism.
\begin{proof}
This is a direct result of the characterization of $\Lmonoidal$ as an adjoint in the Proposition \ref{prop1}. Indeed, for any algebra $\phi:\Cmonoidal\to \Dmonoidal$, either $\phi$ does not send $X$ to an invertible object and in this case 
$Map_{CAlg(\iCat)_{\Cmonoidal/}}(\Lmonoidal(\Cmonoidal), \Dmonoidal)$ is necessarily empty or, $\phi$ sends $X$ to an invertible object and we have by the universal properties

\begin{equation}
Map_{CAlg(\iCat)_{\Cmonoidal/}}(\Lmonoidal(\Cmonoidal), \Dmonoidal)\simeq Map_{CAlg(\iCat)_{\Cmonoidal/}}(\Cmonoidal, \Dmonoidal)\simeq *
\end{equation}

\end{proof}
\end{prop}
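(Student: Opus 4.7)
The plan is to directly unwind the definition of epimorphism in the setting of $CAlg(\iCat)_{\Cmonoidal/}$ and play it against the universal property of $\Lmonoidal(\Cmonoidal)$ established in Proposition \ref{prop1}. By the definition recalled just before the statement, $f:\Cmonoidal\to \Lmonoidal(\Cmonoidal)$ is an epimorphism precisely when, for every commutative $\Cmonoidal$-algebra $\Dmonoidal$ (that is, every object of $CAlg(\iCat)_{\Cmonoidal/}$), the mapping space
\begin{equation}
Map_{CAlg(\iCat)_{\Cmonoidal/}}(\Lmonoidal(\Cmonoidal),\Dmonoidal)
\end{equation}
is either empty or weakly contractible. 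So the task reduces to verifying this dichotomy for all such $\Dmonoidal$.

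First, I would separate the two possibilities according to whether $\Dmonoidal$ lies in the reflective subcategory $CAlg(\iCat)_{\Cmonoidal/}^X$ or not. If $\Dmonoidal$ is \emph{not} in $CAlg(\iCat)_{\Cmonoidal/}^X$, that is, if the structure map $\Cmonoidal\to\Dmonoidal$ does not send $X$ to an invertible object, then I claim the above mapping space is empty. Indeed, any morphism $g:\Lmonoidal(\Cmonoidal)\to \Dmonoidal$ in $CAlg(\iCat)_{\Cmonoidal/}$ would recover the structure map of $\Dmonoidal$ as the composite $g\circ f$; since $f$ sends $X$ to an invertible object of $\Lmonoidal(\Cmonoidal)$ and monoidal functors preserve invertible objects, this would force the image of $X$ in $\Dmonoidal$ to be invertible, contradicting the assumption.

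If, on the contrary, $\Dmonoidal$ does lie in $CAlg(\iCat)_{\Cmonoidal/}^X$, then by the full-faithfulness of the inclusion $CAlg(\iCat)_{\Cmonoidal/}^X\hookrightarrow CAlg(\iCat)_{\Cmonoidal/}$, combined with the adjunction $(\Lmonoidal,\iota)$ of Proposition \ref{prop1}, we have
\begin{equation}
Map_{CAlg(\iCat)_{\Cmonoidal/}}(\Lmonoidal(\Cmonoidal),\Dmonoidal)\simeq Map_{CAlg(\iCat)_{\Cmonoidal/}^X}(\Lmonoidal(\Cmonoidal),\Dmonoidal)\simeq Map_{CAlg(\iCat)_{\Cmonoidal/}}(\Cmonoidal,\Dmonoidal).
\end{equation}
The right-hand term is the mapping space out of the initial object of $CAlg(\iCat)_{\Cmonoidal/}$, hence it is contractible. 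Combining the two cases yields the required dichotomy, and therefore $f$ is an epimorphism.

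There is essentially no technical obstacle here; the argument is a direct manipulation of universal properties. The only subtle point to keep track of is the invocation that monoidal functors preserve invertibility, which is immediate from the fact that invertibility is a property detectable on the homotopy category and that a monoidal functor induces a symmetric monoidal functor on homotopy categories.
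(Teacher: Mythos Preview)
Your proof is correct and follows essentially the same approach as the paper's own proof: both split into the two cases according to whether $X$ becomes invertible in $\Dmonoidal$, invoke the adjunction of Proposition \ref{prop1} in the affirmative case, and observe that $\Cmonoidal$ is initial in $CAlg(\iCat)_{\Cmonoidal/}$. Your version is slightly more explicit about why the mapping space is empty in the negative case (monoidal functors preserve invertibility), but this is the same argument.
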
 

\noindent \textit{Proof of Proposition \ref{main}:} 
By the results above we know that both maps are fully faithful. It suffices now to analyze their images.
\begin{enumerate}
\item If $\phi:\Cmonoidal\to \Dmonoidal$ is in the image, $\Dmonoidal$ is an algebra over $\Lmonoidal(\Cmonoidal)$ and there exists a monoidal factorization 

\begin{equation}
\xymatrix{
\Cmonoidal\ar[r]^{\phi}\ar[d]& \Dmonoidal\\
\Lmonoidal(\Cmonoidal)\ar@{..>}[ur]&
}
\end{equation}

\noindent and therefore $X$ is sent to an invertible object. Conversely, if $\phi:\Cmonoidal\to \Dmonoidal$ sends $X$ to an invertible object, $\phi:\Cmonoidal\to \Dmonoidal$ is local with respect to $Free_{\Cmonoidal}(X\otimes-):Free_{\Cmonoidal}(\C)\to Free_{\Cmonoidal}(\C)$ and therefore the adjunction morphisms
of the Proposition \ref{prop1} fit in a commutative diagram

\begin{equation}
\xymatrix{
\Cmonoidal \ar[rr]^{\phi} \ar[d]&& \Dmonoidal \ar[d]^{\sim}\\
\Lmonoidal(\Cmonoidal)\ar[rr]^{\Lmonoidal(\phi)} && \Lmonoidal(\Dmonoidal)
}
\end{equation}

\noindent where the right vertical map is an equivalence and we deduce the existence of a monoidal map presenting $\Dmonoidal$ as a $\Lmonoidal(\Cmonoidal)$-algebra, therefore being in the image of $f_*$.

\item Again, it remains to prove the assertion about the image. If $M$ is a $\Cmonoidal$-module in the image, by definition, its module structure is obtained by the composition $\Cmonoidal\times M\to \Lmonoidal(\Cmonoidal)\times M\to M$ and therefore the action of $X$ on $M$ is invertible. Conversely, let $M$ be a $\Cmonoidal$-module where $X$ acts as a equivalence. We want to show that $M$ is in the image of the forgetful functor. Since we know it is fully faithful, this is equivalent to show that the unit map of the adjunction 

\begin{equation}M\to f_* \Lx (M)\simeq M\otimes_{\Cmonoidal} \Lmonoidal(\Cmonoidal)\end{equation}

\noindent is an equivalence. To prove this we will need a reasonable description of $Free_{\Cmonoidal}(M)$ - the free $\Cmonoidal$ algebra generated by $M$. Following the Construction $3.1.3.7$ and the Example $3.1.3.12$ of \cite{lurie-ha} we know that the underlying $\Cmonoidal$-module $Free_{\Cmonoidal}(M)_{\onefin}$ can be described as a coprodut

\begin{equation}
\coprod_{n\geq 0}Sym^n(M)_{\Cmonoidal}
\end{equation}

\noindent where $Sym^n(M)_{\Cmonoidal}$ is a colimit diagram in $Mod_{\Cmonoidal}(\iCat)$ which can be informally described as $M^{\otimes_{\Cmonoidal}}/\Sigma_n$ where $\otimes_{\Cmonoidal}$ refers to the natural symmetric monoidal structure in $Mod_{\Cmonoidal}(\iCat)$. Let us proceed.

\begin{itemize}
\item The general machinery tells us that $Free_{\Cmonoidal}(M)$ exists in our case and by construction it comes naturally equipped with a canonical monoidal map $\phi:\Cmonoidal\to Free_{\Cmonoidal}(M)$. We remark that the multiplication map $(\phi(X)\otimes-): Free_{\Cmonoidal}(M)_{\onefin}\to Free_{\Cmonoidal}(M)_{\onefin}$ can be identified with the image $Free_{\Cmonoidal}(X\otimes-)_{\onefin}$ of the multiplication map $(X\otimes-):M\to M$. Since this last one is an equivalence (by the assumption), we conclude that $Free_{\Cmonoidal}(M)$ is in fact a $\Cmonoidal$ algebra where $X$ is sent to an invertible object. This means that it is in fact a $\Lmonoidal(\Cmonoidal)$-algebra and therefore $Free_{\Cmonoidal}(M)_{\onefin}$ is in fact a $\Lmonoidal(\Cmonoidal)$-module, which means that the unit map

\begin{equation}
Free_{\Cmonoidal}(M)_{\onefin}\to f_*( \Lx(Free_{\Cmonoidal}(M)_{\onefin})) \simeq Free_{\Cmonoidal}(M)_{\onefin}\otimes_{\Cmonoidal} \Lmonoidal(\Cmonoidal)
\end{equation}

\noindent is an equivalence. 

\item We observe now that we have a canonical map $M\to Free_{\Cmonoidal}(M)_{\onefin}$ because $Sym^1(M)=M$ and that this map is obviously fully faithful. The unit of the natural transformation associated to the base-change gives us a commutative diagram

\begin{equation}
\xymatrix{
M\ar[r]\ar[d]& M\otimes_{\Cmonoidal} \Lmonoidal(\Cmonoidal) \ar[d]\\
Free_{\Cmonoidal}(M)_{\onefin}\ar[r]^(.3){\sim}& Free_{\Cmonoidal}(M)_{\onefin}\otimes_{\Cmonoidal} \Lmonoidal(\Cmonoidal)
}
\end{equation}

\noindent where the lower arrow is an equivalence from the discussion in the previous item. Since the monoidal structure is compatible with coproducts and using the identification $Sym^n(M)_{\Cmonoidal}\simeq M^{\otimes_{\Cmonoidal}^n}/\Sigma_n$, we have 

\begin{equation}
Free_{\Cmonoidal}(M)_{\onefin}\otimes_{\Cmonoidal} \Lmonoidal(\Cmonoidal)\simeq \coprod [(M^{\otimes_{\Cmonoidal}^n})^\otimes_{\Cmonoidal} \Lmonoidal(\Cmonoidal)]/\Sigma_n
\end{equation}

\noindent and finally, using the fact $\Cmonoidal\to \Lmonoidal(\Cmonoidal)$ is an epimorphism, we have 

\begin{equation} (\Lmonoidal(\Cmonoidal))^{\otimes_{\Cmonoidal}^n}\simeq \Lmonoidal(\Cmonoidal)\end{equation}

\noindent for any $n\geq 0$. We find an equivalence

\begin{equation}
Free_{\Cmonoidal}(M)_{\onefin}\otimes_{\Cmonoidal} \Lmonoidal(\Cmonoidal)\simeq Free_{\Cmonoidal}(M\otimes_{\Cmonoidal} \Lmonoidal(\Cmonoidal))_{\onefin}
\end{equation}

The first diagram becomes

\begin{equation}
\xymatrix{
M\ar[r]\ar[d]& M\otimes_{\Cmonoidal} \Lmonoidal(\Cmonoidal)\ar[d]\\
Free_{\Cmonoidal}(M)_{\onefin}= \coprod_{n\geq 0}Sym^n(M)_{\Cmonoidal}\ar[r]^{\sim}& \coprod_{n \geq 0} Sym^n(M\otimes_{\Cmonoidal}\Lmonoidal(\Cmonoidal))_{\Cmonoidal}
}
\end{equation}

\noindent where both vertical maps are now the canonical inclusions in the coproduct. Therefore, since $\iCat$ has disjoint coproduts (because coproducts can be computed as homotopy coproducts in the combinatorial model category of marked simplicial sets and here coproducts are disjoint), we conclude that the canonical map $M\to M\otimes_{\Cmonoidal}\Lmonoidal(\Cmonoidal)$ is also an equivalence.

\end{itemize}

This concludes the proof.
\end{enumerate}

\begin{remark}
Let $X$ and $Y$ be two objects in a symmetric monoidal $(\infty,1)$-category $\C$. If $X$ and $Y$ are equivalent, then a monoidal map $\Cmonoidal\to \Dmonoidal$ sends $X$ to an invertible object if and only if it sends $Y$ to an invertible object. In this case we have $\Lmonoidal\simeq \mathcal{L}_{(\Cmonoidal,Y)}^{\otimes}$.
\end{remark}

\begin{remark}
\label{invertingtwoobjects}
Let $\Cmonoidal$ be a symmetric monoidal $(\infty,1)$-category. Let $X$ and $Y$ be two objects in $\C$ and let $X\otimes Y$ denote their product with respect to the monoidal structure. Since the monoidal structure is symmetric, it is an easy observation that $X\otimes Y$ is an invertible object if and only if $X$ and $Y$ are both invertible. Therefore, we can identify the full subcategory $CAlg(\iCat)_{\Cmonoidal/}^{X\otimes Y}$ with the full subcategory  $CAlg(\iCat)_{\Cmonoidal/}^{X,Y}$ spanned by the algebra objects $\Cmonoidal\to\Dmonoidal$ sending both $X$ and $Y$ to invertible objects. As a consequence, we can provide a relative version of our methods and by the universal properties the diagram

\begin{equation}
\xymatrix{
\ar@/_2pc/[d]_{base-change} CAlg(\iCat)_{\Lmonoidal(\Cmonoidal)/}=CAlg(\iCat)_{\Cmonoidal/}^{X}  \ar@{^{(}->}[r]& CAlg(\iCat)_{\Cmonoidal/}  \ar@/_2pc/[l]_{base-change} \ar@/^2pc/[d]^{base-change}\\
CAlg(\iCat)_{\mathcal{L}^{\otimes}_{(\Cmonoidal, X\otimes Y)}(\Cmonoidal)/}=CAlg(\iCat)_{\Cmonoidal/}^{X,Y} \ar@{^{(}->}[u] \ar@{^{(}->}[r]& \ar@/^2pc/[l]^{base-change}CAlg(\iCat)_{\Cmonoidal/}^{Y}= CAlg(\iCat)_{\mathcal{L}^{\otimes}_{(\Cmonoidal,Y)}(\Cmonoidal)/} \ar@{^{(}->}[u]
}
\end{equation}

\noindent has to commute. 
\end{remark}

\begin{remark}
\label{workswithspaces}
The results of \ref{prop1} and \ref{main} also hold if we restrict our attention to symmetric monoidal $(\infty,1)$-categories that are $\infty$-groupoids. More precisely, if $\Cmonoidal$ is an object in $CAlg(\Spaces)$ and $X$ is an object in $\C$, the inclusion

\begin{equation}
\xymatrix{
CAlg(\Spaces)_{\Cmonoidal/.}^{X}\ar@{^{(}->}[r]& CAlg(\Spaces)_{\Cmonoidal/}
}
\end{equation}

\noindent admits a left adjoint $\mathcal{L}^{spaces, \otimes}_{\Cmonoidal,X}$. This follows from the same arguments as in \ref{prop1}, using the fact that $\Spaces$ is presentable. Moreover, as in \ref{main}, we can identify $CAlg(\Spaces)_{\Cmonoidal/.}^{X}$ with the $(\infty,1)$-category of commutative $\mathcal{L}^{spaces, \otimes}_{\Cmonoidal,X}(\Cmonoidal)$-algebras.\\

Recall now that the existence of a a fully-faithful inclusion $i:\Spaces\subseteq \iCat$. This inclusion is monoidal with respect to the cartesian structures and produces an inclusion $i:CAlg(\Spaces)\subseteq CAlg(\iCat)$. Therefore, for every symmetric monoidal $\infty$-groupoid $\Cmonoidal$ together with the choice of an object $X\in \C$, we have a commutative diagram

\begin{equation}
\xymatrix{
CAlg(\iCat)_{i(\Cmonoidal)/.}^{X}\ar@{^{(}->}[r]&  CAlg(\iCat)_{i(\Cmonoidal)/.}\\
CAlg(\Spaces)_{\Cmonoidal/.}^{X} \ar@{^{(}->}[u] \ar@{^{(}->}[r]&  CAlg(\Spaces)_{\Cmonoidal/.}\ar@{^{(}->}[u]
}
\end{equation}

\noindent from which, using the universal property of the adjuntion in $\ref{prop1}$, we can deduce the existence of a canonical monoidal map of symmetric monoidal $(\infty,1)$-categories

\begin{equation}
\mathcal{L}^{\otimes}_{i(\Cmonoidal),X}(i(\Cmonoidal))\to i(\mathcal{L}^{spaces, \otimes}_{\Cmonoidal,X}(\Cmonoidal))
\end{equation}

Later on (see the Remark \ref{workswithspaces2}) we will see that under an extra assumption on $X$ this comparison map is an equivalence.
\end{remark}

Our goal now is to extend our construction to the setting of presentable symmetric monoidal $\infty$-categories. The starting observation is that, if $\Cmonoidal$ is a small symmetric monoidal $(\infty,1)$-category the inversion of an object $X$ can now be rewritten by means of a pushout square in $CAlg(\iCat)$: Since $\iCat$ is a symmetric monoidal $(\infty,1)$-category compatible with all colimits, the forgetful functor

\begin{equation}
CAlg(\iCat)\to \iCat 
\end{equation}

\noindent admits a left adjoint $free^{\otimes}$ which assigns to an $\infty$-category $\D$, the \emph{free symmetric monoidal $(\infty,1)$-category generated by} $\D$. An object in $\C$ can be interpreted as a monoidal map $free^{\otimes}(\Delta[0])\to \Cmonoidal$ where $free^{\otimes}(\Delta[0])$ is the free symmetric monoidal category generated by one object $*$. By the universal property of $\mathcal{L}_{(\Cmonoidal,*)}^{\otimes}(free^{\otimes}(\Delta[0]))$, a monoidal map $\Cmonoidal\to \Dmonoidal$ sends $X$ to an invertible object if and only if it factors as a commutative diagram

\begin{equation}
\label{diagram2}
\xymatrix{
free^{\otimes}(\Delta[0])\ar[d]^X\ar[r]& \mathcal{L}_{(\Cmonoidal,*)}^{\otimes}(free^{\otimes}(\Delta[0]))\ar[d]\\
\Cmonoidal\ar[r] &\Dmonoidal
}
\end{equation}

\noindent and by the combination of the universal properties, the pushout in $CAlg(\iCat)$

\begin{equation}
\Cmonoidal\coprod_{free^{\otimes}(\Delta[0])}\mathcal{L}_{(\Cmonoidal,*)}^{\otimes}(free^{\otimes}(\Delta[0]))
\end{equation}
 
\noindent is canonically equivalent to $\Lmonoidal(\Cmonoidal)$. The existence of this pushout is ensured by the fact that $\iCat^{\times}$ is compatible with all colimits. \\

We will use this pushout-version to construct the presentable theory. By the tools described in the section \ref{section3-2}, if $\Cmonoidal$ is a presentable symmetric monoidal $(\infty,1)$-category (not necessarily small) and $X$ is an object in $\C$, the universal \emph{monoidal} property of presheaves ensures that any diagram like (\ref{diagram2}) factors as

\begin{equation}
\label{diagram3}
\xymatrix{
free^{\otimes}(\Delta[0])\ar[d]^j\ar[r]& \mathcal{L}_{(\Cmonoidal,*)}^{\otimes}(free^{\otimes}(\Delta[0]))\ar[d]^{j'}\\
\mathcal{P}(free^{\otimes}(\Delta[0]))^{\otimes}\ar@{-->}[d]\ar[r]& \mathcal{P}(\mathcal{L}_{(\Cmonoidal,*)}^{\otimes}(free^{\otimes}(\Delta[0])))^{\otimes}\ar@{-->}[d]\\
\Cmonoidal\ar[r] &\Dmonoidal
}
\end{equation}

\noindent where $\mathcal{P}^{\otimes}(-)$ is the natural extension of the symmetric monoidal structure to presheaves, the vertical maps $j$ and $j'$ are the respective Yoneda embeddings (which are monoidal maps) and the dotted arrows are given by colimit-preserving monoidal maps obtained as left Kan extensions. 

\begin{defn}
\label{defformal}
Let $\Cmonoidal$ be a presentable symmetric monoidal $(\infty,1)$-category and let $X$ be an object in $\C$. The \emph{formal inversion} of $X$ in $\Cmonoidal$ is the new presentable symmetric monoidal $(\infty,1)$-category $\Cmonoidalx$ defined by pushout

\begin{equation}
\label{formulainverse}
\Cmonoidalx:=\Cmonoidal \coprod_{\mathcal{P}(free^{\otimes}(\Delta[0]))^{\otimes}}\mathcal{P}(\mathcal{L}_{(\Cmonoidal,*)}^{\otimes}(free^{\otimes}(\Delta[0])))^{\otimes}
\end{equation}

\noindent in $CAlg(\Prl)$
\end{defn}

\begin{remark}
Let $\Cmonoidal$ be a small symmetric monoidal $(\infty,1)$-category and let $X$ be an object in $\C$. Again using the tools described in the section \ref{section3-2}, the monoidal structure in $\C$ extends to a monoidal structure in $\mathcal{P}(\C)$ and it makes it a presentable symmetric monoidal $(\infty,1)$-category. It is automatic by the universal properties that the inversion $\mathcal{P}(\C)^{\otimes}[X^{-1}]$ in the setting of presentable $(\infty,1)$-categories is canonically equivalent to $\mathcal{P}(\Lmonoidal(\Cmonoidal))^{\otimes}$.
\end{remark}

As in the \emph{small} context, we analyze the base change with respect to this map. Since $(\Prl)^{\otimes}$ is   compatible with all small colimits, all the machinery related to algebras and modules can be applied. The composition with the canonical map $\Cmonoidal\to \Cmonoidalx$ produces a forgetful functor

\begin{equation}
Mod_{ \Cmonoidalx}(\Prl)\to Mod_{\Cmonoidal}(\Prl)
\end{equation}

\noindent and the base-change functor $\Lpr$ exists, is monoidal and therefore induces an adjunction

\begin{equation}\xymatrix{  CAlg(\Prl)_{\Cmonoidalx/}\ar[r]&  \ar@/_2pc/[l]_{\Lpmonoidal} CAlg(\Prl)_{\Cmonoidal/}}\end{equation}

Our main result is the following:

\begin{prop}
\label{main3}
Let $\Cmonoidal$ be a presentable symmetric monoidal $(\infty,1)$-category. Then
\begin{enumerate}
\item the canonical map

\begin{equation}
CAlg(\Prl)_{\Cmonoidalx/}\to CAlg(\Prl)_{\Cmonoidal/}
\end{equation}
\noindent is fully faithful and its essential image consists of full subcategory spanned by the algebras $\Cmonoidal\to \Dmonoidal$ sending $X$ to an invertible object; In particular we have a canonical equivalence $\Lpmonoidal(\Cmonoidal)\simeq \Cmonoidalx$
\item The canonical map

\begin{equation}
Mod_{ \Cmonoidalx}(\Prl)\to Mod_{\Cmonoidal}(\Prl)
\end{equation}
\noindent is fully faithful and its essential image consists of full subcategory spanned by the presentable  $(\infty,1)$-categories equipped with an action of $\C$ where $X$ acts as an equivalence.
\end{enumerate}

\begin{proof}
Since $(\Prl)^{\otimes}$ is a closed symmetric monoidal $(\infty,1)$-category (see the discussion in the section \ref{section3-4}), it is compatible with all colimits and so the results of the Proposition \ref{main2} can be applied. We prove that $\Cmonoidal\to \Cmonoidalx$  is an epimorphism. Indeed, if $\phi:\Cmonoidal\to \Dmonoidal$ does not send $X$ to an invertible object, by the universal property of the $\Cmonoidalx$ as a pushout, the mapping space $Map_{CAlg(\iCat)_{\Cmonoidal/}}(\Cmonoidalx, \Dmonoidal)$ is empty. Otherwise if $\phi$ sends $X$ to an invertible object, by the universal property of the pushout we have

\begin{equation}Map_{CAlg(\Prl)_{\Cmonoidal/}}(\Cmonoidalx, \Dmonoidal)\simeq Map_{CAlg(\Prl)}(\Cmonoidalx, \Dmonoidal)\end{equation}

\noindent and the last is given by the homotopy pullback of

\begin{equation}
\xymatrix{
&\ar[d]Map_{CAlg(\Prl)}(\mathcal{P}^{\otimes}(\mathcal{L}_{(\Cmonoidal,*)}^{\otimes}(free^{\otimes}(\Delta[0]))), \Dmonoidal)\\
Map_{CAlg(\Prl)}(\Cmonoidal, \Dmonoidal)\ar[r]&Map_{CAlg(\Prl)}(\mathcal{P}^{\otimes}(free^{\otimes}(\Delta[0])), \Dmonoidal)
}
\end{equation}

\noindent which, by the universal property of $\mathcal{P}^{\otimes}(-)$ is equivalent to

\begin{equation} Map_{CAlg(\Prl)}(\Cmonoidal, \Dmonoidal)\times_{Map_{CAlg(\iCat)}(free^{\otimes}(\Delta[0]), \Dmonoidal)} Map_{CAlg(\iCat)}( \mathcal{L}_{(\Cmonoidal,*)}^{\otimes}(free^{\otimes}(\Delta[0])), \Dmonoidal)
\end{equation}

\noindent and we use the fact that $free^{\otimes}(\Delta[0])\to  \mathcal{L}_{(\Cmonoidal,*)}^{\otimes}(free^{\otimes}(\Delta[0]))$ is an epimorphism to conclude the proof.

It remains now to discuss the images.
\begin{enumerate}
\item It is clear by the universal property of the pushout defining $\Cmonoidalx$;
\item If $M$ is in the image, the action of $X$ is clearly invertible. Let $M$ be a $\Cmonoidal$-module with an invertible action of $X$. By repeating exactly the same arguments as in the proof of Prop. \ref{main3} we arrive to a commutative diagram in $\Prl$

\begin{equation}
\xymatrix{
M\ar[r]\ar[d]& M\otimes_{\Cmonoidal} \Lmonoidal(\Cmonoidal)\ar[d]\\
Free_{\Cmonoidal}(M)_{\onefin}= \coprod_{n\geq 0}Sym^n(M)_{\Cmonoidal}\ar[r]^{\sim}& \coprod_{n \geq 0} Sym^n(M\otimes_{\Cmonoidal}\Lmonoidal(\Cmonoidal))_{\Cmonoidal}
}
\end{equation}

\noindent where the vertical maps are the canonical inclusions in the colimit and $Sym^n(-)_{\Cmonoidal}$ is now a colimit in $Mod_{\Cmonoidal}(\Prl)$. We recall now that coproducts in $\Prl$ are computed as products in $\mathcal{P}r^{R}$. Let $u:A\to B$ and $v:X\to Y$ be colimit preserving maps between presentable $(\infty,1)$-categories and assume the coprodut map $u\coprod v:A\coprod X\to B\coprod Y$ is an equivalence. The coproduct $A\coprod X$ is canonically equivalent to the product $A\times X$ and we have commutative diagrams

\begin{equation}
\xymatrix{
A\ar[r]^u\ar[d]_i &B\ar[d]_j\\
A\coprod X\ar[r]^{\sim}_{u\coprod v}&B\coprod Y
}
\end{equation}

\noindent and

\begin{equation}
\label{pinta}
\xymatrix{
A &\ar[l]^{\bar{u}}B\\
A\coprod X=A\times X\ar[u]_p & \ar[l]^{\overline{u\coprod v}}B\coprod Y=B\times Y \ar[u]_q 
}
\end{equation}

\noindent with $i$ and $j$ the canonical inclusions and $p$ and $q$ the projections. The maps in the second diagram are right adjoints to the maps in the first, with $\bar{u\coprod v}\simeq \bar{u}\times \bar{v}$ and therefore $u\coprod v$ and $\bar{u}\times \bar{v}$ are inverses. Since the projections are essentially surjective, the inclusions $i$ and $j$ are fully faithful and we conclude that $u$ has to be fully faithful and $\bar{u}$ is essentially surjective. To conclude the proof is it enough to check that $u$ is essentially surjective or, equivalently (because $u$ is fully faithful), that $\bar{u}$ is fully-faithful. This is the same as saying that for any diagram as in (\ref{pinta}) with $\bar{u}\times \bar{v}$ fully faithful, $\bar{u}$ is necessarily fully faithful. This is true because $Y$ is presentable and therefore has a final object $e$ and since $\bar{v}$ commutes with limits, for any objects $b_0, b_1\in Obj(\A)$ we have 

\begin{eqnarray}
Map_{B}(b_0, b_1)\simeq Map_{B}(b_0, b_1)\times Map_{Y}(e,e)\simeq Map_{A}(\bar{u}(b_0)\bar{u}(b_1))\times Map_{X}(\bar{v}(e),\bar{v}(e))\simeq \\
 \simeq Map_{A}(\bar{u}(b_0)\bar{u}(b_1))
\end{eqnarray}

\end{enumerate}
\end{proof}
\end{prop}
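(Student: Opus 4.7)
The plan is to mimic the strategy used for the small case (Propositions \ref{main} and \ref{main2}) by first reducing both statements to the assertion that the canonical arrow $f: \Cmonoidal \to \Cmonoidalx$ is an epimorphism of commutative algebras in $\Prl$. Since $(\Prl)^{\otimes}$ is a presentable symmetric monoidal $(\infty,1)$-category compatible with all small colimits (as recalled in the preliminaries on $\Prlmonoidal$), the hypotheses of Proposition \ref{main2} apply. Hence showing that $f$ is an epimorphism will immediately yield the full-faithfulness of both $Mod_{\Cmonoidalx}(\Prl)\to Mod_{\Cmonoidal}(\Prl)$ and $CAlg(\Prl)_{\Cmonoidalx/}\to CAlg(\Prl)_{\Cmonoidal/}$; the identification $\Lpmonoidal(\Cmonoidal)\simeq \Cmonoidalx$ then follows from the counit of the adjunction being an equivalence on objects in the image.

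To prove the epimorphism property, I would use the pushout formula (\ref{formulainverse}) together with the universal property of the monoidal envelope of presheaves. Given a commutative $\Cmonoidal$-algebra $\Dmonoidal$, the mapping space $Map_{CAlg(\Prl)_{\Cmonoidal/}}(\Cmonoidalx,\Dmonoidal)$ is obtained, via the pushout, as a homotopy pullback whose legs reduce, by the universal property of $\mathcal{P}^{\otimes}(-)$, to the mapping spaces $Map_{CAlg(\iCat)}(free^{\otimes}(\Delta[0]),\Dmonoidal)$ and $Map_{CAlg(\iCat)}(\mathcal{L}_{(\Cmonoidal,*)}^{\otimes}(free^{\otimes}(\Delta[0])),\Dmonoidal)$. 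The already-proved fact that $free^{\otimes}(\Delta[0])\to \mathcal{L}_{(\Cmonoidal,*)}^{\otimes}(free^{\otimes}(\Delta[0]))$ is an epimorphism in $CAlg(\iCat)$ (a consequence of Proposition \ref{main}) will force this pullback to be either empty or contractible, with the two cases corresponding exactly to whether $X$ is sent to an invertible object of $\Dmonoidal$ or not.

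For the essential image in $(1)$, the universal property of the pushout immediately shows that a $\Cmonoidal$-algebra $\Dmonoidal$ extends to a $\Cmonoidalx$-algebra precisely when the composite $free^{\otimes}(\Delta[0])\xrightarrow{X}\Cmonoidal\to \Dmonoidal$ factors through $\mathcal{L}_{(\Cmonoidal,*)}^{\otimes}(free^{\otimes}(\Delta[0]))$, i.e. precisely when $X$ is sent to an invertible object. For $(2)$, one direction is trivial; for the nontrivial direction I would adapt the argument of Proposition \ref{main}(2), showing that for any $\Cmonoidal$-module $M$ on which $X$ acts invertibly, the unit of base change $M\to M\otimes_{\Cmonoidal}\Cmonoidalx$ is an equivalence. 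The idea is to compare $M$ with the free commutative $\Cmonoidal$-algebra $Free_{\Cmonoidal}(M)$: the invertibility of the $X$-action on $M$ upgrades $Free_{\Cmonoidal}(M)$ to a $\Cmonoidalx$-algebra, so the unit of base change is already an equivalence on $Free_{\Cmonoidal}(M)$, and one uses the description $Free_{\Cmonoidal}(M)\simeq \coprod_n Sym^n(M)_{\Cmonoidal}$ together with the epimorphism property $\Cmonoidalx\otimes_{\Cmonoidal}\Cmonoidalx\simeq \Cmonoidalx$ to transfer the statement to the summand $M=Sym^1(M)_{\Cmonoidal}$.

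The main obstacle I expect is this final transfer step, because the argument in the small case (Proposition \ref{main}(2)) used that coproducts in $\iCat$ are \emph{disjoint}, whereas in $\Prl$ coproducts are computed as products in $\mathcal{P}r^R$ and the naive notion of disjointness does not apply. The workaround is to observe that, at the level of the right adjoints, the two summand inclusions $M\hookrightarrow Free_{\Cmonoidal}(M)$ and $M\otimes_{\Cmonoidal}\Cmonoidalx \hookrightarrow Free_{\Cmonoidal}(M)\otimes_{\Cmonoidal}\Cmonoidalx$ correspond to projections from a product; using that their right adjoints factor through a terminal object in the other factor (a presentable $(\infty,1)$-category always has a final object), one deduces that fully faithfulness propagates from the full coproduct map to each individual summand map, which completes the proof.
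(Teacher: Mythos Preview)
Your proposal is correct and follows essentially the same route as the paper's proof: reduce full faithfulness in both statements to the epimorphism property of $\Cmonoidal\to\Cmonoidalx$ via Proposition~\ref{main2}, verify the epimorphism using the pushout formula (\ref{formulainverse}) and the known epimorphism $free^{\otimes}(\Delta[0])\to \mathcal{L}_{(\Cmonoidal,*)}^{\otimes}(free^{\otimes}(\Delta[0]))$, and for the essential image in (2) rerun the $Free_{\Cmonoidal}(M)$ argument, replacing the disjointness of coproducts in $\iCat$ by the observation that coproducts in $\Prl$ are products in $\mathcal{P}r^R$ and exploiting the final object to propagate full faithfulness to the summand. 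The paper carries out this last step in exactly the way you outline, passing to right adjoints $\bar{u},\bar{v}$ and using $Map_Y(e,e)\simeq *$ for the final object $e$.
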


\begin{remark}
The considerations in the Remark \ref{invertingtwoobjects} work, mutatis mutandis, in the presentable setting.
\end{remark}

\subsection{Connection with ordinary Spectra and Stabilization}
\label{section4-2}
In the previous section we proved the existence of a formal inversion of an object $X$ in a symmetric monoidal $(\infty,1)$-category. Our goal for this section is to compare our formal inversion to the more familiar notion of  (ordinary) spectrum-objects.

\subsubsection{Stabilization}
\label{section4-2-1}
 Let $\C$ be an $(\infty,1)$-category and let $G:\C\to \C$ be a functor with a right adjoint $U:\C\to \C$. We define the \emph{stabilization of $\C$ with respect to $(G,U)$ } as the limit in $\iCatbig$ 

\begin{equation}
\xymatrix{Stab_{(G,U)}(\C):=&...\ar[r]^U& \C \ar[r]^U &\C \ar[r]^U &\C  }
\end{equation}

We will refer to the objects of $Stab_{(G,U)}(\C)$ as \emph{spectrum objects in $\C$ with respect to $(G,U)$}. As a limit, we have a canonical functor "evaluation at level $0$" which we will denote as $\Omega^{\infty}_{\C}: Stab_{(G,U)}(\C)\to \C$.

\begin{remark}
\label{stabilizationpresentablecase}
Let $\C$ is a presentable $(\infty,1)$-category together with a colimit preserving functor $G:\C\to \C$. By the Adjoint Functor Theorem we deduce the existence a right adjoint $U$ to $G$. Using the equivalence  $\Prl \simeq (\mathcal{P}r^{R})^{op}$, and the fact that both inclusions $\Prl, \mathcal{P}r^{R} \subseteq \iCatbig$ preserve limits, we conclude that $Stab_{(G,U)}(\C)$ is equivalent to the colimit  of 

\begin{equation}
\xymatrix{\C \ar[r]^G &\C \ar[r]^G &\C \ar[r]^G &...}
\end{equation}

\end{remark}

\begin{example}
\label{usualspectra}
The construction of spectrum objects provides a method to stabilize an $\infty$-category: Let $\C$ be an $(\infty,1)$-category with final object $*$. If $\C$ admits finite limits and colimits we can construct a pair of adjoint functors $\Sigma_{\C}:\C_{*/}\to \C_{*/}$ and $\Omega_{\C}:\C_{*/}\to \C_{*/}$ defined by the formula

\begin{equation}
\Sigma_{\C}(X):= *\coprod_X * 
\end{equation}
\noindent and 
\begin{equation}
\Omega_{\C}(X):= *\times_X *
\end{equation}
\noindent and by the Proposition 1.4.2.24 of \cite{lurie-ha} we can define the \emph{stabilization of $\C$} as the $\infty$-category 

\begin{equation}Stab(\C):=Stab_{(\Sigma_{\C},\Omega_{\C})}(\C_{*/})\end{equation}.

By the Corollary 1.4.2.17 of \cite{lurie-ha}, $Stab(\C)$ is a stable $\infty$-category and by the Corollary 1.4.2.23 of loc.cit, the functor $\Omega^{\infty}:Stab(\C)\to \C$ has a universal property: for any stable 
$(\infty,1)$-category $\D$, the composition with $\Omega^{\infty}$ induces an equivalence

\begin{equation}
Fun'(\D, Stab(\C))\to Fun'(\D, \C)
\end{equation}

\noindent between the full subcategories of functors preserving finite limits. Suppose now that $\C$ is presentable. Since  $\Omega_{\C}$ by definition commutes with all limits and $\Prr$ is closed under limits, $Stab(\C)$ will also be presentable and $\Omega^{\infty}$ will also commute with all limits. Therefore, by the Adjoint Functor Theorem it will admit a left adjoint $\Sigma^{\infty}:\C\to Stab(\C)$. Using the equivalence $\Prl\simeq (\Prr)^{op}$ we find (see Corollary 1.4.4.5 of \cite{lurie-ha}) that $\Sigma^{\infty}$ is characterized by the following universal property: for every stable presentable $(\infty,1)$-category $\D$, the composition with $\Sigma^{\infty}$ induces an equivalence

\begin{equation}
Fun^{L}(Stab(\C), \D)\to Fun^{L}(\C,\D)
\end{equation}

\end{example}

Our goal for the rest of this section is to compare this notion of stabilization to something more familiar. Let us start with some precisions about the notion of limit in $\iCat$. By the Theorem 4.2.4.1 of \cite{lurie-htt}, the stabilization $Stab_{(G,U)}(\C)$ can be computed as an homotopy limit for the tower

\begin{equation}
\xymatrix{...\ar[r]^U& \C^{\natural} \ar[r]^U &\C^{\natural} \ar[r]^U &\C^{\natural}  }
\end{equation}

\noindent in the simplicial model category $\ssetsmarked$ of (big) marked simplicial sets of the Proposition 3.1.3.7 in \cite{lurie-htt} (as a marked simplicial set $\C^{\natural}$ is the notation for the pair $(\C, W)$ where $W$ is the collection of all edges in $\C$ which are equivalences). By the Theorem 3.1.5.1, the cofibrant-fibrant objects in $\ssetsmarked$ are exactly the objects of the form $\C^{\natural}$ with $\C$ a quasi-category and, forgetting the marked edges provides a right-Quillen equivalence from $\ssetsmarked$ to $\ssets$ with the Joyal model structure. Therefore, to obtain a model for the homotopy limit in $\ssetsmarked$ we can instead compute the homotopy limit in $\ssets$ (with the Joyal's structure).

Let now us recall some important results about homotopy limits in model categories. All the following results can be deduced using the Reedy/injective model structures (see \cite{hovey-modelcategories} or the Appendix section of \cite{lurie-htt}) to study diagrams in the underlying model category. The first result is that for a pullback diagram
\begin{equation}
\xymatrix{
&X\ar[d]_f\\
Y\ar[r]_g&Z
}
\end{equation}

\noindent to be an homotopy pullback, it is enough to have $Z$ fibrant and both $f$ and $g$ fibrations. In fact, these conditions can be a bit weakened, and it is enough to have either $(i)$ the three objects are fibrant and one of the maps is a fibration; $(ii)$ if the model category is right-proper, $Z$ is fibrant and one of the maps is a fibration (this last one applies for instance in the model category of simplicial sets with the standard model structure). 
Secondly, we recall another important fact related to the homotopy limits of towers (again, this can be deduced using the Reedy structure). For the homotopy limit of a tower

\begin{equation}
\xymatrix{...\ar[r]^{T_3}& X_2 \ar[r]^{T_2} &X_1 \ar[r]^{T_1} &X_0 }
\end{equation}

\noindent to be given directly by the associated strict limit, it suffices to have the object $X_0$ fibrant and all the maps $T_i$ given by fibrations. In fact, these towers are exactly the fibrant-objects for the Reedy structure and therefore we can replace any tower by a weak-equivalent one in these good conditions. The following result provides a strict model for the homotopy limit of a tower:

\begin{lemma}
\label{lemmahomtowers}
Let $\M$ be a simplicial model category and let $T:\mathbb{N}^{op}\to \M$ be tower in $\M$

\begin{equation}
\xymatrix{...\ar[r]^{T_3}& X_2 \ar[r]^{T_2} &X_1 \ar[r]^{T_1} &X_0 }
\end{equation}

\noindent with each $X_n$ a fibrant object of $\M$. In this case, the homotopy limit $holim_{(\mathbb{N}^{op})} T_n$ is weak-equivalent to the strict pullback of the diagram

\begin{equation}
\xymatrix{
& \prod_n X_n^{\Delta[1]}\ar[d]\\
\prod_n X_n\ar[r]& \prod_n X_n\times X_n
}
\end{equation}

\noindent where the vertical arrow is the fibration\footnote{it is a fibration because of the simplicial assumption} induced by the composition with the cofibration $\partial \Delta[1]\to \Delta[1]$ and the horizontal map is the product of the compositions $\prod_n X_n \to X_n\times X_{n+1}\to X_n\times X_n$
where the last map is the product $Id_{X_n}\times T_n$. Notice that every vertice of the diagram is fibrant.
\begin{proof}
See \cite{jardine-simplicialhomotopytheory}-VI-Lemma 1.12.
\end{proof}
\end{lemma}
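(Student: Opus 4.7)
The plan is to identify the homotopy limit of the tower with a homotopy equalizer and then use the standard strict model for homotopy equalizers via simplicial path objects.

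First I would identify
\begin{equation*}
\underset{n \in \mathbb{N}^{op}}{\mathrm{holim}}\, X_n
\end{equation*}
with the homotopy equalizer of the two parallel maps $d_0, d_1 : \prod_n X_n \rightrightarrows \prod_n X_n$, where $d_0$ is the identity and $d_1$ sends $(x_k)_k$ to $(T_{n+1}(x_{n+1}))_n$. This is the usual two-term description of a homotopy limit over a one-dimensional Reedy category: the Bousfield--Kan cosimplicial resolution of the constant diagram on $\mathbb{N}^{op}$ collapses to the two-term coskeleton given by precisely these two maps, since all matching objects vanish from level two on. Packaging the pair $(d_0, d_1)$ as a single map to $\prod_n X_n \times X_n$ gives the map labeled in the statement, namely the composite $\prod_n X_n \to X_n \times X_{n+1} \xrightarrow{\mathrm{id} \times T_{n+1}} X_n \times X_n$ on each factor.

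Next I would produce a strict model for this homotopy equalizer. Since each $X_n$ is fibrant, so is $\prod_n X_n$, and because $\M$ is simplicial the cotensor $\bigl(\prod_n X_n\bigr)^{\Delta[1]}$ canonically splits as $\prod_n X_n^{\Delta[1]}$. The map
\begin{equation*}
\prod_n X_n^{\Delta[1]} \longrightarrow \prod_n (X_n \times X_n)
\end{equation*}
induced by the cofibration $\partial \Delta[1] \hookrightarrow \Delta[1]$ is a fibration between fibrant objects, and so exhibits $\prod_n X_n^{\Delta[1]}$ as a path object for $\prod_n X_n$ over $\prod_n(X_n\times X_n)$. Pulling back the map $(\mathrm{id}, T_{n+1})_n: \prod_n X_n \to \prod_n (X_n \times X_n)$ against this fibration produces a model for the homotopy equalizer of $(d_0, d_1)$: every vertex in the pullback square is fibrant and one leg is a fibration, so the criterion for a strict pullback to compute the homotopy pullback (recalled immediately before the lemma) applies. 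Combining the two steps yields exactly the pullback displayed in the statement.

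The only delicate point is the first step, reducing the holim over $\mathbb{N}^{op}$ to a homotopy equalizer. This is classical but requires a small amount of Reedy machinery; the key observation that makes it painless in this setting is that $\mathbb{N}^{op}$ has cohomological dimension one, so no higher cosimplicial coherences can appear. Everything after this reduction is a direct unwinding of the simplicial cotensor and of the standard strict model for a homotopy pullback under fibrancy hypotheses.
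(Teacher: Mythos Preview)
The paper does not actually prove this lemma: it simply cites Goerss--Jardine, \emph{Simplicial Homotopy Theory}, VI, Lemma~1.12. Your argument is correct and is essentially the content of that reference --- the reduction of $\mathrm{holim}_{\mathbb{N}^{op}}$ to a homotopy equalizer of the identity and shift maps on $\prod_n X_n$, followed by the standard path-object model for a homotopy equalizer in a simplicial model category. So you have supplied what the paper only pointed to.
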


Back to our situation, we conclude that the homotopy limit of 

\begin{equation}
\xymatrix{...\ar[r]^U& \C^{\natural} \ar[r]^U &\C^{\natural} \ar[r]^U &\C^{\natural}  }
\end{equation}

\noindent is given by the explicit strict pullback in $\ssetsmarked$ 

\begin{equation}
\xymatrix{
& \prod_n (\C^{\natural})^{\Delta[1]^{\sharp}}\ar[d]\\
\prod_n \C^{\natural}\ar[r]& \prod_n \C^{\natural}\times \C^{\natural}
}
\end{equation}

\noindent where $\Delta[1]^{\sharp}$ is the notation for the simplicial set $\Delta[1]$ with all the edges marked and $(\C^{\natural})^{\Delta[1]^{\sharp}}$ is the coaction of $\Delta[1]^{\sharp}$ on $\C^{\natural}$. In fact, it can be identified with the marked simplicial set $Fun'(\Delta[1], \C)^{\natural}$ where $Fun'(\Delta[1], \C)$ corresponds to  the full-subcategory of $Fun(\Delta[1], \C)$ spanned by the maps $\Delta[1]\to \C$ which are equivalences in $\C$.\\

Let us move further. Consider now a combinatorial simplicial model category $\M$ and let $G:\M\to \M$ be a left simplicial Quillen functor with a right adjoint $U$. Using the technique of the Proposition 5.2.4.6 in \cite{lurie-htt}, from the adjunction data we can extract an endo-adjunction of the underlying $(\infty,1)$-category of $\M$

\begin{equation}
\xymatrix{
N_{\Delta}(\M^{\circ}) \ar@<+.7ex>[r]^{\bar{G}} & N_{\Delta}(\M^{\circ})\ar@<+.7ex>[l]^{\bar{U}}
}
\end{equation}

\noindent where the object $\bar{U}$ can be identified with the composition $Q\circ U$ with $Q$ a simplicial\footnote{(see for instance the Proposition 6.3 of \cite{rezk-schwede-shipley} for the existence of simplicial factorizations in a simplicial cofibrantly generated model category)} cofibrant-replacement functor in $\M$, which we shall fix once and for all. We can consider the stabilization $Stab_{(\bar{G},\bar{U})}(N_{\Delta}(\M^{\circ}))$ given by the homotopy limit

\begin{equation}
\xymatrix{...\ar[r]^{\bar{U}}& N_{\Delta}(\M^{\circ})^{\natural} \ar[r]^{\bar{U}} &N_{\Delta}(\M^{\circ})^{\natural} \ar[r]^{\bar{U}} &N_{\Delta}(\M^{\circ})^{\natural}  }
\end{equation}

\noindent which we now know, is weak-equivalent to the strict pullback of 

\begin{equation}
\label{diaa}
\xymatrix{
& \prod_n Fun'(\Delta[1], N_{\Delta}(\M^{\circ}))^{\natural} \ar[d]\\
\prod_n N_{\Delta}(\M^{\circ})^{\natural} \ar[r]& \prod_n N_{\Delta}(\M^{\circ})^{\natural} \times N_{\Delta}(\M^{\circ})^{\natural} 
}
\end{equation}

\noindent and we know that its underlying simplicial set can be computed as a pullback in $\ssets$ by ignoring all the markings. Moreover, by the Proposition 4.2.4.4 of \cite{lurie-htt}, we have an equivalence of $(\infty,1)$-categories between 

\begin{equation}
\xymatrix{N_{\Delta}((\M^{I})^{\circ})\ar[r]^{\sim}&N_{\Delta}(\M^{\circ})^{\Delta[1]}}
\end{equation}

\noindent where $I$ is the categorical interval and $\M^I$ denotes the category of morphisms in $\M$ endowed with the projective model structure (its cofibrant-fibrant objects are the arrows $f:A\to B$ in $\M$ with both $A$ and $B$ cofibrant-fibrant and $f$ a cofibration in $\M$). Moreover, the equivalence above restricts to a new one between the simplicial nerve of $(\M^{I})^{\circ}_{triv}$ (the full simplicial subcategory of $(\M^{I})^{\circ}$ spanned by the arrows $f:A\to B$ which have $A$ and $B$ cofibrant-fibrant and $f$ a trivial cofibration) and $Fun'(\Delta[1], N_{\Delta}(\M^{\circ}))$. Using this equivalence, we find an equivalence of diagrams

\begin{equation}
\label{diab}
\xymatrix{
& N_{\Delta}((\M^{I})^{\circ}_{triv}) \ar[d]\ar[rr]^{\sim}&&Fun'(\Delta[1], N_{\Delta}(\M^{\circ}))\ar[d]\\
&\prod_n N_{\Delta}(\M^{\circ}) \times N_{\Delta}(\M^{\circ}) \ar[rr]^{id}&&\prod_n N_{\Delta}(\M^{\circ})\times N_{\Delta}(\M^{\circ})\\
\prod_n N_{\Delta}(\M^{\circ})\ar[ru]\ar[rr]^{id}&&\prod_n N_{\Delta}(\M^{\circ})\ar[ru]&
}
\end{equation}

The homotopy pullbacks of both diagrams are weak-equivalent but since the vertical map on the left diagram is no longer a fibration, the associated strict pullback is no longer a model for the homotopy pullback. We continue: the simplicial nerve functor $N_{\Delta}$ is a right-Quillen functor from the category of simplicial categories with the model structure of \cite{bergner-simplicialcategories} to the category of simplicial sets with the Joyal's structure. Therefore, it commutes with homotopy limits and so, the simplicial set underlying the pullback of the previous diagram is in fact given by the simplicial nerve of the homotopy pullback
of 

\begin{equation}
\label{pullbackquasispectra}
\xymatrix{
& \prod_n (\M^I)^{\circ}_{triv}\ar[d]\\
\prod_n \M^{\circ} \ar[r]& \prod_n \M^{\circ}\times\M^{\circ}
}
\end{equation}

\noindent in the model category of simplicial categories.\\

Let us now progress in another direction. We continue with $\M$ a model category together with $G:\M\to \M$ a Quillen left endofuctor with a right adjoint $U$. We recall the construction of a category $Sp^{\mathbb{N}}(\M, G)$ of spectrum objects in $\M$ with respect to $(G,U)$: its objects are the sequences $X= (X_0, X_1,...)$ together with data of morphisms in $\M$,  $\sigma_i:G(X_i)\to X_{i+1}$ (by the adjunction, this is equivalent to the data of morphisms $\bar{\sigma}_i:X_i\to U(X_{i+1})$). A morphism $X\to Y$ is a collection of morphisms in $\M$, $f_i:X_i\to Y_i$, compatible with the structure maps $\sigma_i$. If $\M$ is a cofibrantly generated model category (see Section 2.1 of \cite{hovey-modelcategories}) we can equipped $Sp^{\mathbb{N}}(\M, G)$ with a \emph{stable model structure}. First we define the projective model structure: the weak equivalences are the maps $X\to Y$ which are levelwise weak-equivalences in $\M$ and the fibrations are the levelwise fibrations. The cofibrations are defined by obvious  left-lifting properties. By the Theorem 1.14 of \cite{hovey-spectraandsymmetricspectra} these form a model structure which is again cofibrantly generated and by the Proposition 1.15 of loc. cit,  the cofibrant-fibrant objects are the sequences $(X_0, X_1,...)$ where every $X_i$ is fibrant-cofibrant in $\M$, and the canonical maps $G(X_i)\to G(X_{i+1})$ are cofibrations. We shall write $Sp^{\mathbb{N}}(\M, G)_{proj}$ to denote this model structure. The stable model structure, denoted as $Sp^{\mathbb{N}}(\M, G)_{stable}$, is obtained as a Bousfield localization of the projective structure so that the new fibrant-cofibrant objects are the \emph{$U$-spectra}, meaning, the sequences $(X_0, X_1,...)$ which are fibrant-cofibrant for the projective model structure and such that for every $i$, the adjoint of the structure map $\sigma_i$, $X_i\to U(X_{i+1})$ is a weak-equivalence. (See Theorem 3.4 of \cite{hovey-spectraandsymmetricspectra}).

By the Theorem 5.7 of \cite{hovey-spectraandsymmetricspectra}, this construction also works if we assume $\M$ to be a combinatorial simplicial model category and $G$ to be a left simplicial Quillen functor \footnote{The reader is left with the easy exercise of checking that the following conditions are equivalent for a Quillen adjunction $(G,U)$ between simplicial model categories: $(i)$ $G$ is enriched; $(ii)$ $G$ is compatible with the simplicial action, meaning that for any simplicial set $K$ and any object $X$ we have $G(K\otimes X)\simeq K\otimes G(X)$; $(iii)$ $U$ is compatible with the coaction, meaning that any for any simplicial set $K$ and object $Y$ we have $U(Y^K)\simeq U(Y)^K$; $(iv)$ $U$ is enriched.}. In this case, $Sp^{\mathbb{N}}(\M, G)$ (both with the stable and the projective structures) is again a combinatorial simplicial model category with mapping spaces given by the pullback

\begin{equation}
\label{formulamappingspacesspectrumobjects}
\xymatrix{
&\prod_n Map_{\M}(X_i, Y_i)\ar[d]\\
\prod_n Map_{\M}(X_i, Y_i)\ar[r]&\prod_n Map_{\M}(X_i, U(Y_{i+1}))
}
\end{equation}

\noindent where
\begin{itemize}
\item the horizontal map is the product of the maps 

\begin{equation}
Map_{\M}(X_i, Y_i)\to Map_{\M}(X_i, U(Y_{i+1}))
\end{equation}

\noindent induced by the composition with the adjoint $\bar{\sigma}_i:Y_i\to U(Y_{i+1})$;

\item 
The vertical map is the product of the compositions

\begin{equation}
Map_{\M}(X_{i+1}, Y_{i+1})\to Map_{\M}(U(X_{i+1}),U(Y_{i+1}))\to Map_{\M}(X_i, U(Y_{i+1}))
\end{equation}

\noindent where the first map is induced by $U$ and the second map is the composition with $X_i\to U(X_{i+1})$.
\end{itemize}

Its points correspond to the collections $f=\{f_i\}_{i\in \mathbb{N}}$ for which the diagrams

\begin{equation}
\xymatrix{
X_i\ar[d]^{f_i}\ar[r]& U(X_{i+1})\ar[d]^{U(f_{i+1})}\\
Y_i \ar[r] & U(Y_{i+1})
}
\end{equation}

\noindent commute.\\

By the Proposition \ref{dwyer-kan}, the underlying $(\infty,1)$-categories of $Sp^{\mathbb{N}}(\M, G)_{proj}$ and $Sp^{\mathbb{N}}(\M, G)_{stable}$ are given, respectively by the simplicial nerves $N_{\Delta}((Sp^{\mathbb{N}}(\M, G)_{proj})^{\circ})$ and $N_{\Delta}((Sp^{\mathbb{N}}(\M, G)_{stable})^{\circ})$ and by construction the last appears as the full reflexive subcategory of the first, spanned by the $U$-spectrum objects.\\

Up to this point we have two different notions of spectrum-objects. Of course they are related. To understand the relation we observe first that $Sp^{\mathbb{N}}(\M, G)$ fits in a strict pullback diagram of simplicial categories

\begin{equation}
\label{diagramspk0}
\xymatrix{
Sp^{\mathbb{N}}(\M, G)\ar[r]\ar[d]& \prod_n (\M^I)\ar[d]\\
\prod_n \M \ar[r]& \prod_n \M\times\M
}
\end{equation}

\noindent where the top horizontal map is the product of all maps of the form $(X_i)_{i\in \mathbb{N}}\mapsto (X_i\to U(X_{i+1}))$ and the vertical-left map sends a spectrum-object to its underlying sequence of objects.
The right-vertical map sends a morphism in $\M$ to its respective source and target and the lower-horizontal map is the product of the compositions $(X_i)_{i\in \mathbb{N}}\mapsto (X_i, X_{i+1})\mapsto (X_1, U(X_{i+1}))$. All the maps in this diagram are compatible with the simplicial enrichment. We fabricate a new diagram which culminates in (\ref{pullbackquasispectra}). 

\begin{equation}
\xymatrix{
&&&&\prod_n (\M^I)^{\circ}_{triv} \ar@{^{(}->}[d]\\
&&&&\prod_n (\M^I)^{\circ} \ar@{^{(}->}[d]\\
Sp^{\mathbb{N}}(\M, G)^{\circ}_{stable}\ar@{-->}[rrrruu]_f\ar@{^{(}->}[r]\ar@{-->}[rd]_{a'}&Sp^{\mathbb{N}}(\M, G)^{\circ}_{proj}\ar@{-->}[rrru]_e\ar@{^{(}->}[r]\ar@{-->}[d]_a&Sp^{\mathbb{N}}(\M, G)\ar[r]_x\ar[d]_y& \prod_n (\M^I)\ar[d]_z\ar[r]^b&\ar[d]\prod_n (\M^I)\\
&\prod_n \M^{\circ} \ar@{^{(}->}[r]\ar@{-->}[rrd]_c&\prod_n \M\ar[r]_w& \prod_n \M\times\M& \prod_n \M^{\circ}\times\M^{\circ}\\
&&&\prod_n \M^{\circ}\times\M^{fib}\ar@{^{(}->}[u]\ar[ru]_d&
}
\end{equation}

\noindent where the maps

\begin{enumerate}
\item $x$, $y$, $z$, $w$ are the maps in the diagram (\ref{diagramspk0});
\item $a$ is the restriction of the projection $Sp^{\mathbb{N}}(\M, G)\to \prod_n \M$ (it is well-defined because the cofibrant-fibrant objects in $Sp^{\mathbb{N}}(\M, G)$ are supported on sequences of cofibrant-fibrant objects in $\M$) 
\item $a'$ is the composition of $a$ with the canonical inclusion;
\item $b$ is the product of the compositions

\begin{equation}\xymatrix{\M^I\ar[r]^{Q} &\M^I\times \M^I\ar[r]&\M^I}\end{equation}

\noindent where $\Q$ is the machine associated to our chosen simplicial functorial factorization of the form "(cofibration, trivial fibration)" (
sending a morphism $f:A\to B$ in $\M$ to the pair $(u:A\to X, v:X\to Y)$ with $u$ a cofibration and $v$ a trivial fibration) and the second arrow is the projection in the first coordinate.

\item $c$ is induced by composition of $w$ with the canonical inclusion. Given a sequence of cofibrant-fibrant objects $(X_i)_{i\in \mathbb{N}}$, we have $w((X_i)_{i\in \mathbb{N}})= (X_i, U(X_i+1))_{i\in \mathbb{N}}$ with $X_i$ fibrant-cofibrant and $U(X_{i+1})$ fibrant (because $U$ is a right-Quillen functor). Therefore, the composition factors through $\prod_n \M^{\circ}\times\M^{fib}$ and $c$ is well-defined;
\item To obtain $d$, we consider first the composition

\begin{equation}
\xymatrix{
\M^{\circ}\times \M \ar[r]& \M^{\circ}\times \M^I\ar[r]^(.4){id\times Q}&\M^{\circ}\times (\M^I\times \M^I)   \ar[r]&\M^{\circ}\times \M^I\ar[r]& \M^{\circ}\times \M
}
\end{equation}

\noindent where the first arrow sends $(X,Y)\mapsto (X, \emptyset\to X)$, the third arrow is induced by the projection of $\M^I\times \M^I\to \M^I$ on the first coordinate and the last arrow is induced by taking the source. All together, this composition is sending a pair $(X,Y)$ to the pair $(X, Q(Y))$ with $Q$ a cofibrant-replacement of $Y$ using the same factorization device of the item $(4)$. In particular, if $Y$ is already fibrant, $Q(Y)$ will be cofibrant-fibrant and we have a dotted arrow

\begin{equation}
\xymatrix{
\M^{\circ}\times \M\ar[r]&\M^{\circ}\times \M\\
\M^{\circ}\times \M^{fib}\ar@{^{(}->}[u]\ar@{-->}[r]& \M^{\circ}\times \M^{\circ}\ar@{^{(}->}[u]
}
\end{equation}

\noindent rendering the diagram commutative.

By definition, $d$ is the product of all these dotted maps;
\item $e$ is the map induced by composing $b\circ x$ with the canonical inclusion and it is well-defined for the reasons given also in $(2)$;
\item $f$ is deduced from $e$ by restricting to the $U$-spectra objects: If $(X_i)_{i\in \mathbb{N}}$ is a $U$-spectra, the canonical maps $X_i\to U(X_{i+1})$ are weak-equivalences and therefore, when we perform the factorization  encoded in the composition $b\circ x$, the first map is necessarily a trivial cofibration and therefore $f$ factors through $\prod_n (\M^I)^{\circ}_{triv}$.
\end{enumerate}

Finally, the fact that everything commutes is obvious from the definition of factorization system. All together, we found a commutative diagram

\begin{equation}
\label{pinta}
\xymatrix{
Sp^{\mathbb{N}}(\M, G)^{\circ}_{stable}\ar[d]\ar[r]& \prod_n (\M^I)^{\circ}_{triv}\ar[d]\\
\prod_n \M^{\circ} \ar[r]& \prod_n \M^{\circ}\times\M^{\circ}
}
\end{equation}

In summary, the up horizontal map sends a $U$-spectra $X=(X_i)_{i\in \mathbb{N}}$ to the list of trivial cofibrations $(X_i\to Q(U(X_{i+1})))_{i\in \mathbb{N}}$ and the left-vertical map sends $X$ to its underlying sequence of cofibrant-fibrant objects. By considering the simplicial nerve of the diagram above and using the equivalence of diagrams in (\ref{diab}), we obtain, using the universal property of the strict pullback, a map

\begin{equation}
\phi:N_{\Delta}((Sp^{\mathbb{N}}(\M, G)_{stable})^{\circ})\to Stab_{(\bar{G},\bar{U})}(N_{\Delta}(\M^{\circ}))
\end{equation}

\noindent where we identify $Stab_{(\bar{G},\bar{U})}(N_{\Delta}(\M^{\circ}))$ with the strict pullback of the diagram (\ref{diaa}).\\

The following result clarifies this already long story:

\begin{prop}
\label{main4}
Let $\M$ be a combinatorial simplicial model category and let $G:\M\to \M$ be a left simplicial Quillen functor with a right adjoint $U$. Let $Sp^{\mathbb{N}}(\M, G)^{stable}$ denote the combinatorial simplicial model category of  \cite{hovey-spectraandsymmetricspectra} equipped the stable model structure. Then, the canonical map induced by the previous commutative diagram

\begin{equation}
\phi:N_{\Delta}((Sp^{\mathbb{N}}(\M, G)_{stable})^{\circ})\to Stab_{(\bar{G},\bar{U})}(N_{\Delta}(\M^{\circ}))
\end{equation}

\noindent is an equivalence of $(\infty,1)$-categories.

\begin{proof}

We will prove this by checking the map is essentially surjective and fully-faithful. We start with the essential surjectivity. For that we can restrict ourselves to study of the map induced between the maximal $\infty$-groupoids (Kan-complexes) on both sides . 

\begin{equation}
N_{\Delta}((Sp^{\mathbb{N}}(\M, G)_{stable})^{\circ})^{\simeq}\to Stab_{(\bar{G},\bar{U})}(N_{\Delta}(\M^{\circ}))^{\simeq}
\end{equation}

To conclude the essential surjectivity it suffices to check that the map induced between the $\pi_0$'s 

\begin{equation}
\pi_0(N_{\Delta}((Sp^{\mathbb{N}}(\M, G)_{stable})^{\circ})^{\simeq})\to \pi_0(Stab_{(\bar{G},\bar{U})}(N_{\Delta}(\M^{\circ}))^{\simeq})
\end{equation}

\noindent is surjective. We start by analyzing the right-side. First, the operation $(-)^{\simeq}$ commutes with homotopy limits. To see this, notice that both the $(\infty,1)$-category of homotopy types $\Spaces$ and the $(\infty,1)$-category of small $(\infty,1)$-categories $\iCat$ are presentable. The combinatorial simplicial model category of simplicial sets with the Quillen structure is a strict model for the first and $\ssetsmarked$ models the second. By combining the Theorem 3.1.5.1 and the Proposition 5.2.4.6 of \cite{lurie-htt}, the inclusion $\Spaces \subseteq \iCat$ is in fact a Bousfield (a.k.a reflexive) localization and its the left adjoint can be understood (by its universal property) as the process of inverting all the morphisms. By combining the Proposition 3.3.2.5 and the Corollaries 3.3.4.3 and 3.3.4.6 of \cite{lurie-htt}, we deduce that the inclusion $\Spaces\subseteq \iCat$ commutes with colimits. Since $\Spaces$ and $\iCat$ are presentable, by the Adjoint Functor Theorem (see Corollary 5.5.2.9 of \cite{lurie-htt}), the inclusion $\Spaces\subseteq \iCat$ admits a right adjoint which, by its universal property can be identified with the operation $(-)^{\simeq}$.
 An immediate application of this fact is that $\pi_0(Stab_{(\bar{G},\bar{U})}(N_{\Delta}(\M^{\circ}))^{\simeq})$ is in bijection with the $\pi_0$ of the homotopy limit of the tower of Kan-complexes

\begin{equation}
\xymatrix{...\ar[r]^{\bar{U}}& N_{\Delta}(\M^{\circ})^{\simeq} \ar[r]^{\bar{U}} &N_{\Delta}(\M^{\circ})^{\simeq} \ar[r]^{\bar{U}} &N_{\Delta}(\M^{\circ})^{\simeq}  }
\end{equation}

Using the Reedy structure (on $\ssets$ with the Quillen structure), we can find a morphism of towers

\begin{equation}
\xymatrix{...\ar[r]^{\bar{U}}& \ar[d]N_{\Delta}(\M^{\circ})^{\simeq} \ar[r]^{\bar{U}} &\ar[d]N_{\Delta}(\M^{\circ})^{\simeq} \ar[r]^{\bar{U}} &\ar[d]N_{\Delta}(\M^{\circ})^{\simeq}\\
...\ar[r]&T_2\ar[r]&T_1\ar[r]&T_0 }
\end{equation}

\noindent where the vertical maps are weak-equivalences of simplicial sets for the Quillen structure, every object is again a Kan-complex but this time the maps in the lower tower are fibrations. By the nature of the weak-equivalences, this morphism of diagrams becomes an isomorphism at the level of the $\pi_0$'s

\begin{equation}
\xymatrix{...\ar[r]& \pi_0(N_{\Delta}(\M^{\circ})^{\simeq}) \ar[r]^{\pi_0(\bar{U})} \ar[d]^{\sim}&\ar[d]^{\sim}\pi_0(N_{\Delta}(\M^{\circ})^{\simeq}) \ar[r]^{\pi_0(\bar{U})} &\ar[d]^{\sim}\pi_0(N_{\Delta}(\M^{\circ})^{\simeq})\\
...\ar[r]&\pi_0(T_2)\ar[r]&\pi_0(T_1)\ar[r]&\pi_0(T_0) }
\end{equation}

\noindent and therefore the limits $lim_{\mathbb{N}^{op}} \pi_0(N_{\Delta}(\M^{\circ})^{\simeq})$ and $lim_{\mathbb{N}^{op}} \pi_0(T_i)$ are isomorphic. Finally, using the Milnor's exact sequence associated to a tower of fibrations together with the fact that fibrations of simplicial sets are surjective (see Proposition VI-2.15 and Proposition VI-2.12-2  in \cite{jardine-simplicialhomotopytheory}) we deduce an isomorphism

\begin{equation}
\pi_0(lim_{\mathbb{N}^{op}} T_i)\simeq lim_{\mathbb{N}^{op}} \pi_0(T_i)
\end{equation}

\noindent and by combining everything we have

\begin{equation}
\pi_0(Stab_{(\bar{G},\bar{U})}(N_{\Delta}(\M^{\circ}))^{\simeq})\simeq lim_{\mathbb{N}^{op}} \pi_0(N_{\Delta}(\M^{\circ})^{\simeq})
\end{equation}

\noindent where set on the right is the strict limit of the tower of sets

\begin{equation}
\xymatrix{...\ar[r]& \pi_0(N_{\Delta}(\M^{\circ})^{\simeq}) \ar[r]^{\pi_0(\bar{U})} &\pi_0(N_{\Delta}(\M^{\circ})^{\simeq}) \ar[r]^{\pi_0(\bar{U})} &\pi_0(N_{\Delta}(\M^{\circ})^{\simeq})
}
\end{equation}

\noindent and since $\bar{U}$ can be identified with $Q\circ U$, the elements of the last can be presented as sequences $([X_i])_{i\in \mathbb{N}}$ with each $[X_i]$ an equivalence class of an object $X_i$ in $N_{\Delta}(\M^{\circ})$, satisfying $[QU(X_{i+1})]=[X_i]$, which is the same as stating the existence of an equivalence in $N_{\Delta}(\M^{\circ})$ between $X_i$ and $QU(X_{i+1})$. Since we are dealing with cofibrant-fibrant objects, we can find an actual homotopy equivalence $X_i\to QU(X_{i+1})$ and by choosing a representative for each $[X_i]$ together with composition maps $X_i\to QU(X_{i+1})\to U(X_{i+1})$ we retrieve a $U$-spectra. This proves that the map is essentially surjective. \\

It remains to prove $\phi$ is fully-faithful. Given two $U$-spectrum objects $X=(X_i)_{i\in \mathbb{N}}$ and $Y=(Y_i)_{i\in \mathbb{N}}$, the mapping space in $N_{\Delta}((Sp^{\mathbb{N}}(\M, G)_{stable})^{\circ})$ between $X$ and $Y$ is given by the pullback\footnote{see the formula (\ref{formulamappingspacesspectrumobjects})} of the diagram

\begin{equation}
\label{diaA}
\xymatrix{
&\prod_n Map_{\M}(X_i, Y_i)\ar[d]\\
\prod_n Map_{\M}(X_i, Y_i)\ar[r]&\prod_n Map_{\M}(X_i, U(Y_{i+1}))
}
\end{equation}

All vertices in this diagram are given by Kan-complexes (because $\M$ is a simplicial model category, each $Y_i$ and $X_i$ is cofibrant-fibrant and $U$ is right-Quillen) and the vertical map is a fibration. Indeed, it can be identified with product of the compositions

\begin{equation}
Map_{\M}(X_{i+1}, Y_{i+1})\to Map_{\M}(G(X_i), Y_{i+1}))\simeq Map_{\M}(X_i, U(Y_{i+1}))
\end{equation}

\noindent where the last isomorphism follows from the adjunction data and the first map is the fibration  induced by the composition with structure maps $G(X_i)\to X_{i+1}$ of $X$ (which are cofibrations because $X$ is a $U$-spectra). Therefore, the pullback square is an homotopy pullback.

At the same time, because of the equivalence of diagrams (\ref{diab}) the mapping spaces in $Stab_{(\bar{G},\bar{U})}(N_{\Delta}(\M^{\circ}))$ between the image of $X$ and the image of $Y$ can obtained\footnote{The mapping spaces in the homotopy pullback are the homotopy pullback of the mapping spaces} as the homotopy pullback of 

\begin{equation}
\label{diaB}
\xymatrix{
&\prod_n Map_{\M}(X_i, Y_i)\ar[d]^U\\
&\prod_n Map_{\M}(U(X_i), U(Y_i))\ar[d]^Q\\
&\prod_n Map_{\M}(QU(X_{i}), QU(Y_{i}))\ar[d]\\
\prod_n Map_{\M}(X_i, Y_i)\ar[r]&\prod_n Map_{\M}(X_i, QU(Y_{i+1}))
}
\end{equation}

To conclude the proof it suffices to produce a weak-equivalence between the formulas. Indeed, we produce a map from the diagram (\ref{diaB}) to the diagram (\ref{diaA}), using the identity maps in the outer vertices and in the corner we use the product of the maps induced by the composition with the canonical map $QU(Y_{i+1})\to U(Y_{i+1})$.
 
\begin{equation}
Map_{\M}(X_i, QU(Y_{i+1}))\to  Map_{\M}(X_i, U(Y_{i+1}))
\end{equation}

Of course, this map is a trivial fibration: $\M$ is a simplicial model category, $X_i$ is cofibrant and $QU(Y_{i+1})\to U(Y_{i+1})$ is a trivial fibration.

\end{proof}
\end{prop}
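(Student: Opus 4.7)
}

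The plan is to separately verify that $\phi$ is essentially surjective and fully faithful, mirroring the structure of the preceding analysis. For essential surjectivity, I would first reduce to showing surjectivity on $\pi_0$ of the underlying Kan complexes, using that $(-)^{\simeq}$ is a right adjoint to the inclusion $\Spaces\subseteq \iCat$ and thus commutes with the homotopy limit defining $Stab_{(\bar G,\bar U)}(N_{\Delta}(\M^{\circ}))$. The key computation would then be to replace the tower $\{N_{\Delta}(\M^{\circ})^{\simeq}\}$ by a Reedy-fibrant one and invoke the Milnor exact sequence together with the fact that fibrations of simplicial sets are surjective on $\pi_0$; this identifies $\pi_0(Stab_{(\bar G,\bar U)}(N_{\Delta}(\M^{\circ}))^{\simeq})$ with the strict inverse limit of sequences $([X_i])$ satisfying $[X_i]=[QU(X_{i+1})]$. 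Given such data, after choosing representatives $X_i$ and genuine homotopy equivalences $X_i\to QU(X_{i+1})$ (which exist because each $X_i$ and $QU(X_{i+1})$ is cofibrant-fibrant), I would compose with the canonical trivial fibration $QU(X_{i+1})\to U(X_{i+1})$ to produce a bona fide $U$-spectrum whose image under $\phi$ represents the chosen class.

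For fully faithfulness, the plan is to compute both mapping spaces as explicit homotopy pullbacks and compare them. On the left, for $U$-spectra $X=(X_i)$ and $Y=(Y_i)$, the mapping space is the pullback of
\begin{equation}
\xymatrix{
&\prod_n Map_{\M}(X_{i+1},Y_{i+1})\ar[d]\\
\prod_n Map_{\M}(X_i,Y_i)\ar[r]&\prod_n Map_{\M}(X_i,U(Y_{i+1}))
}
\end{equation}
which, using that $X$ is a $U$-spectrum so the adjoint structure maps $G(X_i)\to X_{i+1}$ are cofibrations and that each $Y_{i+1}$ is fibrant, is a genuine homotopy pullback. On the right, the diagrammatic analysis leading to (\ref{diab}) and (\ref{pullbackquasispectra}) identifies the mapping space as the homotopy pullback of an analogous diagram but with $QU(Y_{i+1})$ in place of $U(Y_{i+1})$. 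The comparison map would then be the product of the compositions $Map_{\M}(X_i,QU(Y_{i+1}))\to Map_{\M}(X_i,U(Y_{i+1}))$ induced by the trivial fibration $QU(Y_{i+1})\to U(Y_{i+1})$; this is a trivial fibration on mapping spaces since $\M$ is simplicial and $X_i$ is cofibrant, so the induced map between the two homotopy pullbacks is a weak equivalence.

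The main obstacle I expect is the bookkeeping step of identifying the right-hand mapping space with the homotopy pullback written above. The issue is that $Stab_{(\bar G,\bar U)}(N_{\Delta}(\M^{\circ}))$ is defined as a homotopy limit in $\iCatbig$, not directly as a strict pullback in simplicial categories, and one must carefully track through the chain of equivalences from (\ref{diaa}) to (\ref{diab}) to (\ref{pinta}), including the passage $Fun'(\Delta[1],N_{\Delta}(\M^{\circ}))\simeq N_{\Delta}((\M^I)^{\circ}_{triv})$, to justify that the homotopy-pullback expression in terms of $QU(Y_{i+1})$ is really what controls the mapping space, rather than some more complicated derived construction. Once this bookkeeping is in hand, all remaining verifications (the triviality of the comparing fibration, the fact that the outer legs are identity maps) are essentially formal consequences of simpliciality of $\M$ and the definition of $\bar U = Q\circ U$.
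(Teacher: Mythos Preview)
Your proposal is correct and follows essentially the same approach as the paper's proof: the essential surjectivity argument via $(-)^{\simeq}$ commuting with limits, Reedy fibrant replacement, and the Milnor exact sequence matches the paper exactly, as does the fully-faithfulness argument comparing the two homotopy pullback formulas via the trivial fibration $QU(Y_{i+1})\to U(Y_{i+1})$. The bookkeeping obstacle you flag is precisely what the paper handles through the chain of diagrams (\ref{diaa})--(\ref{diab})--(\ref{pinta}) preceding the proposition, so your caution there is well-placed but already addressed by the paper's preparatory work.
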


In the situation of the Proposition \ref{main4}, with $\M$ a combinatorial simplicial model category and $G$ a left-simplicial Quillen functor, we know that $Sp^{\mathbb{N}}(\M,G)^{\circ}_{stable}$ is again combinatorial and simplicial and so, both the underlying $(\infty,1)$-categories $N_{\Delta}(\M^{\circ})$ and $N_{\Delta}(Sp^{\mathbb{N}}(\M,G)^{\circ}_{stable})$ are presentable (see the Proposition A.3.7.6 of \cite{lurie-htt}). Finally, using the Remark \ref{stabilizationpresentablecase} we deduce the existence of canonical equivalence between $N_{\Delta}(Sp^{\mathbb{N}}(\M,G)^{\circ}_{stable})$ and the colimit of the sequence

\begin{equation}
\xymatrix{ N_{\Delta}(\M^{\circ}) \ar[r]^{\bar{G}} &N_{\Delta}(\M^{\circ}) \ar[r]^{\bar{G}} &...  }
\end{equation}

\subsubsection{Stabilization and Symmetric Monoidal Structures}
Let us proceed. Our goal now is to compare the construction of spectra with the formal inversion $\C[X]^{\otimes}$. The idea of a relation between the two comes from the following classical theorem:

\begin{thm}(see Theorem 4.3 of \cite{voevodsky-icm})\\
Let $\C$ be a symmetric monoidal category with tensor product $\otimes$ and unit $\mathbf{1}$. Let $X$ be an object in $\C$. Let $Stab_X(\C)$ denote the colimit of the sequence 

\begin{equation}
\xymatrix{...\ar[r]^{X\otimes-} &\C\ar[r]^{ X\otimes-} &\C\ar[r]^{X\otimes-} &\C\ar[r]^{X\otimes-} &...}
\end{equation}

\noindent in $\Cat$ (up to equivalence). Then, if the action of the cyclic permutation on $X\otimes X\otimes X$ becomes an identity map $\C$ after tensoring with $X$ an appropriate amount of times (which is the same as saying it is the identity map in $Stab_X(\C)$) the category $Stab_X(\C)$ admits a canonical symmetric monoidal structure and the canonical functor $\C\to Stab_X(\C)$ is monoidal, sends $X$ to an invertible object and is universal with respect to this property.

\begin{proof}

We can identify the colimit of the sequence with the category of pairs $(A,n)$ where $A$ is an object in $\C$ and $n$ an integer. The hom-sets are given by the formula

\begin{equation}
Hom_{Stab_X(\C)}((A,n),(B,m))= colim_{(k>-n,-m)}Hom_{\C}(X^{n+k}\otimes A,X^{m+k}\otimes B) 
\end{equation}
The composition is the obvious one. There is a natural wannabe symmetric monoidal structure on $Stab_X(\C)$, namely, the one given by the formula $(A,n)\wedge (B,m):= (A\otimes B, n+m)$. When we try to define this operation on the level of morphisms, we find the need for our hypothesis on $X$: Let $[f]:(Z,n)\to (Y,m)$ and $[g]:(A,a)\to (B,b)$ be two maps in $Stab_X(\C)$. Let $f:X^{\alpha+ n}(Z)\to X^{\alpha+ m}(Y)$ and $g:X^{\gamma+ a}(A)\to X^{\gamma+ b}(B)$ be representatives for $[f]$ and $[g]$. Their product has to be a map in $Stab_X(\C)$ represented by some map in $\C$, $X^{n+a+ k}(Z\otimes A)\to X^{m+ b+ k}(Y\otimes B)$. In order to define this map from the data of $f$ and $g$ we have to make a choice of which copies of $X$ should be kept together with $Z$ and which should be kept with $A$. These choices will differ by some permutation of the factors of $X$, namely, for each two choices there will be a commutative diagram

\begin{equation}
\xymatrix{
X^{n+a+ \alpha+ \gamma}(Z\otimes A)\ar[d]^{\text{Use Choice 1}} \ar[rr]_{\exists \sigma\in \Sigma_{n+a+ \alpha+ \gamma}} && X^{n+a+ \alpha+ \gamma}(Z\otimes A) \ar[d]^{\text{Use Choice 2}} \\
X^{m+ b+ \alpha+\gamma}(Y\otimes B) \ar[rr]_{\exists \sigma'\in \Sigma_{m+b+ \alpha+ \gamma}}&& X^{m+ b+ \alpha+\gamma}(Y\otimes B)
}
\end{equation}

The reason why we cannot adopt one choice once and for all, is because if we choose different representatives for $f$ and $g$, for instance, $id_X\otimes f$ and $id_X\otimes g$, we will need a permutation of factors to make the second result equivalent to the one given by our first choice. Therefore, in order to have a well-define product map, it is sufficient to ask for the  different permutations of the $p$-fold product $X^p$ to become equal after tensoring with the identity of $X$ an appropriate amount of times. In other words, they should become an identity map. For this, it is sufficient to ask for the action of the cyclic permutation $(123)$ on $X^3$ to become the identity. This is because any permutation of $p$-factors can be built from permutations of $3$-factors, by composition.\\

It is now an exercise to check that this operation, together with the object $(1,0)$ and the natural associators and commutators induced from $\C$, endow $Stab_X(\C)$ with the structure of a symmetric monoidal category. Moreover, one can also check that the object $(X,0)$ becomes invertible, with inverse given by $(\mathbf{1},-1)$.\\

The fact that $Stab_X(\C)$ when endowed with this symmetric monoidal structure is universal with respect to the inversion of $X$ comes from fact that any monoidal functor $f:\C\to \D$ sending $X$ to an invertible element produces a morphism of diagrams (in the homotopy category of (small) categories)
\begin{equation}
\xymatrix{
...\ar[rr]^{X\otimes-} &&\C\ar[rr]^{X\otimes-}\ar[d]^f &&\C\ar[rr]^{X\otimes-}\ar[d]^f &&\C\ar[rr]^{X\otimes-}\ar[d]^f &&...\\
...\ar[rr]^{f(X)\otimes-} &&\D\ar[rr]^{f(X)\otimes-} &&\D\ar[rr]^{ f(X)\otimes-} &&\D\ar[rr]^{f(X)\otimes-} &&...
}
\end{equation}

\noindent together with an associated colimit map $Stab_X(\C)\to Stab_X(\D)$. To conclude the proof we need two observations: let $\D$ be a symmetric monoidal category and let $U$  be an invertible object in $\D$, then we the following facts:

\begin{enumerate}
\item $U$ automatically satisfies the cocyle condition. This follows from a more general fact. If $U$ is an invertible object in $\C$, we can prove that the group of automorphisms of $U$ in $\C$ is necessarily abelian. This follows from the existence of an isomorphism $U\simeq U\otimes U^*\otimes U$ and the fact that any map $f:U\to U$ can either be written as $f\otimes id_X\otimes id_X$ or $id_X\otimes id_X\otimes f$. Given two maps $f$ and $g$ we can write 

\begin{equation}g\circ f= (g\otimes id_X\otimes id_X)\circ (id_X\otimes id_X\otimes f) = (f\otimes id_X\otimes id_X)\circ (id_X\otimes id_X\otimes g)= f\circ g\end{equation}

The fact that $U$ satisfies the cocycle condition is an immediate consequence, because the actions of the transpositions $(i,i+1)$ and $(i+1, i+2)$ have to commute and we have the identity $((i,i+1)\circ (i+1, i+2))^3=id$.

\item the functor $U\otimes -: \D\to \D$ is an equivalence of categories with inverse given by multiplication with $U^*$, the inverse of $U$ in $\D$. In this case, multiplications by the powers of $U$ and $U^*$ make $\D$ a cocone over the stabilizing diagram. It is an easy observation that the canonical colimit $Stab_U(\D)\to\D$ (which can be described by the formula $(A,n)\mapsto (U^*)^n\otimes A$) is an equivalence. Moreover, since $U$ satisfies the cocycle condition (following the previous item), $Stab_U(\D)$ comes naturally equipped with a symmetric monoidal structure and we can check that the colimit map is monoidal. Under these circumstances, any monoidal functor $f:\C\to \D$  with $f(X)$ invertible, gives a canonical colimit map $Stab_X(\C)\to Stab_{f(X)}(\D)\simeq \D$. It is an observation that this map is monoidal under our hypothesis on $X$. This implies the universal property.

\end{enumerate}

\end{proof}
\end{thm}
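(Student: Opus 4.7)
The plan is to model $Stab_X(\C)$ explicitly as the category whose objects are pairs $(A,n)$ with $A \in \C$ and $n \in \mathbb{Z}$, and whose hom-sets are the filtered colimits
$$\mathrm{Hom}_{Stab_X(\C)}((A,n),(B,m)) = \operatorname{colim}_{k} \mathrm{Hom}_{\C}(X^{\otimes(n+k)} \otimes A,\, X^{\otimes(m+k)} \otimes B),$$
with the evident composition; this will realize the colimit in $\Cat$. On objects the candidate tensor product is forced: $(A,n) \otimes (B,m) := (A \otimes B,\, n+m)$, with unit $(\mathbf{1},0)$ and associators and commutators inherited from $\C$. The real content of the theorem is the construction of this tensor product on morphisms and verification of its coherence axioms.

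To multiply classes $[f]\colon (Z,n)\to (Y,m)$ and $[g]\colon (A,a)\to(B,b)$ represented by morphisms $f\colon X^{\otimes(\alpha+n)}\otimes Z \to X^{\otimes(\alpha+m)}\otimes Y$ and $g\colon X^{\otimes(\gamma+a)}\otimes A \to X^{\otimes(\gamma+b)}\otimes B$, I would first choose some interleaving of the $\alpha+\gamma$ copies of $X$ between the two factors and then take $f\otimes g$. Any two choices of interleaving differ by a permutation of the $X$-factors acting on source and target, so well-definedness in the colimit reduces to showing that every permutation of the factors of $X^{\otimes p}$ becomes the identity after tensoring with enough further copies of $X$. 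Since the symmetric group on $p$ letters is generated by adjacent transpositions, and any two consecutive adjacent transpositions compose to a $3$-cycle, it suffices to know that the cyclic permutation on $X^{\otimes 3}$ becomes trivial after tensoring with $X$, which is precisely our hypothesis. The same mechanism handles compatibility with composition and with the natural associativity and symmetry constraints inherited from $\C$. Checking that $(X,0)$ is invertible with inverse $(\mathbf{1},-1)$ is then immediate: the identity $X\to X$ represents mutually inverse maps $(X,0)\otimes(\mathbf{1},-1) \leftrightarrows (\mathbf{1},0)$ in the colimit.

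The main obstacle will be the universal property. Given a symmetric monoidal functor $F\colon \C\to\D$ with $F(X)$ invertible, the endofunctor $F(X)\otimes -\colon\D\to\D$ is an equivalence, so $\D\simeq Stab_{F(X)}(\D)$ canonically, and composing $F$ with this identification yields an extension $Stab_X(\C)\to Stab_{F(X)}(\D)\simeq \D$. The nontrivial point is that this extension is monoidal, which again requires the cyclic permutation on $F(X)^{\otimes 3}$ to become trivial inside $\D$. Here I expect to argue that this is automatic for any invertible object $U$: the automorphism group $\mathrm{Aut}(U)$ is abelian, since using $U\simeq U\otimes U^*\otimes U$ any endomorphism of $U$ can be written either as acting on the first or on the last factor, so any two endomorphisms commute; applying this to $U^{\otimes 3}$ shows that the two adjacent transpositions in $\Sigma_3$ commute, but their product is a $3$-cycle whose square thus equals the identity, forcing the $3$-cycle itself to be trivial. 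Uniqueness of the extension follows from the universal property of filtered colimits, completing the argument.
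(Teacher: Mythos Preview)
Your proposal is correct and follows essentially the same route as the paper's proof: the same explicit model of $Stab_X(\C)$ as pairs $(A,n)$ with colimit hom-sets, the same tensor product formula, the same reduction of well-definedness on morphisms to the cyclic permutation hypothesis via generation of $\Sigma_p$ by adjacent transpositions, and the same universal property argument via $Stab_{F(X)}(\D)\simeq\D$ together with the observation that $\mathrm{Aut}(U)$ is abelian for invertible $U$ (using $U\simeq U\otimes U^*\otimes U$) to get the cocycle condition automatically on the target side. Your deduction that the $3$-cycle is trivial (commuting involutions have product of order dividing $2$, combined with order dividing $3$ from the group relation) is exactly the content of the paper's remark that the transpositions commute and $((i,i+1)(i+1,i+2))^3=\mathrm{id}$.
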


\begin{remark}
The condition on $X$ appearing in the previous result is trivially satisfied if the action of the cyclic permutation $(X\otimes X\otimes X)^{(1,2,3)}$ is already an identity map in $\C$. For instance, this particular situation holds when $\C$ is the pointed $\mathbb{A}^1$-homotopy category and $X$ is $\mathbb{P}^1$ (See Theorem 4.3 and Lemma 4.4 of \cite{voevodsky-icm}).
\end{remark}

Our goal now is to find an analogue for the previous theorem in the context of symmetric monoidal $(\infty,1)$-categories.

\begin{defn}
\label{symmetric}
Let $\Cmonoidal$ be a symmetric monoidal $(\infty,1)$-category and let $X$ be an object in $\C$. We say that $X$ is \emph{symmetric} if there is a $2$-equivalence in $\C$ between the cyclic permutation $\sigma:(X\otimes X\otimes X)^{(1,2,3)}$ and the identity map of $X\otimes X\otimes X$. In other words, we demand the existence of a $2$-cell in $\C$

\begin{equation}
\xymatrix{ 
X\otimes X\otimes X \ar[r]^{\sigma} \ar[d]_{id} & X\otimes X\otimes X\\
  X\otimes X\otimes X \ar[ur]_{id}|*{}="M" \ar@{=>}[u];"M"                          }
\end{equation} 

\noindent providing an homotopy between the cyclic permutation and the identity. In fact, since $\C$ is a quasi-category, this condition is equivalent to ask for $\sigma$ to be equal to the identity in $h(\C)$.
\end{defn}

This notion of symmetry is well behaved under equivalences. Moreover, it is immediate that monoidal functors map symmetric objects to symmetric objects.
 
\begin{remark}
\label{remarksymmetric}
Let $\V$ be a symmetric monoidal model category with a cofibrant unit $1$. Recall that a unit interval $I$ is a cylinder object for the unit of the monoidal structure $I:=C(1)$, together with a map $I\otimes I\to I$ such that the diagrams

\begin{equation}
\xymatrix{
1\otimes I\simeq I \ar[r]^{\pi}\ar[d]_{\partial_0\otimes Id_I}&1\ar[d]_{\partial_0}\\
I\otimes I\ar[r]& I
}
\end{equation}

\begin{equation}
\xymatrix{
I\otimes 1 \simeq I\ar[r]^{\pi}\ar[d]_{Id_I\otimes \partial_0}&1\ar[d]_{\partial_0}\\
I\otimes I\ar[r]& I
}
\end{equation}

\noindent and

\begin{equation}
\xymatrix{
I\otimes 1\simeq I\ar[rd]_{Id_I}\ar[d]_{\partial_1\otimes Id_I}&\\
I\otimes I\ar[r]& I
}
\end{equation}

\noindent commute, where $\partial_0,\partial_1:1\to I$ and $\pi:I\to 1$ are the maps providing $I$ with a structure of cylinder object. 

Recall also that two maps $f,g:A\to B$ are said to be homotopic with respect to a unit interval $I$ if there is a map $H:A\otimes I\to B$ rendering the diagram commutative

\begin{equation}
\xymatrix{
A\simeq A\otimes 1\ar[dr]_{Id_A\otimes \partial_0}\ar@/^/[drr]^{f} &&\\
&A\otimes I\ar[r]^H& B\\
A\simeq A\otimes 1\ar[ru]^{id_A\otimes \partial_1}\ar@/_/[urr]_{g}&&
}
\end{equation}

In \cite{hovey-spectraandsymmetricspectra}-Def.9.2, the author defines an object $X$ to be symmetric if it is cofibrant and if there is a \emph{unit interval} $I$, together with an homotopy

\begin{equation}
H:X\otimes X\otimes X \otimes I\to X\otimes X\otimes X
\end{equation}

\noindent between the cyclic permutation $\sigma$ and the identity map. 
We observe that if an object $X$ is symmetric in the sense of \cite{hovey-spectraandsymmetricspectra} then it is symmetric as an object in the underlying symmetric monoidal $(\infty,1)$-category of $\V$ in the sense of the Definition \ref{symmetric}. Indeed, since $\V$ is a symmetric monoidal model category with a cofibrant unit, the full subcategory $\V^c$ of cofibrant objects is closed under the tensor product and therefore inherits a monoidal structure, which moreover preserves the weak-equivalences. In the Section \ref{link2} we used this fact to define the underlying symmetric monoidal $(\infty,1)$-category of $\V$, $N((\V^c)^{\otimes})[W_c^{-1}]$ (see Section \ref{link2} for the notations). Its underlying $(\infty,1)$-category is $N(\V^c)[W^{-1}]$ and its homotopy category is the classical localization in $\Cat$. Moreover, it comes canonically equipped with a monoidal functor $L:N((\V^c)^{\otimes})\to N((\V^c)^{\otimes})[W_c^{-1}]$. Now, if $X$ is symmetric in $\V$ in the sense of \cite{hovey-spectraandsymmetricspectra}, the homotopy $H$ forces $\sigma$ to become the identity in $h(N(\V^c)[W^{-1}])$ (because the classical localization functor is monoidal and the map $I\to 1$ is a weak-equivalence). The conclusion now follows from the commutativity of the diagram induced by the unit of the adjunction $(h,N)$

\begin{equation}
\xymatrix{
N(\V^c)\ar[r]\ar[d]_{\sim}& N(\V^c)[W^{-1}]\ar[d]\\
N(h(N(\V^c)))\ar[r]& N(h(N(\V^c)[W^{-1}]))
}
\end{equation}

\noindent and the fact that the both horizontal arrows are monoidal and therefore send the cyclic permutation of the monoidal structure in $\V$ to the cyclic permutation associated to the monoidal structure in  $N((\V^c)^{\otimes})[W_c^{-1}]$.
\end{remark}

We now come to the generalization of the Theorem 9.3 of \cite{hovey-spectraandsymmetricspectra}. The following results relate our formal inversion of an object to the construction of spectrum objects.\\

\begin{remark}
\label{remarkmain411}
Let $\Cmonoidal$ be a small monoidal $(\infty,1)$-category and let $\overline{M}$ be an object in $Mod_{\Cmonoidal}(\iCat)$ (which we will understand as a left-module). Since $\iCat$ admits classifying object for endomorphisms (given by the categories of endofunctors), the data of $\overline{M}$ is equivalent to the data an $(\infty,1)$-category $M:=\overline{M}(\textbf{m})$ together with a monoidal functor $T^{\otimes}:\Cmonoidal\to End(M)^{\otimes}$ where the last is endowed with the strictly associative monoidal structure induced by the composition of functors. 

If $X$ is an object in $\C$, the endofunctor $T(X):M\to M$ corresponds to the action of $X$ in $M$ by means of the operation $\C\times M\to M$ encoded in the module-structure. We will call it the \emph{multiplication by $X$}.

Notice that if the monoidal structure $\Cmonoidal$ is symmetric, the map $T(X)$ extends to a morphism of module-objects $\overline{T}(X):\overline{M}\to \overline{M}$. The standard coherences $\beta_{T(X),Y}$ that make T(X) a "$\C$-linear" map are given by the image through $T$ of the twisting equivalences $\tau_{X,Y}:X\otimes Y\to Y\otimes X$ in $\C$. More precisely, we have commutative squares in $\C$

\begin{equation}
\xymatrix{
T(X\otimes Y)\ar[r]^-{\sim} \ar[d]^{\tau_{X,Y}}& T(X)\simeq T(Y)\ar[d]^{\beta_{T(X),Y}}\\
T(Y\otimes X)\ar[r]^-{\sim}& T(Y)\circ T(X)
}
\end{equation}

\noindent given by the fact $T$ is monoidal.

\end{remark}

The following is our key result:

\begin{prop}
\label{main41}
Let $\Cmonoidal$ be a small symmetric monoidal $(\infty,1)$-category and $X$ be a symmetric object in $\C$. Then, for any $\Cmonoidal$-module $\overline{M}$, the colimit of the diagram of $\Cmonoidal$-modules 
\begin{equation}
\xymatrix{\overline{Stab}_X(\overline{M}):= colimit_{Mod_{\Cmonoidal}(\iCat)}(...\ar[r]& \overline{M} \ar[r]^{\overline{T}(X)}& \overline{M} \ar[r]^{\overline{T}(X)}& \overline{M} \ar[r]^{\overline{T}
(X)}& ...)}
\end{equation}

\noindent is a $\Cmonoidal$-module where the multiplication by $X$ is an equivalence.

\begin{proof}
Since $\iCat^{\times}$ is compatible with all small colimits, the Corollary 3.4.4.6 of \cite{lurie-ha} \footnote{Since we are working the commutative setting, we could also refer to the Corollary 4.2.3.5 of \cite{lurie-ha}} implies that $\overline{Stab}_X(\overline{M})$ exists as an object in $Mod_{\Cmonoidal}(\iCat)$ and it also that it can be computed in $\iCat$ by means of the forgetful functor. This means that the colimit module $\overline{Stab}_X(\overline{M})$ corresponds to a $\C$-module structure on the $(\infty,1)$-category

\begin{equation}
Stab_X(M):=\xymatrix{colimit_{\iCat}(...\ar[r]& M\ar[r]^{T(X)}& M \ar[r]^{T(X)}& M \ar[r]^{T(X)}& ...)}
\end{equation}

To be more precise, this diagram is an object in $Fun(N(\mathbb{Z})$, $\iCat)$ and our sketch misses all the faces

\begin{equation}
\xymatrix{ 
M \ar[r]^{T(X)}  & M \\ 
M\ar[u]^{T(X)} \ar[ru]_{T(X)\circ T(X)}|*{}="M" \ar@{=>}[u];"M"^\alpha   }
\end{equation}

\noindent providing the compositions. Moreover, since $T$ is a monoidal functor, we can find a $2$-cell providing an homotopy between the composition $T(X)\circ T(X)$ and the multiplication map $T(X\otimes X)$.\\

Since $\iCat^{\otimes}$ is compatible with colimits, the $\C$-action $\C\times Stab_X(M)\to Stab_X(M)$ is given by the canonical map induced between the colimits of the two rows

\begin{equation}
\xymatrix{
...\ar[rr]&& \C\times M\ar[d]\ar[rr]^{id\times T(X)}&& \ar@{=>}[lld] \C\times M\ar[d] \ar[rr]^{id\times T(X)}&& \ar@{=>}[lld] \C\times M\ar[d] \ar[rr]^{id\times T(X)}&& ...\\
...\ar[rr]&& M\ar[rr]_{T(X)}&& M \ar[rr]_{T(X)}&& M \ar[rr]_{T(X)}&& ...
}
\end{equation}

\noindent where the vertical maps correspond to the action of $\C$ on $M$ and the faces correspond to the coherences that make $T(X)$ a map of modules, which as we saw in the previous remark are given by the twisting equivalences provided by the symmetry of $\Cmonoidal$.\\

We can now prove the statement concerning the multiplication by $X$. Of course, it is a particular case of the previous diagram, replacing the vertical arrows by $T(X)$ and keeping the coherences

\begin{equation}
\xymatrix{
...\ar[rr]&& M\ar[d]_{ T(X)}\ar[rr]^{ T(X)}&& \ar@{=>}[lld]^{\beta_{T(X),Y}} M\ar[d]\ar[rr]^{ T(X)}&& \ar@{=>}[lld]^{\beta_{T(X),Y}}  M\ar[d]^{ T(X)} \ar[rr]^{ T(X)}&& ...\\
...\ar[rr]&& M\ar[rr]_{T(X)}&& M \ar[rr]_{T(X)}&& M \ar[rr]_{T(X)}&& ...
}
\end{equation}

\noindent which in this case are induced by the twisting permutation equivalence $\tau_{X,X}:X\otimes X\to X\otimes X$.

The crucial observation is that the horizontal composition of the natural transformations  $\beta_{T(X),Y}\circ \beta_{T(X),Y}$ can be identified with the natural transformation $T(\sigma)$ induced by the cyclic permutation $\sigma:X\otimes X\otimes X\to X\otimes X \otimes X$. To see this, let us rewrite the previous diagram keeping track of the different copies of $X$

\begin{equation}
\xymatrix{
 M\ar[d]_{ T(X_1)}\ar[rr]^{ T(X_2)}&& \ar@{=>}[lld]^{\beta_{T(X_1),X_2}} M\ar[d]^{T(X_1)}\ar[rr]^{ T(X_3)}&& \ar@{=>}[lld]^{\beta_{T(X_1),X_3}}  M\ar[d]^{ T(X_1)} \\
 M\ar[rr]_{T(X_2)}&& M \ar[rr]_{T(X_3)}&& M
}
\end{equation}

Since the composition in $End(M)$ is strictly associative, we find a commutative diagram in $\C$

\begin{equation}
\xymatrix{
T((X_1\otimes X_2) \otimes X_3)\ar[rr]^-{\sim}\ar[d]^{\tau_{X_1,X_2}\otimes Id_3}&& T(X_1)\circ (T(X_2)\circ T(X_3))= (T(X_1)\circ T(X_2))\circ T(X_3)\ar[d]^{\beta_{T(X_1), X_2}\circ T(Id_3)}\\
T((X_2\otimes X_1)\otimes X_3)\ar[rr]^-{\sim}\ar[d]^{T(\gamma_{X_2,X_1,X_3})}&&(T(X_2)\circ T(X_1))\circ T(X_3)\ar@{=}[d]\\
T(X_2\otimes (X_1\otimes X_3)\ar[rr]^-{\sim} \ar[d]^{T(Id_2\otimes\tau_{X_1,X_3})}&&T(X_2)(\circ T(X_1)\circ T(X_3))\ar[d]^{T(Id_2)\circ \beta_{T(X_1),X_3}}\\
T(X_2\otimes (X_3\otimes X_1))\ar[rr]^-{\sim}&& T(X_2)\circ (T(X_3)\circ T(X_1))
}
\end{equation}

\noindent where the horizontal arrows are the natural equivalences that make $T$ monoidal and the map $\gamma_{X_2,X_1,X_3}$ is the associativity restrain in $\C$. The relation with the cyclic permutation $\sigma$ follows from the definition of $\sigma$: it is constructed as the composition $(Id_2\otimes\tau_{X_1,X_3})\circ \gamma_{X_2,X_1,X_3}\circ (\tau_{X_1,X_2}\otimes Id_3)$ in $\C$.

Finally, since $X$ is symmetric, there is a $2$-cell in $\C$ providing an homotopy between $\sigma$ and the identity of $X\otimes X\otimes X$. T sends it to a $3$-cell in $\iCat$ providing an homotopy between the natural transformation $T(\sigma)$ and the identity. By cofinality, the colimit map induced by the previous diagram is in fact equivalent to the colimit map induced by

\begin{equation}
 \xymatrix{ 
...\ar[rr]^{T(X)}&& M \ar[rr]^{T(X)} \ar[d]_{T(X))} && M \ar[rr]^{T(X)} \ar[d] \ar@{=>}[lld]^{Id} && \ar@{=>}[lld]^{Id}M \ar[d]^{T(X)} \ar[rr]^{T(X)}&& ...\\
... \ar[rr]_{T(X)}&& M \ar[rr]_{T(X)} && M \ar[rr]_{T(X)} && M \ar[rr]_{T(X)} &&...}
\end{equation}

\noindent and therefore, the induced colimit map $Stab_X(M)\to Stab_X(M)$ is an equivalence.

\end{proof}
\end{prop}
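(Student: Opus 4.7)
The plan is to reduce the problem to a statement about the induced endofunctor $T(X) : M \to M$, where $T^{\otimes} : \Cmonoidal \to \End(M)^{\otimes}$ is the monoidal functor classifying the $\Cmonoidal$-module structure on $M := \overline{M}(\mathbf{m})$ (see Remark \ref{remarkmain411}). First I would use the fact that $\iCat^{\times}$ is a presentable symmetric monoidal $(\infty,1)$-category compatible with all small colimits, so by \cite[Corollary 3.4.4.6]{lurie-ha} the colimit $\overline{Stab}_X(\overline{M})$ exists in $Mod_{\Cmonoidal}(\iCat)$ and can be computed in $\iCat$ through the forgetful functor. Thus $\overline{Stab}_X(\overline{M})$ carries a canonical $\Cmonoidal$-module structure on the underlying $\infty$-category $Stab_X(M) = \mathrm{colim}(M \xrightarrow{T(X)} M \xrightarrow{T(X)} \cdots)$, and the induced multiplication by $X$ is given by the colimit of the natural transformation from the sequence to itself whose horizontal component is $T(X)$ and whose squares are filled by the $\C$-linearity $2$-cells $\beta_{T(X),X}$ associated to $\overline{T}(X)$ as a map of modules.

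Next I would analyze the structural $2$-cells $\beta_{T(X),X}$. By construction (Remark \ref{remarkmain411}) these come from the image under $T$ of the twist equivalences $\tau_{X,X} : X \otimes X \to X \otimes X$ in $\C$. The central computation is the horizontal composition of two such squares: using the strict associativity of the composition in $\End(M)^{\otimes}$ together with the monoidality of $T$, the two-fold composition $\beta_{T(X),X} \circ \beta_{T(X),X}$ is homotopic to $T(\sigma)$, where $\sigma$ is the cyclic permutation of $X \otimes X \otimes X$, as it factors as
\[
T(\tau_{X,X} \otimes \mathrm{id}_X) \; ; \; T(\gamma_{X,X,X}) \; ; \; T(\mathrm{id}_X \otimes \tau_{X,X}),
\]
which is precisely $T(\sigma)$. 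This is the key step and, I expect, the main technical difficulty: assembling the monoidality and associativity coherences of $T^{\otimes}$ consistently with the structural $2$-cells, so that the identification with $T(\sigma)$ is genuine rather than merely formal on homotopy categories.

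Finally, I would invoke the hypothesis that $X$ is symmetric: the chosen homotopy $\sigma \simeq \mathrm{id}$ in $\C$ is sent by $T$ to a $3$-cell in $\iCat$ exhibiting a homotopy $T(\sigma) \simeq \mathrm{id}_{T(X) \circ T(X) \circ T(X)}$. Consequently, in the shifted diagram computing multiplication-by-$X$ on $Stab_X(M)$, the composite of two adjacent structural squares can be replaced (up to homotopy) by the identity $2$-cell. After re-indexing by $\mathbb{N} \to \mathbb{N}$, $n \mapsto n+1$ (which is cofinal, hence induces an equivalence on colimits by \cite[4.1.1.8]{lurie-htt}), the original diagram mapping to itself by $T(X)$ becomes equivalent, after stepping one stage along, to the identity natural transformation from the sequence to itself. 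Hence the colimit map $Stab_X(M) \to Stab_X(M)$ induced by multiplication by $X$ is an equivalence, which is exactly the assertion to be proved.
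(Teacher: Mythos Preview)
Your proposal is correct and follows essentially the same route as the paper's proof: reduce to the underlying colimit in $\iCat$ via \cite[3.4.4.6]{lurie-ha}, identify the structural $2$-cells of multiplication-by-$X$ with the images of the twist $\tau_{X,X}$ under $T$, compute that the horizontal composition of two adjacent squares is $T(\sigma)$ for the cyclic permutation $\sigma$, use the symmetry hypothesis to trivialize this, and conclude by a cofinality/shift argument. The paper carries out exactly these steps, with the same key identification $\beta_{T(X),X}\circ\beta_{T(X),X}\simeq T(\sigma)$ via the factorization $(\mathrm{id}\otimes\tau)\circ\gamma\circ(\tau\otimes\mathrm{id})$.
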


\begin{remark}
A similar argument shows that the same result holds if $X$ is $n$-symmetric, meaning that, there exists a number $n\in \mathbb{N}$ such that $\tau^n$ is equal to the identity map in $h(\C)$.
\end{remark}

\begin{remark}
The Remark \ref{remarkmain411} and the Proposition \ref{main41} applies mutatis-mutandis in the presentable setting. This is true because of the Proposition \ref{presentablehaveendomorphismobject} - $\Prl$ admits classifying objects for endomorphisms. If $M$ is a presentable $(\infty,1)$-category, $End^L(M)$ is a classifying object for endomorphisms of $M$, with the strictly associative monoidal structure given by the composition of functors.
\end{remark}

We can finally establish the connection between the adjoint $\Lpr$ and the notion of spectra.

\begin{cor}
\label{main5}
Let $\Cmonoidal$ be a presentable symmetric monoidal $(\infty,1)$-category and let $X$ be a symmetric object in $\C$. Given a $\Cmonoidal$-module $M$, $Stab_X(M)$ is a $\Cmonoidal$-module where $X$ acts as an equivalence and therefore the adjunction of Proposition \ref{main3} provides a map of $\Cmonoidal$-modules

\begin{equation}\Lpr(M)\to Stab_X(M)\end{equation}

This map is an equivalence. In particular, the underlying $\infty$-category of the formal inversion $\Cmonoidalx$ is equivalent to the stabilization $Stab_X(\C)$. 

\begin{proof}
The map can be obtained as a composition

\begin{equation}
\Lpr(M)\to \Lpr(Stab_X(M))\to Stab_X(M)
\end{equation}

\noindent where the first arrow is the image of the canonical map $M\to Stab_X(M)$ by the adjunction $\Lpr$  and the second arrow is the counit of the adjunction. In fact, with our hypothesis and because of the previous Proposition, the action of $X$ is invertible in $Stab_X(M)$ and therefore, by the Proposition \ref{main3} the second arrow is an equivalence It remains to prove that the first map is an equivalence. But now, since $Stab_X(M)$ is a colimit and $\Lpr$ is a left adjoint and therefore commutes with colimits, we have a commutative diagram

\begin{equation}
\xymatrix{
\Lpr(M)\ar[dr]\ar[r]& \Lpr(Stab_X(M))\\ 
& Stab_X(\Lpr(M))\ar[u]^{\sim}
}
\end{equation}

\noindent where the diagonal arrow is the colimit map induced by the stabilization of $\Lpr(M)$. It is enough now to observe that if $M$ is a $\Cmonoidal$-module where the action of $X$ is already invertible, then the canonical map $M\to Stab_X(M)$ is an equivalence of modules. The \emph{2 out of 3} argument concludes the proof.

\end{proof}
\end{cor}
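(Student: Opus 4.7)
The plan is to construct the comparison map as a composition and then verify each piece is an equivalence, leveraging the two main results already established: Proposition \ref{main3}, which identifies $\C[X^{-1}]^{\otimes}$-modules with those $\C^{\otimes}$-modules on which $X$ acts invertibly via a fully faithful forgetful functor, and Proposition \ref{main41}, which says that for a symmetric object $X$ the $\C^{\otimes}$-module $Stab_X(M)$ has $X$ acting as an equivalence.

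Concretely, I would define the map as the composite
\begin{equation}
\Lpr(M) \longrightarrow \Lpr(Stab_X(M)) \longrightarrow Stab_X(M),
\end{equation}
where the first arrow is obtained by applying $\Lpr$ to the canonical map $M \to Stab_X(M)$ (inclusion of the $0$-th term into the colimit), and the second arrow is the counit of the adjunction of Proposition \ref{main3}. By Proposition \ref{main41}, $Stab_X(M)$ lies in the essential image of the forgetful functor $Mod_{\C[X^{-1}]}(\Prl) \to Mod_{\C}(\Prl)$, and since by Proposition \ref{main3} this functor is fully faithful, the counit map is an equivalence. This is the easier half.

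For the first arrow, I would use that $\Lpr$ is a left adjoint and therefore commutes with all small colimits in $Mod_{\C}(\Prl)$; hence there is a natural equivalence $\Lpr(Stab_X(M)) \simeq Stab_X(\Lpr(M))$, where the right-hand side is the stabilization of $\Lpr(M)$ against multiplication by $X$ (computed in $Mod_{\C[X^{-1}]}(\Prl)$ or equivalently through the forgetful embedding). The problem thus reduces to showing that for any $\C^{\otimes}$-module $N$ on which $X$ already acts as an equivalence, the canonical insertion $N \to Stab_X(N)$ is itself an equivalence. This last point is the step requiring the most care and is what I expect to be the main obstacle: it amounts to saying that the colimit of a sequential diagram whose every structure map is an equivalence is carried by the $0$-th term. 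I would argue it by invoking cofinality of a terminal object in $N(\mathbb{Z})$ after localizing at the class of edges, or more directly by checking the universal property of the colimit against the fact that each $T(X)\colon N \to N$ is invertible up to coherent homotopy in $\iCat$.

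Putting these pieces together gives the equivalence $\Lpr(M) \simeq Stab_X(M)$ in $Mod_{\C}(\Prl)$, hence in $Mod_{\C[X^{-1}]}(\Prl)$ via the fully faithful embedding. The final sentence of the statement, identifying the underlying $(\infty,1)$-category of $\Cmonoidalx$ with $Stab_X(\C)$, is then the special case $M=\C$ together with Proposition \ref{main3}(1), which identifies $\Lpr(\C) \simeq \C[X^{-1}]$.
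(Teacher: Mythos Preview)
Your proposal is correct and follows essentially the same approach as the paper: the same factorization through the counit, the same use of Proposition~\ref{main41} to invoke full faithfulness from Proposition~\ref{main3}, the same commutation of $\Lpr$ with the defining colimit, and the same reduction to the observation that $N\to Stab_X(N)$ is an equivalence when $X$ already acts invertibly. The only difference is that you spell out more carefully why this last observation holds, whereas the paper simply asserts it.
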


In particular

\begin{cor}
\label{corolariodacaca}
Let $\Cmonoidal$ be a stable presentable symmetric monoidal $(\infty,1)$-category and let $X$ be a symmetric object in $\C$. Then $\Cmonoidalx$ is again a stable presentable symmetric monoidal $(\infty,1)$-category.
\begin{proof}
If $\Cmonoidal$ is stable presentable, the multiplication by $X$ is an exact functor. Moreover, since $X$ is symmetric, the previous corollary provides an equivalence $\C[X^{-1}]\simeq Stab_X(\C)$ where the last is a colimit in $\Prl$. Moreover, since the whole diagram is in $\Prl_{Stb}$ and the last  has all colimits and the inclusion $\Prl_{Stb}\subseteq \Prl$ commutes with them\footnote{To see this we can use the equivalence between $\Prl_{Stb}$ and $Mod_{\Sp}(\Prl)$ \cite[6.3.2.18]{lurie-ha} and the identification of  the inclusion $\Prl_{Stb}\subseteq \Prl$ with the forgetful functor $Mod_{\Sp}(\Prl)\to \Prl$. Now we use the fact that $\Prlmonoidal$ is compatible with colimits (its has internal-hom objects) and therefore colimits of modules are computed in $\Prl$ using the forgetful functor \cite[3.4.4.6]{lurie-ha}. }, we find that $\C[X^{-1}]$ is stable. Moreover, since by construction $\Cmonoidalx$ is a presentable symmetric monoidal  $(\infty,1)$-category, we conclude it is a stable presentable symmetric monoidal $(\infty,1)$-category.
\end{proof}
\end{cor}

\begin{remark}
\label{workswithspaces2}
Let $\Cmonoidal$ be a small symmetric monoidal $\infty$-groupoid and let $X$ be a symmetric object in $\C$. Then,  using the same arguments as in the proof of the previous corollary together with the fact that the $(\infty,1)$-category of spaces $\Spaces$ admits classifying objects for endomorphisms, we deduce that the underlying $(\infty,1)$-category of the formal inversion $\mathcal{L}^{spaces,\otimes}_{\Cmonoidal,X}(\Cmonoidal)$ of the Remark \ref{workswithspaces} is equivalent to the stabilization $Stab^{spaces}_X(\C)$ obtained as the colimit in $\Spaces$ of the diagram induced by the multiplication by $X$. Moreover, since the inclusion $\Spaces\subseteq \iCat$ admits a right adjoint (the "maximal $\infty$-groupoid"), it preserves colimits and we see that the comparison map of \ref{workswithspaces} is an equivalence

\begin{equation}
\mathcal{L}^{\otimes}_{i(\Cmonoidal),X}(i(\Cmonoidal))_{\onefin}\simeq Stab_{X}(i(\C))\simeq i(Stab^{spaces}_X(\C))\simeq \mathcal{L}^{spaces,\otimes}_{\Cmonoidal,X}(\Cmonoidal)_{\onefin}
\end{equation}

where $Stab_{X}(i(\C))$ is the stabilization in $\iCat$.

\end{remark}

\begin{example}
\label{examplemonoidalstructurespectra}
In \cite{lurie-ha} the author introduces the $(\infty,1)$-category of spectra $\Sp$ as the stabilization of the $(\infty,1)$-category of spaces. More precisely, following the notations of the Example \ref{usualspectra} it is given by 

\begin{equation}\Sp:=Sp^{\mathbb{N}}_{(\Sigma_{\Spaces},\Omega_{\Spaces})}(\Spaces_{*/})\end{equation} 

where $\Spaces$ denotes the $(\infty,1)$-category of spaces. By the Propositions and 1.4.3.6 and 1.4.4.4 of \cite{lurie-ha} this $(\infty,1)$-category is presentable and stable and by the Proposition 6.3.2.18 of \cite{lurie-ha} it admits a natural presentable stable symmetric monoidal structure $\Spmonoidal$ which can be described by means of a universal property: it is an initial object in $CAlg(\Prlstable)$. The unit of this monoidal structure is the sphere-spectrum.

Our corollary \ref{main5} provides an alternative characterization of this symmetric monoidal structure. We start with $\Spacesp$ the $(\infty,1)$-category of pointed spaces. Recall that this $(\infty,1)$-category is presentable and admits a monoidal structure given by the so-called \emph{smash product} of pointed spaces. (see the Remark 6.3.2.14 of \cite{lurie-ha} and the section \ref{smashproducts} below). We will denote it as $\Spacessmash$. According to the Proposition 6.3.2.11 of \cite{lurie-ha}, $\Spacessmash$ has an universal property amongst the presentable pointed symmetric monoidal $(\infty,1)$-categories: it is a initial one. The unit of this monoidal structure is the pointed space $S^0=*\coprod*$. We will see below (Corollary \ref{pointedmodelmonoidal} and Remark \ref{smashnovoigualsmashvelho}) that $\Spacessmash$ is the underlying symmetric monoidal $(\infty,1)$-category of the combinatorial simplicial model category of pointed simplicial sets $\ssets_*$ equipped with the classical smash product of spaces. Since $S^1$ is symmetric in $\ssets_*$ with respect to this classical smash (see the Lemma 6.6.2 of \cite{hovey-modelcategories}), by the Remark \ref{remarksymmetric} it will also be symmetric in $\Spacessmash$. Our inversion $\Spacessmash[(S^1)^{-1}]$ provides a new presentable symmetric monoidal $(\infty,1)$-category and because of the symmetry of $S^1$, the fact that $(S^1\wedge-)$ can be identified with $\Sigma_{\Spaces}$ and the Corollary \ref{main5}, we conclude that the underlying $(\infty,1)$-category of $\Spacessmash[(S^1)^{-1}]$ is the stabilization defining $\Sp$ and therefore that $\Spacessmash[(S^1)^{-1}]$ is a presentable stable symmetric monoidal $(\infty,1)$-category. By the universal property of $\Spmonoidal$ there is a unique (up to a contractible space of choices) monoidal map
 
\begin{equation}\Spmonoidal\to \Spacessmash[(S^1)^{-1}]\end{equation}
 
At the same time, since every stable presentable $(\infty,1)$-category is pointed, the universal property of $\Spacessmash$ ensures the existence of a canonical morphism

\begin{equation}
\Spacessmash\to \Spmonoidal
\end{equation}

\noindent which is also unique up to a contractible space of choices. This morphism is just the canonical stabilization morphism and it sends $S^1$ to the sphere-spectrum in $\Sp$ and therefore the universal property of the localization provides a factorization

\begin{equation}
\Spacessmash[(S^1)^{-1}]\to \Spmonoidal
\end{equation}
\noindent which is unique up to homotopy. By combining the two universal properties we find that these two maps are in fact inverses up to homotopy

\end{example}

\begin{remark}
\label{monoidalstabilization}
The technique of inverting an object provides a way to define the monoidal stabilization of a pointed presentable symmetric monoidal $(\infty,1)$-category $\Cmonoidal$. It follows from the Proposition 6.3.2.11 of \cite{lurie-ha} that for any such $\Cmonoidal$, there is an essentially unique (base-point preserving and colimit preserving) monoidal map $f:\Spacessmash\to \Cmonoidal$. Let $f(S^1)$ denote the image of the topological circle through this map. The (presentable) universal property of inverting an object provides an homotopy commutative diagram of commutative algebra objects in $\Prl$

\begin{equation}
\xymatrix{
\Spmonoidal\simeq \Spacessmash[(S^1)^{-1}]\ar@{-->}[d]&\Spacessmash\ar[l]\ar[d]^f\\
\Cmonoidal[f(S^1)^{-1}]& \Cmonoidal\ar[l]
}
\end{equation}
 
The monoidal map $\Spacessmash\to \Spmonoidal$ produces a forgetful functor

\begin{equation}
CAlg(\Prl)_{\Spmonoidal/}\to CAlg(\Prl)_{\Spacessmash/}
\end{equation}

\noindent which by the Proposition \ref{main3} is fully faithful and admits a left adjoint given by the base-change formula $\Cmonoidal\mapsto \Spmonoidal\otimes_{\Spacessmash} \Cmonoidal$. The combination of the universal property of the adjunction and the universal property of inverting an object ensures the existence of an equivalence of pointed symmetric monoidal $(\infty,1)$-categories 

\begin{equation}\Cmonoidal[f(S^1)^{-1}]\simeq \Spmonoidal\otimes_{\Spacessmash}\Cmonoidal\end{equation}

Finally, by the combination of the Proposition \ref{main3} with the Example 6.3.1.22 of \cite{lurie-ha} we deduce that the underlying $(\infty,1)$-category of $\Cmonoidal[f(S^1)^{-1}]$ is the stabilization $Stab(\C)$. 

Moreover, we deduce also that if $\Cmonoidal$ is a stable presentable symmetric monoidal $(\infty,1)$-category and $X$ is any object in $\C$, in order to conclude that the inversion $\Cmonoidalx$  is stable presentable it is enough to show that $\C[X^{-1}]$ is pointed, thus extending the result \ref{corolariodacaca}. Indeed, by the previous discussion, $\C$  is stable if and only if $f(S^1)$ is invertible. Since the inversion functor $\Cmonoidal\to \Cmonoidalx$ is monoidal, the image of $f(S^1)$ in $\C[X^{-1}]$ will again by invertible. Finally, if $\C[X^{-1}]$ is pointed, the image of $f(S^1)$ will necessarily correspond to the image of $S^1$ in $\C[X^{-1}]$, which therefore will be invertible, and so, by the previous discussion, $\C[X^{-1}]$ will be stable.

\end{remark}

\subsection{Connection with the Symmetric Spectrum objects of Hovey}
\label{section4-3}

We recall from \cite{hovey-spectraandsymmetricspectra} the construction of symmetric spectrum objects: Let $\V$ be a combinatorial simplicial symmetric monoidal model category and let $\M$ be a combinatorial simplicial $\V$-model category. Following the Theorem 7.11 of \cite{hovey-spectraandsymmetricspectra}, for any object $X$ in $\V$ we can produce a new combinatorial simplicial  $\V$-model category $Sp^{\Sigma}(\M,X)$ of spectrum objects in $\M$ endowed with the \emph{stable model structure} and where $X$ acts by an equivalence. In particular, by considering $\V$ as a $\V$-model category (using the monoidal structure) the new model category $Sp^{\Sigma}(\V,X)$ inherits the structure of a combinatorial simplicial symmetric monoidal model category and there is left simplicial Quillen monoidal map $\V\to Sp^{\Sigma}(\V,X)$ sending $X$ to an invertible object. 

This general construction sends an arbitrary combinatorial simplicial $\V$-model category to a combinatorial simplicial $\V$-model category where the action of $X$ is invertible. In fact, by the Theorem 7.11 of \cite{hovey-spectraandsymmetricspectra} $Sp^{\Sigma}(\M,X)$ is a combinatorial simplicial $Sp^{\Sigma}(\V,X)$-model category. This a first sign of the fundamental role of the construction of symmetric spectrum objects as an adjoint in the spirit of Section \ref{section4-1}.
We have canonical simplicial left Quillen maps

\begin{equation}
\xymatrix{Sp^{\Sigma}(\V,X)\ar[r]^(.4){\sim} &Sp^{\mathbb{N}}(Sp^{\Sigma}(\V,X), X)\ar[r]^{\sim}& Sp^{\Sigma}(Sp^{\mathbb{N}}(\V,X), X)& \ar[l] Sp^{\mathbb{N}}(\V,X)}
\end{equation}
\noindent but in general the last map is not an equivalence. By the Theorem 8.1 of \cite{hovey-spectraandsymmetricspectra} for the last map to be an equivalence we only need $Sp^{\mathbb{N}}(\V,X)$ to be a $\V$-model category where $X$ as an equivalence. This is exactly the functionality of the symmetric condition on $X$ (see Theorems 9.1 and 9.3 in \cite{hovey-spectraandsymmetricspectra}).

We state our main result

\begin{thm}
\label{maintheorem}
Let $\V$ be a combinatorial simplicial symmetric monoidal model category whose unit is cofibrant and let $X$ be a symmetric object in $\V$ in the sense of the Def. 9.2 in \cite{hovey-spectraandsymmetricspectra} (therefore, cofibrant). Let $Sp^{\Sigma}(\V,X)$ denote the combinatorial simplicial symmetric monoidal model category provided by Theorem 7.11 of \cite{hovey-spectraandsymmetricspectra}, equipped the convolution product. Let $\Cmonoidal:=N((\V^c)^{\otimes})[W_c^{-1}]$, respectively $Sp^{\Sigma}_X(\C)^{\otimes}:=N((Sp^{\Sigma}(\V,X)^c)^{\otimes})[W_c^{-1}]$ denote the underlying symmetric monoidal $(\infty,1)$-category of $\V$, resp. $Sp^{\Sigma}(\V,X)$ (see Section \ref{link2} for the details). By the Corollary 4.1.3.16 of \cite{lurie-ha} we have monoidal equivalences $\Cmonoidal\simeq \N_{\Delta}^{\otimes}((\V^{\circ})^{\otimes})$ and $Sp^{\Sigma}_X(\C)^{\otimes}\simeq N_{\Delta}^{\otimes}((Sp^{\Sigma}(\V,X)^{\circ})^{\otimes})$  and therefore both $\Cmonoidal$ and $Sp^{\Sigma}_X(\C)^{\otimes}$ are presentable symmetric monoidal $(\infty,1)$-categories. Moreover, the left-Quillen monoidal map $\V\to Sp^{\Sigma}(\V,X)$ induces a monoidal functor $\Cmonoidal\to Sp^{\Sigma}_X(\C)^{\otimes}$ (see the Prop. \ref{inducedmonoidafunctor}) which sends $X$ to an invertible object, endowing $Sp^{\Sigma}_X(\C)^{\otimes}$ with the structure of object in $CAlg(\Prl)_{\Cmonoidal/}^X$. In this case, the adjunction of the Prop.\ref{main3} provides a monoidal map

\begin{equation}
\Cmonoidalx\simeq \Lpmonoidal(\Cmonoidal)\to Sp^{\Sigma}_X(\C)^{\otimes}
\end{equation}

We claim that this map is an equivalence of presentable symmetric monoidal $(\infty,1)$-categories.
\begin{proof}
By the remark \ref{remarksymmetric} if $X$ is symmetric in the sense of \cite{hovey-spectraandsymmetricspectra} then it is symmetric in $\Cmonoidal$ in the sense of the Definition \ref{symmetric}.

By definition, the map is obtained as a composition 

\begin{equation}
\xymatrix{\Lpmonoidal(\Cmonoidal)\ar[r]& \Lpmonoidal(Sp^{\Sigma}_X(\C)^{\otimes})\ar[r] & Sp^{\Sigma}_X(\C)^{\otimes}}
\end{equation}

\noindent where the last arrow is the counit of the adjunction of Proposition \ref{main3}. To prove that this map is an equivalence it is enough to verify that the map between the underlying $(\infty,1)$-categories

\begin{equation}\Lpr(\C)\to Sp^{\Sigma}_X(\C)\end{equation}

\noindent is an equivalence. But now, by the combination of the Corollary \ref{main5} with the main result of the Corollary 9.4 in \cite{hovey-spectraandsymmetricspectra}, we find a commutative diagram of equivalences

\begin{equation}
\xymatrix{
\Lpr(\C)\ar[d]^{\sim} \ar[r]&Sp^{\Sigma}_X(\C)= N_{\Delta}(Sp^{\Sigma}(\V,X)^{\circ})\ar[d]^{\sim}\\
Stab_X(\C)\simeq  N_{\Delta}(Sp^{\mathbb{N}}(\V,X)^{\circ})\ar[r]^(.4){\sim}& Stab_X(N_{\Delta}(Sp^{\Sigma}(\V,X)^{\circ}))\simeq N_{\Delta}(Sp^{\mathbb{N}}(Sp^{\Sigma}(\V,X), X)^{\circ})
}
\end{equation}
\noindent where the left vertical map is an equivalence because $X$ is symmetric in $\Cmonoidal$; the equivalence $Stab_X(\C)\simeq  N_{\Delta}(Sp^{\mathbb{N}}(\V,X)^{\circ})$ follows from the Proposition \ref{main4} with $G=(X\otimes -)$ (it is a left Quillen functor because $X$ is cofibrant), and the fact that $\C$ is presentable; the right vertical map is an equivalence because $X$ is already invertible in $N_{\Delta}(Sp^{\Sigma}(\V,X)^{\circ})$  and because a Quillen equivalence between combinatorial model categories induces an equivalence between the underlying $(\infty,1)$-categories (see Lemma 1.3.4.21 of \cite{lurie-ha}). This same last argument, together with the Corollary 9.4 of \cite{hovey-spectraandsymmetricspectra}, justifies the fact that the lower horizontal map is an equivalence.

\end{proof}
\end{thm}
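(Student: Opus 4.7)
The plan is to reduce the statement about symmetric monoidal $(\infty,1)$-categories to a statement about underlying $(\infty,1)$-categories. Recall from Remark 2.1.3.8 in \cite{lurie-ha} (cited earlier in the excerpt in the context of monoidal functors) that a monoidal functor between symmetric monoidal $(\infty,1)$-categories is an equivalence if and only if the induced map between underlying $(\infty,1)$-categories is. So the whole proof reduces to showing that the underlying comparison functor
\begin{equation}
\Lpr(\C) \longrightarrow Sp^{\Sigma}_X(\C)
\end{equation}
is an equivalence of $(\infty,1)$-categories.

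First I would identify the left-hand side. Since $X$ is symmetric in $\V$ in Hovey's sense (Def. 9.2 of \cite{hovey-spectraandsymmetricspectra}), Remark \ref{remarksymmetric} guarantees that its image remains symmetric in $\Cmonoidal$ in the sense of Definition \ref{symmetric}. Then Corollary \ref{main5} applies and gives a canonical equivalence $\Lpr(\C) \simeq Stab_X(\C)$, where $Stab_X(\C)$ is the colimit in $\Prl$ of the sequence obtained by repeatedly tensoring with $X$. Because $\C$ is presentable (being the underlying $(\infty,1)$-category of a combinatorial simplicial model category), Proposition \ref{main4} together with the Remark following it identifies $Stab_X(\C)$ with the underlying $(\infty,1)$-category of the stable model category $Sp^{\mathbb{N}}(\V,X)$ of ordinary (non-symmetric) spectrum objects, i.e.\ $Stab_X(\C) \simeq N_{\Delta}(Sp^{\mathbb{N}}(\V,X)^{\circ})$.

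Next I would identify the right-hand side via the same two-step procedure, but applied starting from $Sp^{\Sigma}(\V,X)$ instead of $\V$. In $Sp^{\Sigma}_X(\C)$ the object $X$ already acts invertibly, so the tautological map $Sp^{\Sigma}_X(\C) \to Stab_X(Sp^{\Sigma}_X(\C))$ is an equivalence; and again by Proposition \ref{main4} applied to the combinatorial simplicial symmetric monoidal model category $Sp^{\Sigma}(\V,X)$, the latter stabilization is identified with $N_{\Delta}(Sp^{\mathbb{N}}(Sp^{\Sigma}(\V,X),X)^{\circ})$. The proof will then be completed by invoking Hovey's Corollary 9.4 of \cite{hovey-spectraandsymmetricspectra}, which says precisely that when $X$ is symmetric, the canonical left Quillen map $Sp^{\mathbb{N}}(\V,X) \to Sp^{\mathbb{N}}(Sp^{\Sigma}(\V,X),X)$ is a Quillen equivalence; passing to underlying $(\infty,1)$-categories (which preserves Quillen equivalences by \cite[1.3.4.21]{lurie-ha}) yields the missing equivalence, and assembling a commuting square with all four equivalences finishes the argument.

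The main obstacle is really just a bookkeeping check rather than a conceptual one: one must verify that the comparison map produced by the universal property of $\Lpr$ in Proposition \ref{main3} agrees, up to homotopy, with the zig-zag of equivalences obtained via the two applications of Corollary \ref{main5} and Proposition \ref{main4} together with Hovey's Quillen equivalence. This compatibility follows from the naturality of the universal arrow $\C \to \Lpr(\C) \simeq Stab_X(\C)$ and the functoriality of the stabilization construction, but it is the only step where one has to genuinely chase diagrams. The symmetric hypothesis on $X$ enters exactly twice: once to apply Corollary \ref{main5} and once to apply Corollary 9.4 of \cite{hovey-spectraandsymmetricspectra}; without it, the comparison between ordinary and symmetric spectra breaks down, which is why the theorem is expected to remain true even without the hypothesis (see Remark \ref{withoutassumption}), though the present proof would not apply.
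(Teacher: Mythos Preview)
Your proposal is correct and follows essentially the same approach as the paper's own proof: reduce to the underlying $(\infty,1)$-categories, identify both sides with ordinary spectrum objects via Corollary~\ref{main5} and Proposition~\ref{main4}, and close the square using Hovey's Corollary~9.4. Your explicit mention of the compatibility/bookkeeping check is a fair caveat, but the paper treats this as implicit in the commutativity of the diagram, so there is no substantive difference.
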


\begin{remark}
\label{withoutassumption}
In the proof of Theorem \ref{maintheorem}, we used the condition on $X$ twice. The first using the result of \cite{hovey-spectraandsymmetricspectra} and the second with the Proposition \ref{main4}. We believe the use of this condition is not necessary. Indeed, everything comes down to prove an analogue of Proposition \ref{main4} for the construction of symmetric spectrum objects, replacing the natural numbers by some more complicated partially ordered set. If such a result is possible, then the construction of symmetric spectra in the combinatorial simplicial case can be presented as a colimit of a diagram of simplicial categories. In this case, the Proposition \ref{main41} would follow immediately even without the condition on $X$. We will not pursue this topic here since it won't be necessary for our goals.
\end{remark}

\begin{example}
The combination of the Theorem \ref{maintheorem} together with the Remark \ref{remarksymmetric} and the Example \ref{examplemonoidalstructurespectra} provides a canonical equivalence of presentable symmetric monoidal presentable $(\infty,1)$-categories $\Spmonoidal\simeq N_{\Delta}^{\otimes}(Sp^{\Sigma}(\ssetsp,S^1))$.
\end{example}

\section{Universal Characterization of the Motivic Stable Homotopy Theory of Schemes}
\label{section5}

Let $\uniU\in \uniV\in \uniW$ be universes. In the following sections, we shall write $\sch$ to denote the $\uniV$-small category of smooth separated $\uniU$-small schemes of finite type over a Noetherian $\uniU$-scheme $S$.

\subsection{$\mathbb{A}^1$-Homotopy Theory of Schemes}

The main idea in the subject is to "do homotopy theory with schemes" in more or less the same way we do with spaces, by thinking of the affine line $\mathbb{A}^1$ as an "interval".  Of course, this cannot be done directly inside the category of schemes for it does not have all colimits. In \cite{voevodsky-morel}, the authors constructed a \emph{place} to realize this idea. The construction proceeds as follows: start from the category of schemes and add formally all the colimits. Then make sure that the following two principles hold: 

\begin{enumerate}[I)]
\item the line $\mathbb{A}^1$ becomes contractible;
\item if $X$ is a scheme and $U$ and $V$ are two open subschemes whose union equals $X$ in the category of schemes then make sure that their union continues to be $X$ in the new place;
\end{enumerate}

The original construction in \cite{voevodsky-morel} was performed using the techniques of model category theory and this \emph{place} is the homotopy category of a model category $\M_{\mathbb{A}^1}$. During the last years their methods were revisited and reformulated in many different ways. In \cite{dugger-universalhomotopytheories}, the author presents a "universal" characterization of the original construction using the theory of Bousfield localizations for model categories\footnote{see \cite{hirschhorn}} together with a universal characterization of the theory of simplicial presheaves, within model categories. The construction of \cite{dugger-universalhomotopytheories} can be summarized by the expression

\begin{equation}\M_{\mathbb{A}^1}=L_{\mathbb{A}^1}L_{HyperNis}((SPsh(\sch)))\end{equation}

\noindent where $SPsh(-)$ stands for simplicial presheaves with the projective model structure, $L_{HyperNis}$ corresponds the Bousfield localization with respect to the collection of the hypercovers associated to the \emph{Nisnevich topology} (see below) and $L_{\mathbb{A}^1}$ corresponds to the Bousfield localization with respect to the collection of all projection maps $X\times \mathbb{A}^1\to X$.\\

 It is clear today that model categories should not be taken as fundamental objects, but rather, we should focus on their associated $(\infty,1)$-categories. In this section, we use the insights of \cite{dugger-universalhomotopytheories} to perform the construction of an $(\infty,1)$-category $\hsch$ directly within the setting of $\infty$-categories. By the construction, it will have a universal property and using the link described in Section \ref{section1-2} and the theory developed by J.Lurie in \cite{lurie-htt} relating Bousfield localizations to localizations of $\infty$-categories, we will be able to prove that $\hsch$ is equivalent to the $\infty$-category underlying the $\mathbb{A}^1$ model category of Morel-Voevodsky.

The construction of $\hsch$ proceeds as follows. We start from the category of smooth schemes of finite type over $S$ - $\sch$ and consider it as a trivial $\uniV$-small $(\infty,1)$-category $N(\sch)$. Together with the \emph{Nisnevich topology}  (\cite{nisnevich}), it acquires the structures of an \emph{$\infty$-site} (see Definition 6.2.2.1 of \cite{lurie-htt}). By definition (see Def. 1.2 of \cite{voevodsky-morel}) the Nisnevich topology is the topology generated by the pre-topology whose covering families of an $S$-scheme $X$ are the collections of étale morphisms $\{f_i:U_i\to X\}_{i\in I}$ such that for any $x\in X$ there exists an $i\in I$ and $u_i\in U_i$ such that $f_i$ induces an isomorphism between the residual fields $k(x)\simeq k(u_i)$. Recall from \cite{voevodsky-morel} (Def. 1.3) that an elementary Nisnevich square is a commutative square of schemes

\begin{equation}
\label{nissquare}
\xymatrix{
p^{-1}(U)\ar[r]\ar[d]&V\ar[d]^p\\
U\ar[r]^i&X
}
\end{equation}

such that 

\begin{enumerate}[a)]
\item $i:U\hookrightarrow X$ is an open immersion of schemes;
\item $p:V\to X$ is an étale map;
\item the square (\ref{nissquare}) is a pullback. In particular $p^{-1}(U)\to V$ is also an open immersion.
\item the canonical projection $p^{-1}(X-U)\to X-U$ is an isomorphism where we consider the closed subsets $Z:=X-U$ and $p^{-1}{Z}$ both equipped with the reduced structures of closed subschemes;
\end{enumerate}

and from this we can easily deduce that
\begin{enumerate}[e)]
\item the square 

\begin{equation}
\xymatrix{
V\ar[d]_p&\ar[d] \ar[l] p^{-1}(Z)\\
X&\ar[l]Z:=X-U
}
\end{equation}

is a pullback with both $Z$ and $p^{-1}(Z)$ equipped with the reduced structures;
\item the square (\ref{nissquare}) is a pushout.
\end{enumerate}

The crucial fact is that each family $(V\to X, U\to X)$ as above forms a Nisnevich covering and the families of this form provide a \emph{basis} for the Nisnevich topology (see the Proposition 1.4 of \cite{voevodsky-morel}). We consider the very big $(\infty,1)$-category $\mathcal{P}^{big}(N(\sch)):=Fun(N(\sch)^{op},\widehat{\mathcal{S}})$ of presheaves of (big) homotopy types over $N(\sch)$ (See Section 5.1 of \cite{lurie-htt}) which has the expected universal property (Thm. 5.1.5.6 of \cite{lurie-htt}): it is the free completion of $N(\sch)$ with $\uniV$-small colimits (in the sense of $\infty$-categories). Using the Proposition 4.2.4.4  of \cite{lurie-htt} we can immediately identify $\mathcal{P}^{big}(N(\sch))$ with the underlying $\infty$-category of the model category of simplicial presheaves on $\sch$ endowed with the projective model structure. The results of \cite{lurie-htt} provide an $\infty$-analogue for the classical Yoneda's embedding, meaning that we have a fully faithful map of $\infty$-categories $j:N(\sch)\to \mathcal{P}^{big}(N(\sch))$ and as usual we will identify a scheme $X$ with its image $j(X)$. We now restrict to those objects in $\mathcal{P}^{big}(N(\sch))$ which are sheaves with respect to the Nisnevich topology. Because the Nisnevich squares form a basis for the Nisnevich topology, an object $F\in \mathcal{P}^{big}(N(\sch))$ is a sheaf iff it maps Nisnevich squares to pullback squares. In particular, every representable $j(X)$ is a sheaf (because Nisnevich squares are pushouts). Following the results of \cite{lurie-htt}, the inclusion of the full subcategory $Sh_{Nis}^{big}(\sch)\subseteq \mathcal{P}^{big}(N(\sch))$ admits a left adjoint (which is known to be exact - Lemma 6.2.2.7 of  \cite{lurie-htt}) and provides a canonical example of an $\infty$-topos (See Definition 6.1.0.4 of \cite{lurie-htt}). More importantly to our needs, this is an example of a presentable localization of a presentable $(\infty,1)$-category and we can make use of the results discuss in Section \ref{section3-5}. Notice also that the fact that any Nisnevich square is a pushout square, implies that any representable $j(X)$ is a Nisnevich sheaf. \\
 
\begin{remark}
\label{affineisenough}
In the case $S$ is affine given by a ring $k$, the category of smooth schemes $\sch$ can be replaced by the category of \emph{affine} smooth schemes of finite type over $k$, $\aff$, and the resulting $(\infty,1)$-categories $Sh_{Nis}^{big}(\sch)$ and $Sh_{Nis}^{big}(\aff)$ are equivalent. This follows because we can identify $\sch$ with a full subcategory of $\mathcal{P}^{big}(\aff)$ using the map sending a smooth scheme $X$ to the representable functor $Y\in \aff \mapsto Hom_{\sch}(Y,X)$, and this identification is compatible with the Nisnevich topologies. For more details see \cite{MR1693330}.
\end{remark}

Next step, we consider the \emph{hypercompletion} of the $\infty$-topos $Sh_{Nis}^{big}(\sch)$ (see Section 6.5.2 of \cite{lurie-htt}). By construction, it is a presentable localization of $Sh_{Nis}^{big}(\sch)$ and by the Corollary 6.5.3.13 of \cite{lurie-htt} it coincides with $Sh_{Nis}^{big}(\sch)^{hyp}$: the localization of $\mathcal{P}^{big}(N(\sch))$ spanned by the objects which are local with respect to the class of \emph{Nisnevich hypercovers}.

Finally, we reach the last step: We will restrict ourselves to those sheaves in $Sh_{Nis}^{big}(\sch)^{hyp}$ satisfying $\mathbb{A}^1$-invariance, meaning those sheaves $F$ such that for any scheme $X$, the canonical map induced by the projection $F(X)\to F(X\times \mathbb{A}^1)$ is an equivalence. More precisely, we consider the localization of $Sh_{Nis}^{big}(\sch)^{hyp}$ with respect to the class of all projection maps $\{X\times \mathbb{A}^1\to X\}_{X\in Obj(\sch)}$. We will write $\hsch$ for the result of this localization 
and write $l_{\mathbb{A}^1}:Sh_{Nis}^{big}(\sch)^{hyp}\to \hsch$ for the localization functor. Notice that $\hsch$ is very big, presentable with respect to the universe $\uniV$. It is also clear from the construction that $\hsch$ comes naturally equipped with a universal characterization: 

\begin{thm}
\label{universalpropertymotivichomotopy}
Let $\sch$ be the category of smooth schemes of finite type over a base noetherian scheme $S$ and let $L:N(\sch)\to \hsch$ denote the composition of localizations

\begin{equation}
N(\sch)\to \mathcal{P}^{big}(N(\sch))\to Sh_{Nis}^{big}(\sch)\to Sh_{Nis}^{big}(\sch)^{hyp}\to \hsch
\end{equation}

Then, for any $(\infty,1)$-category $\D$ with all $\uniV$-small colimits, the map induced by composition with $L$ 

\begin{equation}
Fun^L(\hsch, \D)\to Fun(N(\sch),\D)
\end{equation}
 
\noindent is fully faithful and its essential image is the full subcategory of $Fun(N(\sch),\D)$ spanned by those functors satisfying \emph{Nisnevich descent} and \emph{$\mathbb{A}^1$-invariance}. The left-side denotes the full subcategory of $Fun(\hsch, \D)$ spanned by the colimit preserving maps.
\begin{proof}
The proof follows from the combination of the universal property of presheaves with the universal properties of each localization in the construction and from the fact that for the Nisnevich topology in $\sch$, descent is equivalent to hyperdescent (see \cite[Prop. 5.9]{MR2593670} or \cite[3- 1.16]{voevodsky-morel}) and therefore the localization $Sh_{Nis}^{big}(\sch)\to Sh_{Nis}^{big}(\sch)^{hyp}$ is an equivalence.
\end{proof}
\end{thm}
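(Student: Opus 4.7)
My plan is to prove the theorem by composing the universal properties at each of the four stages in the construction of $\hsch$, translating the defining localization condition at each stage into a concrete condition on functors out of $N(\sch)$. The first step is the universal property of $\mathcal{P}^{big}(N(\sch))$ as the free cocompletion of $N(\sch)$ under $\uniV$-small colimits (Thm.~5.1.5.6 of \cite{lurie-htt}), which gives an equivalence $\Fun^L(\mathcal{P}^{big}(N(\sch)),\D) \simeq \Fun(N(\sch),\D)$ for any $\D$ with $\uniV$-small colimits. For each subsequent reflexive localization $\C_0 \subseteq \C$ appearing in the tower, the restriction $\Fun^L(\C_0,\D) \hookrightarrow \Fun^L(\C,\D)$ is fully faithful and its image consists of those colimit-preserving functors $\C \to \D$ that send the local equivalences (i.e.\ the generating maps of the localization) to equivalences in $\D$.

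The bookkeeping then comes down to identifying, for each of the three localizations $\mathcal{P}^{big} \to Sh_{Nis}^{big} \to Sh_{Nis}^{big,hyp} \to \hsch$, the generating class of morphisms and translating the corresponding locality condition on a colimit-preserving functor $F \colon \mathcal{P}^{big}(N(\sch)) \to \D$ into a condition on its restriction $F|_{N(\sch)}$. For the $\mathbb{A}^1$-localization, the generators are the projections $X \times \mathbb{A}^1 \to X$, and after restriction this becomes exactly $\mathbb{A}^1$-invariance of $F|_{N(\sch)}$. For the Nisnevich sheafification plus hypercompletion, the generators are (hyper)covers, and after restriction this becomes the condition that $F|_{N(\sch)}$ sends elementary Nisnevich squares to pushout squares in $\D$ (equivalently, satisfies Nisnevich descent), using the fact that Nisnevich squares form a basis for the topology and that any representable is automatically a Nisnevich sheaf.

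The main obstacle is the claim, invoked at the end of the proof sketch, that on $\sch$ Nisnevich descent agrees with Nisnevich hyperdescent, which is what lets me collapse the two intermediate stages $Sh_{Nis}^{big}(\sch)$ and $Sh_{Nis}^{big}(\sch)^{hyp}$ and avoid introducing an additional hyperdescent condition in the statement. The justification proceeds by invoking Lurie's criterion \cite[Cor.~7.2.1.12, Thm.~7.2.3.6]{lurie-htt}: the Nisnevich $\infty$-topos has enough points (given by henselizations at points of schemes) and is locally of finite homotopy dimension on a Noetherian base (of bounded Krull dimension, more generally one reduces to this by Noetherian induction on the scheme), so the associated $\infty$-topos is already hypercomplete and the localization $Sh_{Nis}^{big}(\sch) \to Sh_{Nis}^{big}(\sch)^{hyp}$ is an equivalence. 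Once this is established, assembling the three universal properties in sequence yields the desired identification of $\Fun^L(\hsch,\D)$ with the full subcategory of $\Fun(N(\sch),\D)$ spanned by functors satisfying Nisnevich descent and $\mathbb{A}^1$-invariance.
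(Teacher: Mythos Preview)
Your proposal is correct and follows essentially the same approach as the paper: both combine the universal property of presheaves with the universal properties of the successive reflexive localizations, and both rely on the equivalence of Nisnevich descent and hyperdescent to collapse the hypercompletion step. Your version is simply a more detailed expansion of the paper's one-sentence sketch, including an explicit justification (via Lurie's finite homotopy dimension criterion) for the descent/hyperdescent equivalence that the paper asserts without proof.
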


Our goal now is to provide the evidence that $\hsch$ really is the underlying $(\infty,1)$-category of the $\mathbb{A}^1$- model category of Morel-Voevodsky. In fact, we already saw that our first step coincides with the first step in the construction of $\M_{\mathbb{A}^1}$ - simplicial presheaves are a model for $\infty$-presheaves. It remains to prove that our localizations produce the same results of the Bousfield localizations. But of course, this follows from the general results in the appendix of \cite{lurie-htt} applied to the model category $\M:=SPsh(\sch)$. (See our introductory survey \ref{combinatorialmodelcategories}).

\begin{remark}
It is important to remark that the sequence of functors in the Theorem \ref{universalpropertymotivichomotopy} can be promoted to a sequence of monoidal functors with respect to the cartesian monoidal structures

\begin{equation}
N(\sch)^{\times}\to \mathcal{P}^{big}(N(\sch))^{\times}\to Sh_{Nis}^{big}(\sch)^{\times}\simeq (Sh_{Nis}^{big}(\sch)^{hyp})^{\times}\to \hsch^{\times}
\end{equation}

The first is the Yoneda map which we know commutes with limits. The second map is the sheafification functor which we also know is exact. The last functor is a monoidal localization because of the definition of the $\mathbb{A}^1$-equivalences. These localized monoidal structures are cartesian because of the existence of fully faithful right adjoints.
\end{remark}

\subsection{The monoidal structure in $\hschp$}
\label{smashproducts}

Let $\hsch$ be the $(\infty,1)$-category introduced in the last section. Since it is presentable it admits a final object $*$ and the $(\infty,1)$-category of pointed objects $\hschp$ is also presentable (see Proposition 5.5.2.10 of \cite{lurie-htt}. In this case, since the forgetful functor $\hschp\to \hsch$ commutes with limits, by the Adjoint Functor Theorem (see the Corollary 5.5.2.9 of \cite{lurie-htt}) it admits a left adjoint $()_+:\hsch\to \hschp$ which we can identify with the formula $X\mapsto X_+:=X\coprod*$. In order to follow the stabilization methods of Morel-Voevodsky we need to explain how the cartesian product in $\hsch$ extends to a symmetric monoidal structure in $\hschp$ and how the pointing morphism becomes monoidal.

This problems fits in a more general setting. Recall that the $(\infty,1)$-category of spaces $\Spaces$ is the unit for the symmetric monoidal structure $\Prlmonoidal$. In \cite{lurie-ha}-Prop. 6.3.2.11  the author proves that the pointing morphism $-\coprod \ast:\Spaces\to \Spacesp$ endows $\Spacesp$ with the structure of an idempotent object in $\Prlmonoidal$ and proves that its associated local objects are exactly the pointed presentable $(\infty,1)$-categories. It follows from the general theory of idempotents  that the product functor $\C\mapsto \C\otimes \Spacesp$ is a left adjoint to the inclusion functor $\Prl_{Pt}\subseteq \Prl$. Also from the general theory, this left adjoint is monoidal. The final ingredient is that for any presentable $(\infty,1)$-category $\C$ there is an equivalence of $(\infty,1)$-categories $\C_*\simeq \C\otimes \Spacesp$ (see the Example 6.3.1.20 of \cite{lurie-ha}) and via this equivalence, the pointing map $\C\to \C_*$ is equivalent to the product map $id_\C\otimes ()_+:\C\otimes \Spaces\to \C\otimes\Spacesp$ where $()_+$ denotes the pointing map of spaces.  Altogether, we have the following result

\begin{cor}(Lurie)
\label{corollarylurie}
The formula $\C\mapsto \C_*$ defines a monoidal left adjoint to the inclusion $\Prl_{Pt}\subseteq \Prl$ and therefore induces a left adjoint to the inclusion $CAlg(\Prl_{Pt})\subseteq CAlg(\Prl)$. In other words, for any presentable symmetric monoidal $(\infty,1)$-category $\Cmonoidal$, there exists a pointed presentable symmetric monoidal $(\infty,1)$-category $\C_*^{\wedge(\otimes)}$ whose underlying $(\infty,1)$-category is $\C_*$, together with a monoidal functor $\Cmonoidal \to \C_*^{\wedge(\otimes)}$ extending the pointing map $\C\to \C_*$, and satisfying the following universal property:\\

$(*)$ for any  pointed presentable symmetric monoidal $(\infty,1)$-category $\Dmonoidal$, the composition

\begin{equation}
Fun^{\otimes, L}(\C_*^{\wedge(\otimes)}, \Dmonoidal)\to Fun^{\otimes,L}(\Cmonoidal, \Dmonoidal)
\end{equation}

\noindent is an equivalence. 
\end{cor}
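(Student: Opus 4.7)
The plan is to deduce the corollary by combining the cited results of Lurie on idempotent objects with the general formalism of monoidal reflexive localizations from Sections \ref{mfl} and \ref{idemalgebras}. First I would invoke Prop. 6.3.2.11 of \cite{lurie-ha} to recognize the canonical pointing map $e: \Spaces \to \Spacesp$ as exhibiting $\Spacesp$ as an idempotent commutative algebra object in $\Prlmonoidal$. By the general theory of idempotents recalled in \ref{idemalgebras}, the essential image $\Prl_{\Spacesp}$ of the endofunctor $(-) \otimes \Spacesp : \Prl \to \Prl$ is then a full reflexive subcategory of $\Prl$, and it inherits a symmetric monoidal structure for which the localization functor $(-)\otimes \Spacesp: \Prlmonoidal \to \Prl_{\Spacesp}^{\otimes}$ is symmetric monoidal with unit $\Spacesp$.

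The second step would be to identify $\Prl_{\Spacesp}$ with $\Prl_{Pt}$ and the localization $(-)\otimes\Spacesp$ with $(-)_{*}$. This follows from the Example 6.3.1.20 of \cite{lurie-ha}, which provides a natural equivalence $\C_{*} \simeq \C \otimes \Spacesp$ intertwining the pointing morphism $\C \to \C_{*}$ with the canonical map $\mathrm{id}_\C \otimes e: \C \otimes \Spaces \to \C \otimes \Spacesp$. Since a presentable $(\infty,1)$-category is pointed precisely when its pointing morphism is an equivalence, and such an object is in the essential image of $(-)\otimes \Spacesp$ exactly when tensoring with $\Spacesp$ is idempotent on it, this yields the required identification of both the reflexive subcategories and of the two candidate left-adjoints.

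Finally, once $(-)_{*}$ is recognised as a monoidal left-adjoint to the fully faithful inclusion $\Prl_{Pt} \subseteq \Prl$, the remaining assertions follow formally. Applying Remark \ref{reflexivelocalizationalgebras} to the monoidal reflexive localization $\Prl_{Pt}^{\otimes} \subseteq \Prlmonoidal$ produces a left-adjoint to the inclusion $CAlg(\Prl_{Pt}) \subseteq CAlg(\Prl)$, sending $\Cmonoidal$ to the algebra $\C_{*}^{\wedge(\otimes)} := \Cmonoidal \otimes \Spacesp$ computed inside $CAlg(\Prlmonoidal)$; its underlying $(\infty,1)$-category is exactly $\C_{*}$, and the unit morphism of the adjunction extends the pointing map. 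The universal property $(*)$ is then merely the translation of this adjunction into mapping-space language. The only delicate point in the argument is to make sure that the object-level equivalence $\C_{*} \simeq \C \otimes \Spacesp$ of Example 6.3.1.20 is natural in $\C$ and intertwines the two pointing morphisms coherently, so that the two candidate left-adjoints are identified as functors on all of $\Prl$; however this naturality follows directly from the universal property of the tensor product in $\Prlmonoidal$ together with the idempotence of $\Spacesp$.
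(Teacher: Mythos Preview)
Your proposal is correct and follows essentially the same approach as the paper: the argument given in the paragraph immediately preceding the corollary invokes Prop.~6.3.2.11 of \cite{lurie-ha} to exhibit $\Spacesp$ as an idempotent object in $\Prlmonoidal$ with local objects the pointed presentable $(\infty,1)$-categories, uses the general theory of idempotents (Section~\ref{idemalgebras}) to obtain the monoidal left-adjoint $(-)\otimes\Spacesp$, and then identifies this with $(-)_*$ via Example~6.3.1.20 of \cite{lurie-ha}. Your explicit invocation of Remark~\ref{reflexivelocalizationalgebras} for the passage to $CAlg$ and your attention to the naturality of the equivalence $\C_*\simeq \C\otimes\Spacesp$ are welcome clarifications, but the underlying strategy is the same.
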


\begin{remark}
\label{pointedextension}
In the situation of the previous corollary, given a functor $F:\C\to \D$ with $\D$ being pointed, its canonical extension $\tilde{F}$ 

\begin{equation}
\xymatrix{
\C\ar[r]\ar[d]& \D\\
\C_*\ar@{-->}[ru]^{\tilde{F}}&
}
\end{equation}

\noindent is naturally identified with the formula $(u:*\to X)\mapsto cofiber (u)\in \D$.
\end{remark}

The symmetric monoidal structure $\C_*^{\wedge(\otimes)}$ will be called the \emph{smash product induced by $\Cmonoidal$}. Of course, if $\Cmonoidal$ is already pointed we have an equivalence $\C_*^{\wedge(\otimes)}\simeq \Cmonoidal$. In the particular case when $\Cmonoidal$ is Cartesian, we will use the notation $\C_*^{\wedge}:=\C_*^{\wedge(\times)}$.\\

Let us now progress in a different direction. Let $\M$ be a combinatorial simplicial model category. Assume also that $\M$ is cartesian closed and that its final object $*$ is cofibrant. This makes $\M$ a symmetric monoidal model category with respect to the cartesian product and we have an monoidal equivalence

\begin{equation} N_{\Delta}^{\otimes}((\M^{\circ})^{\times})\simeq N_{\Delta}(\M^{\circ})^{\times}\end{equation}

Moreover, because the product is a Quillen bifunctor, $N_{\Delta}(\M^{\circ})^{\times}$ is a presentable symmetric monoidal $(\infty,1)$-category and therefore we equip $N_{\Delta}(\M^{\circ})_{*}$ with a canonical presentable symmetric monoidal structure $N_{\Delta}(\M^{\circ})_{*}^{\wedge}$ for which the pointing map becomes monoidal

\begin{equation}
N_{\Delta}(\M^{\circ})^{\times}\to  N_{\Delta}(\M^{\circ})_{*}^{\wedge}
\end{equation}

Independently of this, we can consider the natural model structure in $\M_*$ (see the Remark 1.1.8 in \cite{hovey-modelcategories}). Again, it is combinatorial and simplicial and comes canonically equipped with a left-Quillen functor $\M\to \M_*$ defined by the formula $X\mapsto X\coprod *$. Moreover, it acquires the structure of a symmetric monoidal model category via the usual definition of the smash product, given by the formula

\begin{equation}
(X,x)\wedge (Y,y):= \frac{(X,x)\times (Y,y)}{(X,x)\vee (Y,y)}
\end{equation}

It is well-known that this formula defines a symmetric monoidal structure with unit given by $(*)_+$ and by the Proposition 4.2.9 of \cite{hovey-modelcategories} it is compatible with the model structure in $\M_*$. Let $N_{\Delta}^{\otimes}(((\M_{*})^{\circ})^{\wedge_{usual}})$ be its underlying symmetric monoidal $(\infty,1)$-category. The same result also tells us that the left-Quillen map $\M\to \M_*$ is monoidal. By the Proposition \ref{inducedmonoidafunctor}, it induces a monoidal map between the underlying symmetric monoidal $(\infty,1)$-categories.

\begin{equation}
f^{\otimes}: N_{\Delta}(\M^{\circ})^{\times}\to  N_{\Delta}^{\otimes}(((\M_{*})^{\circ})^{\wedge_{usual}})
\end{equation}

Of course, $N_{\Delta}^{\otimes}(((\M_{*})^{\circ})^{\wedge_{usual}})$ is a pointed presentable symmetric monoidal $(\infty,1)$-category and by the universal property defining the smash product we obtain a monoidal map

\begin{equation}
N_{\Delta}(\M^{\circ})_{*}^{\wedge(\otimes)}\to N_{\Delta}^{\otimes}(((\M_{*})^{\circ})^{\wedge_{usual}})
\end{equation}

\begin{cor}
\label{pointedmodelmonoidal}
The upper map is an equivalence of presentable symmetric monoidal $(\infty,1)$-categories.
\begin{proof}
Since the map is monoidal, the proof is reduced to the observation that the underlying map  

\begin{equation}
f=f^{\otimes}_{\onefin}:N_{\Delta}(\M^{\circ})_{*}\to N_{\Delta}(((\M_{*})^{\circ})
\end{equation}

\noindent is an equivalence. To prove this, we observe first that since $*$ is cofibrant, we have an equality of simplicial sets $N_{\Delta}(\M^{\circ})_{*}= N_{\Delta}((\M^{\circ})_*)$. Secondly, we observe that the cofibrant-fibrant objects in $(\M_{*})$ are exactly the pairs $(X,*\to X)$ with $X$ cofibrant-fibrant in $\M$ and $*\to X$ a cofibration. This means there is a natural inclusion of $(\infty,1)$-categories $i: N_{\Delta}(((\M_{*})^{\circ})\subseteq N_{\Delta}((\M^{\circ})_*)$. It follows from the definition of the model structure in $\M_*$ that this inclusion is essentially surjective: if $(X,*\to X)$ is an object in $N_{\Delta}((\M^{\circ})_*)$, we consider the factorization of $*\to X$ through a cofibration followed by a trivial fibration in $\M$,

\begin{equation}*\to X'\simeq X\end{equation} 

Of course, $(X',*\to X')$ is an object in  $N_{\Delta}(((\M_{*})^{\circ})$ and it is equivalent to $(X,*\to X)$ in $N_{\Delta}((\M^{\circ})_*)$.

Finally, we notice that the composition $i\circ f: N_{\Delta}(\M^{\circ})\to N_{\Delta}((\M_*)^{\circ})\subseteq  N_{\Delta}((\M^{\circ})_*)$ yields the result of the canonical pointing map $N_{\Delta}(\M^{\circ})\to N_{\Delta}(\M^{\circ})_{*}$. Indeed, the pointing map is characterized by the universal property of the homotopy pushout, and since $*\to X$ in  $N_{\Delta}((\M_*)^{\circ})$ is a cofibration and $X$ is also cofibrant, the coproduct $X\coprod *$ is an homotopy coproduct. The result now follows from the universal property of the pointing map.
\end{proof}
\end{cor}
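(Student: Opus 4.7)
The plan is to reduce the claim about a monoidal equivalence to the statement that the underlying functor of $(\infty,1)$-categories is an equivalence. This reduction is possible because, by the general theory recalled in Section~\ref{section3-1}, a monoidal functor between symmetric monoidal $(\infty,1)$-categories is an equivalence iff it is so on the underlying categories. Once this reduction is made, everything comes down to analyzing the comparison map
\begin{equation*}
f : N_{\Delta}(\M^{\circ})_{*} \longrightarrow N_{\Delta}((\M_*)^{\circ}).
\end{equation*}

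First, I would identify the source more concretely. Since $*$ is cofibrant in $\M$, the pointed construction commutes with taking cofibrant-fibrant objects in the obvious way, giving an equality $N_{\Delta}(\M^{\circ})_{*} = N_{\Delta}((\M^{\circ})_{*})$ of simplicial sets. So the task becomes: compare the comma category $(\M^{\circ})_{*}$ of morphisms $* \to X$ with $X$ cofibrant-fibrant in $\M$ (with no further restriction on the morphism), with the full subcategory $(\M_*)^{\circ}$ of cofibrant-fibrant objects for the induced model structure on $\M_*$, where the latter consists of morphisms $* \to X$ with $X$ cofibrant-fibrant in $\M$ \emph{and} the pointing map being a cofibration.

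Next, there is an evident inclusion $i : N_{\Delta}((\M_*)^{\circ}) \hookrightarrow N_{\Delta}((\M^{\circ})_{*})$, and I would verify it is an equivalence of $(\infty,1)$-categories. Fully faithfulness is immediate since it is induced by a fully faithful inclusion of simplicial categories. Essential surjectivity uses cofibrant replacement in $\M_*$: given $(X, * \to X)$ with $X$ cofibrant-fibrant, factor $* \to X$ as a cofibration followed by a trivial fibration $* \hookrightarrow X' \stackrel{\sim}{\to} X$. Then $X'$ is still cofibrant-fibrant in $\M$ (as the trivial fibration is a weak equivalence and $X$ is fibrant, while $X'$ being cofibrant follows from composing cofibrations $\emptyset \to * \hookrightarrow X'$), so $(X', * \hookrightarrow X') \in (\M_*)^{\circ}$, and it is equivalent to $(X, * \to X)$ in $(\M^{\circ})_{*}$.

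Finally, I need to check that the composition $i \circ f$ is naturally equivalent to the canonical $\infty$-categorical pointing map $N_{\Delta}(\M^{\circ}) \to N_{\Delta}(\M^{\circ})_{*}$, $X \mapsto X \coprod *$. By the universal property of the pointing functor as left adjoint to the forgetful map (characterized as the pushout $X \coprod_{\emptyset} *$), it suffices to identify the composite functor with the homotopy coproduct $X \sqcup *$. Since $X$ is cofibrant, the coproduct in $\M$ computes the homotopy coproduct; and the classical construction in $\M_*$ takes $X \in \M$ to $(X_+, * \hookrightarrow X_+)$, which is exactly this homotopy pushout. I don't expect any real obstacle here: the only subtlety is bookkeeping in the identification $N_{\Delta}(\M^{\circ})_{*} = N_{\Delta}((\M^{\circ})_{*})$ and in the cofibrant replacement argument, both of which rely on $*$ being cofibrant (so that coproducts with $*$ are homotopy coproducts) — which is precisely one of the standing hypotheses.
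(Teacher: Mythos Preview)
Your proposal is correct and follows essentially the same approach as the paper's proof: the same reduction to the underlying functor, the same identification $N_{\Delta}(\M^{\circ})_{*} = N_{\Delta}((\M^{\circ})_{*})$ using cofibrancy of $*$, the same essential surjectivity argument via the cofibration/trivial-fibration factorization, and the same final identification of $i\circ f$ with the canonical pointing map via the universal property of the homotopy coproduct.
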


\begin{remark}
\label{smashnovoigualsmashvelho}
If $\M=\ssets$ is the model category of simplicial sets with the cartesian product, it satisfies the above conditions and we find a monoidal equivalence between $\Spacessmash$ and the underlying symmetric monoidal $(\infty,1)$-category of $\ssets_*$ endowed with the classical smash product of pointed spaces.\\
\end{remark}

\begin{remark}
\label{smashsimplicialpresheaves}
If $\C$ is a simplicial category, the left-Quillen adjunction $\ssets\to \ssets_*$ extends to a left Quillen adjunction $SPsh(\C)\to SPsh_*(\C)$, where $SPsh_*(\C)$ corresponds to the category of presheaves of pointed simplicial sets over $\C$, endowed with the projective model structure (see \cite{lurie-htt}-Appendix). It follows that $SPsh(\C)$ has all the good properties which intervene in the proof of the Corollary \ref{pointedmodelmonoidal} and we find a monoidal equivalence $N_{\Delta}(SPsh(\C)^{\circ})_*^{\wedge}\to N_{\Delta}^{\otimes}((SPsh_*(\C)^{\circ})^{\wedge_{usual}})$ where the last is the underlying symmetric monoidal $(\infty,1)$-category associated to the smash product in $SPsh(\C)_*$.
\end{remark}
 
The Corollary \ref{pointedmodelmonoidal} implies that

\begin{cor}
\label{newsmashgivesclassicalsmash}
Let $\hsch^{\times}$ be the presentable symmetric monoidal $(\infty,1)$-category underlying the model category $\M$ encoding the $\mathbb{A}^1$-homotopy theory of Morel-Voevodsky together with the cartesian product. Let $\mathcal{M}_*$ be its pointed version with the smash product given by the Lemma 2.13 of \cite{voevodsky-morel}. Then, the canonical map induced by the universal property of the smash product 

\begin{equation}
\hschp^{\wedge}\to N_{\Delta}^{\otimes}(((\mathcal{M}_*)^{\circ})^{\wedge})
\end{equation}

\noindent is an equivalence of presentable symmetric monoidal $(\infty,1)$-categories.
\end{cor}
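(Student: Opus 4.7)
The plan is to realize this as a direct application of Corollary \ref{pointedmodelmonoidal} to $\M$, the Morel--Voevodsky $\mathbb{A}^1$-model category. So the task reduces to verifying that $\M$ fits into the hypotheses of that corollary, and to identifying the resulting data with the statement at hand.

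First, I would recall that by construction $\M$ is obtained as a left Bousfield localization of $SPsh(\sch)$ (with the projective model structure) at the union of the class of Nisnevich hypercovers and the class of projections $X\times\mathbb{A}^1 \to X$. The projective model structure on $SPsh(\sch)$ is combinatorial and simplicial, is cartesian closed (as products and internal homs are computed levelwise in $\ssets$), and has cofibrant terminal object (the constant presheaf with value $\Delta[0]$, which is projectively cofibrant). Bousfield localization preserves the combinatorial and simplicial structure, as well as the class of cofibrations and hence the cofibrancy of the terminal object. The non-trivial point is that $\M$ is still cartesian closed as a symmetric monoidal model category: one needs the class of $\M$-equivalences to be stable under cartesian product with any cofibrant object. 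This is the content of Morel--Voevodsky's monoidality statement for the $\mathbb{A}^1$-local Nisnevich-local model structure, and reduces (via Ken Brown) to the corresponding stability for Nisnevich-local equivalences and for $\mathbb{A}^1$-projections, both of which are classical.

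Once these hypotheses are verified, Corollary \ref{pointedmodelmonoidal} applied to $\M$ yields an equivalence of presentable symmetric monoidal $(\infty,1)$-categories
\begin{equation}
N_{\Delta}(\M^{\circ})_{*}^{\wedge(\times)} \xrightarrow{\;\sim\;} N_{\Delta}^{\otimes}\bigl(((\M_{*})^{\circ})^{\wedge_{usual}}\bigr),
\end{equation}
where $\wedge_{usual}$ refers to the smash product of pointed objects in $\M$ of Lemma 2.13 of \cite{voevodsky-morel}, the formula $(X,x)\wedge(Y,y) = (X\times Y)/(X\vee Y)$ used there being precisely the formula used in the proof of Corollary \ref{pointedmodelmonoidal}. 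By the identification of $\hsch$ with the underlying $(\infty,1)$-category of $\M$ established in the previous section, together with the fact that the cartesian symmetric monoidal structure $\hsch^{\times}$ is presented by $\M$ with its cartesian monoidal model structure (so $\hsch^{\times}\simeq N_{\Delta}(\M^{\circ})^{\times}$ as objects of $CAlg(\Prl)$), the left-hand side is monoidally equivalent to $\hschp^{\wedge}$ by the very definition of the smash construction of \ref{corollarylurie} applied to $\hsch^{\times}$. This gives the claim.

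The main obstacle, as indicated above, is the monoidality of the Bousfield localization with respect to the cartesian product of simplicial presheaves, i.e.\ the fact that $\mathbb{A}^1$-local Nisnevich-local equivalences are preserved by cartesian product with a cofibrant object. This is the only place where one uses something beyond abstract nonsense about Bousfield localizations and the general results of Section \ref{section3-4}; it is classical but essential, and without it one cannot even speak of $\hsch^{\times}$ as the underlying symmetric monoidal $(\infty,1)$-category of a symmetric monoidal model category structure on $\M$.
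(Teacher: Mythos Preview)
Your proposal is correct and follows exactly the approach the paper intends: the paper introduces this corollary with the sentence ``The Corollary \ref{pointedmodelmonoidal} implies that'' and gives no further proof, so applying Corollary \ref{pointedmodelmonoidal} to $\M$ is precisely what is meant. Your additional care in checking the hypotheses (combinatorial, simplicial, cartesian closed, cofibrant terminal object) and in isolating the one non-formal input---that the $\mathbb{A}^1$-Nisnevich localization is compatible with the cartesian product---is appropriate; the paper invokes this same fact later (Proposition 2.15 of \cite{voevodsky-morel}) in the proof of Corollary \ref{universalpropertymotives}.
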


In other words and as expected, $\hschp^{\wedge}$ is the underlying symmetric monoidal $(\infty,1)$-category of the classical construction.

\subsection{The Stable Motivic Theory}

As in the classical setting, we may now consider a \emph{stabilized} version of the theory. In fact, two stabilizations are possible - one with respect to the \emph{topological circle} $S^1:= \Delta[1]/\partial \Delta[1]$ (pointed by the image of $\partial \Delta[1]$) and another one with respect to the \emph{algebraic circle} defined as 
$\mathbb{G}_m:=\mathbb{A}^1-\{0\}$. The \emph{motivic stabilization } of the theory is by definition, the stabilization with respect to the product $\mathbb{G}_m\wedge S^1$ which we know (Lemma 4.1 of \cite{voevodsky-icm}) is equivalent to $(\mathbb{P}^1,\infty)$ in $\hschp$. \footnote{It is very easy to deduce this equivalence: take the Nisnevich covering of $(\mathbb{P}^1,1)$ given by two copies of $\mathbb{A}^1$ both pointed at $1$, together with the maps $\mathbb{A}^1\to \mathbb{P}^1$ sending $x\mapsto (1:x)$, respectively, $x\mapsto (x:1)$. Their intersection is  $\mathbb{A}^1-\{0\}$ (also pointed at $1$). The result follows because $\mathbb{A}^1$ is contractible in $\hschp$}

\begin{defn}(Definition 5.7-\cite{voevodsky-icm})
Let $S$ be a base scheme. The \emph{stable motivic $\mathbb{A}^1$ $\infty$-category over $S$} is the underlying $(\infty,1)$-category of the presentable symmetric monoidal $\infty$-category $\stmonoidal$ defined by the formula

\begin{equation}
\stmonoidal:=\hschp^{\wedge}[(\mathbb{P}^1,\infty)^{-1}]
\end{equation}

\noindent as in the Definition \ref{defformal}.
\end{defn}

The standard way to define the stable motivic theory is to consider the combinatorial simplicial symmetric monoidal model category $Sp^{\Sigma}((\M_{\mathbb{A}^1})_{*}, (\mathbb{P}^1,\infty))$ where $(\M_{\mathbb{A}^1})_{*}$ is equipped with the smash product. By the Lemma 4.4 of \cite{voevodsky-icm} together with the Remark \ref{symmetric}, we know that $(\mathbb{P}^1,\infty)$ is symmetric and consequently the Theorem \ref{maintheorem} ensures that $\stmonoidal$ recovers the classical definition. In addition, since we have an equivalence $(\mathbb{P}^1,\infty)\simeq \mathbb{G}_m\wedge S^1$, the universal properties provide canonical monoidal equivalences of presentable symmetric monoidal $(\infty,1)$-categories

\begin{equation}
\stmonoidal\simeq (\hschp^{\wedge})[(\mathbb{G}_m\wedge S^1)^{-1}]\simeq (\hschp^{\wedge})[((\mathbb{P}^1,\infty)\wedge S^1)^{-1}]\simeq ((\hschp^{\wedge})[(S^1)^{-1}])[(\mathbb{P}^1,\infty)^{-1}]
\end{equation}

Since $S^1$ is symmetric in $\Spacessmash$ (see \cite{hovey-modelcategories} Lemma 6.6.2 together with the Remark \ref{remarksymmetric}) it is also symmetric in $\hschp^{\wedge}$ (because it is given by the image of the unique colimit preserving monoidal map $\Spacessmash\to \hschp^{\wedge}$). In this case, we can use the Proposition \ref{main5} to deduce that the underlying $\infty$-category of $(\hschp^{\wedge})[(S^1)^{-1}]$ is equivalent to the stable $\infty$-category $Stab(\hsch)$. Plus, since  $(\hschp^{\wedge})[(S^1)^{-1}]$ is presentable by construction, the monoidal structure is compatible with colimits, thus exact, separately on each variable. We conclude that it is a stable presentable symmetric monoidal $(\infty,1)$-category. 

Finally, because $(\mathbb{P}^1,\infty)$ is symmetric, the Corollary \ref{corolariodacaca} tells us that  $\stmonoidal$ is  a \emph{stable presentable symmetric monoidal $\infty$-category}. In particular its homotopy category is triangulated and inherits a canonical symmetric monoidal structure.

\begin{cor}
\label{universalpropertymotives}
Let $S$ be a base scheme and $\sch$ denote the category of smooth schemes of finite type over $S$, together with the cartesian product. The composition of monoidal functors

\begin{equation}
\theta^{\otimes}:N(\sch)^{\times}\to \mathcal{P}^{big}(N(\sch))^{\times}\to \hsch^{\times}\to \hschp^{\wedge}\to \hschp^{\wedge}[(S^1)^{-1}]\to \stmonoidal
\end{equation}

\noindent satisfies the following universal property: for any pointed presentable symmetric monoidal $(\infty,1)$-category $\Dmonoidal$, the composition map

\begin{equation}Fun^{\otimes,L}(\stmonoidal, \Dmonoidal)\to Fun^{\otimes}(N(\sch)^{\times}, \Dmonoidal)\end{equation}

\noindent is fully faithful and its image consists of those monoidal functors $N(\sch)^{\times}\to \Dmonoidal$ whose underlying functor satisfy Nisnevich descent, $\mathbb{A}^1$-invariance and such that the cofiber of the image of the point at $\infty$, $\xymatrix{S\ar[r]^{\infty}& \mathbb{P}^1}$ is an invertible object in $\Dmonoidal$. Moreover, any pointed presentable symmetric monoidal $(\infty,1)$-category $\Dmonoidal$ admitting such a monoidal functor is necessarily stable.

\begin{proof}
Here, $N(\sch)$ denotes the standard way to interpret an ordinary category as a trivial $(\infty,1)$-category using the nerve. The Yoneda map $j:N(\sch)\to \mathcal{P}^{big}(N(\sch))$ extends to a monoidal map because of the monoidal universal property of presheaves (consult our introductory section on Higher Algebra). By the Proposition 2.15 pg. 74 in \cite{voevodsky-morel}, the localization functor $\mathcal{P}^{big}(N(\sch))\to \hsch$ is monoidal with respect to the cartesian structure and therefore extends to a monoidal left adjoint to the inclusion $\hsch^{\times}\subseteq \mathcal{P}^{big}(N(\sch))^{\times}$. The result now follows from the discussion above, together with the Corollaries \ref{corollarylurie} and \ref{newsmashgivesclassicalsmash}, the Corollary \ref{corolariodacaca}, the Theorem \ref{maintheorem} and Remark \ref{pointedextension}. \\

The last assertion follows from the Remark \ref{monoidalstabilization}, together with the fact that $\mathbb{P}^1$ mod out by the point at infinity is the tensor product of $S^1$ and $G_m$, so that, since we are dealing with monoidal functors, the conditions defining the image of the composition map force the image $S^1$ to be tensor invertible in $\Dmonoidal$. 
\end{proof}
\end{cor}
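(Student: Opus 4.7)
The plan is to peel off $\theta^{\otimes}$ one monoidal morphism at a time, applying at each stage the universal property already established in the paper, and then translate the resulting invertibility condition on $(\mathbb{P}^1,\infty)$ into the cofiber condition in the statement. Since every stable presentable symmetric monoidal $(\infty,1)$-category is in particular pointed, we may assume $\Dmonoidal$ is pointed throughout, which makes the ``pointed'' and ``stable'' steps automatic.

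First, Example \ref{monoidalstructurepresheaves} identifies $Fun^{\otimes,L}(\mathcal{P}^{big}(N(\sch))^{\times},\Dmonoidal)$ with $Fun^{\otimes}(N(\sch)^{\times},\Dmonoidal)$. Second, Nisnevich sheafification followed by $\mathbb{A}^1$-localization defines a monoidal reflexive localization of $\mathcal{P}^{big}(N(\sch))^{\times}$ in the sense of \ref{mfl}: the local equivalences are stable under cartesian product with any object, which is the content of Proposition 2.15 of \cite{voevodsky-morel}. Combined with Theorem \ref{universalpropertymotivichomotopy} this identifies $Fun^{\otimes,L}(\hsch^{\times},\Dmonoidal)$ with the full subcategory of $Fun^{\otimes}(N(\sch)^{\times},\Dmonoidal)$ spanned by the monoidal functors whose underlying functor satisfies Nisnevich descent and $\mathbb{A}^1$-invariance. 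Third, Corollary \ref{corollarylurie} yields $Fun^{\otimes,L}(\hschp^{\wedge},\Dmonoidal)\simeq Fun^{\otimes,L}(\hsch^{\times},\Dmonoidal)$ since $\Dmonoidal$ is pointed.

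Fourth, by Remark \ref{monoidalstabilization} the monoidal map $\hschp^{\wedge}\to\hschp^{\wedge}[(S^1)^{-1}]$ is universal amongst colimit preserving monoidal maps from $\hschp^{\wedge}$ to pointed presentable symmetric monoidal $(\infty,1)$-categories with stable target, and this step is free since $\Dmonoidal$ is stable. Fifth, Proposition \ref{main3} applied to $(\mathbb{P}^1,\infty)$ identifies $Fun^{\otimes,L}(\stmonoidal,\Dmonoidal)$ with the full subcategory of $Fun^{\otimes,L}(\hschp^{\wedge}[(S^1)^{-1}],\Dmonoidal)$ spanned by those monoidal functors sending $(\mathbb{P}^1,\infty)$ to an invertible object. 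To close the loop, I translate this condition back to $N(\sch)$: by Remark \ref{pointedextension}, the canonical extension of a monoidal functor $\phi:\hsch^{\times}\to\Dmonoidal$ to $\hschp^{\wedge}$ sends a pointed object $u:\ast\to X$ to $\mathrm{cofiber}(\phi(u))$, so $(\mathbb{P}^1,\infty)$ is sent to the cofiber of $\phi(\infty):\phi(S)\to\phi(\mathbb{P}^1)$, which is precisely the object required to be invertible.

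The only genuinely nontrivial step is the second: verifying that the combined $\mathbb{A}^1$/Nisnevich localization is a \emph{monoidal} reflexive localization, i.e.\ that condition $(*)$ of \ref{mfl} holds for the local equivalences. Once this compatibility is in place (by \cite{voevodsky-morel}), all remaining steps are bookkeeping with universal properties already established in the paper.
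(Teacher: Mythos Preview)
Your argument is correct and follows essentially the same route as the paper: you peel off the composite using the monoidal universal property of presheaves, the monoidal compatibility of the Nisnevich/$\mathbb{A}^1$ localization (via \cite{voevodsky-morel}), Corollary~\ref{corollarylurie} for the pointed step, Remark~\ref{monoidalstabilization} for the $S^1$-stabilization, the universal property of inversion for $(\mathbb{P}^1,\infty)$, and Remark~\ref{pointedextension} to rewrite the invertibility condition as a cofiber condition. The only cosmetic difference is that the paper cites Theorem~\ref{maintheorem} (the identification with symmetric spectra) where you invoke Proposition~\ref{main3} directly; both are adequate since $\stmonoidal$ is \emph{defined} as the formal inversion.
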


\begin{remark}
\label{remarksingularities}
Thanks to the results of \cite{riou-spanierwhitehead} and to our discussion in the Prop. \ref{cg}, if $k$ is a field admitting resolutions of singularities the collection of dualizable objects in $\stmonoidalk$ forms a family of compact generators (in the sense of \ref{cg}). The same for the collection of projective varieties. In particular the $(\infty,1)$-category $\stk$ is compactly generated.
\end{remark}

\subsection{Description using Spectral Presheaves}
\label{usingpresheavesofspectra1}

In this section we give an alternative description of the symmetric monoidal $(\infty,1)$-category $\stmonoidalk$ using presheaves of spectra. 

\begin{remark}
\label{enrichedyoneda}(Spectral Yoneda's Lemma) Recall (for instance, see the discussion in \ref{stableinfinitycategories}) that any stable $(\infty,1)$-category has a natural enrichment over spectra, determined by means of a universal property. In this remark we explain how to use this universal property to deduce an enriched version of the Yoneda's lemma for spectral presheaves. More precisely, if $\C$ is a small $(\infty,1)$-category, we consider the composition of the Yoneda's embedding with the pointing map followed by stabilization $\Sigma^{\infty}_+\circ j: \C\hookrightarrow \mathcal{P}(\C)\to \mathcal{P}(\C)_{\ast}\to Stab(\mathcal{P}(\C))\simeq Fun(\C^{op}, \Sp)$ (because the stabilization is a limit).  Now, given  an object $X$ in $\C$, we can use the Yoneda's lemma for $\mathcal{P}(\C)$ to construct a natural equivalence of functors $Map_{Fun(\C^{op}, Sp)}(\Sigma^{\infty}_+\circ j(X), -)\to \Omega^{\infty}\circ ev_X$, where $ev_X : Fun(\C^{op}, \Sp) \to \Sp$ is the evaluation map at $X$. This is possible because the delooping of presheaves is computed objectwise. To conclude, since  the composition with $\Omega^\infty$ induces an equivalence $Exc_{\ast}(\C, \Sp)\simeq Exc_{\ast}(\C, \Spaces)$, we can lift the previous natural equivalence to a new one

\begin{equation}
Map^{Sp}_{Fun(\C^{op}, Sp)}(\Sigma^{\infty}_+\circ j(X), -)\to  ev_X
\end{equation}

\noindent which, when evaluated at $F$ gives us the Yoneda's formula we seek. This holds for any universe: if $\C$ is only $\uniV$-small for some universe $\uniV$ we apply the same arguments as above to the $\uniV$-small $(\infty,1)$-category of spectra obtained from the stabilization of the $\uniV$-small $(\infty,1)$-category of spaces.\\
\end{remark}

Now, we start from the $(\infty,1)$-category $N(\sch)$ and consider the very big $(\infty,1)$-category $Fun(N(\sch)^{op}, \widehat{\Sp})$\footnote{Here $\widehat{\Sp}$ denotes the big $(\infty,1)$-category of spectra, obtained from the stabilization of the big $(\infty,1)$-category of spaces $\widehat{\Spaces}$} which is canonically equivalent to $Stab(\mathcal{P}^{big}(N(\sch))_{\ast})$. Using the Remark \ref{monoidalstabilization} we obtain a canonical monoidal structure $Fun(N(\sch)^{op}, \widehat{\Sp})^{\otimes}$ defined by the inversion $\mathcal{P}^{big}(N(\sch))_{\ast}^{\wedge(\otimes)}[(S^1)^{-1}]^{\otimes}$ where $\mathcal{P}^{big}(N(\sch))_{\ast}^{\wedge(\otimes)}$ is the canonical monoidal smash structure given by the Prop. \ref{corollarylurie} extending the monoidal structure $\mathcal{P}^{big}(N(\sch))^{\otimes}$ of \ref{monoidalstructurepresheaves}. 

We proceed as before and perform the localization with respect to the Nisnevich topology and $\mathbb{A}^1$. Extra care is needed, for the class of maps with respect to which we need to localize is not the same as for presheaves of spaces. In order to describe these two classes we recall first that $Fun(N(\sch)^{op},  \widehat{\Sp})$ is a stable presentable $(\infty,1)$-category and by the discussion in \ref{stableinfinitycategories}, for any $G\in  Fun(N(\sch)^{op},  \widehat{\Sp})$ we have a mapping spectrum functor $Map^{Sp}(G, -): Fun(N(\sch)^{op},  \widehat{\Sp})\to  \widehat{\Sp}$ which when composed with $\Omega^{\infty}$ recovers the mapping space functor in $Fun(N(\sch)^{op},  \widehat{\Sp})$. Moreover, because of the universal property that defines it  and because the composition $\Omega^{\infty}Map^{Sp}(G,-)$ commutes with all limits, we conclude that $Map^{Sp}(G,-)$ also commutes with all limits. In particular, by the Adjoint functor theorem, it admits a left adjoint which we shall denote as $\delta_G:  \widehat{\Sp}\to Fun(N(\sch)^{op},  \widehat{\Sp})$ and for any $K\in  \widehat{\Sp}$ and $F\in Fun(N(\sch)^{op},  \widehat{\Sp})$ we have 

\begin{equation}
Map_{\Sp}(K, Map^{Sp}(G,F))\simeq Map_{Fun(N(\sch)^{op},  \widehat{\Sp})}(\delta_G(K), F)
\end{equation}

We can now use this to define the class of maps that generate the Nisnevich localization. Namely, for any Nisnevich square

\begin{equation}
\xymatrix{
W\ar[r]\ar[d]& \ar[d]V\\
U\ar[r]& X
}
\end{equation}

\noindent we consider its image through $\Sigma^{\infty}_{+}\circ j$

\begin{equation}
\xymatrix{
 \Sigma^{\infty}_{+}\circ j(W)\ar[r]\ar[d]& \ar[d] \Sigma^{\infty}_{+}\circ j(V)\\
 \Sigma^{\infty}_{+}\circ j(U)\ar[r]&  \Sigma^{\infty}_{+}\circ j(X)
}
\end{equation}

\noindent in $Fun(N(\sch)^{op},  \widehat{\Sp})$.  Every commutative square like this produces a commutative diagram of natural transformations

\begin{equation}
\xymatrix{
\delta_{ \Sigma^{\infty}_{+}\circ j(W)}\ar[r]\ar[d]& \ar[d]\delta_{ \Sigma^{\infty}_{+}\circ j(V)}\\
 \delta_{\Sigma^{\infty}_{+}\circ j(U)}\ar[r]& \delta_{ \Sigma^{\infty}_{+}\circ j(X)}
}
\end{equation}

\noindent and in particular for each $K\in  \widehat{\Sp}$, a commutative diagram in  $Fun(N(\sch)^{op},  \widehat{\Sp})$

\begin{equation}
\xymatrix{
\delta_{ \Sigma^{\infty}_{+}\circ j(W)}(K)\ar[r]\ar[d]& \ar[d]\delta_{ \Sigma^{\infty}_{+}\circ j(V)}(K)\\
 \delta_{\Sigma^{\infty}_{+}\circ j(U)}(K)\ar[r]& \delta_{ \Sigma^{\infty}_{+}\circ j(X)}(K)
}
\end{equation}

Finally, we localize with respect to the class of maps

\begin{equation}
 \delta_{\Sigma^{\infty}_{+}\circ j(U)}(K)\coprod_{\delta_{ \Sigma^{\infty}_{+}\circ j(W)}(K)}\delta_{ \Sigma^{\infty}_{+}\circ j(V)}(K)\to  \delta_{ \Sigma^{\infty}_{+}\circ j(X)}(K)
\end{equation}

\noindent given by the universal property of the pushout, with $K$ in $( \widehat{\Sp})^{\omega}$ \footnote{Here $( \widehat{\Sp})^{\omega}$ denotes the full subcategory of $ \widehat{\Sp}$ spanned by the compact objects. Recall that $ \widehat{\Sp}\simeq Ind(( \widehat{\Sp})^{\omega})$.} and $W$,$V$,$U$ and $X$ part of a Nisnevich square. Finally, the fact that this class satisfies the required properties follows directly from the definition of the functors $\delta_{ \Sigma^{\infty}_{+}\circ j(-)}$ as left adjoints to $Map^{Sp}$ and from the enriched version of the Yoneda's lemma \ref{enrichedyoneda}.\\

For the $\mathbb{A}^1$ localization, we localize with respect to the class of all induced maps 

\begin{equation}
\delta_{ \Sigma^{\infty}_{+}\circ j(X\times \mathbb{A}^1)}(K)\to \delta_{ \Sigma^{\infty}_{+}\circ j(X)}(K)
\end{equation}

\noindent  with $X$ in $N(\sch)^{op}$ and $K \in ( \widehat{\Sp})^{\omega}$.

We observe that these localizations are monoidal. This  follows because for any two objects $G$ and $G'$ in $Fun(N(\sch)^{op},  \widehat{\Sp})$, we have

\begin{eqnarray}
Map_{Fun(N(\sch)^{op},  \widehat{\Sp})}(\delta_{G\otimes G'}(K), F)\simeq Map_{ \widehat{\Sp}}(K, Map^{Sp}(G\otimes G',F))\simeq\\
\simeq Map_{ \widehat{\Sp}}(K, Map^{Sp}(G ,\underline{Hom}(G',F))\simeq Map_{Fun(N(\sch)^{op},  \widehat{\Sp})}(\delta_{G}(K), \underline{Hom}(G',F))\simeq\\
\simeq Map_{Fun(N(\sch)^{op},  \widehat{\Sp})}(\delta_{G}(K)\otimes G',F)) 
\end{eqnarray}

\noindent where $\underline{Hom}$ denotes the internal-hom in $Fun(N(\sch)^{op},  \widehat{\Sp})$\footnote{which exists because $Fun(N(\sch)^{op},  \widehat{\Sp})$ is a stable presentable symmetric monoidal $(\infty,1)$-category}. In particular, the previous chain of equivalences implies $\delta_{G\otimes G'}(K)\simeq \delta_{G}(K)\otimes G'$. 

Finally, we denote the result of both these localizations as $Fun^{Nis}_{\mathbb{A}^1}(N(\sch)^{op},  \widehat{\Sp})^{\otimes}$.  To conclude, we invert $\mathbb{G}_m$ and obtain a new stable presentable symmetric monoidal $(\infty,1)$-category which by the universal properties involved, is canonically monoidal equivalent to $\stmonoidalk$.

\subsection{Stable Presentable Symmetric Monoidal $(\infty,1)$-Categories of Motives over a Scheme}

One important result in the subject of motives (see \cite{rodingsostaer}-Theorem 1.1) tells us that the homotopy category of modules over the \emph{motivic Eilenberg-Maclane spetrum} $M\mathbb{Z}$ in 

\begin{equation}h(\st)\simeq h(Sp^{\Sigma}((\M_{\mathbb{A}^1})_{*}, (\mathbb{P}^1,\infty)))\end{equation}

\noindent is (monoidal) equivalent to the triangulated category of motives constructed by Voevodsky in \cite{voevodsky-triangulatedmotives} (whenever $S$ is field of characteristic zero).\\
 
This brings the study of motives to the realm of abstract homotopy theory: it is encoded in the homotopy theory of module objects in $Sp^{\Sigma}((\M_{\mathbb{A}^1})_{*}, (\mathbb{P}^1,\infty))$. However, this is exactly one of those situations where the theory of strictly commutative algebra-objects and their associated theories of modules do not have satisfactory model structures in the sense of the Section \ref{link2}. Therefore, this is exactly one of those situations where the techniques of higher algebra are crucial: they provide a direct access to the $(\infty,1)$-category of commutative algebra objects $CAlg(\st)$ where we can recognize the $K$-theory ring spectrum $\mathbb{K}$ (see for instance \cite{gepner-algebraiccobordismalgebraicKtheory} and \cite{1010.3944}) and the motivic Eilenberg-MacLane spectrum $H\mathbb{Z}$. Moreover, we also have direct access to the theory of modules $\Mot:=Mod_{H\mathbb{Z}}(\st)$. In addition, since $\stmonoidal$ is a presentable symmetric monoidal $(\infty,1)$-category, $\Mot$ is also presentable and inherits a natural symmetric monoidal structure $\Motmonoidal$. Plus, since $\stmonoidal$ is stable, the $(\infty,1)$-category $\Motmonoidal$ is also stable because an $\infty$-category of modules-objects over an algebra in a stable symmetric monoidal $\infty$-category, is stable. Therefore, the homotopy category $h(\Mot)$ carries a canonical triangulated structure and by our results and the main result of \cite{rodingsostaer} it is equivalent to the triangulated category of motives of Voevodsky.\\

We should now be able to reproduce the results of \cite{ayoub1, ayoub2} and \cite{cisinski-tcmm} in this new setting. More precisely, the assignment $S\mapsto \Motmonoidal$ should be properly understood as an $\infty$-functor with values in stable presentable symmetric monoidal $(\infty,1)$-categories and we should study its descent properties and verify the \emph{six-operations} (which have recently been well-understood and reformulated in the setting of symmetric monoidal $(\infty,1)$-categories \cite{1211.5948,1211.5294}).

\section{Noncommutative Motives - Motivic Stable Homotopy Theory of Noncommutative Spaces over a Ring}
\label{section6}

Our goal in this section is to formulate a motivic stable homotopy theory for the noncommutative spaces of Kontsevich \cite{kontsevich3, kontsevich2, kontsevich1}. This new construction will be canonically related to the classical theory for schemes by means of the universal property proved in the previous chapter of this paper. We start with a small survey of the main notions and results concerning the Morita theory of dg-categories and its relation to the notion of finite type introduced by Toën-Vaquié in \cite{toen-vaquie}. In the second part we review how a classical scheme gives birth to a dg-category $L_{qcoh}(X)$ with a compact generator and compact objects given by the perfect complexes of quasi-coherent sheaves. It follows that $L_{pe}(X)$ - the full sub-dg-category of $L_{qcoh}(X)$ spanned by the compact objects - is enough to recover the whole $L_{qcoh}(X)$. In \cite{kontsevich1}, Kontsevich proposes the dg-categories of the form $L_{pe}(X)$ as the natural objects of noncommutative geometry. We recall his notions of smoothness and properness for dg-categories and how they relate to the notion of finite type. The last is the appropriate notion of smoothness while the notion of Kontsevich should be understood as "formal smoothness". Following this, we define the $(\infty,1)$-category of smooth noncommutative spaces $\nc$ as the opposite of the $(\infty,1)$-category of dg-categories of finite type and explain how the formula $X\mapsto L_{pe}(X)$ can be arranged as a monoidal functor from schemes to $\nc$. Finally, we perform the construction of a motivic stable homotopy theory for these new noncommutative spaces. After the work in the previous chapter, our task is reduced to find the appropriate analogue for the Nisnevich topology in the noncommutative setting.

\subsection{ Preliminaries on Dg-categories}

\subsubsection{The Homotopy Theory of dg-categories over a ring}
\label{dg1}

For a first contact with the subject we recommend the highly pedagogical expositions in \cite{sedano, ondgcategories}. In order to make our statements precise, we will need to work with three universes $\uniU\in \uniV\in \uniW$ (which we will assume big enough to fit our purposes). The reader is invited to verify that none of the definitions and constructions depends on the choice of the universes. The $\uniU$-small objects will be called \emph{small}, the $\uniV$-small \emph{big} and the $\uniW$-small, \emph{very big}. We redirect the reader to the Section \ref{notations} for our notations and 
conventions. \\

Let $\uniU$ be our fixed base universe. For the rest of this section we fix a small commutative ring $k$ and following the discussion in \ref{complexes} we denote by $Ch(k)$ the big category of (unbounded) complexes of small $k$-modules. By definition, a small dg-category $T$ is a small category enriched over $Ch(k)$. In other words, $T$ consists of: a small collection of objects $Ob(T)$, for every pair of objects $x,y\in Ob(T)$ a complex of small $k$-modules $T(x,y)$ and composition maps $T(x,y)\otimes T(y,z)\to T(x,z)$ satisfying the standard coherences of a composition. To every small dg-category $T$ we can associated a classical small category $[T]$ with the same objects of $T$ and morphisms given by the zero-homology groups $H^0(T(x,y))$. It is called the \emph{homotopy category of $T$}.\\

Let $\Cat_{Ch(k)}$ the big category of all small dg-categories together with the $Ch(k)$-enriched functors (dg-functors for short).

\begin{remark}
We can of course give sense to these notions within any universe. In our context we will denote by $\Cat_{Ch(k)}^{big}$ the very big category of big dg-categories. We will allow ourselves not to mention the universes whenever the context applies for any universe.
\end{remark}

Let us provide some important simple examples:

\begin{example}
\label{dgalgebrasdgcats}
Given a $k$-dg-algebra $A$, we denote by $A_{dg}$ the dg-category with a single object and $A$ as endomorphisms, with composition given by the multiplicative structure of $A$. This assignment provides a fully-faithful functor $(-)_{dg}:Alg(Ch(k)) \hookrightarrow \Cat_{Ch(k)}$. With this, we a have a commutative diagram of fully-faithful functors between ordinary categories

\begin{equation}
\xymatrix{
k-algebras \ar@{^{(}->}[r] \ar@{^{(}->}[d]&\ar@{^{(}->}[d] k-additive Cats\\
Alg(Ch(k))\ar@{^{(}->}[r]^{(-)_{dg}}&\Cat_{Ch(k)}
}
\end{equation}

\noindent where the horizontal maps understand an algebra as a category with one object and the vertical maps understand an object as concentrated in degree zero.
\end{example}

\begin{example} 
The category of complexes $Ch(k)$ has a natural structure of dg-category with the enrichement given by the internal-hom described in \ref{complexes}. We will write $Ch_{dg}(k)$ to denote $Ch(k)$ together with this enrichement.
\end{example}

\begin{construction}
Every model category $\M$ with a compatible $Ch(k)$-enrichment (see Def. 4.2.6 of \cite{hovey-modelcategories}) provides a new dg-category $Int(\M)$: the full dg-category of $\M$ spanned by the cofibrant-fibrant objects (usually called the \emph{underlying dg-category of $\M$}). Also in this case, the homotopy category $h(Int(\M))$ recovers the usual homotopy category of $M$. We will call $Int(\M)$ the \emph{underlying dg-category of $\M$}. This machine is crucial to the foundational development of the theory. The dg-category $Ch_{dg}(k)$ of the previous item is a canonical example of this situation since its model structure is compatible with the $Ch(k)$-enrichment. In this case, $Int(Ch_{dg}(k))$ is the full dg-subcategory of $Ch_{dg}(k)$ spanned by the cofibrant complexes (all objects are fibrant). \\
\end{construction}

If $T$ and $T'$ are dg-categories, we can form a third dg-category $T\otimes T'$ where the objects are the pairs $(x,y)$ with $x$ an object in $T$ and $y$ in $T'$ and the mapping complex $T\otimes T'((x,y), (x',y'))$ is given by the tensor product of complexes in $Ch(k)$, $T(x,x')\otimes T(y,y')$. This formula endows $Cat_{Ch(k)}$ with a symmetric monoidal structure with unit $\mathit{1}_{k}$ given by the dg-category with a single object and $k$ as its complex of endomorphisms. We can use the general arguments of \cite{kelly-enriched} to deduce the existence of an internal-hom functor. In other words, given $T$ and $T'$ there is a new dg-category $\underline{Hom}(T,T')$ and a natural isomorphism
\begin{equation}
Hom_{Cat_{Ch(k)}}(T'', \underline{Hom}(T,T'))\simeq Hom_{Cat_{Ch(k)}}(T''\otimes T, T')
\end{equation}

In particular the objects of $\underline{Hom}(T,T')$ are the $Ch(k)$-enriched functors and the morphisms can be identified with the $Ch(k)$-natural transformations.\\

\begin{construction}
If $T$ is a dg-category, the \emph{dg-category of $T$-dg-modules} is defined by the formula

\begin{equation}Ch(k)^T:=\underline{Hom}(T, Ch_{dg}(k))\end{equation}

An object $E\in Ch(k)^T$ can be naturally identified with a formula that assigns to each object $x\in T$ a complex $E(x)$ and for each pair of objects $x,y$ in $T$, a map of complexes $T(x,y)\otimes E(x)\to E(y)$ compatible with the composition in $T$.

The dg-category of dg-modules carries a natural $Ch(k)$-model structure induced from the one in $Ch(k)$, with weak-equivalences and fibrations determined objectwise. More generally, for any $Ch(k)$-model category $\M$, the \emph{dg-category of $T$-modules with values in $\M$} is defined by the formula $\M^T:=\underline{Hom}(T, \M)$. If $\M$ is a cofibrantly generated model category then we can equip $\M^T$ with the projective model structure and we can check that this is again compatible with the dg-enrichment. This construction can be made functorial in $T$. More precisely, if $f:T\to T'$ is a dg-functor, we have a canonical restriction functor $f^*:\M^{T'}\to \M^T$ defined by sending a $T'$-module $F$ to the composition $F\circ f$. This functor admits a left adjoint $f_!$ and the pair $(f_!, f^*)$ forms a Quillen adjunction compatible with the $Ch(k)$-enrichment. 
\end{construction}

\begin{remark}
\label{bimodulesandcoisas}
If $T$ is the dg-category of the form $A_{dg}$ as in the example \ref{dgalgebrasdgcats}, $Ch(k)^T$ can be naturally identified with the category of left $A$-modules in $Ch(k)$. In particular, when endowed with the projective model structure, the dg-category $Int(Ch(k)^T)$ is a dg-enhancement of the classical derived category of $A$, in the sense that the $[Int(Ch(k)^T)]\simeq D(A)$. Another important case is when $T$ is associated to the product of two dg-algebras $A\otimes B^{op}$. In this case the category of $T$-modules is naturally isomorphic to $BiMod(A,B)(Ch(k))$.
\end{remark}

In practice we are not interested in the strict study of dg-categories but rather on what results from the study of complexes up to quasi-isomorphisms. A \emph{Dwyer-Kan} equivalence of dg-categories is an (homotopic) fully faithful $Ch(k)$-enriched functor $f:T\to T'$ (ie, such that the induced maps $T(x,y)\to T'(f(x),f(y))$ are weak-equivalences in $Ch(k)$) such that the functor induced between the homotopy categories $[T]\to [T']$ is essentially surjective. Of course, if $T\to T'$ is a Dwyer-Kan equivalence, the induced functor $[T]\to [T']$ is an equivalence of ordinary categories. It is the content of \cite{tabuada-quillen} that $\Cat_{Ch(k)}$ admits a (non left-proper) cofibrantly generated model structure to study these weak-equivalences. Moreover, the model structure is combinatorial because $Cat_{Ch(k)}$ is known to be presentable (see \cite{MR1870617}). The fibrations are the maps $T\to T'$ such that the induced applications $T(x,y)\to T'(f(x),f(y))$ are surjections (meaning fibrations in $Ch(k))$ and the map induced between the associated categories has the lifting property for isomorphisms. Therefore, every object is fibrant and the cofibrant objects, which are more difficult to describe, are necessarily enriched over cofibrant complexes (see Prop. 2.3 in \cite{Toen-homotopytheorydgcatsandderivedmoritaequivalences}). We will address to this model structure as the "standard one".\\

\begin{remark}
\label{modulesbehavewellunderequivalence}
The theory of modules over dg-categories is well-behaved with respect to the Dwyer-Kan equivalences. By the Proposition 3.2 of \cite{Toen-homotopytheorydgcatsandderivedmoritaequivalences}, if $f:T\to T'$ is a Dwyer-Kan equivalence of dg-categories, then the adjunction $f^*:\M^{T'}\to \M^T$  is a Quillen equivalence if one of the following situations hold: $(i)$ $T$ and $T'$ are locally cofibrant (meaning: enriched over cofibrant complexes); $(ii)$ the product (using the $Ch(k)$-action) of a cofibrant object $A$ in $\M$ with a weak-equivalence of complexes $C\to D$ is a weak-equivalence in $\M$. In particular the second condition holds if $\M= Ch(k)$. Moreover, by the Proposition 3.3 of \cite{Toen-homotopytheorydgcatsandderivedmoritaequivalences}, if $T$ is locally cofibrant then the evaluation functors $ev_x:\M^T\to \M$ sending $F\mapsto F(x)$, preserve fibrations, cofibrations and weak-equivalences. In particular, $Int(\M^T)$ is made of objectwise cofibrant-fibrant objects in $\M$.\\
\end{remark}

The information of this homotopical study is properly encoded in a new big\footnote{because $\Cat_{Ch(k)}$ is big, its nerve is a a big simplicial set and the localization is obtained as a cofibrant-fibrant replacement in the model category of big marked simplicial sets} $(\infty,1)$-category

\begin{equation}
\dg:= N(\Cat_{Ch(k)})[W_{DK}^{-1}]
\end{equation}

\noindent where $W_{DK}$ denote the collection of all Dwyer-Kan equivalences. Because the homotopy category $h(\dg)$ recovers the ordinary localization in $\Cat$, the objects of $\dg$ can be again identified with the small dg-categories. Notice that in this situation we cannot apply the Proposition \ref{dwyer-kan} because $\Cat_{Ch(k)}$ is not a simplicial model category. 

\begin{remark}
It is important to remark that the inclusion of very big categories $Cat_{Ch(k)} \subseteq Cat_{Ch(k)}^{big}$ is compatible with the model structure of \cite{tabuada-quillen} and we have a fully-faithful map of very big $(\infty,1)$-categories $\dg\subseteq \dg^{big}$ (where the last is defined by the same formula using the theory for $\uniV$-small simplicial sets).
\end{remark}

\begin{remark}
The inclusion $(-)_{dg}:Alg(Ch(k)) \hookrightarrow Cat_{Ch(k)}$ sends weak-equivalences of dg-algebras to Dwyer-Kan equivalences. The localization gives us a canonical map of $(\infty,1)$-categories $Alg(\derivedk)\to \dg$ (see \ref{complexes}). As pointed to me by B. Toën, this map is not fully-faithfull anymore. To see this, we observe that the new map $Alg(\derivedk)\to \dg$ factors as $Alg(\derivedk)\to \dg_{\ast}\to \dg$ where $\dg_{\ast}$ denotes the $(\infty,1)$-category of pointed objects in $\dg$. The first map in the factorization is fully-faithful but the second one is not. Indeed, if $A$ and $B$ are two dg-algebras, the mapping space $Map_{\dg_{\ast}}(A_{dg}, B_{dg})$ can be obtained as the homotopy quotient of $Map_{\dg}(A,B)$ by the action of the simplicial group of units in $B$. 
\end{remark}

\begin{remark}
The theory of \emph{$A_{\infty}$-categories} of \cite{kontsevich-soibelman} provides an equivalent approach to the homotopy theory of dg-categories.
\end{remark}

We now collect some fundamental results concerning the inner structure of the $(\infty,1)$-category $\dg$.

\begin{enumerate}
\item \emph{ Existence of Limits and Colimits}: This results from the fact that the model structure in $Cat_{Ch(k)}$ is combinatorial, together with the Proposition A.3.7.6 \cite{lurie-htt} and the main result of \cite{dugger-combinatorial}.

\item \emph{Symmetric Monoidal Structure:} Notice that $Cat_{Ch(k)}$ is \emph{not} a symmetric monoidal model category in the sense of the Definition 4.2.6 in \cite{hovey-modelcategories}. For instance, the product of the two cofibrant objects is not necessarily cofibrant (Exercise 14 of \cite{sedano}). Therefore, we cannot apply directly the abstract-machinery reviewed in the Section \ref{link2} to deduce the existence of a monoidal structure in $\dg$. Luckily, we can overcome this problem and extend the monoidal structure to $\dg$ even under these bad circumstances.\\

We observe first that the product of dg-categories whose hom-complexes are cofibrant in $Ch(k)$ (also called \emph{locally cofibrant}) is again a dg-category with cofibrant hom-complexes. This follows from the fact that $Ch(k)$ is a symmetric monoidal model category and so the product of cofibrant complexes is again a cofibrant complex. Second, we notice that the product of weak-equivalences between dg-categories with cofibrant hom-complexes is again a weak-equivalence. It is enough to check that for any triple of locally-cofibrant dg-categories $T$, $T'$ and $S$ and for any weak-equivalence $T\to T'$, the product $T\otimes S\to T'\otimes S$ is again a weak-equivalence. The fact that the map between the homotopy categories is essentially surjective is immediate. Everything comes down to prove that if $M$ is a cofibrant complex of $k$-modules and $N\to P$ is quasi-isomorphism between cofibrant complexes then $M\otimes N\to M\otimes P$ is also a quasi-isomorphisms of complexes. Again this follows because $Ch(k)$ is a symmetric monoidal model category.

The first conclusion of this discussion is that the full-subcategory $Cat_{Ch(k)}^{loc-cof}$ of $Cat_{Ch(k)}$ spanned by the locally-cofibrant dg-categories, is closed under tensor products and contains the unit of $Cat_{Ch(k)}$ and therefore inherits a symmetric monoidal structure

We have inclusions

\begin{equation}Cat_{Ch(k)}^{cof}\subseteq Cat_{Ch(k)}^{loc-cof}\subseteq Cat_{Ch(k)}\end{equation}

mapping weak-equivalences to weak-equivalences. Since in $Cat_{Ch(k)}$ we can choose a functorial cofibrant-replacement $Q$ preserving the objects (see \cite{tabuada-quillen} for details) and together with the inclusions in the previous diagram, $Q$ produces equivalences of $(\infty,1)$-categories 

\begin{equation}N(Cat_{Ch(k)}^c)[W_{c}^{-1}]\simeq \dgloccof\simeq \dg\end{equation}

where we set $\dgloccof:=N(Cat_{Ch(k)}^{loc-cof})[W_{loc-cof}^{-1}]$. \\

Finally, since the symmetric monoidal structure in $N(Cat_{Ch(k)}^{loc-cof})$ preserves weak-equivalences, by the discussion in \ref{link2}, the localization $\dgloccof$ inherits a natural symmetric monoidal structure  $\dgloccofmonoidal\to N(\Fin)$ obtained as the monoidal localization of $Cat_{Ch(k)}^{loc-cof}$ seen as a trivial simplicial coloured operad (followed by the Grothendieck construction). We can now use the equivalence $\dgloccof\simeq \dg$ to give sense to the product of two arbitrary dg-categories $T\otimes^{\mathbb{L}}T'\simeq Q(T)\otimes T'$ . This recovers the famous formula for the derived tensor product. 

\begin{remark}
\label{dgalgebrasdgcatsmonoidal}
The category of strict $k$-dg-algebras inherits a symmetric monoidal structure, obtained by tensoring the underlying complexes. It follows that $(-)_{dg}: Alg(Ch(k))\hookrightarrow Cat_{Ch(k)}$ is monoidal. Moreover, since the product of cofibrant dg-algebras remains a dg-algebra with a cofibrant underlying complex (as proved in \cite{schwede-shipley-algebrasandmodulesinmonoidalmodelcategories}), we can use the Remark \ref{last3} and the discussion in \ref{complexes} to deduce that the induced inclusion 
$(-)_{dg}: Alg(\derivedk)\subseteq \dgloccof\simeq \dg$ is a monoidal functor.
\end{remark}

\begin{notation}
Following the previous remark, we will sometimes abuse the notation and identify a dg-algebra $A$ with its associated dg-category $A_{dg}$.
\end{notation}

\item \emph{The Mapping spaces in $\dg$}: The first important technical result of \cite{Toen-homotopytheorydgcatsandderivedmoritaequivalences} is the characterization of the mapping spaces $Map_{\dg}(T,T')$. The description uses the monoidal structure introduced in the previous item: from the input of two dg-categories $T$ and $T'$, we consider the $Ch(k)$-model category of $T\otimes^{\mathbb{L}} (T')^{op}:= Q(T)\otimes (T')^{op}$-dg-modules. Again, this homotopy theory is properly encoded in the $(\infty,1)$-category

\begin{equation}(T,T')_{\infty}:=N(Ch(k)^{T\otimes^{\mathbb{L}} (T')^{op}})[W_{qis}^{-1}]\end{equation}

\noindent inside which we can isolate the full subcategory spanned by the \emph{right quasi-representable objects}. \footnote{By definition, these are the $T\otimes (T')^{op}$- modules $F$ such that for any $x\in T$, there is an object $f_x \in T'$ and an isomorphism between $F(x,-)$ and $T'(-,f_x)$ in the homotopy category of $(T')^{op}$-modules} which we denote here as $rrep(T,T')_{\infty}$. By the Theorem 4.2 of \cite{Toen-homotopytheorydgcatsandderivedmoritaequivalences}, there is an explicit isomorphism of homotopy types
\begin{equation}
Map_{\dg}(T,T')\simeq rrep(T,T')_{\infty}^{\simeq}
\end{equation}

where $rrep(T,T')_{\infty}^{\simeq}$ denotes the $\infty$-groupoid of equivalences in $rrep(T,T')_{\infty}$.

\begin{remark}
\label{nc1nerveequivalencesequalsmaximalgrupoid}
The original formulation of this theorem in \cite{Toen-homotopytheorydgcatsandderivedmoritaequivalences} uses another presentation of the Kan-complex $rrep(T,T')^{\simeq}$. Let $\M$ be a model category with weak-equivalences $W$. In one direction, we can consider the subcategory $W$ of $\M$ consisting of all the objects in $\M$ together with the weak-equivalences between them. The inclusion $W\subseteq \M$ sends weak-equivalences to weak-equivalences and by using the nerve we have a natural homotopy commutative diagram of $(\infty,1)$-categories

\begin{equation}
\xymatrix{
N(W)\ar[r]\ar[d]& N(M)\ar[d]\\
N(W)[W^{-1}]\ar[r]&N(M)[W^{-1}]
}
\end{equation}

The $(\infty,1)$-category $N(W)[W^{-1}]$ is a Kan-complex (because every arrow is invertible)\footnote{See for instance the Proposition 1.2.5.1 of \cite{lurie-htt}} and therefore, the map $N(W)[W^{-1}]\to N(M)[W^{-1}]$ factors as $N(W)[W^{-1}]\to N(M)[W^{-1}]^{\simeq}\subseteq N(M)[W^{-1}]$. This map is a weak-equivalence of simplicial sets for the Quillen structure because $M$ is a model category. It results from the foundational works of Dwyer and Kan (see the Proposition 4.3 of \cite{dwyer-kan-simpliciallocalizationofcategories} ) that the canonical map $N(W)\to N(W)[W^{-1}]$ is also weak-equivalences of simplicial sets for the standard model structure.
\end{remark}

\item \emph{The monoidal structure in $\dg$ is closed:} More precisely, by the Theorem 6.1 of \cite{Toen-homotopytheorydgcatsandderivedmoritaequivalences}, for any three small dg-categories $A$, $B$ and $C$, there exists a new small dg-category $\mathbb{R}\underline{Hom}(B,C)$ (in the same universe of $B$ and $C$ - see Proposition 4.11 of \cite{Toen-homotopytheorydgcatsandderivedmoritaequivalences}) and functorial isomorphisms of homotopy types

\begin{equation}
Map_{\dg}(A\otimes^{\mathbb{L}}B, C)\simeq Map_{\dg}(A, \mathbb{R}\underline{Hom}(B,C))
\end{equation}

Furthermore, $\mathbb{R}\underline{Hom}(B,C)$ is naturally equivalent in $\dg$ to the full essentially small sub-dg-category $Int(B\otimes^L C^{op}-Mod))_{rrep}$ of $Int(B\otimes^L C^{op}-Mod))$ spanned by the right quasi-representable modules.

An immediate implication of this result is that the derived tensor product is compatible with colimits on each variable separately. \\

\item \emph{Existence of dg-localizations}: The description of the mapping spaces in $\dg$ allows us to prove the existence of a localization process inside the dg-world. By the Corollary 8.7 of \cite{Toen-homotopytheorydgcatsandderivedmoritaequivalences}, given a dg-category $T\in \dg$ together with a class of morphisms $S$ in $[T]$ we can formally construct a new dg-category $L_ST$ together with a map $T\to L_ST$ in $\dg$, such that for any dg-category $T'$ the composition map 

\begin{equation}
Map_{\dg}(L_ST, T')\to Map^S_{\dg}(T,T')
\end{equation}

is a weak-equivalence. Here $Map^S_{\dg}(T,T')$ denotes the full simplicial set of $Map_{\dg}(T,T')$ given by the union of all connected components corresponding to morphisms in $h(\dg)$ sending $S$ to isomorphisms in $[T']$. 
Another way to formulate this is to say that for each pair $(T,S)$, the functor $\dg\to \Spaces$ sending $T'\mapsto Map^S_{\dg}(T,T')$ is co-representable.\\

\begin{remark}
\label{localizationdg}
This localization allows us to prove a dg-analogue of a fundamental result of Quillen for model categories: for a $Ch(k)$-model category $\M$, the dg-localization of $\M$ with respect to its weak-equivalences is equivalent to $Int(\M)$ (see \cite{sedano}). This motivates the terminology for the last.
\end{remark}

\end{enumerate}

\subsubsection{Morita Theory of dg-categories}
\label{morita}

Let $T$ be a small dg-category. The enriched version of the Yoneda's Lemma allows us understand $T$ as a full sub-dg-category of $Ch(k)^{T^{op}}$.

\begin{equation}
h:T\to Ch(k)^{T^{op}}
\end{equation}

This big dg-category admits a compatible model structure induced from the one in $Ch(k)$. It is well known that this model structure is stable so that its homotopy category inherits a canonical triangulated structure where the exact triangles are the image of the homotopy fibration sequences through the localization map. It is an important remark that for each $x \in T$, the representable $h_x$ is a cofibrant $T^{op}$-module (it follows again from the Yoneda's lemma and the fact that fibrations are defined as levelwise surjections). By setting $\widehat{T}:=Int(Ch(k)^{T^{op}})$, there is a factorization
\begin{equation}
T\to \widehat{T}\subseteq Ch(k)^{T^{op}}
\end{equation}

\noindent and from now will use the letter $h$ to denote the first map in this factorization. Of course, when $T=\mathit{1}_{k}$ we have $(\mathit{1}_{k})^{op}-Mod= Ch(k)$ and therefore $\widehat{\mathit{1}_{k}}=Int(Ch(k))$. \\

\begin{remark}
\label{omega1}
It is important to notice that if $T$ is a small locally-cofibrant dg-category, then $\widehat{T}$ will not be locally-cofibrant in general. However, we will see in the Remark \ref{emerald1} that it is possible to provide an alternative construction of the assignement $T\mapsto \widehat{T}$ that preserves the condition of being locally-cofibrant.
\end{remark}

It is also important to remark that the passage $T\mapsto \widehat{T}$ is (pseudo) functorial. If $f:T\to T'$ is a dg-functor, we have a natural restriction map

\begin{equation}
Ch(k)^{(T')^{op}}\to Ch(k)^{T^{op}} 
\end{equation}

\noindent induced by the composition with $f^{op}$. As a limit preserving map between presentable, it admits a left adjoint and this adjunction is compatible with the model structures. Since all objects are fibrant, the left adjoint restricts to a well-define map $\widehat{T}\to \widehat{T'}$.\\

We finally come to the subject of (derived) Morita theory. Classically, it can described as the study of algebras up to their (derived) categories of modules. The version for small dg-categories implements the same principle: it is the study of small dg-categories $T$ up to their derived dg-categories of modules $\widehat{T}$. It generalizes the classical theory for algebras for when $T$ is a dg-category coming from an algebra $A$ the homotopy category of $\widehat{T}$ recovers the derived category of $A$. We will see that the following three constructions are equivalent:

\begin{enumerate}[a)]
\item the localization of $Cat_{Ch(k)}$ with respect to the class of dg-functors $T\to T'$ for which the induced map
$\widehat{T}\to \widehat{T'}$ is a weak-equivalence of dg-categories;
\item the full subcategory of $\dg$ spanned by the \emph{idempotent complete} dg-categories;
\item the (non-full) subcategory of $\dg^{loc-cof,big}$ spanned by the dg-categories of the form $\widehat{T}$ (with $T$ small), together with those morphisms that preserve colimits and compact objects.
\end{enumerate}

The link is made by the notion of a compact object. It is well known that the model category of complexes is combinatorial and compactly generated so that we can apply the results of our discussion in \ref{compactlygeneratedmodelcategories}. It is immediate that the same will hold for the projective structure in model category $Ch(k)^{T}$, for any small dg-category $T$. Following this, we denote by $\widehat{T}_c$ the full sub-dg-category of $\widehat{T}$ spanned by those cofibrant modules which are homotopicaly finitely presented in the model category $Ch(k)^{T^{op}}$. Again by the general machinery described in \ref{compactlygeneratedmodelcategories}, they can be constructed as retracts of strict finite cell-objects and  correspond to the compact objects in the underlying $(\infty,1)$-category of $Ch(k)^{T^{op}}$ and with this in mind we will refer to them as compact. 

It follows from the definitions and from the enriched version of the Yoneda Lemma that for any object $x$ in $T$, the representable dg-module $h(x)$ is compact. In particular, $h$ factors as $T\to (\widehat{T})_c\subseteq \widehat{T}$. At the level of the homotopy categories, this produces a sequence of inclusions $[T]\subseteq [(\widehat{T})_c]\subseteq [\widehat{T}]$ and the fact that $Ch(k)^{T^{op}}$ is stable model category implies two important things: $(i)$ the category $[\widehat{T}]$ has a triangulated structure; $(ii)$ because homotopy pushouts are homotopy pullbacks, a dg-module $F$ is compact if and only if $Map(F,-)$ commutes with arbitrary coproducts. With this we identify the subcategory $[(\widehat{T})_c]\subseteq [\widehat{T}]$ with $[\widehat{T}]_c\subseteq [\widehat{T}]$ - the full triangulated subcategory spanned by the compact objects in the sense of Neeman (see \ref{senseofNeeman}). In particular when $T=1_k$, the objects in $\widehat{T}_c$ are are exactly the perfect complexes of $k$-modules.

\begin{remark}
Because any compact module is a strict finite $I$-cell (\ref{compactlygeneratedmodelcategories}) the dg-category $(\widehat{T})_c$ is essentially small and can be considered as an object in $\dg$. For the same reason, $(\widehat{T})_c$ is stable under shifts, retracts and pushouts.
\end{remark}

Recall now that a small dg-category is said to be \emph{idempotent complete} (or \emph{triangulated}) if the dg-functor

\begin{equation}
T\to (\widehat{T})_c
\end{equation}

\noindent is a weak-equivalence of dg-categories. The first reason why idempotent dg-categories are relevant to Morita theory is because for any small dg-category $T$, the restriction along $T\to (\widehat{T})_c$

\begin{equation}
\widehat{((\widehat{T})_c)}\to \widehat{T}
\end{equation}

\noindent is a weak-equivalence of dg-categories (Lemma 7.5-(1) in \cite{Toen-homotopytheorydgcatsandderivedmoritaequivalences}). In other words, at the level of modules we cannot distinguish between $T$ and $(\widehat{T})_c$. It follows that a dg-functor $f:T\to T'$ induces a weak-equivalence $\widehat{T}\to \widehat{T'}$ if and only if its restriction to compact objects \footnote{This restriction is well-defined because every compact dg-module in $\widehat{T}$ can be constructed from representables using finite colimits and retracts. The conclusion follows because the map $\widehat{T}\to \widehat{T'}$ sends representables to representables, representables are compact and compact objects stable under finite colimits and retracts. (See \cite{Toen-homotopytheorydgcatsandderivedmoritaequivalences} for more details )} $(\widehat{T})_c\to (\widehat{T'})_c$ is a weak-equivalence. We will denote by $\dg^{idem}$ the full subcategory of $\dg$ spanned by those dg-categories which are idempotent complete.

\begin{prop}
\label{omega2}
The formula $T\mapsto (\widehat{T})_c $ provides a left adjoint to the inclusion $\dg^{idem} \subseteq \dg$. 
\begin{proof}
In order to prove this result we construct a cocartesian fibration $(\infty,1)$-categories $p:\M\to \Delta[1]$ with $p^{-1}(\{0\})=\dg$ and $p^{-1}(\{1\})=\dg^{idem}$. We consider the full $(\infty,1)$-category $\M$ of the product $\dg\times \Delta[1]$ spanned by the pairs $(T,0)$ where $T$ can be any small dg-category and the pairs of the form $(T,1)$ only accept dg-categories $T$ which are triangulated. By construction, there is a canonical projection $p:\M\to \Delta[1]$ whose fiber over $0$ is $\dg$ and over $1$ is $\dg^{idem}$. We are reduced to check that $p$ is a cocartesian fibration. Notice a map in $\M$ over the morphism $0\to 1$ in $\Delta[1]$ consists in the data of a morphism $T\to T'$ in $\M$ where $T$ is any dg-category and $T'$ is a triangulated one. To say that $p$ is cocartesian is equivalent to say that for any dg-category $T$, there is a new triangulated dg-category $T'$ together with a morphism $T\to T'$ having the following universal property: for any triangulated dg-category $D$, the composition map

\begin{equation}
Map_{\dg}(T',D)\to Map_{\dg}(T, D)
\end{equation}

\noindent is a weak-equivalence of spaces. We set $T':= (\widehat{T})_c$ and $T\to T'$ the yoneda's map. Since any triangulated dg-category $D$ is equivalent in $\dg$ to $\widehat{D}_c$, we are reduced to prove the composition map

\begin{equation}Map_{\dg}((\widehat{T})_c, \widehat{D}_c)\to Map_{\dg}(T, \widehat{D}_c)\end{equation}

\noindent is a weak-equivalence. Using the internal-hom, we are reduced to prove that the natural map
\begin{equation}
\mathbb{R}\underline{Hom}((\widehat{T})_c, \widehat{D}_c)\to \mathbb{R}\underline{Hom}(T, \widehat{D}_c)
\end{equation}
\noindent is an equivalence in $\dg$. This is the content of the Theorem 7.2-(2) in \cite{Toen-homotopytheorydgcatsandderivedmoritaequivalences}.

\end{proof}
\end{prop}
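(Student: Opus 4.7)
The plan is to establish the adjunction by constructing a cocartesian fibration $p: \M \to \Delta[1]$ whose fiber over $0$ is $\dg$, whose fiber over $1$ is $\dg^{idem}$, and whose cocartesian lifts implement the assignment $T \mapsto (\widehat{T})_c$. Concretely, I would let $\M$ be the full subcategory of $\dg \times \Delta[1]$ spanned by pairs $(T,0)$ with $T$ arbitrary and $(T,1)$ with $T$ idempotent complete. The projection to $\Delta[1]$ is automatically an inner fibration, and both fiber-wise identity lifts are already $p$-cocartesian, so the only genuine content is producing a $p$-cocartesian lift of the unique non-identity edge $0 \to 1$ at each object $T$ of the fiber over $0$.

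The candidate lift is the Yoneda map $h: T \to (\widehat{T})_c$, which makes sense because $(\widehat{T})_c$ is triangulated: indeed, the natural functor $(\widehat{T})_c \to \widehat{(\widehat{T})_c}_c$ is a Dwyer--Kan equivalence, since $h$ already takes values in compact modules (representables are compact), and conversely every compact module over $(\widehat{T})_c$ is obtained by retracts and finite colimits of representables, which are in the image of $h$. To check the cocartesian condition, I would use the equivalence $D \simeq \widehat{D}_c$ valid for any $D \in \dg^{idem}$, together with the internal-hom description of mapping spaces recalled in item (3) of the preliminaries, to reduce the problem to showing that the restriction
\[
\mathbb{R}\underline{\mathrm{Hom}}((\widehat{T})_c,\, \widehat{D}_c) \longrightarrow \mathbb{R}\underline{\mathrm{Hom}}(T,\, \widehat{D}_c)
\]
is an equivalence in $\dg$.

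The key technical input, and the main obstacle, is this last equivalence. The strategy I would follow is to factor the restriction through $\mathbb{R}\underline{\mathrm{Hom}}(\widehat{T},\, \widehat{D})^{c,L}$, the full sub-dg-category of colimit preserving functors $\widehat{T} \to \widehat{D}$ which preserve compact objects. On the one hand, compact-and-colimit-preserving functors $\widehat{T} \to \widehat{D}$ are determined (up to equivalence) by their restriction to representables, since $\widehat{T}$ is generated under colimits by the image of $T$; this gives the identification with $\mathbb{R}\underline{\mathrm{Hom}}(T, \widehat{D}_c)$. On the other hand, any dg-functor $(\widehat{T})_c \to \widehat{D}_c$ extends uniquely (up to contractible choice) along the ind-completion to a colimit-and-compact preserving functor $\widehat{T} \to \widehat{D}$, since $\widehat{T} \simeq \mathrm{Ind}((\widehat{T})_c)$ by standard compactly generated Morita theory; this yields the identification with $\mathbb{R}\underline{\mathrm{Hom}}((\widehat{T})_c, \widehat{D}_c)$.

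Once this equivalence is established, the cocartesian property is verified at every $T$, the projection $p: \M \to \Delta[1]$ becomes a cocartesian fibration, and by the standard Grothendieck dictionary this exhibits $T \mapsto (\widehat{T})_c$ as a left adjoint to the inclusion $\dg^{idem} \subseteq \dg$. The essentials of the compactly generated Morita dictionary used in the main step are close in spirit to the earlier Proposition in the compactly generated $(\infty,1)$-categorical setting, and I would expect to either invoke the analogous $\dg$-enriched statement from \cite{Toen-homotopytheorydgcatsandderivedmoritaequivalences} or to develop it directly using the description of $\widehat{T}$ as the underlying dg-category of the projective model structure on $T^{op}$-modules, together with the characterization of compact objects as retracts of strict finite cell complexes.
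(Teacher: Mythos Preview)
Your proposal is correct and follows essentially the same approach as the paper: you build the same cocartesian fibration $\M \subset \dg \times \Delta[1]$, take the Yoneda map $T \to (\widehat{T})_c$ as the candidate cocartesian lift, and reduce everything to the equivalence $\mathbb{R}\underline{\mathrm{Hom}}((\widehat{T})_c, \widehat{D}_c) \simeq \mathbb{R}\underline{\mathrm{Hom}}(T, \widehat{D}_c)$. The only difference is that the paper simply cites this last equivalence as Theorem 7.2-(2) of \cite{Toen-homotopytheorydgcatsandderivedmoritaequivalences}, whereas you sketch an independent argument via the intermediary of colimit-and-compact-preserving functors $\widehat{T} \to \widehat{D}$ before noting you would also be willing to invoke that reference; your sketch is in fact the idea behind Toën's proof.
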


The existence of this left adjoint, which we will denote as $(\widehat{-})_c$, makes $\dg^{idem}$ a reflexive localization of $\dg$. In particular the idempotent dg-categories can be described as local objects with respect to the class of maps in $\dg$ whose image through the composition of the inclusion with $(\widehat{-})_c$ is an equivalence. This establishes $\dg^{idem}$ as the second approach in the list.

\begin{remark}
\label{omega3}
The equivalence $\dgloccof \simeq \dg$ restricts to an equivalence $\dg^{idem, loc-cof}\subseteq \dg^{idem}$. Using the Remark \ref{omega1} we see that the left adjoint of the Prop. \ref{omega2} restricts to a left adjoint to the inclusion $\dg^{idem, loc-cof}\subseteq \dgloccof$: if $T$ is small and locally-cofibrant, we can find an equivalent way to define the formula $T\mapsto \widehat{T}$, this time preserving the condition of being locally-cofibrant so that the full subcategory $(\widehat{T})_c$ is locally cofibrant.
\end{remark}

We now explain the first approach. Recall that dg-functor $T\to T'$ in $Cat_{Ch(k)}$ is called a \emph{Morita equivalence} if the induced map $\widehat{T}\to \widehat{T'}$ is a weak-equivalence of dg-categories.

\begin{cor}
\label{moritatheorysmall}
Let $W_{Mor}$ denote the collection of Morita equivalences between small dg-categories.  Then there is a canonical isomorphism in the homotopy category of $(\infty,1)$-categories 

\begin{equation}\dg^{idem}\simeq N(Cat_{Ch(k)})[W_{Mor}^{-1}]\end{equation} 

\begin{proof}
This follows from the universal property of the localization in the setting of $(\infty,1)$-categories, together with the observations that: $(i)$ every weak-equivalence of dg-categories is in $W_{Mor}$ and $(ii)$ a dg-functor $f$ is in $W_{Mor}$ if and only if its image through the composition of localizations 
$N(Cat_{Ch(k)})\to \dg\to \dg^{idem}$ is an equivalence.
\end{proof}
\end{cor}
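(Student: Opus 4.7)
The plan is to express $\dg^{idem}$ as a two-stage localization of $N(Cat_{Ch(k)})$, first inverting the Dwyer--Kan equivalences to land in $\dg$, and then performing a further reflexive localization to land in $\dg^{idem}$. The corollary then reduces to the identification of the combined class of inverted morphisms with $W_{Mor}$.

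By definition $\dg = N(Cat_{Ch(k)})[W_{DK}^{-1}]$, and by Proposition \ref{omega2} together with the theory of reflexive localizations recalled in \ref{locinfinity}, the fully faithful inclusion $\dg^{idem} \subseteq \dg$ is a reflexive localization with left adjoint $T \mapsto (\widehat{T})_c$; hence $\dg^{idem} \simeq \dg[W'^{-1}]$ where $W'$ is the class of edges in $\dg$ whose image under $(\widehat{-})_c$ is an equivalence. Composing the two universal properties, for any $(\infty,1)$-category $\mathcal{D}$ the space $Fun(\dg^{idem},\mathcal{D})$ is naturally identified with the full subspace of $Fun(N(Cat_{Ch(k)}),\mathcal{D})$ spanned by those functors that send to equivalences the class $S$ of dg-functors whose image under the composition $N(Cat_{Ch(k)}) \to \dg \to \dg^{idem}$ is an equivalence. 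Hence $\dg^{idem} \simeq N(Cat_{Ch(k)})[S^{-1}]$, and it remains only to verify $S = W_{Mor}$.

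For $W_{Mor} \subseteq S$: if $f:T \to T'$ is a Morita equivalence, then $\widehat{T} \to \widehat{T'}$ is a DK equivalence by definition; restricting to compact objects gives a DK equivalence $(\widehat{T})_c \to (\widehat{T'})_c$, which is precisely the image of $f$ in $\dg^{idem}$. For $S \subseteq W_{Mor}$: if $(\widehat{T})_c \to (\widehat{T'})_c$ is a DK equivalence, then applying $\widehat{(-)}$ and invoking the identification $\widehat{(\widehat{T})_c} \simeq \widehat{T}$ of Lemma 7.5-(1) in \cite{Toen-homotopytheorydgcatsandderivedmoritaequivalences} (recalled in the discussion preceding Proposition \ref{omega2}), we conclude that $\widehat{T} \to \widehat{T'}$ is a DK equivalence, i.e.\ $f \in W_{Mor}$. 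Observation (i) in the hint, that every DK equivalence is a Morita equivalence, is subsumed by $W_{DK} \subseteq S$, which is automatic since any DK equivalence becomes an equivalence already in $\dg$.

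The main obstacle is establishing the equivalence $\widehat{(\widehat{T})_c} \simeq \widehat{T}$, which is what makes passage to compact objects detect Morita equivalences in both directions; once this lemma of Toën is available, the coincidence between the abstract class $S$ coming from the two-stage localization and the explicit class $W_{Mor}$ becomes a formal verification, and the universal property of the $(\infty,1)$-categorical localization does the rest.
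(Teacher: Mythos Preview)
Your proof is correct and follows essentially the same approach as the paper. The paper's proof is more terse because the key content of your verification that $S = W_{Mor}$ (your two inclusions using Lemma 7.5-(1) of To\"en) is exactly what the paper packages as ``observation (ii)'', and this fact was already established in the discussion immediately preceding Proposition~\ref{omega2}.
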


Both sides of this equivalence admit natural symmetric monoidal structures and the equivalence preserves them. More precisely, for the first side we have

\begin{prop}
\label{epavala}
The reflexive localization $\dg^{idem,loc-cof}\subseteq \dgloccof$ is compatible with the monoidal structure in $\dgloccof^{\otimes}$. In other words, there is a natural symmetric monoidal structure in $\dg^{idem,loc-cof}$ for which the left adjoint is monoidal. Informally, it is given by the formula $T\otimes^{idem} T':= (\widehat{T\otimes^{\mathbb{L}}T'})_c$. In particular, the unit is the idempotent completion of the dg-category with a single object with $k$ as endomorphisms.
\begin{proof}
It is enough to check that if $f:T\to T'$ is a morphism in $\dg$ such that $(\widehat{T})_c\to\widehat{ T'}_c$ is an equivalence, then for any dg-category $C\in \dg$, the product $f\otimes^{\mathbb{L}}Id_C: T\otimes^{\mathbb{L}}C\to T'\otimes^{\mathbb{L}}C$ will also by sent to an equivalence in $\dg^{idem}$. This follows directly from the Lemma 7.5-(1) in \cite{Toen-homotopytheorydgcatsandderivedmoritaequivalences}.
\end{proof}
\end{prop}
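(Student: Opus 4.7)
The plan is to apply the general criterion for producing a monoidal reflexive localization reviewed in Section \ref{mfl}. Recall that if $\C_0 \subseteq \C$ is a reflexive localization of a symmetric monoidal $(\infty,1)$-category $\Cmonoidal$ with reflection $L$, then $\C_0$ inherits a symmetric monoidal structure for which $L$ is monoidal as soon as condition $(*)$ holds: for every $L$-equivalence $f: X \to Y$ in $\C$ and every $Z \in \C$, the map $f \otimes \mathrm{Id}_Z: X \otimes Z \to Y \otimes Z$ is again an $L$-equivalence. I would apply this criterion to the reflexive localization $\dg^{idem,loc-cof} \subseteq \dgloccof$ whose left-adjoint, by Proposition \ref{omega2} and Remark \ref{omega3}, is the idempotent completion $T \mapsto (\widehat{T})_c$.

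First I would identify the class of $L$-equivalences. By the discussion preceding Corollary \ref{moritatheorysmall}, a morphism $f: T \to T'$ of locally cofibrant dg-categories is sent to an equivalence by $(\widehat{-})_c$ if and only if it is a Morita equivalence, i.e.\ if and only if the induced restriction $\widehat{T'} \to \widehat{T}$ (equivalently the left adjoint $\widehat{T} \to \widehat{T'}$) is a weak-equivalence of dg-categories. So the task reduces to showing that Morita equivalences are stable under derived tensor product with any object $C \in \dgloccof$: that is, if $f: T \to T'$ is a Morita equivalence then $f \otimes \mathrm{Id}_C: T \otimes C \to T' \otimes C$ is again a Morita equivalence. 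Since we are working inside $\dgloccof$, the derived tensor product is modeled by the strict tensor product (no cofibrant replacement is required), so this is a precise strict statement about modules over tensor products of locally cofibrant dg-categories.

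This stability property is precisely the content of Lemma 7.5-(1) of \cite{Toen-homotopytheorydgcatsandderivedmoritaequivalences}: for $T$ locally cofibrant and any $C$, the base change along $T \otimes C \to T' \otimes C$ induces a Quillen equivalence between the corresponding model categories of dg-modules, and in particular a Dwyer--Kan equivalence on the associated dg-categories of cofibrant objects. With this in hand, condition $(*)$ is verified and Section \ref{mfl} produces the symmetric monoidal structure on $\dg^{idem,loc-cof}$, with the informal formula $T \otimes^{idem} T' \simeq (\widehat{T \otimes^{\mathbb{L}} T'})_c$ following by unwinding the construction: the product in the localization is computed by applying the reflection $L = (\widehat{-})_c$ to the ambient product. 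The assertion about the unit is then automatic, since a monoidal left-adjoint sends the unit to the unit: the image of $\mathit{1}_k$ under $(\widehat{-})_c$ is, by definition, the idempotent completion of $\mathit{1}_k$.

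The main (and essentially only) obstacle is the invocation of Toën's Lemma 7.5-(1); everything else is a straightforward application of the formal machinery of monoidal reflexive localizations. One subtle point worth checking is that the localization $\dg^{idem,loc-cof} \subseteq \dgloccof$ genuinely is reflexive (not merely the restriction of the reflexive localization $\dg^{idem} \subseteq \dg$), but this is exactly the content of Remark \ref{omega3}, which guarantees that the idempotent completion functor can be performed within locally cofibrant dg-categories.
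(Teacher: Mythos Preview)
Your proof is correct and follows essentially the same approach as the paper: both reduce to verifying condition $(*)$ from Section \ref{mfl} for the reflexive localization $(\widehat{-})_c$, and both invoke Lemma 7.5-(1) of \cite{Toen-homotopytheorydgcatsandderivedmoritaequivalences} to establish that Morita equivalences are stable under derived tensor product. Your version simply makes explicit the references to Section \ref{mfl} and Remark \ref{omega3} that the paper leaves implicit, and spells out why the formula for the product and the description of the unit follow formally.
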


\begin{remark}
The combination of the argument in the previous proof, with the Theorem 7.2-(2) of \cite{Toen-homotopytheorydgcatsandderivedmoritaequivalences} implies that for any idempotent dg-category $Z$ and any dg-category $T$, the internal-hom $\mathbb{R}\underline{Hom}(T,Z)$ is again idempotent. In particular, it provides an internal-hom for the monoidal structure in $\dg^{idem}$.\\
\end{remark}

To find a monoidal structure in the second localization $N(Cat_{Ch(k)})[W_{Mor}^{-1}]$ it suffices to verify that the tensor product of Morita equivalences in $Cat_{Ch(k)}$ is again a Morita equivalence. However, and as for the Dwyer-Kan equivalences, this is not true. It happens that everything is well-behaved if we restrict to locally cofibrant dg-categories (see the Proposition 2.22 in \cite{tabuada-cisinski} together with the fact that any cofibrant complex is flat). The problem is solved by considering the monoidal localization of the trivial simplicial coloured operad associated to the well-defined monoidal structure in $Cat_{Ch(k)}^{loc-cof}$. The fact that the equivalence in the Corollary \ref{moritatheorysmall} is monoidal follows immediately from the universal property of the monoidal localization.\\

To compare these two approaches with the third one, it is convenient to have a description of the mapping spaces in $\dg^{idem}$. Being a full subcategory of $\dg$ and using again the Theorem 7.2-(2) of \cite{Toen-homotopytheorydgcatsandderivedmoritaequivalences} we find equivalences

\begin{eqnarray}
Map_{\dg^{idem}}((\widehat{T})_{c}, (\widehat{T'})_c)\simeq Map_{\dg}((\widehat{T})_{c}, (\widehat{T'})_c)\simeq Map_{\dg}(\mathit{1}_k,\mathbb{R}\underline{Hom}((\widehat{T})_{c}, (\widehat{T'})_c))\\
\simeq Map_{\dg}(\mathit{1}_k,\mathbb{R}\underline{Hom}(T, (\widehat{T'})_c))
\end{eqnarray}

\noindent and the internal-hom $\mathbb{R}\underline{Hom}(T, (\widehat{T'})_c)$ is given by the full sub-dg-category of $Int(T\otimes^{\mathbb{L}} ((\widehat{T'})_c)^{op}-modules)$ spanned by the right-representable. In this particular case, the last can be described as the full sub-dg-category of $\widehat{T^{op}\otimes^{\mathbb{L}} T'}$ spanned by those modules $E$ which for every $x\in T,$ the module $E(x,-): (T')^{op}\to Ch(k)$ is compact. These are called \emph{pseudo-perfect} (over $T$ relatively to $T'$). Following \cite{toen-vaquie} we will write $\widehat{T^{op}\otimes^{\mathbb{L}} T'}_{pspe}$ for the sub-dg-category of $\widehat{T^{op}\otimes^{\mathbb{L}} T'}$ spanned by the pseudo-perfect dg-modules over $T$ relative to $T'$. In the next section we will review how the interplay between the notion of pseudo-perfect and compact is essential to express the geometrical behavior of dg-categories. Using this description we have

\begin{equation}
Map_{\dg}(\mathit{1}_k,\mathbb{R}\underline{Hom}(T, \widehat{T'}_c))\simeq Map_{\dg}(\mathit{1}_k,\widehat{T^{op}\otimes^{\mathbb{L}} T'}_{pspe})\simeq
rrep(\mathit{1}_k, \widehat{T^{op}\otimes^{\mathbb{L}} T'}_{pspe})
\end{equation}

\noindent where the last is our notation for the maximal $\infty$-groupoid of the full subcategory spanned by right-representable in the underlying $(\infty,1)$-category of all $\mathit{1}_k\otimes^{\mathbb{L}} (\widehat{T^{op}\otimes^{\mathbb{L}} T'}_{pspe})^{op}$-modules. We can easily check this is equivalent to the maximal $\infty$-groupoid of $pspe(T,T)_{\infty}\subseteq (T,T')_{\infty}$ - the full subcategory spanned by the pseudo-perfect modules.\\

We now come to the third approach. Let $\dgc$ denote the (non full) subcategory of $\dg^{big}$ spanned by the dg-categories of the form $\widehat{T}$ for some small dg-category $T$, together with those morphisms $\widehat{T}\to \widehat{T'}$ whose map induced between the homotopy categories $[\widehat{T}]\to[\widehat{T'}]$ commutes with arbitrary sums\footnote{This notion is well-defined because the map induced between the homotopy categories is unique up to isomorphism of functors.}. Notice that each map in $\dgc$ corresponds to a unique (up to quasi-isomorphism) $(\widehat{T}\otimes \widehat{T'}^{op})$-dg-module. Let $\mathbb{R}\underline{Hom}_c(\widehat{T}, \widehat{T'})$ be the full sub-dg-category of $\mathbb{R}\underline{Hom}_c(\widehat{T}, \widehat{T'})$ spanned by those modules which induce a sum preserving map $[\widehat{T}]\to [\widehat{T'}]$. Then, by the Theorem 7.2-(1) in \cite{Toen-homotopytheorydgcatsandderivedmoritaequivalences}, for any small dg-category $T'$ the composition with the Yoneda's embedding $h:T\to \widehat{T}$  

\begin{equation}
\mathbb{R}\underline{Hom}_c(\widehat{T},\widehat{T'})\to \mathbb{R}\underline{Hom}(T,\widehat{T'})
\end{equation}

\noindent is an isomorphism in the homotopy category of dg-categories. It follows from the description of the internal-hom as right representable modules, that the last is equivalent to the dg-category $\widehat{T^{op}\otimes^{\mathbb{L}}(T')}$. One corollary of this result (see \cite{Toen-homotopytheorydgcatsandderivedmoritaequivalences}) is the description of the mapping spaces  $Map_{\dgc}(\widehat{T}, \widehat{T'})$ as the maximal $(\infty,1)$-groupoid in $(T, T')_{\infty}$. Another corollary is the existence of a functor $\widehat{(-)}:\dg\to \dgc$ sending a small dg-category to its category of dg-modules. For an explicit description, we consider the canonical projection $\dg^{big}\times \Delta[1]\to \Delta[1]$ and the full subcategory $\M$ spanned by the vertices $(i,T)$ where if $i=0$, $T$ is small and if $i=1$, $T$ is of the form $\widehat{T_0}$ for some small dg-category $T_0$ and the only admissible maps $(1,T)\to (1,T')$ are the ones in $\dgc$. The fact that this fibration is cocartesian follows again from the theorem.\\

To formalize the third approach, we will restrict our attention to a subcategory of $\dgc$. As we just saw, a map $f:\widehat{T}\to \widehat{T'}$ in $\dgc$ corresponds to the data of a (uniquely determined) $T\otimes^{\mathbb{L}} (T')^{op}$-module $E_f$. We will say that $f$ preserves compact objects if for every object $x \in T$, the $(T')^{op}$-module $E_f(x,-)$ is compact. According to our terminology, this is the same as saying that $E_f$ is pseudo-perfect over $T$ relatively to $(T')^{op}$. With this, we denote by $\dgcc$ the (non-full) subcategory of $\dgc$ containing all the objects together with those maps that preserve compact objects. It follows from the definitions that the mapping spaces $Map_{\dgcc}(\widehat{T}, \widehat{T'})$ are given by the maximal $\infty$-groupoids inside $pspe(T,T')_{\infty}$. It is now easy to see that the canonical map $\widehat{(-)}: \dg\to \dgc$ factors through $\dgcc$. The following proposition establishes $\dgcc$ as a third approach to Morita theory

\begin{prop}
The composition $\dg^{idem}\hookrightarrow \dg \to \dgcc$ is an equivalence of $(\infty,1)$-categories. An inverse is given by the formula sending a dg-category $\widehat{T}$ to the full subcategory $(\widehat{T})_c$ spanned by the compact objects.

\begin{proof}
By the definition of $\dgcc$ the map is essentially surjective. It is fully-faithful because the mapping spaces in $\dgcc$ are by definition, the same as in $\dg^{idem}$, corresponding both to the $\infty$-groupoid of pseudo-perfect modules.
\end{proof}
\end{prop}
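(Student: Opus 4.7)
The plan is to verify essential surjectivity and fully-faithfulness separately, both of which reduce to identifications already established in the excerpt.

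For essential surjectivity, note that every object of $\dgcc$ is by definition of the form $\widehat{T}$ for some small dg-category $T$. The candidate preimage is $(\widehat{T})_c$, which is idempotent complete and hence an object of $\dg^{idem}$. To show that $\widehat{(\widehat{T})_c} \simeq \widehat{T}$ in $\dgcc$, I would invoke the result of Lemma 7.5-(1) in \cite{Toen-homotopytheorydgcatsandderivedmoritaequivalences} cited just above in the excerpt: the restriction along $T \to (\widehat{T})_c$ induces a Dwyer--Kan equivalence $\widehat{(\widehat{T})_c} \to \widehat{T}$. This equivalence is automatically a morphism in $\dgcc$ because equivalences trivially preserve compact objects.

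For fully-faithfulness, the idea is simply to match the two descriptions of mapping spaces that were explicitly computed earlier in the excerpt. On the $\dg^{idem}$ side, using that $\dg^{idem} \subseteq \dg$ is full and chaining the adjunction with the internal-hom description, the computation carried out just before the proposition gives
\[
Map_{\dg^{idem}}\bigl((\widehat{T})_c,(\widehat{T'})_c\bigr) \simeq rrep\bigl(\mathit{1}_k,\widehat{T^{op}\otimes^{\mathbb{L}} T'}_{pspe}\bigr)^{\simeq},
\]
i.e.\ the maximal $\infty$-groupoid inside $pspe(T,T')_{\infty}$. On the $\dgcc$ side, the mapping space $Map_{\dgcc}(\widehat{T},\widehat{T'})$ was, by definition of the subcategory $\dgcc\subseteq \dgc$, identified with the maximal $\infty$-groupoid of the full subcategory of $(T,T')_{\infty}$ spanned by the pseudo-perfect modules, which is exactly the same object. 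One then checks that the map induced by the composition $\dg^{idem} \hookrightarrow \dg \to \dgcc$ on mapping spaces is precisely the tautological identification of these two descriptions, both of which go through the assignment $f \mapsto E_f$ sending a dg-functor to its associated bimodule.

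The only real point requiring some care is the essential surjectivity step: one must verify that the Dwyer--Kan equivalence $\widehat{(\widehat{T})_c} \to \widehat{T}$ supplied by Lemma 7.5-(1) represents the image of $(\widehat{T})_c$ under the functor $\widehat{(-)}:\dg \to \dgcc$ constructed earlier, rather than some other map. This is a matter of unraveling the cocartesian fibration $\mathcal{M} \to \Delta[1]$ used to define $\widehat{(-)}$ and observing that the cocartesian lift at $(\widehat{T})_c$ is exactly the Yoneda embedding, whose restriction along $T \to (\widehat{T})_c$ recovers the Yoneda embedding of $T$ by the universal property; the desired identification is then immediate. Everything else is bookkeeping against results already stated in the preceding pages.
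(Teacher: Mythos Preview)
Your proposal is correct and follows exactly the same approach as the paper's proof, which is extremely terse: essential surjectivity is ``by the definition of $\dgcc$'' and fully-faithfulness holds because both mapping spaces are identified with the $\infty$-groupoid of pseudo-perfect modules. You have simply unpacked these two sentences, citing Lemma~7.5-(1) of \cite{Toen-homotopytheorydgcatsandderivedmoritaequivalences} for the equivalence $\widehat{(\widehat{T})_c}\simeq\widehat{T}$ and tracing through the mapping-space computations already recorded in the preceding paragraphs; the extra care about the cocartesian lift is a reasonable sanity check but not something the paper mentions.
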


\begin{remark}
\label{emerald1}
Notice that if $\widehat{T}$ is a locally-cofibrant dg-category, then so is $\widehat{T}_c$. In this case, the equivalence $(-)_c$ restricts to an equivalence $\mathcal{D}g^{cc, loc-cof}(k)\to \dg^{idem,loc-cof}$. By choosing an inverse to this functor we solve the problem posed in the Remark \ref{omega1} of finding a model for the formula $T\mapsto \widehat{T}$ that preserves the hypothesis of being locally-cofibrant.  
\end{remark}

To complete the comparison between the second and third approaches, we regard the existence of a symmetric monoidal structure in $\mathcal{D}g^{cc, loc-cof}(k)$ which makes $(-)_c:\mathcal{D}g^{cc, loc-cof}(k)\to \dg^{idem,loc-cof}$ a monoidal functor.

\begin{prop}
The $(\infty,1)$-category $\mathcal{D}g^{cc, loc-cof}(k)$ is the underlying $\infty$-category of a symmetric monoidal structure $\mathcal{D}g^{cc, loc-cof}(k)^{\otimes}$. Given two objects $\widehat{T},\widehat{T'}\in \dgcc$, their monoidal product can be informally described by the formula $\widehat{T}\otimes \widehat{T}= \widehat{T\otimes^{\mathbb{L}} T'}$, where $\otimes^{\mathbb{L}}$ denotes the monoidal structure in $\dgloccof$.
\end{prop}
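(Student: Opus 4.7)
The plan is to transport the symmetric monoidal structure from $\dg^{idem,loc-cof}$ (established in Proposition \ref{epavala}) along the equivalence $(-)_c : \mathcal{D}g^{cc, loc-cof}(k) \to \dg^{idem,loc-cof}$ pointed out in Remark \ref{emerald1}. Concretely, having identified symmetric monoidal $(\infty,1)$-categories with commutative algebra objects in $\iCatbig$ (see \ref{remarkcat}), one can transport algebra structures along equivalences of underlying $(\infty,1)$-categories. I would therefore define $\mathcal{D}g^{cc, loc-cof}(k)^{\otimes}$ by pulling back the cocartesian fibration $\dg^{idem,loc-cof,\otimes} \to N(\Fin)$ along $(-)_c$, so that by construction $(-)_c$ extends to a monoidal equivalence $\mathcal{D}g^{cc, loc-cof}(k)^{\otimes} \to \dg^{idem,loc-cof,\otimes}$.

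Once this structure exists, the remaining task is to verify that the informal tensor product on objects is given by $\widehat{T} \otimes \widehat{T'} \simeq \widehat{T \otimes^{\mathbb{L}} T'}$. Let me fix a quasi-inverse $\widehat{(-)} : \dg^{idem,loc-cof} \to \mathcal{D}g^{cc, loc-cof}(k)$ sending an idempotent complete dg-category $S$ to the dg-category $\widehat{S}$ of its modules (using the locally-cofibrant model of Remark \ref{emerald1}). By definition of the transported structure, the product $\widehat{T} \otimes \widehat{T'}$ is computed as $\widehat{(\widehat{T})_c \otimes^{idem} (\widehat{T'})_c}$, which unpacks to $\widehat{\bigl(\widehat{(\widehat{T})_c \otimes^{\mathbb{L}} (\widehat{T'})_c}\bigr)_c}$.

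To reduce this to $\widehat{T \otimes^{\mathbb{L}} T'}$, I will combine two facts. First, for every small dg-category $R$ the canonical map $R \to (\widehat{R})_c$ is a Morita equivalence (indeed, any idempotent completion map is), hence it induces an equivalence $\widehat{R} \simeq \widehat{(\widehat{R})_c}$; this is precisely Lemma 7.5-(1) of \cite{Toen-homotopytheorydgcatsandderivedmoritaequivalences}, which already intervenes in the proof of Proposition \ref{epavala}. Second, since $(-) \otimes^{\mathbb{L}} (-)$ preserves Morita equivalences on the locally-cofibrant side (this is what makes $\otimes^{idem}$ well-defined at the level of the localization), the two maps $T \to (\widehat{T})_c$ and $T' \to (\widehat{T'})_c$ induce a Morita equivalence $T \otimes^{\mathbb{L}} T' \to (\widehat{T})_c \otimes^{\mathbb{L}} (\widehat{T'})_c$. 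Applying $\widehat{(-)}$ and then the first fact twice more gives the desired chain of equivalences.

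The main obstacle, and essentially the only nontrivial step, is this compatibility of the tensor product with passage through the Morita equivalence $R \mapsto (\widehat{R})_c$, and it is precisely this compatibility that was used to show in Proposition \ref{epavala} that the reflexive localization $\dgloccof \to \dg^{idem,loc-cof}$ is monoidal. Everything else is formal manipulation inside $\CAlg(\iCatbig)$ using the equivalence $(-)_c$; in particular the associators and symmetry constraints are inherited for free, and no further compatibility needs to be checked by hand.
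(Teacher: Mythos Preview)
Your argument is correct, but it takes a genuinely different route from the paper. The paper does \emph{not} transport the structure along the equivalence $(-)_c$; instead it builds $\mathcal{D}g^{cc,loc\text{-}cof}(k)^{\otimes}$ directly as a (non-full) subcategory of $\mathcal{D}g^{loc\text{-}cof,big}(k)^{\otimes}$. Concretely, the paper first defines an intermediate $\mathcal{D}g^{c,loc\text{-}cof}(k)^{\otimes}$ whose objects are sequences $(\widehat{t_1},\dots,\widehat{t_n})$ and whose morphisms over $f:\nfin\to\mfin$ are the \emph{multi-continuous} maps (continuous separately in each variable). It then exhibits explicit cocartesian lifts $\bigotimes_{j\in f^{-1}(i)}\widehat{t_j}\to \widehat{\,\bigotimes^{\mathbb L}_{j\in f^{-1}(i)} t_j\,}$ using the chain of equivalences
\[
\mathbb{R}\underline{Hom}_c(\widehat{T},\mathbb{R}\underline{Hom}_c(\widehat{T'},\widehat{A}))\simeq \mathbb{R}\underline{Hom}(T\otimes^{\mathbb L} T',\widehat{A})\simeq \mathbb{R}\underline{Hom}_c(\widehat{T\otimes^{\mathbb L} T'},\widehat{A})
\]
coming from Theorem~7.2(1) of \cite{Toen-homotopytheorydgcatsandderivedmoritaequivalences}, and finally checks that these lifts restrict to the compact-preserving subcategory $\mathcal{D}g^{cc}$. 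Only \emph{after} this construction does the paper observe that $(-)_c$ is monoidal.

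Your approach is more economical: you already have $\dg^{idem,loc\text{-}cof,\otimes}$ from Proposition~\ref{epavala} and the equivalence $(-)_c$ from Remark~\ref{emerald1}, and transporting commutative-algebra structures along equivalences in $\iCatbig$ is indeed formal. Your verification of the formula $\widehat{T}\otimes\widehat{T'}\simeq\widehat{T\otimes^{\mathbb L}T'}$ via Lemma~7.5(1) is correct. One terminological slip: ``pulling back the cocartesian fibration along $(-)_c$'' is not literally what you are doing (there is no map to $N(\Fin)$ in sight); you mean transporting the algebra structure along the equivalence, which is fine. What the paper's direct construction buys is an intrinsic description of the morphisms in $\mathcal{D}g^{cc,loc\text{-}cof}(k)^{\otimes}$ as multi-continuous compact-preserving maps, sitting inside the ambient big monoidal category --- a concrete model rather than an abstract transport.
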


\begin{proof}
The proof of this proposition requires two steps. The first concerns the construction of an $(\infty,1)$-category $\mathcal{D}g^{cc, loc-cof}(k)^{\otimes}$ equipped with a map to $N(\Fin)$. The second step is the prove that this map is a cocartesian fibration. For the first, we start with $\mathcal{D}g^{loc-cof,big}(k)^{\otimes}\to N(\Fin)$ the symmetric monoidal structure in the $(\infty,1)$-category of the big locally-cofibrant dg-categories (as constructed in the section \ref{dg1}). By construction, its objects can be identified with the pairs $(\nfin, (T_1,..., T_n))$ with $\nfin\in N(\Fin)$ and $(T_1,..., T_n)$ a finite sequence of dg-categories. By the cocartesian property, maps $(\nfin,(T_1,.., T_n))\to (\mfin,(Q_1,..., Q_m))$ over $f:\nfin\to \mfin$ correspond to families of edges in $\dg^{loc-cof, big}$

\begin{equation}
\bigotimes_{j\in f^{-1}(\{i\})}T_j\to Q_i
\end{equation}

\noindent with $1\leq i\leq m$, where $\otimes$ denotes the tensor product in $\mathcal{D}g^{loc-cof,big}(k)^{\otimes}$. Given small dg-categories $T$, $T'$, $Q$, we will stay that an object in $\mathbb{R}\underline{Hom}(\widehat{T}\otimes^{\mathbb{L}}\widehat{T'}, \widehat{Q})$ is \emph{multi-continuous} if its image through the canonical adjunction is in $\mathbb{R}\underline{Hom}_c(\widehat{T},\mathbb{R}\underline{Hom}_c(\widehat{T'}, \widehat{Q}))$. 

With this, we consider the (non full)subcategory $\mathcal{D}g^{c, loc-cof}(k)^{\otimes}\subseteq \mathcal{D}g^{loc-cof,big}(k)^{\otimes}$ spanned by the pairs $(\nfin,(T_1,..., T_n))$ where each $T_i$ is an object in $\dgc$ together with those morphisms $(\nfin,(T_1,.., T_n))\to (\mfin,(Q_1,..., Q_m))$ corresponding to the edges

\begin{equation}
\bigotimes_{j\in f^{-1}(\{i\})}T_j\to Q_i
\end{equation}

\noindent which are multi-continuous. It follows that the composition $\mathcal{D}g^{c, loc-cof}(k)^{\otimes}\subseteq \mathcal{D}g^{loc-cof,big}(k)^{\otimes}\to N(\Fin)$ is a cocartesian fibration: a cocartesian lifting for a morphism $f:\nfin\to \mfin$ at a sequence $(\nfin,(T_1=\widehat{t_1},..., T_n=\widehat{t_n}))$ is given by the edge corresponding to the family of canonical maps

\begin{equation}
u_i: \bigotimes_{j\in f^{-1}(\{i\})}T_j\to Q_i:=\widehat{\otimes^{\mathbb{L}}_{j\in f^{-1}(\{i\})} t_j} 
\end{equation}

\noindent obtained from the identity of $\widehat{\otimes^{\mathbb{L}}_{j\in f^{-1}(\{i\})} t_j}$  using the canonical equivalences

\begin{equation}
\mathbb{R}\underline{Hom}_{multi-continuous}(\widehat{T}\otimes \widehat{T'},\widehat{A}):= \mathbb{R}\underline{Hom}_c(\widehat{T}, \mathbb{R}\underline{Hom}_c(\widehat{T},\widehat{A}))\simeq \mathbb{R}\underline{Hom}_c(\widehat{T}, \mathbb{R}\underline{Hom}(T,\widehat{A}))\simeq
\end{equation}

\begin{equation}
 \simeq \mathbb{R}\underline{Hom}(T, \mathbb{R}\underline{Hom}(T,\widehat{A}))\simeq \mathbb{R}\underline{Hom}(T\otimes T', \widehat{A})\simeq \mathbb{R}\underline{Hom}_c(\widehat{T\otimes T'}, \widehat{A})
\end{equation}

The same equivalences imply the cocartesian property of the family $(u_i)$.\\

With this, we are reduced to prove that this monoidal structure restricts to the (non-full) subcategory $\mathcal{D}g^{cc, loc-cof}(k)\subset\mathcal{D}g^{c, loc-cof}(k)$. For this purpose, it suffices to check that the same canonical morphisms

\begin{equation}
u_i: \bigotimes_{j\in f^{-1}(\{i\})}T_j\to Q_i:=\widehat{\otimes^{\mathbb{L}}_{j\in f^{-1}(\{i\})} t_j}
\end{equation}

\noindent preserve compact objects on each variable, which follows using an analogue of the above argument for multi-continuous maps.\\

\end{proof}

By inspection of the proof it is obvious that the map $(-)_c:\mathcal{D}g^{cc, loc-cof}(k)\to \dg^{idem,loc-cof}$ is compatible with the monoidal structures.\\

In summary, we have three equivalent ways to encode Morita theory. 

\begin{equation}
\label{versionsmorita}
N(Cat_{Ch(k)}^{loc-cof})[W_{Mor}^{-1}]^{\otimes }\simeq(\dg^{idem,loc-cof})^{\otimes}\simeq \mathcal{D}g^{cc, loc-cof}(k)^{\otimes}
\end{equation}

\begin{convention}
\label{omega5}
For the future sections, and for the sake of simplicity, we will omit the fact that these monoidal structures are defined for locally-cofibrant dg-categories and that to make sense of this monoidal product for arbitrary dg-categories, we need to perform cofibrant-replacements to fall in the locally-cofibrant context.
\end{convention}

Furthermore, in \cite{tabuada-invariantsadditifs} the author proves the existence of a combinatorial compactly generated model structure in $Cat_{Ch(k)}$ with weak-equivalences the Morita equivalences and the same cofibrations as for the Dwyer-Kan model structure. It follows that the three $(\infty,1)$-categories are presentable. In particular, they have all limits and colimits and we can compute them as homotopy limits and homotopy colimits in $Cat_{Ch(k)}$ with respect to this Morita model structure. In particular, we can prove that $\dg^{idem}$ has a zero object $\ast$ (the dg-category with one object and one morphism) and that, more generally, finite sums are equivalent to finite products (denoted by $\oplus$). The last follows because for any two small dg-categories $T$ and $T'$  we have a canonical equivalence of big dg-categories $\underline{Hom}(T\coprod T', Ch_{dg}(k))\simeq \underline{Hom}(T, Ch_{dg}(k))\times \underline{Hom}(T, Ch_{dg}(k))$ compatible with the natural model structures for dg-modules. We can now use this equivalence to find that $\widehat{(T\coprod T')}_{c}\simeq \widehat{T}_{c}\times \widehat{T'}_c$.\\

\subsubsection{Dg-categories with a compact generator}
\label{dgcategorieswithcompactobjects}

A dg-category $\widehat{T}\in \dgc$ is said to have a compact generator if the triangulated category $[\widehat{T}]$ has a compact generator in the sense of Neeman (see the Remark \ref{senseofNeeman}). We will say that a small dg-category $T$ has a \emph{compact generator} if the triangulated category $[\widehat{T}]$ admits a compact generator in the previous sense. It follows that $T$ has a compact generator if and only if its idempotent completion $\widehat{T}_c$ has a compact generator (of course, this follows from the equivalence $\widehat{(\widehat{T})_c}\simeq \widehat{T}$).\\ 

Let $Perf$ be the composition

\begin{equation}\xymatrix{Alg(\derivedk)\ar[r]^{(-)_{dg}}& \dg\ar[r]^{(\widehat{-})_c} &\dg^{idem}}\end{equation}

Using the same methods as in \cite{schwede-shipley-modules}, it can be proved that $T$ has a compact generator if and only if it is in the essential image of $Perf$. For the "only if" direction we consider the dg-algebra $B$ given by the opposite algebra of endomorphisms of the compact generator in $\widehat{T}$. For the "if" direction, if $T\simeq Perf(B)$ then $B$, seen as a dg-module over itself, is a compact generator.

\begin{remark}
\label{productdgcategorieswithcompactgeneratorhascompactgenerator}
Let $T, T'\in \dg^{idem}$ be idempotent complete dg-categories having a compact generator. Then their tensor product in $\dg^{idem}$ has a compact generator. This follows because the functor $Perf$ is monoidal (see \ref{dgalgebrasdgcatsmonoidal} and \ref{epavala}).
\end{remark}

\subsubsection{Dg-categories of Finite Type}
\label{finitetype}

In this section we discuss the notion of dg-category of finite type studied by Toën-Vaquie in \cite{toen-vaquie}. In the next section they will give body to our noncommutative spaces.\\
 
It follows from the results of \cite{tabuada-invariantsadditifs} that the Morita model structure is combinatorial, compactly generated and satisfies the general conditions of the Proposition 2.2 in \cite{toen-vaquie}. Following the discussion in \ref{compactlygeneratedmodelcategories}, we can identify the compact objects in $\dg^{idem}$ with the retracts of finite cell objects and we have a canonical equivalence $\dg^{idem}\simeq Ind( (\dg^{idem})^{\omega})$. At the same time in \cite{tabuada-cisinski}-Theorem 4.3, the authors prove that an object $T\in \dg^{idem}$ is compact if and only if its internal-hom functor $\mathbb{R}\underline{Hom}(T,-)$ in $\dg^{idem,\otimes}$ commutes with filtered colimits. An immediate corollary of this is that the product of compact objects in $\dg^{idem,\otimes}$ is again compact so that the subcategory $(\dg^{idem})^{\omega}$ inherits a symmetric monoidal structure. \\

Following \cite{toen-vaquie}, we say that an idempotent dg-category $T$ is of \emph{finite type} if it is equivalent in $\dg^{idem}$ to a dg-category of the form $Perf(B)$ for some dg-algebra $B$ which is compact as an object in the $(\infty,1)$-category $Alg(\derivedk)$ \footnote{$Alg(\derivedk)$ is the underlying $(\infty,1)$-category of a compactly generated model structure in the category of strictly associative dg-algebras and again by the discussion in \ref{compactlygeneratedmodelcategories} we can identify its compact objects with the retracts of finite cell strict dg-algebras}. In particular a dg-category of finite type has a compact generator.

In \cite{toen-vaquie}-Lemma 2.11, the authors prove that a dg-category of the form $Perf(B)$ is compact in $\dg^{idem}$ if and only if $B$ is compact in the $(\infty,1)$-category $Alg(\derivedk)$. In fact, an object in $\dg^{idem}$ is compact if and only if it is of finite type:

\begin{prop}(Toën-Vaquié)
\label{dgideminddgft}
Let $\dg^{ft}$ denote the full subcategory of $\dg^{idem}$ spanned by the dg-categories of finite type. Then, the inclusion $\dg^{ft}\subseteq (\dg^{idem})^{\omega}$ is an equivalence.
\begin{proof}
By the discussion in \ref{dgcategorieswithcompactobjects} it suffices to prove that any compact dg-category $T\in \dg^{idem}$ has a compact generator. Indeed, we can always write $T$ as a filtered colimit of its subcategories generated by compact objects. Since $T$ is compact it is equivalent to one of these subcategories and therefore the triangulated category $[T]$ is compactly generated by a finite family of objects $\{x_1,..., x_n\}$ (in the sense of Neeman - see the Remark \ref{senseofNeeman}). Since $T$ is idempotent complete, it admits finite sums and therefore the finite direct product $\oplus x_i$ is a compact generator.
\end{proof}
\end{prop}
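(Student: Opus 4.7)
The inclusion $\dg^{ft}\subseteq (\dg^{idem})^{\omega}$ is immediate from the Toën--Vaqui\'e Lemma 2.11 cited just above: if $T\simeq Perf(B)$ with $B$ a compact object of $Alg(\derivedk)$, then $T$ is a compact object of $\dg^{idem}$. My plan is therefore to prove the reverse inclusion: every compact idempotent dg-category is of finite type. Combining this with the characterisation of dg-categories in the essential image of $Perf$ recalled in \ref{dgcategorieswithcompactobjects} (an idempotent complete $T$ lies in the image of $Perf$ iff it admits a compact generator), the task reduces to showing that any compact object $T\in(\dg^{idem})^{\omega}$ admits a compact generator; indeed one can then write $T\simeq Perf(B)$ with $B$ the opposite endomorphism dg-algebra of the generator, and by Lemma 2.11 in the reverse direction the compactness of $T$ in $\dg^{idem}$ forces the compactness of $B$ in $Alg(\derivedk)$, so $T$ is of finite type.

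To produce a compact generator of a compact $T\in\dg^{idem}$, the plan is to exploit the fact, recalled at the beginning of \ref{finitetype}, that the Morita model structure is combinatorial and compactly generated, so $\dg^{idem}\simeq \mathrm{Ind}((\dg^{idem})^{\omega})$. This allows one to exhibit $T$ as the filtered colimit (in $\dg^{idem}$) of a diagram of sub-dg-categories $T_{\alpha}\subseteq T$ indexed by the finite subsets $\alpha$ of the set of compact objects of $T$, where $T_{\alpha}$ is the idempotent-complete sub-dg-category generated by the chosen objects. The key step is then to apply the compactness of $T$: the identity $\mathrm{id}_T\colon T\to T$ must factor through some $T_{\alpha_0}$, i.e.\ $T$ is a retract of $T_{\alpha_0}$ inside $\dg^{idem}$. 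In particular $[T]$, as a thick subcategory of a triangulated category generated by finitely many objects $\{x_1,\dots,x_n\}$, is itself generated (in the sense of Neeman, Remark \ref{senseofNeeman}) by that same finite family.

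To conclude, I would use the fact (noted after the Corollary \ref{moritatheorysmall}) that objects of $\dg^{idem}$ are stable under finite sums, so the single object $x_1\oplus\cdots\oplus x_n$ makes sense in $T$. This object is compact (finite sums of compacts are compact in any compactly generated triangulated category), and it is a generator in the sense of Neeman because an object orthogonal to the sum is orthogonal to each $x_i$. Hence $T$ has a single compact generator, which by the first paragraph completes the proof.

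The only step I expect to require some thought is the expression of $T$ as a filtered colimit of its finitely-generated idempotent-complete sub-dg-categories \emph{in $\dg^{idem}$} (as opposed to just set-theoretically), and verifying that compactness forces the identity of $T$ to factor through one of them. The rest of the argument is formal once the Toën--Vaqui\'e Lemma 2.11 and the Perf/compact-generator dictionary of \ref{dgcategorieswithcompactobjects} are in hand.
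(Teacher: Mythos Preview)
Your proposal is correct and follows essentially the same route as the paper's proof: reduce to showing that a compact idempotent dg-category has a compact generator, write $T$ as a filtered colimit of its finitely-generated idempotent-complete subcategories, use compactness of $T$ to land in one of them, and take the direct sum of the finite generating family. Your write-up is in fact more careful than the paper's on two points: you make explicit the use of Lemma~2.11 in both directions to close the loop, and you correctly flag the need to check that the filtered-colimit presentation holds in $\dg^{idem}$ rather than merely set-theoretically (the paper simply asserts this step).
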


With this, we have a canonical equivalence $\dg^{idem}\simeq Ind(\dg^{ft})$. It follows that $\dg^{ft}$ is closed under finite direct sums, pushouts and contains the zero object.

\begin{remark}
\label{nc2dgftdgidempreservesproducts}
It follows from the Yoneda's lemma that the inclusion $\dg^{ft}\subseteq \dg^{idem}$ commutes with arbitrary limits whenever they exist in $\dg^{ft}$.
\end{remark}

\subsection{Dg-categories vs stable $(\infty,1)$-categories}
\label{linkdgspectrastable}

This section is merely expository and sketches the conjectural relation between the theory of dg-categories and the theory of stable $(\infty,1)$-categories. We aim to somehow justify our choice to work with dg-categories. I learned this vision from B. Toën.\\

For any commutative ring $k$, $\derivedk^{\otimes}$ is a stable presentable symmetric monoidal $(\infty,1)$-category. In this case, the universal property of $\Spmonoidal$ ensures the existence of a (unique up to a contractible space of choices) monoidal colimit preserving map

\begin{equation}
f:\Spmonoidal\to  \derivedk^{\otimes}
\end{equation}

\noindent sending the sphere spectrum to the ring $k$ seen as complex concentrated in degree zero. This is a morphism of commutative algebras in $\Prlmonoidal$ and therefore produces a base-change adjunction

\begin{equation}
\xymatrix{
\Prl_{Stb}\simeq Mod_{\Spmonoidal}(\Prl)\ar@<.7ex>[rr]^{(\derivedk\otimes_{\Sp}-)}&& Mod_{\derivedk^{\otimes}}(\Prl) \ar@<.7ex>[ll]^{f^{*}}
}
\end{equation}

\noindent with $f^*$ the forgetful map given by the composition with $f$. Notice that the objects in the left side are stable $(\infty,1)$-categories because the adjunction is defined over the forgetful functors to $\Prl$. By definition, a $k$-linear stable $(\infty,1)$-category is an object in $Mod_{\derivedk^{\otimes}}(\Prl)$. \\

At the same time, there is a canonical way to assign an $(\infty,1)$-category to a dg-category. More precisely, given a small dg-category $T$ we can apply the Dold-Kan construction to the positive truncations of the complexes of morphisms in $T$ to get mapping spaces. Since the Dold-Kan functor is right-lax monoidal (via the Alexander-Whitney map), this construction provides a new simplicial category which happens to be enriched over Kan-complexes. By takings its simplicial nerve we obtain an $(\infty,1)$-category $N_{dg}(T)$. The details of this mechanism can be found in \cite[Section 1.3.1]{lurie-ha}. Moreover, the assignement $T\mapsto N_{dg}(T)$ provides a right Quillen functor between the model category of dg-categories categories with the Dwyer-Kan model equivalences and the model category of simplicial sets with the Joyal's model structure \cite[1.3.1.20]{lurie-ha}. Following the discussion in \ref{combinatorialmodelcategories} and since these model structures are combinatorial, this assignement provides a functor between the $(\infty,1)$-categories

\begin{equation}N_{dg}:\dg\to \iCat\end{equation}

By the properties of the Dold-Kan correspondence, $N_{dg}$ preserves the notion of "homotopy category"\footnote{Recall that $\pi_n(DK(A))\simeq H_n(A)$, where $DK$ denotes the Dold-Kan map}. Moreover, using the arguments in (\ref{combinatorialmodelcategories}), the combinatorial property implies that $N_{dg}$ has a left adjoint and therefore preserves limits. In particular, for a bigger universe we also have a well-defined map

\begin{equation}N_{dg}^{big}:\dg^{big}\to \iCat^{big}\end{equation}

Following \cite{toen-azumaya} we have the notion of a locally presentable dg-category. By definition, these are big dg-categories that can be obtained as accessible reflexive localizations of big dg-categories of the form $\widehat{T_0}$ for some small dg-category $T_0$. Alternatively, we can describe them as the dg-categories of cofibrant-fibrant objects of a Bousfield localization of the left proper combinatorial model category $Ch(k)^{T_0}$ for some small dg-category $T_0$. Together with the colimit preserving maps, they form a (non-full) subcategory $\dglp$ of $\dg^{big}$. In particular, the $(\infty,1)$-category $\dgcc$ introduced in the previous section has a non-full embedding in $\dglp$. As explained in the proof of \cite[Lemma 2.3]{toen-azumaya} a big dg-category having all colimits is locally presentable if and only if $N_{dg}^{big}(T)$ is in $\Prl$. In particular, as the notions of colimit are compatible, the restriction

\begin{equation}N_{dg}^{L}:\dglp\to \Prl\end{equation}

\noindent is well-defined.

For a dg-category of the form $\widehat{T_0}$, the $(\infty,1)$-category $N_{dg}^L(\widehat{T_0})$ can be identified with the 
underlying $(\infty,1)$-category of the combinatorial model category $Ch(k)^{T_0}$ which is compactly generated. In particular, since $Ch(k)^{T_0}$ is stable (in the sense of model categories), we find that $N_{dg}^L(\widehat{T_0})$ is a stable compactly generated $(\infty,1)$-category\footnote{In the condition of having all limits and colimits, the property of being stable depends only on the fact the suspension functor is invertible at the level of the homotopy category}. In fact, a dg-category $T\in \dglp$ is in $\dgcc$ if and only if $N_{dg}^{L}(\widehat{T_0})$ is compactly generated. More generally, we can identify the functor $N_{dg}^{L}$ with the map sending a Bousfield localization of $Ch(k)^{T_0}$ to its underlying $(\infty,1)$-category. In particular we find that $N_{dg}^{L}$ factors through the full subcategory of $\Prl$ spanned by the stable presentable $(\infty,1)$-categories $\Prl_{Stb}$. In particular, $N_{dg}^L$ restricts to $\dgcc\to \Prl_{\omega,Stb}\subseteq \Prl_{\omega}$.\\

\begin{remark}
\label{nervedgconservative} 
It follows from the fact that  $Ch(k)^{T_0}$ is a stable  model category and from the properties of the Dold-Kan construction that $N_{dg}^L$ is conservative, for it preserves the notion of homotopy category and using stability, we see that it also reflects fully-faithfulness. More generally, the restriction of $N_{dg}$ to dg-categories satisfying stability is conservative. 
\end{remark}

\begin{remark}
\label{limitspresentabledg}
By the previous remark, since $N_{dg}^{L}$ and more generally $N_{dg}$ (restricted to big stable dg-categories) are conservative, and both  commute with limits and the non-full inclusion $\Prl\subseteq \iCat^{big}$ (respectively the inclusion of big stable dg-categories inside all big dg-categories) preserves limits, we find that $\dglp$ also has all small limits and that the inclusion $\dglp \subseteq \dg^{big}$ also preserves them. 
\end{remark}

We now come to the conjectural relation between the Morita theory of dg-categories and the theory of stable presentable $(\infty,1)$-categories: the map $N_{dg}^{L}:\dglp\to \Prl_{Stb}$ is expected to factor through the forgetful functor $f^{*}:Mod_{\derivedk^{\otimes}}(\Prl)\to \Prl_{Stb}$

\begin{equation}
\xymatrix{
\dglp\ar@{-->}[r]^{\theta} &Mod_{\derivedk^{\otimes}}(\Prl)\ar[r]^{f^{*}} & \Prl_{Stb}
}
\end{equation}

\noindent and this factorization $\theta$ is expected to be an equivalence of $(\infty,1)$-categories. In this case, the restriction

\begin{equation}
\xymatrix{\dgcc\ar[r]^{\sim}& Mod_{\derivedk^{\otimes}}(\Prl_{\omega})}
\end{equation}

\noindent will provide a link between the Morita theory of dg-categories (as described in the previous section) and the Morita theory of stable $\infty$-categories studied in \cite{tabuada-gepner}. The following diagram is an attemptive to schematize this landscape

\begin{equation}
\xymatrix{
&&&\infty(SpectralCats/Morita)\ar[d]_{\alpha}^{\sim}\ar[ld]_{\beta}^{\sim}&\\
&\ar[dl]_{\sim}Mod_{\Spmonoidal}(\Prl)& \ar@{_{(}->}[l]_(.3){nonfull}\Prl_{\omega,Stb}\ar[r]_{\gamma}^{\sim}&Cat_{\infty}^{\mathcal{E}x,idem}\ar@{^{(}->}[r]&\iCatstable\\
\Prl_{Stb}& \ar[l]Mod_{\derivedk^{\otimes}}(\Prl)\ar[u]^{f^*}& \ar@{_{(}->}[l]_{nonfull}Mod_{\derivedk^{\otimes}}(\Prl_{\omega})\ar[u]^{f^*}&&&\\
&\ar[ul]^{N_{dg}^{L}}\dglp \ar@{-->}[u]_{\theta}^{\sim}&\ar@{_{(}->}[l]_{nonfull}\dgcc \ar@{-->}[u]_{\theta}^{\sim}\ar[r]_w^{\sim}&\dg^{idem}\ar@{^{(}->}[r]&\dg\\
&&&N(Cat_{Ch(k)})[W_{Mor}^{-1}]\ar[ul]_u^{\sim}\ar[u]_v^{\sim}&
}
\end{equation}

Here $\infty(Spectral/Morita)$ (resp. $N(Cat_{Ch(k)})[W_{Mor}^{-1}]$) denotes the $(\infty,1)$-category associated to the Morita model structure on the small spectral categories (resp. small dg-categories). The map $\beta$ is defined by sending a spectral category $\C$ to the stable $(\infty,1)$-category associated to the stable model category of $\C$-modules in spectra (ie, functors from $\C$ to the model category of spectra, together with the projective structure). The map $\gamma$ is the equivalence discussed in \ref{stableinfinitycategories} obtained by taking the full-subcategory of compact objects. The map $\alpha$ is the composition $\gamma\circ \beta$ and the fact that it is an equivalence is due to the Theorem 4.23 of \cite{tabuada-gepner}. The map $\theta$ is the conjectural equivalence and the maps $u,v$ and $w$ are the dg-analogues of $\alpha$,$\beta$ and $\gamma$, and the fact that they are equivalences results from the main results of \cite{tabuada-quillen,Toen-homotopytheorydgcatsandderivedmoritaequivalences, toen-vaquie} as indicated in the previous section. It is also important to remark that $\theta$ should respect the natural monoidal structures.\\

We hope this discussion clarifies the decision to work with dg-categories. For a quasi-compact and quasi-separated  scheme $X$ over $k$ we shall have $\theta(L_{qcoh}(X))\simeq \mathcal{D}(X)$ where $L_{qcoh}(X)$ is the derived dg-category of $X$ (see the next section) and $\mathcal{D}(X)$ is the stable presentable symmetric monoidal derived $(\infty,1)$-category of $X$ as in \cite[Def. 1.3.5.8]{lurie-ha}.

\subsection{Dg-Categories and Noncommutative Geometry}

\subsubsection{From Schemes to dg-algebras (over a ring $k$)- Part I}
Let $k$ be a ring. Given a quasi-compact and separated $k$-scheme $(X,\mathcal{O}_X)$ we consider $Qcoh(X)\subseteq \mathcal{O}_X-Mod$ the subcategory of quasi-coherent sheaves on $X$. Under some general conditions, the natural tensor product in $\mathcal{O}_X-Mod$ is closed for quasi-coherent sheaves (see the Prop. 9.1.1 of \cite{ega1}-Chap. 1). It results from a theorem of Deligne (see \cite{har66}-Appendix, Prop 2.2) that $Qcoh(X)$ is a \emph{Grothendieck abelian category}\footnote{More generally (see \cite{conrad}-Lemma 2.1.7) for any scheme $X$ there is an infinite cardinal $\kappa$ such that $qcoh(X)$ is $\kappa$-presentable.} so that we can apply to $C(Qcoh(X))$ the Theorem 2.2 of \cite{hovey-modelstructureonsheaves} which tells us that the category of unbounded complexes on a Grothendieck abelian category can be equipped with a model structure, with cofibrations given by the monomorphisms and the weak-equivalences the quasi-isomorphisms of complexes. By the Proposition 2.12 of loc.cit, every fibrant-object is a complex of injectives and every bounded above complex of injectives is fibrant. Since $X$ is defined over $k$, $\mathcal{O}_X$ is a sheaf of $k$-algebras and each $\mathcal{O}_X$-module is naturally a sheaf of $k$-modules. This induces a canonical action of $Ch(k)$ on $C(Qcoh(X))$ compatible with the model structure. By definition, the \emph{dg-derived category of $X$} is the dg-category $L_{qcoh(X)}:=Int(C(Qcoh(X)))$. Following the Remark \ref{localizationdg}, its associated homotopy category $[L_{qcoh(X)}]$ is canonically equivalent to the classical derived category of quasi-coherent sheaves on $X$. 

\begin{remark}
In general, for any quasi-compact scheme $X$, the correct derived dg-category to consider is full subcategory of the derived category of $\mathcal{O}_X$-modules with quasi-coherent cohomology. When $X$ is separated, this agrees with $L_{qcoh(X)}$.
\end{remark}

It is compactly generated (in the sense of Neeman \cite{Neeman-triangulatedcategories}) and thanks the results of \cite{thomasonalgebraic} we know that its compact objects are perfect complexes of quasi-coherent sheaves. We write $L_{pe}(X)$ for the full subcategory of $L_{qcoh(X)}$ spanned by the perfect complexes and the general theory gives us a canonical equivalence $\widehat{L_{pe}(X)}\simeq L_{qcoh(X)}$. By construction $L_{pe}(X)$ is an idempotent dg-category and we will understand it as the natural noncommutative incarnation of the scheme $X$. The philosophical importance of the following result is evident

\begin{thm}(Bondal-Van den Bergh \cite{bondal-vandenbergh}-Thm 3.1.1)
Let $X$ be a quasi-compact and quasi-separated scheme over a ring $k$. Then $L_{pe}(X)$ has a compact generator.
\end{thm}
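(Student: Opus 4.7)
The plan is to induct on the minimal number $n$ of affine open subschemes needed to cover $X$, which is finite since $X$ is quasi-compact. Throughout, I reduce the statement to showing that the triangulated category $[L_{qcoh}(X)]$ has a single compact generator in the sense of Neeman: this single object will then lie in $L_{pe}(X)$, since compacts coincide with perfect complexes, and the existence of such a compact generator for $L_{pe}(X)$ in the sense of the statement follows immediately (finite sums in $L_{pe}(X)$ exist because $L_{pe}(X)$ is idempotent complete and stable under finite colimits).

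For the base case $n=1$, we have $X=\mathrm{Spec}(A)$ and $L_{qcoh}(X)$ is equivalent to the dg-derived category of $A$-modules. The structure sheaf $\mathcal{O}_X$, corresponding to $A$ itself, is perfect and hence compact, and the identification $\mathrm{RHom}(\mathcal{O}_X,M)\simeq M$ shows that $\mathrm{RHom}(\mathcal{O}_X,M[i])=0$ for all $i\in \mathbb{Z}$ forces every cohomology sheaf of $M$ to vanish, so $\mathcal{O}_X$ is a compact generator. For the inductive step, using that $X$ is quasi-compact and quasi-separated we write $X=U\cup V$ where $V$ is affine and $U$ is a quasi-compact open admitting a cover by $n-1$ affines, with $U\cap V$ quasi-compact. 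By the induction hypothesis, $L_{qcoh}(U)$ has a compact generator $G_U$, which can be realized as a perfect complex on $U$.

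Now I assemble two perfect complexes on $X$. First, the localization sequence associated to the open immersion $j:U\hookrightarrow X$ with closed complement $Z=X\setminus U$ expresses $[L_{qcoh}(X)]$ as a Verdier quotient of itself by the full subcategory of complexes supported on $Z$; by the Thomason--Neeman extension theorem for compactly generated triangulated categories, a shift of $G_U\oplus \Sigma G_U$ lifts to a perfect complex $\widetilde{G}$ on $X$ with $j^*\widetilde{G}\simeq G_U\oplus \Sigma G_U$. Second, since $Z\cap V$ is cut out in the affine $V$ by finitely many elements $f_1,\ldots,f_r\in \Gamma(V,\mathcal{O}_V)$, the associated Koszul complex $K(f_1,\ldots,f_r)$ is a perfect complex on $V$ supported on $Z\cap V$, and extends by zero along $V\hookrightarrow X$ to a perfect complex $K$ on $X$ supported on $Z$. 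I claim $\widetilde{G}\oplus K$ is the required compact generator: it is perfect, hence compact, and if $M\in L_{qcoh}(X)$ satisfies $\mathrm{RHom}(\widetilde{G}\oplus K,M[i])=0$ for every $i$, then orthogonality to $\widetilde{G}$ and adjunction give $\mathrm{RHom}(G_U,j^*M[i])=0$ for every $i$, so $j^*M=0$ by the inductive hypothesis. Thus $M$ is supported on $Z$, and then orthogonality to $K$, combined with the fact that the Koszul complex generates the perfect complexes on $V$ supported on $Z\cap V$, forces $M|_V=0$, whence $M=0$.

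The main obstacle is the Thomason--Neeman lifting of $G_U$ to a perfect complex on $X$: it is not literally true that every perfect complex on $U$ restricts from a perfect complex on $X$ (this is the source of the $G_U\oplus \Sigma G_U$ shift, coming from the need to kill an obstruction class in $K_0$). The second construction, producing a compact object supported on $Z$ via a Koszul-type complex on the affine chart $V$, is considerably more elementary but requires care that one can choose the $f_i$ so that $V(f_1,\ldots,f_r)$ set-theoretically equals $Z\cap V$, which is available because $Z\cap V$ is closed in the affine $V$ with quasi-compact complement $U\cap V$. Together these two ingredients reduce the inductive step to the base case, completing the proof.
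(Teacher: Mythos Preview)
The paper does not give its own proof of this theorem; it is simply quoted from Bondal--Van den Bergh with a citation, so there is nothing to compare against. Your sketch is essentially the Bondal--Van den Bergh/Neeman argument and is on the right track, but there is a genuine gap in the final verification that $\widetilde{G}\oplus K$ generates.

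You write that ``orthogonality to $\widetilde{G}$ and adjunction give $\mathrm{RHom}(G_U,j^*M[i])=0$''. This step is not justified: the adjunction $(j^*,j_*)$ yields
\[
\mathrm{RHom}_U(j^*\widetilde{G},\,j^*M)\;\simeq\;\mathrm{RHom}_X(\widetilde{G},\,j_*j^*M),
\]
and you only know $\mathrm{RHom}_X(\widetilde{G},M)=0$, not $\mathrm{RHom}_X(\widetilde{G},j_*j^*M)=0$. The lift $\widetilde{G}$ is \emph{not} $j_!(G_U\oplus\Sigma G_U)$ (that object is typically not perfect), so the $(j_!,j^*)$ adjunction is not available either. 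The fix is to reverse the order of your two steps. First use orthogonality to $K$: since $K$ lies in $L_{qcoh,Z}(X)$ and compactly generates it, the adjunction for the inclusion $L_{qcoh,Z}(X)\hookrightarrow L_{qcoh}(X)$ with right adjoint $\Gamma_Z$ gives $\Gamma_Z M=0$, hence the localization triangle $\Gamma_Z M\to M\to j_*j^*M$ yields $M\simeq j_*j^*M$. Now the $(j^*,j_*)$ adjunction applies and orthogonality to $\widetilde{G}$ gives $\mathrm{RHom}_U(G_U\oplus\Sigma G_U,\,j^*M[i])=0$ for all $i$, so $j^*M=0$ and thus $M=0$. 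With this correction your argument goes through; the remaining ingredients (Thomason--Neeman lifting with the $K_0$ obstruction, the Koszul complex as compact generator of the supported category on the affine piece, and $Z\subset V$) are handled correctly.
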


Together with the preceeding discussion, this result implies that for any quasi-compact and quasi-separated scheme $X$ over $k$, the dg-category of perfect complexes $L_{pe}(X)$ is of the form $Perf(B)$ for some dg-algebra $B$.

\subsubsection{Smooth and Proper Dg-categories}
\label{nc1smoothandproper}

The geometric notions of smoothness and properness can be adapted to the world of dg-categories, in a way compatible with $L_{pe}(-)$. Recall that a dg-category $T$ is said to be \emph{locally perfect} it is enriched over perfect complexes of $k$-modules. $T$ is said to be \emph{proper} if it is locally perfect and it has a compact generator. We say that $T$ is \emph{smooth} if the object in $\widehat{T\otimes^{\mathbb{L}} T^{op}}$ defined the formula $(x,y)\mapsto T(x,y)$, is compact. Finally, we say that a dg-category is \emph{saturated} if it is smooth, proper and idempotent complete. It can easily be checked (\cite{toen-vaquie}-Lemma 2.6-(2)) that a dg-category $T$ is proper (resp. smooth) if and only if its idempotent completion is proper (resp. smooth). Of course, these notions are also invariant under the operation $(-)^{op}$. This implies that a dg-category of the form $Perf(B)$ is proper (resp. smooth) if and only $B$ is perfect as a complex of $k$-modules (resp. the $B^{op}\otimes^{\mathbb{L}} B$-module defined by the formula $(\bullet,\bullet)\mapsto B$ is compact).\\

\begin{example}
\label{smoothalgebrassmoothdgcats}
This notion of smoothness is compatible with the classical geometrical notion: a morphism $Spec(A)\to Spec(k)$ is smooth (meaning, $A$ is regular over $k$) if and only if the dg-category $Perf(A)$ is smooth. This is proved using a famous theoreom of J.P.Serre \cite[IV-37, Thm 9]{serre-algebrelocal}: a commutative ring is regular if and only if it is of finite global homological dimension. More generally, and thanks to the Lemma 3.27 in \cite{toen-vaquie} we have a machine to produce smooth and proper dg-categories: for any scheme $X$ smooth and proper over a ring $k$, the dg-category $L_{pe}(X)$ is smooth and proper.
\end{example}

In \ref{morita} we explained how the notion of pseudo-perfectness can be used to describe the mapping spaces in $\dg^{idem}$. Notice now that an object $E\in \widehat{T}$ is pseudo-perfect (over $T$ relatively to $\mathit{1_k}$) if it has values in compact complexes of $k$-modules. The distinction between being compact and pseudo-perfect is the key to understand the notions of smooth and proper as the following results from \cite{toen-vaquie} suggest:

\begin{itemize}
\item \cite{toen-vaquie}-Lemma 2.8-(1): A dg-category $T$ is locally perfect if and only if for any dg-category $T'$, we have an an inclusion of subcategories

\begin{equation}(\widehat{T\otimes^{\mathbb{L}} T'})_c\subseteq (\widehat{T\otimes^{\mathbb{L}} T'})_{pspe}\end{equation}

\item \cite{toen-vaquie}-Lemma 2.8-(2): A dg-category $T$ is smooth if and only if for any dg-category $T'$, we have an an inclusion of subcategories

\begin{equation}(\widehat{T\otimes^{\mathbb{L}} T'})_{pspe}\subseteq (\widehat{T\otimes^{\mathbb{L}} T'})_c\end{equation}

\item \cite{toen-vaquie}-Lemma 2.8-(3): From the two previous items, a dg-category $T$ is smooth and proper iff it has a compact generator and for any dg-category $T'$, the subcategories of $\widehat{T\otimes^{\mathbb{L}} T'}$ spanned by compact, respectively pseudo-perfect modules, coincide. 

\end{itemize}

\begin{remark}
Recall from \ref{morita} that the mapping spaces $Map_{\dg^{idem}}(T,T')$ are given by the maximal $\infty$-groupoids in $pspe(T,T')_{\infty}$ - the full subcategory of $(T,T')_{\infty}$ spanned by the pseudo-perfect modules. It follows that if $T$ is smooth and proper, we can identify $pspe(T,T')_{\infty}$ with the full subcategory $(T,T')_{\infty}^{\omega}$ spanned by the compact modules. 
\end{remark}

The notions of smooth and proper are related to the notion of finite type:

\begin{itemize}
\item \cite{toen-vaquie}-Corollary 2.13:  Any smooth and proper dg-category is of finite type;
\item \cite{toen-vaquie}-Proposition 2.14: Any dg-category of finite type is smooth.
\end{itemize}

To conclude this section we recall another important characterization of smoothness and properness given by the following result due to B. Toën 

\begin{prop}(see \cite{sedano}-Lectures on dg-categories)
\label{nc1smoothandproperaredualizable}
An object $T \in \dg^{idem}$ is smooth and proper if and only if it is dualizable with respect to the symmetric monoidal structure $\dg^{idem,\otimes}$. In particular, the dual of a dg-category $\widehat{T}_c\in \dg^{idem}$ is the opposite $(\widehat{T^{op}})_c$.
\end{prop}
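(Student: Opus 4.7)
The plan is to exhibit explicit duality data when $T$ is smooth and proper, and to extract these properties from abstract dualizability. The proposed dual is $\check{T} = T^{op}$ (which is again idempotent), with unit $1 = Perf(k)$, and both the evaluation and coevaluation maps will be modeled by the diagonal bimodule $\Delta_T \in \widehat{T^{op}\otimes^{\mathbb{L}} T}$ given by $\Delta_T(x,y) = T(x,y)$.

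Using the description of mapping spaces in $\dg^{idem}$ recalled in Section \ref{morita}, $\mathrm{Map}_{\dg^{idem}}(A,B) \simeq pspe(A,B)^{\simeq}_{\infty}$, I would first interpret the candidate structure maps. The coevaluation $\mathrm{coev}\colon 1 \to T^{op}\otimes T$ corresponds to an object pseudo-perfect relative to $T^{op}\otimes T$; since $1 = Perf(k)$ has a single generating object, this is simply a compact object of $\widehat{T^{op}\otimes^{\mathbb{L}} T}$, so $\mathrm{coev}$ exists exactly when $\Delta_T$ is compact --- precisely the smoothness of $T$. Dually, $\mathrm{ev}\colon T\otimes T^{op} \to 1$ corresponds to a module in $\widehat{T^{op}\otimes^{\mathbb{L}} T}$ whose values at every pair of objects are perfect complexes, and $\Delta_T$ satisfies this condition exactly when $T$ is locally perfect. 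Since composition in $\dg^{idem,\otimes}$ is given by derived convolution of bimodules, the triangle identity $(\mathrm{ev}\otimes \mathrm{id}_T)\circ (\mathrm{id}_T \otimes \mathrm{coev}) = \mathrm{id}_T$ reduces to the standard Morita computation $\Delta_T \otimes_T^{\mathbb{L}} \Delta_T \simeq \Delta_T$, and the other triangle identity holds symmetrically.

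For the converse, if $T$ is dualizable with some dual $\check{T}$, then using the internal-hom $\mathbb{R}\underline{\mathrm{Hom}}$ in $\dg^{idem,\otimes}$ (provided by the remark following Prop.\ \ref{epavala}), one recovers $\check{T} \simeq \mathbb{R}\underline{\mathrm{Hom}}(T,1)$, which one identifies with $T^{op}$. The existence of $\mathrm{ev}$ forces $\Delta_T$ to be pointwise perfect (so $T$ is locally perfect), and the existence of $\mathrm{coev}$ forces $\Delta_T$ to be compact (so $T$ is smooth). For the compact generator condition in properness, dualizability gives $\mathrm{Map}_{\dg^{idem}}(T,-) \simeq \mathrm{Map}_{\dg^{idem}}(1, T^{op}\otimes -)$; since $1 = Perf(k)$ is compact in $\dg^{idem}$ and $T^{op}\otimes -$ preserves filtered colimits, $T$ itself is compact in $\dg^{idem}$, hence of finite type by Toën-Vaqui\'e (Section \ref{finitetype}), and therefore admits a compact generator.

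The main obstacle will be the precise verification of the triangle identities within the coherent symmetric monoidal $\infty$-structure on $\dg^{idem}$. Fortunately, since dualizability is detectable at the level of the symmetric monoidal homotopy category $h(\dg^{idem,\otimes})$, it suffices to check $\Delta_T \otimes_T^{\mathbb{L}} \Delta_T \simeq \Delta_T$ up to quasi-isomorphism of bimodules, bypassing the higher coherence subtleties. A secondary subtlety is that the tensor product $T\otimes T^{op}$ in $\dg^{idem}$ involves idempotent completion of $T\otimes^{\mathbb{L}} T^{op}$, but this is invisible at the level of mapping spaces into $\dg^{idem}$ by the Morita invariance established in Section \ref{morita}.
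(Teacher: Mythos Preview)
The paper does not actually prove this proposition; it is stated with attribution to To\"en's lecture notes and left unproved. So there is no in-paper argument to compare against, and your proposal is being evaluated on its own merits.

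Your forward direction (smooth and proper $\Rightarrow$ dualizable) is correct and is the standard argument: smoothness makes $\Delta_T$ compact so that $\mathrm{coev}\colon 1 \to T^{op}\otimes T$ is a genuine morphism in $\dg^{idem}$, local perfectness makes $\Delta_T$ pseudo-perfect so that $\mathrm{ev}\colon T\otimes T^{op}\to 1$ exists, and the triangle identities reduce to $\Delta_T\otimes_T^{\mathbb{L}}\Delta_T\simeq\Delta_T$, which you correctly observe can be checked in the homotopy category.

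Your converse direction has the right ingredients but a logical gap in their ordering. You assert $\check{T}\simeq\mathbb{R}\underline{Hom}(T,1)\simeq T^{op}$ and then read off smoothness and local perfectness from the assumption that $\mathrm{ev}$ and $\mathrm{coev}$ are realized by $\Delta_T$. But the identification $\mathbb{R}\underline{Hom}(T,1)\simeq T^{op}$ is exactly what needs proof: in general $\mathbb{R}\underline{Hom}(T,1)=\widehat{T^{op}}_{pspe}$, which need not coincide with $(\widehat{T^{op}})_c$. The correct order is: first run your compactness argument (which works with $\check{T}$ in place of $T^{op}$, since all you need is $Map(T,-)\simeq Map(1,\check{T}\otimes-)$) to get $T$ of finite type, hence smooth with a compact generator; then do the same for $\check{T}$; then use smoothness of $T$ and Lemma 2.8-(2) of \cite{toen-vaquie} to get $\check{T}=\widehat{T^{op}}_{pspe}\subseteq(\widehat{T^{op}})_c=T^{op}$, and symmetrically $T\subseteq\check{T}^{op}$, forcing $\check{T}\simeq T^{op}$; finally, the equality $T^{op}=\widehat{T^{op}}_{pspe}$ says every representable $T(-,y)$ has perfect values, which is local perfectness.
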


\subsubsection{From Schemes to Noncommutative Spaces (over a ring $k$) - Part II}
\label{section633}

Following \cite{toen-vaquie}, the notion of finite type should be understood as the correct notion of smoothness for noncommutative spaces, while the smooth dg-categories should only be understood as "formally smooth"' noncommutative spaces. Finally, we are ready to introduce our smooth noncommutative geometric objects.

\begin{defn}
Let $k$ be a ring. We define the $(\infty,1)$-category of \emph{smooth noncommutative spaces over $k$}- $\nck$ - to be the \emph{opposite} of $\dg^{ft}$. It has a natural symmetric monoidal structure $\nckmonoidal$ induced from the one in $\dg^{ft,\otimes}$ , with unit object given by $L_{pe}(k)$.
\end{defn}

\begin{notation}
We will denote our smooth noncommutative spaces using caligraphic letters $\X$,$\UU$, $\V$, $\W$, etc. For a smooth noncommutative space $\X\in \nc$ we will denote by $T_{\X}$ its associated dg-category of finite type and by $A_{\X}$ a compact dg-algebra such that $T_{\X}\simeq Perf(A_{\X})$.
\end{notation}

We will say that a smooth noncommutative space $\X$ is \emph{smooth and proper} if its associated dg-category $T_{\X}$ is smooth and proper. We will let $\nck^{sp}$ denote the full subcategory of $\nck$ spanned by the smooth and proper noncommutative spaces. Since the smooth and proper dg-categories correspond to the dualizable objects in $\dg^{ft}$, the subcategory $\nck^{sp}$ is closed under tensor products.\\

It follows immediately from the properties of $\dg^{ft}$ that $\nck$ admits pullbacks, together with finite direct sums and a zero object. Moreover, the tensor product commutes with limits. In particular, if $\X$ and $\Y$ are two smooth noncommutative spaces, the mapping space $Map_{\nck}(\X,\Y)$ is given by the $\infty$-groupoid $pspe(A_{\Y},A_{\X})_{\infty}^{\simeq}$ of pseudo-perfect $A_{\Y}\otimes^\mathbb{L}A_{\X}^{op}$-dg-modules and equivalences between them.\\

We now explain how the formula $X\mapsto L_{pe}(X)$ can be properly arranged as a functor. We define it for the smooth affine schemes of finite type over $k$, whose ordinary category we denote by $AffSm^{ft}(k)$. Recall that the full subcategory of $0$-truncated objects in $Alg(\derivedk)^{cn}$ is equivalent to the nerve of the category of classical associative rings. In particular, we can identify the nerve of the category of commutative smooth $k$-algebras of finite type $N(SmCommAlg_k)\simeq \aff^{op}$ with a full subcategory of $Alg(\derivedk)^{cn}$. Let $L$ denote the composition

\begin{equation}
\xymatrix{
N(SmCommAlg_k)\ar@{^{(}->}[r]& Alg(\derivedk)^{cn} \ar@{^{(}->}[r]& Alg(\derivedk)  \ar[r]^{Perf}&  \dg^{idem}
}
\end{equation}

The following is a key result:

\begin{prop}
\label{commutativesmoothimpliesfinitetype}
Let $A$ be a classical commutative smooth $k$-algebra of finite type. Then, $L(A)$ is a dg-category of finite type. In other words, $L$ provides a well-defined functor $N(SmCommAlg_k)\to \dg^{ft}$.
\begin{proof}
If $A$ is smooth as a classical commutative $k$-algebra it is smooth as a dg-category (Example \ref{smoothalgebrassmoothdgcats}) which by definition means it is compact as a $A\otimes_k A^{op}$-dg-module. Following the Remark \ref{bimodulesandcoisas} the category of $A\otimes_k A^{op}$-dg-modules can be naturally identified with the category of $A$-bimodules $BiMod(A,A)(Ch(k))$. Using the strictification results of \ref{strictificationalgebras} the underlying $(\infty,1)$-category of $BiMod(A,A)(Ch(k)$) is equivalent to $_ABMod_{A}(\derivedk)\simeq Mod_{A}^{\Ass}(\derivedk)$.

Of course, if $A$ is compact in $Mod_{A}^{\Ass}(\derivedk)$ and since $A\otimes_k A^{op}$ is also compact (it is a generator), the kernel of the multiplication map $I\to A\otimes_k A^{op}\to A$ will also be compact. Following the Example \ref{noncommutativecotangentcomplex} we can now identify $I$ with the relative cotangent complex $\mathbb{L}_{A/k}\in Mod_{A}^{\Ass}(\derivedk) $. The Lemma \ref{cotangentcompact} completes the proof.
\end{proof}
\end{prop}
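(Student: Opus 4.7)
The plan is to reduce the statement, via the characterization of dg-categories of finite type from \cite{toen-vaquie}, to verifying that $A$, viewed as an object of $Alg(\derivedk)^{cn}$, is compact, and then to apply the Lemma \ref{cotangentcompact} which converts compactness of an associative algebra into two concrete conditions: finite presentation of $\mathbb{H}_0(A)$ and compactness of the absolute cotangent complex $\mathbb{L}_A$ in $Mod_A^{\Ass}(\derivedk)$.

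First I would unwind the definition: by the discussion in \ref{finitetype}, $L(A) = Perf(A)$ is of finite type precisely when $A$ itself is a compact object of $Alg(\derivedk)$, and since $A$ is connective this is equivalent to compactness in $Alg(\derivedk)^{cn}$ (using that the truncation $\tau_{\leq 0}$ commutes with filtered colimits, exactly as in the proof of Lemma \ref{cotangentcompact}). Next I invoke Lemma \ref{cotangentcompact}, which replaces this compactness question with two conditions. The condition on $\mathbb{H}_0(A) = A$ being a finitely presented associative $k$-algebra is immediate from the hypothesis that $A$ is a commutative $k$-algebra of finite type (any finite presentation as a commutative algebra yields one as an associative algebra). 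So the entire content of the proof lies in verifying the second condition: that $\mathbb{L}_A \in Mod_A^{\Ass}(\derivedk)$ is compact.

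To address the cotangent complex, I would exploit the classical smoothness hypothesis as follows. By the Example \ref{smoothalgebrassmoothdgcats}, classical smoothness of $A$ over $k$ implies that $Perf(A)$ is smooth as a dg-category, meaning precisely that $A$, viewed as an $A \otimes_k A^{op}$-dg-module via the multiplication, is a compact object. Using Remark \ref{bimodulesandcoisas} together with the strictification results of \ref{strictificationalgebras}, I identify the $(\infty,1)$-category of $A \otimes_k A^{op}$-dg-modules with $Mod_A^{\Ass}(\derivedk) \simeq {}_A BMod_A(\derivedk)$. In this category, both $A \otimes_k A^{op}$ (a free generator, and hence compact) and $A$ (compact by smoothness) belong to the stable subcategory of compact objects. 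The cotangent complex identification of Example \ref{noncommutativecotangentcomplex} now tells us that $\mathbb{L}_A$ is the fiber of the multiplication map $A \otimes_k A^{op} \to A$, and since compact objects in a stable $(\infty,1)$-category are closed under finite limits (in particular fibers), we conclude $\mathbb{L}_A$ is compact.

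I do not anticipate a serious obstacle: the only subtle input is the equivalence between the module-theoretic perspective on the cotangent complex and Quillen's classical construction, but this is precisely the content of Example \ref{noncommutativecotangentcomplex} and requires no additional work. The step most worth double-checking is the passage between the strict category of $A$-bimodules and the $(\infty,1)$-category $Mod_A^{\Ass}(\derivedk)$ of Example \ref{noncommutativecotangentcomplex}, where one must ensure the notion of ``compact'' matches on both sides; this follows from the strictification result recalled in \ref{strictificationalgebras}, together with the identification of homotopically finitely presented objects with compact objects in the underlying $(\infty,1)$-category of a compactly generated model category, as reviewed in \ref{compactlygeneratedmodelcategories}.
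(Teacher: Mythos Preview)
Your proof is correct and follows essentially the same approach as the paper: both arguments use Example \ref{smoothalgebrassmoothdgcats} to get compactness of $A$ as an $A\otimes_k A^{op}$-module, deduce compactness of the cotangent complex as the fiber of the multiplication map via Example \ref{noncommutativecotangentcomplex}, and then invoke Lemma \ref{cotangentcompact}. You are slightly more explicit than the paper in spelling out why finite type means compactness of $A$ in $Alg(\derivedk)$ and in verifying the finite-presentation condition on $\mathbb{H}_0(A)$, but these are details the paper leaves to the reader rather than genuine differences in strategy.
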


Using this, we define $L_{pe}$ as the opposite of $L$

\begin{equation}L_{pe}:\aff\to \nck\end{equation}

To conclude this section we observe that $L_{pe}$ can be promoted to a monoidal functor 

\begin{equation}
L_{pe}^{\otimes}:\aff^{\times}\to\nck^{\otimes}
\end{equation}

\noindent where $\aff^{\times}$ is the cartesian structure in $\aff$ which corresponds to the coproduct of classical commutative smooth $k$-algebras which is, well-known, given by the classical tensor product over $k$.\\ 

It follows from \ref{tensorproductalgebras} and the fact that the tensor product in $\derivedk$ is compatible with the $t$-structure, that the composition $Alg(\derivedk)^{cn}\to Alg(\derivedk)$ is monoidal. Moreover, the functor $Perf$ is monoidal because it is the composition of monoidal functors - \ref{dgalgebrasdgcatsmonoidal} and \ref{epavala}. We are left to check that the inclusion $N(SmCommAlg_k)\to Alg(\derivedk)^{cn}$ is monoidal. In other words, that for a commutative smooth $k$-algebra of finite type over $k$, the classical tensor product agrees with the derived tensor product. But this is true since smooth $k$-algebras are flat over $k$.

\subsection{The Motivic $\mathbb{A}^1$-Homotopy Theory of Kontsevich's Noncommutative Spaces over a ring $k$}

We will now use our main results to fabricate a motivic $\mathbb{A}^1$-homotopy theory for smooth noncommutative spaces over a ring $k$. In this section we proceed in analogy with the construction of the motivic stable homotopy for schemes as described in the previous chapter of this work. Recall from the Remark \ref{affineisenough} that these constructions only depend on the category of \emph{affine} smooth schemes of finite type over $k$.\\

\begin{remark}
\label{extendingtononaffineschemes}
There is a natural way to extend the functor $L_{pe}$ to non-affine schemes. To do this, we observe that the classical category of schemes can be identified with a full subcategory of $\mathcal{P}^{big}(\aff)$, by the identification of a scheme with its "functor of points". The universal property of (big) presheaves provides a colimit preserving map

\begin{equation}
\xymatrix{
\aff \ar[d]\ar[r]^{L_{pe}}& \nck\ar[d]\\
\mathcal{P}^{big}(\aff)\ar@{-->}[r]& \mathcal{P}^{big}(\nck)
}
\end{equation}

The Lemma 3.27 in \cite{toen-vaquie} implies that the image through this map of any smooth and proper scheme $X$ over $k$ is representable in  $\mathcal{P}^{big}(\nck)$. This should remain true without the properness condition.
\end{remark}

 To start with, we need to introduce an appropriate analogue for the Nisnevich topology, for the interval $\mathbb{A}^1$ and for the projective space $\mathbb{P}^1$. For the last two we have natural choices - $L_{pe}(\mathbb{A}^1)$ and $L_{pe}(\mathbb{P}^1)$: the first is a dg-category of finite type because $\mathbb{A}^1$ is smooth affine over $k$; the second, $L_{pe}(\mathbb{P}^1)$, is of finite type because the canonical morphism $\mathbb{P}^1\to Spec(k)$ is smooth and proper (see \ref{smoothalgebrassmoothdgcats}). The analogue of the Nisnevich topology requires a more careful discussion.

\subsubsection{The noncommutative version of the Nisnevich Topology}

To obtain our noncommutative analogue for the Nisnevich topology we isolate the formal properties of the commutative squares in $\nck$

\begin{equation}
\xymatrix{
L_{pe}(p^{-1}(U))\ar[r]\ar[d]& L_{pe}(V)\ar[d]\\
L_{pe}(U)\ar[r]& L_{pe}(X)
}
\end{equation}

\noindent induced by the Nisnevich squares of schemes. Following the list of properties given in Section \ref{section5}, we start with the notion of an open embedding. For that we need some preparations. Recall that an \emph{exact sequence} in $\dg^{idem}$  is the data of a commutative square  

\begin{equation}
\label{pku}
\xymatrix{
A\ar[r]^f \ar[d]& \ar[d]^g B\\
\ast \ar[r]& C
}
\end{equation}

\noindent where $\ast$ is the zero object in $\dg^{idem}$, such that $f$ fully-faithful and the diagram is a pushout. Since $\dg^{idem}$ is a reflexive localization of $\dg$, this pushout $C$ is canonically equivalent to the idempotent completion of the pushout $B/A$ computed in $\dg$. Of course, using the equivalence (\ref{versionsmorita}), the previous diagram is an exact sequence if and only if the diagram

\begin{equation}
\label{exactsequence}
\xymatrix{
\widehat{A}\ar[r]^{\widehat{f}} \ar[d]& \ar[d]^{\widehat{g}} \widehat{B}\\
\ast \ar[r]& \widehat{C}
}
\end{equation}

\noindent is an exact sequence in $\dgcc$ in the same sense. Thanks to the works of B.Keller in \cite{keller-exact}, we know that this notion of exact sequence extends the notion given by Verdier \cite{Verdier}. 

\begin{prop}(B. Keller \cite{keller-exact})
\label{kellerexact}
The following conditions are equivalent:
\begin{enumerate}
\item a diagram as above is an exact sequence;
\item the functor $\widehat{f}$ induces an equivalence of $[\widehat{A}]$ with a triangulated subcategory of the triangulated category $[\widehat{B}]$ and $\widehat{g}$ exhibits the homotopy category $[\widehat{C}]$ as the Verdier quotient $[\widehat{B}]/[\widehat{A}]$;
\item the functor $f$ induces an equivalence of $[A]$ with a triangulated subcategory of the triangulated category $[B]$ and the canonical map from the Verdier quotient $[B]/[A]\hookrightarrow [C]$ is cofinal (see our discussion in \ref{exactsequencepresentable}).
\end{enumerate}
\end{prop}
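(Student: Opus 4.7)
The plan is to reduce the three conditions to parallel statements about the derived dg-categories $\widehat{A}, \widehat{B}, \widehat{C}$ in $\dgcc$, then invoke Keller's identification of the dg-quotient with the Verdier quotient, and finally use a Thomason--Neeman argument to transport between compact objects and the full categories. The overall structure closely mirrors the proof of Proposition \ref{gepnerexact}, but works directly in $\dg^{idem}\simeq \dgcc$ rather than in $\Prl_{\omega,Stb}$.

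First I would use the equivalence (\ref{versionsmorita}) to restate condition (1) in terms of the square (\ref{exactsequence}) in $\dgcc$: since $\widehat{(-)}$ sends the pushout of (\ref{pku}) computed in $\dg$ (and then idempotent-completed) to the pushout computed in $\dgcc$, the exactness of (\ref{pku}) in $\dg^{idem}$ is equivalent to the analogous exactness of (\ref{exactsequence}) in $\dgcc$. Under this translation, fully faithfulness of $f$ in $\dg^{idem}$ becomes fully faithfulness of $\widehat{f}$ in $\dgcc$, which in turn is equivalent to fully faithfulness of $[\widehat{f}]$ at the level of triangulated categories (the Hom-complexes in $\dgcc$ detect equivalences via $H^0$).

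For $(1)\Leftrightarrow (2)$, the key input is Keller's theorem in \cite{keller-exact}: for a fully faithful morphism $\widehat{A}\hookrightarrow \widehat{B}$ in $\dgcc$, the dg-quotient $\widehat{B}/\widehat{A}$ constructed as the pushout against the zero object has homotopy category canonically identified with the classical Verdier quotient $[\widehat{B}]/[\widehat{A}]$. Hence the exactness of (\ref{exactsequence}) is equivalent to $\widehat{C}$ being the dg-quotient $\widehat{B}/\widehat{A}$, which at the level of homotopy categories is exactly the statement in (2).

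For $(2)\Leftrightarrow (3)$, I would pass between compactly generated triangulated categories and their subcategories of compact objects. Since $[A]\simeq [\widehat{A}]^{c}$ (up to idempotent completion) and analogously for $B,C$, and since the compactly generated localization $[\widehat{B}]\to [\widehat{C}]$ is controlled by the Thomason--Neeman theorem \cite{Neeman-thomason}, the induced functor on compact objects $[\widehat{B}]^c\to [\widehat{C}]^c$ realizes the Verdier quotient $[\widehat{B}]^c/[\widehat{A}]^c$ up to retracts, which is precisely the cofinality condition of (3). Conversely, knowing (3), one recovers (2) by applying $\widehat{(-)}$ and using that $\widehat{T}$ records all of $[T]$ together with its Ind-style completion at the triangulated level.

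The main obstacle is the compatibility between the abstract pushout in $\dgcc$ of an exact sequence and the concrete dg-quotient constructions of \cite{keller-exact} and \cite{drinfeld2}; once this identification is pinned down, $(1)\Leftrightarrow (2)$ is Keller's theorem and $(2)\Leftrightarrow (3)$ is a direct Thomason--Neeman argument essentially identical to the one used in Proposition \ref{gepnerexact}.
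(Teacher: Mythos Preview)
The paper does not give its own proof of this proposition: it is stated with the attribution ``(B.~Keller \cite{keller-exact})'' and no \texttt{proof} environment follows; the text immediately continues with the definition of a strict exact sequence. So there is nothing in the paper to compare your argument against line by line.

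That said, your outline is a reasonable reconstruction of how the statement is extracted from Keller's work. The reduction $(1)\Leftrightarrow(2)$ via the identification of the dg-quotient with the Verdier quotient is precisely the content of \cite{keller-exact}, and your $(2)\Leftrightarrow(3)$ via Thomason--Neeman is exactly the mechanism the paper itself invokes elsewhere (see the proof of Proposition~\ref{gepnerexact}, which cites \cite[Section~4.12, Corollary]{keller-exact} for this step). One small point: in your translation of fully faithfulness between $\dg^{idem}$ and $\dgcc$, the claim that ``fully faithfulness of $[\widehat{f}]$'' is equivalent to fully faithfulness of $\widehat{f}$ is not quite enough as stated---you need quasi-fully-faithfulness at the level of Hom-complexes, not just on $H^0$---but this is what $\widehat{(-)}$ actually gives you, so the argument goes through once phrased correctly.
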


\begin{remark}
\label{exactdgiffexactstable}
Following the discussion in \ref{linkdgspectrastable}, we can use the functor $N_{dg}^L: \dgcc \to \Prl_{\omega,Stb}$ to relate exact sequences of dg-categories in the above sense to exact sequences of stable presentable $(\infty,1)$-categories in the sense of \ref{exactsequencepresentable}. As explained in \ref{nervedgconservative} $N_{dg}^L$ is conservative, preserves fully-faithfulness and preserves the notion of "homotopy category" (see the Remark \cite[1.3.1.11]{lurie-ha}). This result, together with the Propositions \ref{kellerexact} and \ref{gepnerexact} implies that a sequence of dg-categories $\widehat{A}\to \widehat{B}\to \widehat{C}$ in $\dgcc$ is exact in the sense discussed in this section if and only if its image $N_{dg}^L(\widehat{A})\to N_{dg}^L(\widehat{B})\to N_{dg}^L(\widehat{C})$ in $\Prl_{\omega,Stb}$ is exact in the sense discussed in \ref{exactsequencepresentable}. It follows also that $\widehat{A}$ has a compact generator if and only if $h(N_{dg}^L(\widehat{A}))$ has a compact generator.
\end{remark}

\begin{remark}
\label{exactsequencesarestrict2}
It is common to find in the literature the terminology of \emph{strict exact sequence} to denote an exact sequence (\ref{pku}) in $\dg^{idem}$ which, appart from being a pushout square, is also a pullback in $\dg^{idem}$. It follows again from the results of \cite{keller-exact} that in terms of the associated homotopy triangulated categories this corresponds to the additional condition that $[\widehat{A}]$ is thick in $[\widehat{B}]$. It follows however that when working in $\dg^{idem}$ this terminology is unnecessary because every exact sequence is strict. This follows from the properties of the functor $N_{dg}^L$ together with the Corollary \ref{exactsequencesarestrict}.
\end{remark}

Let us now come back to the definition of open immersion. Thanks to the results of Thomason in \cite[Section 5]{thomasonalgebraic} and to the work of B.Keller in \cite{keller-exact}, we know that for a quasi-compact and quasi-separated scheme $X$ with a quasi-compact open embedding $i:U\hookrightarrow X$, the restriction map $i^*:L_{qcoh}(X)\to L_{qcoh}(U)$ fits in a strict exact sequence in $\dgcc$

\begin{equation}
\xymatrix{
L_{qcoh}(X)_{X-U}\ar[d]\ar[r]&\ar[d]^{i^*} L_{qcoh}(X)\\
\ast \ar[r]& L_{qcoh}(U)
}
\end{equation}

\noindent where $L_{qcoh}(X)_{X-U}$ is by definition the kernel of the restriction $i^*$. It is also well-known that this kernel has a compact generator (see for instance the Lecture notes by M. Schlichting in \cite{sedano}). Of course, using the equivalence $\dg^{idem}\simeq \dgcc$, we can reformulate this in terms of an exact sequence in $\dg^{idem}$

\begin{equation}
\xymatrix{
(L_{qcoh}(X)_{X-U})_c \ar[d]\ar[r]&\ar[d]^{i^*} L_{pe}(X)\\
\ast \ar[r]& L_{pe}(U)
}
\end{equation}

\noindent where $(L_{qcoh}(X)_{X-U})_c$ has a compact generator. This motivates the following definition:

\begin{defn}
\label{defopenimmersion}
Let $f:\UU\to \X$ be a morphisms of smooth noncommutative spaces over $k$. We say that $f$ is an \emph{open immersion} if there exists a dg-category with a compact generator $K_{\X-\UU}\in \dg^{idem}$ (see \ref{dgcategorieswithcompactobjects}) together with a fully-faithful map $K_{\X-\UU}\hookrightarrow T_{\X}$ such that opposite of $f$ in $\dg^{ft}$ fits in a exact sequence in $\dg^{idem}$:

\begin{equation}
\xymatrix{
K_{\X-\UU} \ar[r] \ar[d]&T_{\X}\ar[d]\\
\ast \ar[r]&T_{\UU}
}
\end{equation}

It follows from the Remark \ref{exactsequencesarestrict2} that this diagram is also a pullback square.
\end{defn}

\begin{defn}
\label{defncnisnevich}
We will say that a commutative diagram in $\nck$

\begin{equation}
\xymatrix{
\W\ar[r]\ar[d]& \V\ar[d]\\
\UU\ar[r]&\X
}
\end{equation}

\noindent is a \emph{Nisnevich square of smooth noncommutative spaces} if the following conditions hold:

\begin{enumerate}
\item The maps $\UU\to \X$ and $\W\to \V$ are open immersions;
\item The associated map $T_{\X}\to T_{\V}$ sends the compact generator of $K_{\X-\UU}\subseteq T_{\X}$ to the compact generator of $K_{\V-\W}\subseteq T_{\V}$ and induces an equivalence $K_{\X-\UU}\simeq K_{\V-\W}$; 
\item The diagram is a pushout.
\end{enumerate}
\end{defn}

\begin{convention}
\label{emptyisnisnevich}
We will adopt the convention that if $\X$ is a smooth noncommutative space whose underlying dg-category $T_{\X}$ is a zero object of $\dg^{ft}$, the empty collection forms a Nisnevich square of $\X$. 
\end{convention}

Using the duality between smooth noncommutative spaces and dg-categories, a Nisnevich square corresponds to the data of a commutative diagram in $\dg^{idem}$

\begin{equation}
\label{nisnevichdata}
\xymatrix{
K_{\X-\UU}\ar[r]\ar[d]& \ast \ar[d]\\
T_{\X}\ar[r] \ar[d]& T_{\UU}\ar[d]\\
T_{\V}\ar[r]&T_{\W}\\
K_{\V-\W}\ar[u]\ar[r] & \ar[u]\ast
}
\end{equation}

\noindent where:

\begin{enumerate}[1)]
\item all $T_{\X}$, $T_{\UU}$, $T_{\V}$ and $T_{\W}$ are of finite type;
\item Both $K_{\X-\UU}$ and $K_{\V-\W}$ belong to $\dg^{idem}$, have a compact generator and the maps $K_{\X-\UU}\to T_{\X}$ and $K_{\V-\W}\to T_{\V}$ are fully-faithful;
\item The associated map $T_{\X}\to T_{\V}$ sends the compact generator of $K_{\X-\UU}\subseteq T_{\X}$ to the compact generator of $K_{\V-\W}\subseteq T_{\V}$ and induces an equivalence $K_{\X-\UU}\simeq K_{\V-\W}$;
\item the upper and lower squares are pushouts and pullbacks in $\dg^{idem}$ (see \ref{exactsequencesarestrict2}) and the middle square is a pullback in $\dg^{ft}$ and therefore in $\dg^{idem}$ (see the Remark \ref{nc2dgftdgidempreservesproducts}).
\end{enumerate}

These conditions also imply that the middle square is a pushout in $\dg^{idem}$. Indeed, because the exterior diagrams are pushouts we can write $T_{\W}\simeq T_{\V}\coprod_{K_{\V-\W}}\ast$ and $T_{\UU}\simeq T_{\X}\coprod_{K_{\X-\UU}}\ast$. Together with the fact that $K_{\X-\UU}$ and $K_{\V-\W}$ are equivalent, we have

\begin{equation}
T_{\V}\coprod_{T_{\X}}T_{\UU}\simeq  T_{\V}\coprod_{T_{\X}}(T_{\X}\coprod_{K_{\X-\UU}}\ast)\simeq T_{\V}\coprod_{K_{\X-\UU}}\ast\simeq T_{\V}\coprod_{K_{\V-\W}}\ast\simeq T_{\W}
\end{equation}

\begin{cor}
\label{nisnevichpullback}
Every Nisnevich square in $\nck$ is a pullback.
\end{cor}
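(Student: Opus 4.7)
The plan is to unwind the definition of pullback in $\nck$ using the duality $\nck = (\dg^{ft})^{op}$ and then exploit the computation already sketched in the paragraph preceding the statement. Concretely, showing that the Nisnevich square is a pullback in $\nck$ amounts to showing that the middle square
\begin{equation*}
\xymatrix{
T_{\X}\ar[r] \ar[d]& T_{\UU}\ar[d]\\
T_{\V}\ar[r]&T_{\W}
}
\end{equation*}
of diagram (\ref{nisnevichdata}) is a pushout in $\dg^{ft}$.

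Next I would reduce this to verifying the pushout property in the larger $(\infty,1)$-category $\dg^{idem}$. Since $\dg^{ft} \subseteq \dg^{idem}$ is a full subcategory and all four corners $T_{\X}, T_{\UU}, T_{\V}, T_{\W}$ belong to $\dg^{ft}$, a pushout cocone in $\dg^{idem}$ whose apex happens to lie in $\dg^{ft}$ automatically satisfies the pushout universal property against any test object in $\dg^{ft}$. Hence it suffices to check that the middle square is a pushout in $\dg^{idem}$.

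The core step is then the explicit computation already indicated before the statement. Using the upper pushout of (\ref{nisnevichdata}) I rewrite $T_{\UU} \simeq T_{\X}\coprod_{K_{\X-\UU}} \ast$; then by associativity of pushouts in $\dg^{idem}$ together with the equivalence $K_{\X-\UU}\simeq K_{\V-\W}$ supplied by condition (3) of the definition of a Nisnevich square, I obtain
\[
T_{\V}\coprod_{T_{\X}} T_{\UU} \;\simeq\; T_{\V}\coprod_{T_{\X}}\bigl(T_{\X}\coprod_{K_{\X-\UU}}\ast\bigr) \;\simeq\; T_{\V}\coprod_{K_{\X-\UU}}\ast \;\simeq\; T_{\V}\coprod_{K_{\V-\W}}\ast \;\simeq\; T_{\W},
\]
where the last identification uses the lower pushout square of (\ref{nisnevichdata}). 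This produces exactly the required pushout in $\dg^{idem}$, and dualizing gives the pullback in $\nck$.

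There is no substantive obstacle; the only point requiring a moment of care is the passage between pushouts in $\dg^{ft}$ and in $\dg^{idem}$, which is handled by the fullness of the inclusion together with the fact that the candidate apex $T_{\W}$ is already of finite type. The case covered by Convention \ref{emptyisnisnevich} (empty collection) is trivially a pullback since the empty diagram's limit is a zero object, which $\X$ is assumed to be.
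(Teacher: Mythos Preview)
Your proof is correct and follows exactly the paper's approach: the corollary is stated without its own proof because it is an immediate consequence of the pushout computation in $\dg^{idem}$ given just before the statement, which you reproduce verbatim. The only addition you make is spelling out the passage from a pushout in $\dg^{idem}$ to a pushout in $\dg^{ft}$ via fullness of the inclusion, a point the paper leaves implicit.
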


\begin{remark}
\label{strictexactsequencesgivenisnevich}
Let $\UU\to \X$ be an open immersion of smooth noncommutative spaces. If the associated dg-category $K_{\X-\UU}\in \dg^{idem}$ is of finite type we can then see it as the dg-category $T_{\Z}=K_{\X-\UU}$ dual to a smooth noncommutative space $\Z$. Of course, since the zero map $T_{\Z}\to \ast$ is a quotient of $T_{\Z}$ by itself, its dual $\ast\to \Z$ is an open immersion. Moreover, since the diagram $K_{\X-\UU}\hookrightarrow T_{\X}\to T_{\U}$ is also a fiber sequence (see \ref{exactsequencesarestrict2}),  the square of smooth noncommutative spaces

\begin{equation}
\xymatrix{
\UU\ar[r]\ar[d]&\X\ar[d]\\
\ast \ar[r]& \Z
}
\end{equation}

\noindent is a pushout and therefore, Nisnevich.
\end{remark}

\begin{example}
\label{exceptional1}
The notions of \emph{semi-orthogonal decomposition} and \emph{exceptional collection} for triangulated categories (see \cite{bondal-orlov-semi}) have an immediate translation to the setting of dg-categories in terms of \emph{split short exact sequence} in $\dg^{idem}$. Recall that an exact sequence in $\dg^{idem}$

\begin{equation}
\xymatrix{
I\ar[d]\ar[r]^f& T\ar[d]^g\\ 
\ast\ar[r]& I'
}
\end{equation}

\noindent is said to \emph{split} if the functor $f$ (resp. $g$ ) admits a right adjoint $j$ (resp. fully-faithful right adjoint $i$). Following the Remark \ref{strictexactsequencesgivenisnevich} if $\X$ is a smooth noncommutative space, every semi-ortogonal decomposition of the associated dg-category $T_{\X}$ given by dg-categories $I$, $I'$ of finite type provides the data dual to a Nisnevich square

\begin{equation}
\xymatrix{
I\ar[r]\ar[d]&\ar[d] \ast\\
T_{\X}\ar[r]& I'
}
\end{equation}

\end{example}

\begin{example}
\label{exceptional2}
The previous example will be particularly important to us in the case $\X=L_{pe}(\mathbb{P}^1)$. Thanks to the results of \cite{beilinsonprojective} we know that $\mathbb{P}^n$ admits an exceptional collection generated by the twisting sheaves $\langle \mathcal{O},..., \mathcal{O}(-n)\rangle$. By the previous example, the diagram in $\dg^{idem}$ associated to the split exact sequence 

\begin{equation}
\xymatrix{
Perf(k)\ar[r]\ar[d]&\ar[d] \ast\\
L_{pe}(\mathbb{P}^1)\ar[r]& Perf(k)
}
\end{equation}

\noindent provides the data of a Nisnevich square.
\end{example}

We now prove that our Nisnevich squares are compatible with the monoidal product of smooth noncommutative spaces. For that we will need the following preliminary result

\begin{lemma}
\label{fiberproductdg}
Let \item Let

\begin{equation}
\xymatrix{
\W\ar[r]\ar[d]& \V\ar[d]\\
\UU\ar[r]&\X
}
\end{equation}

\noindent be a Nisnevich square of smooth noncommutative spaces and let 

\begin{equation}
\label{flora}
\xymatrix{
\widehat{T_{\X}}\ar[r]\ar[d]& \widehat{T_{\UU}}\ar[d]\\
\widehat{T_{\V}}\ar[r]&\widehat{T_{\W}}
}
\end{equation}

\noindent be its associated pullback diagram in $\dgcc$. Then the image of (\ref{flora}) through the (non-full) inclusion $\dgcc\to \dgc$ remains a pullback diagram.
\begin{proof}
As discussed in \ref{limitspresentabledg}, the dg-category $\dglp$ has all limits and the (non-full) inclusion $\dglp\subseteq \dg^{big}$ preserves them. By definition, $\dgc$ is the full subcategory of $\dglp$ spanned by the locally presentable dg-categories of the form $\widehat{T}$ for some small dg-category $T$. Therefore, we are reduced to show that the (non-full) inclusion $\dgcc\subseteq \dglp$ preserves the pullback diagrams (\ref{flora}) associated to Nisnevich squares. This statement is the dg-analogue of the Proposition \ref{bondalcontext}.\\

Following the discussion in \ref{linkdgspectrastable}, the functor $N_{dg}^L$ provides a commutative square

\begin{equation}
\label{ultimaweapon}
\xymatrix{
\dg^{idem}\simeq \dg^{cc}\ar@{^{(}->}[rr]^(0.7){non-full} \ar[d]^{N_{dg}^L}&& \dg^{lp}\ar[d]^{N_{dg}^L}\\
\Prl_{\omega,Stb}\ar@{^{(}->}[rr]^{non-full}&&\Prl_{Stb}
}
\end{equation}

\noindent and as explained in \ref{nervedgconservative} $N_{dg}^L$ is conservative, preserves fully-faithfulness and preserves the notion of "homotopy category" (see the Remark \cite[1.3.1.11]{lurie-ha}). It follows, as explained in  the Remark \ref{exactdgiffexactstable} that $N_{dg}^L$ preserves the notions of exact sequence. It follows also that $\widehat{A}$ has a compact generator if and only if $h(N_{dg}^L(\widehat{A}))$ has a compact generator.

Consider now the pullback diagram (\ref{flora}) associated to a Nisnevich covering and let $\widehat{K}\simeq \widehat{K_{\X-\UU}}\simeq \widehat{K_{\V-\W}}$ be the dg-category (with a compact generator) in $\dgcc$ associated to the open immersions. We find a diagram in $\Prl_{\omega,Stb}$ 

\begin{equation}
\xymatrix{
&N_{dg}^L(\widehat{T_{\X}})\ar[r]\ar[d]& N_{dg}^L(\widehat{T_{\UU}})\ar[d]\\
N_{dg}^L(\widehat{K})\ar@{^{(}->}[r]&N_{dg}^L(\widehat{T_{\V}})\ar[r]& N_{dg}^L(\widehat{T_{\W}})
}
\end{equation}

Since $N_{dg}^L$ commutes with limits, this diagram remains a pullback in $\Prl_{\omega,Stb}$ and we find ourselves facing the conditions of the Proposition \ref{bondalcontext} so that the diagram remains a pullback after the inclusion in $\Prl_{Stb}$. Finally, since $N_{dg}^L$ is conservative, the commutativity of (\ref{ultimaweapon}) implies that (\ref{flora}) remains a pullback in $\dg^{lp}$. This concludes the proof.
\end{proof}
\end{lemma}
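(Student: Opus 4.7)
The plan is to reduce the statement to the already-established Proposition \ref{bondalcontext} via the functor $N_{dg}^L : \dglp \to \Prl_{Stb}$ discussed in Section \ref{linkdgspectrastable}, exploiting its good formal properties (conservativity, preservation of limits, preservation of fully-faithfulness).

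First I would note that, since $\dg^{lp}$ has all small limits and the inclusion $\dg^{lp} \subseteq \dg^{big}$ preserves them (Remark \ref{limitspresentabledg}), and since $\dgc$ is defined as a full subcategory of $\dg^{lp}$, it suffices to show the non-full inclusion $\dgcc \subseteq \dg^{lp}$ preserves the pullback (\ref{flora}) coming from a Nisnevich square. Applying $N_{dg}^L$ to (\ref{flora}), I obtain a diagram in $\Prl_{Stb}$; since $N_{dg}^L$ commutes with limits, this image is still a pullback in $\Prl_{Stb}$. Moreover, since $N_{dg}^L$ restricts to a functor $\dgcc \to \Prl_{\omega,Stb}$, the four vertices land in $\Prl_{\omega, Stb}$.

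Next I would verify that the hypotheses of Proposition \ref{bondalcontext} are met for this image diagram. The key ingredient is the existence of the kernel dg-category $\widehat{K} \simeq \widehat{K_{\X-\UU}} \simeq \widehat{K_{\V-\W}}$, which by the definition of a Nisnevich square has a compact generator and embeds fully-faithfully in $\widehat{T_\X}$ and $\widehat{T_\V}$, identifying the open immersions with Bousfield localizations. Because $N_{dg}^L$ preserves fully-faithfulness and the passage to the homotopy category, Proposition \ref{kellerexact} combined with Proposition \ref{gepnerexact} translates an exact sequence of idempotent complete dg-categories into an exact sequence in $\Prl_{\omega, Stb}$. Furthermore, a dg-category $\widehat{A}$ has a compact generator in the sense of Neeman if and only if $h(N_{dg}^L(\widehat{A}))$ does. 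Hence the image sequence $N_{dg}^L(\widehat{K}) \hookrightarrow N_{dg}^L(\widehat{T_\V}) \to N_{dg}^L(\widehat{T_\W})$ exhibits $N_{dg}^L(\widehat{T_\V}) \to N_{dg}^L(\widehat{T_\W})$ as an accessible reflexive localization killing a stable subcategory with a compact generator, and $h(N_{dg}^L(\widehat{T_\UU}))$ has a compact generator since $T_\UU$ is of finite type.

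Then Proposition \ref{bondalcontext} applies and tells us that the image diagram remains a pullback after the non-full inclusion $\Prl_{\omega,Stb} \subseteq \Prl_{Stb}$. Finally, because $N_{dg}^L$ is conservative and the square
\begin{equation}
\xymatrix{
\dgcc \ar@{^{(}->}[r] \ar[d]^{N_{dg}^L} & \dg^{lp} \ar[d]^{N_{dg}^L} \\
\Prl_{\omega,Stb} \ar@{^{(}->}[r] & \Prl_{Stb}
}
\end{equation}
commutes, the original diagram (\ref{flora}) remains a pullback in $\dg^{lp}$, whence in $\dgc$. The main obstacle I anticipate is the careful bookkeeping of which data in a Nisnevich square actually supplies the hypotheses of \ref{bondalcontext} --- specifically, ensuring that the kernel's compact generator and the fully-faithfulness of $\widehat{K} \hookrightarrow \widehat{T_\V}$ are faithfully transported through $N_{dg}^L$ to match the Bondal--Van den Bergh style input required.
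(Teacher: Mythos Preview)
Your proposal is correct and follows essentially the same route as the paper: reduce to preservation under $\dgcc \subseteq \dglp$, pass through $N_{dg}^L$ using the commutative square with $\Prl_{\omega,Stb} \subseteq \Prl_{Stb}$, transport the exact-sequence and compact-generator data via Propositions \ref{kellerexact} and \ref{gepnerexact} to check the hypotheses of Proposition \ref{bondalcontext}, and conclude by conservativity. One small slip: when you first apply $N_{dg}^L$ and say the image is ``still a pullback in $\Prl_{Stb}$'', the limit preservation you can invoke at that stage only gives a pullback in $\Prl_{\omega,Stb}$ (the diagram is a pullback in $\dgcc$, not yet known in $\dglp$); it is precisely Proposition \ref{bondalcontext} that then promotes it to a pullback in $\Prl_{Stb}$, after which conservativity closes the loop --- you do state this correctly a few lines later, so the argument is sound once the order is tidied.
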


We can now state the main result:

\begin{prop}
\label{nisnevichncsquaresmonoidal}
\begin{enumerate}[1)]
\item Let $\UU\to \X$ be an open immersion of smooth noncommutative spaces. Then, for any smooth noncommutative space $\Y$, the product map 

\begin{equation}
\UU\otimes \Y \to \X\otimes \Y 
\end{equation}

\noindent is also an open immersion;
\item Let

\begin{equation}
\xymatrix{
\W\ar[r]\ar[d]& \V\ar[d]\\
\UU\ar[r]&\X
}
\end{equation}

\noindent be a Nisnevich square of smooth noncommutative spaces. Then, for any smooth noncommutative space $\Y$, the square

\begin{equation}
\label{L7}
\xymatrix{
\W\otimes \Y \ar[r]\ar[d]& \V\otimes \Y\ar[d]\\
\UU\otimes \Y\ar[r]&\X \otimes \Y
}
\end{equation}

\noindent remains a Nisnevich square.
\end{enumerate}
\begin{proof}

To prove $1)$, let 

\begin{equation}
\xymatrix{
K_{\X-\UU}\ar[r] \ar[d]&T_{\X}\ar[d]\\
\ast \ar[r] &T_{\U}
}
\end{equation}

\noindent be the data in $\dg^{idem}$ corresponding to the open immersion. We are reduced to prove that by tensoring with $T_{\Y}$ (in $\dg^{idem}$) the diagram

\begin{equation}
\label{L6}
\xymatrix{
K_{\X-\UU}\otimes T_{\Y} \ar[r] \ar[d]&T_{\X}\otimes T_{\Y}\ar[d]\\
\ast\otimes T_{\Y} \ar[r] &T_{\U}\otimes T_{\Y}
}
\end{equation}

\noindent remains the data of an open immersion. Observe first that since the monoidal structure in $\dg^{idem}$ is compatible with colimits, $\ast\otimes T_{\Y}$ is again a zero object. To complete the proof it suffices to check that $(i)$ $K_{\X-\UU}\otimes T_{\Y}$ remains a dg-category having a compact generator; $(ii)$ the map $K_{\X-\UU}\otimes T_{\Y}\to T_{\X}\otimes T_{\Y}$ remains fully-faithful and $(iii)$ the diagram (\ref{L6}) is a pushout. The first assertion follows from the Remark \ref{productdgcategorieswithcompactgeneratorhascompactgenerator}. The second is obvious by the definition of fully-faithful and the construction of tensor products. The third follows from the Proposition 1.6.3 in \cite{drinfeld1}.\\

Let us now prove $2)$. It follows from $1)$ that both $\W\otimes \Y\to \V\otimes \Y$ and $\UU\otimes \Y \to \X\otimes \Y$ remain open immersions, corresponding the quotients by the subcategories $K_{\X-\UU}\otimes T_{\Y}$ and $K_{\V-\W}\otimes T_{\Y}$.  Since the map $K_{\X-\UU}\to K_{\V-\W}$ is an equivalence, the tensor product with the identity of $T_{\Y}$

\begin{equation}K_{\X-\UU}\otimes T_{\Y}\to K_{\V-\W}\otimes T_{\Y}\end{equation} 

\noindent remains an equivalence. We are now left to prove that the diagram (\ref{L7}) remains a pushout. This is equivalent to prove that associated diagram of dg-categories

\begin{equation}
\xymatrix{
T_{\X}\otimes T_{\Y}\ar[r]\ar[d]& \ar[d] T_{\UU}\otimes T_{\Y}\\
T_{\V}\otimes T_{\Y}\ar[r]& T_{\W}\otimes T_{\Y}
}
\end{equation}

\noindent remains a pullback in $\dg^{ft}$. Since all the dg-categories in this diagram are of finite type we can find dg-algebras $T_{\X}=Perf(A_{\X})$, $T_{\V}=Perf(A_{\V})$, $T_{\UU}=Perf(A_{\UU})$, $T_{\W}=Perf( A_{\W})$ and $T_{\Y}=Perf(A_{\Y})$. It follows that the previous diagram is a pullback if and only if the diagram 

\begin{equation}
\xymatrix{
\widehat{A_{\X}\otimes A_{\Y}}\ar[r]\ar[d]& \ar[d] \widehat{A_{\UU}\otimes A_{\Y}}\\
\widehat{A_{\V}\otimes A_{\Y}}\ar[r]& \widehat{A_{\W}\otimes A_{\Y}}
}
\end{equation}

\noindent is a pullback in $\dgcc$. By the hypothesis, the diagram
\begin{equation}
\xymatrix{
\widehat{A_{\X}}\ar[r]\ar[d]& \ar[d] \widehat{A_{\UU}}\\
\widehat{A_{\V}}\ar[r]& \widehat{A_{\W}}
}
\end{equation}

\noindent is a pullback and so, thanks to \cite[Theorem 7.2-1)]{Toen-homotopytheorydgcatsandderivedmoritaequivalences} and to the Lemma \ref{fiberproductdg}, we have equivalences

\begin{equation}
\widehat{A_{\X}\otimes A_{\Y}}\simeq  \mathbb{R}\underline{Hom}_c(\widehat{A_{\Y}},\widehat{A_{\X}})\simeq
\mathbb{R}\underline{Hom}_c(\widehat{A_{\Y}},\widehat{A_{\V}}\times_{\widehat{A_{\W}}} \widehat{A_{\UU}})\simeq 
\end{equation}

\begin{equation}
\mathbb{R}\underline{Hom}_c(\widehat{A_{\Y}},\widehat{A_{\V}})\times_{\mathbb{R}\underline{Hom}_c(\widehat{A_{\Y}},\widehat{A_{\W}})} \mathbb{R}\underline{Hom}_c(\widehat{A_{\Y}},\widehat{A_{\UU}})\simeq \widehat{A_{\V}\otimes A_{\Y}}\times_{\widehat{A_{\W}\otimes A_{\Y}}} \widehat{A_{\UU}\otimes A_{\Y}}
\end{equation}

\end{proof}
\end{prop}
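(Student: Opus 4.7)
\textbf{Proof plan for Proposition \ref{nisnevichncsquaresmonoidal}.}

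For part (1), my plan is to unpack the data of an open immersion and verify that each constituent is preserved by $-\otimes T_{\Y}$ in $\dg^{idem}$. Dualizing $\UU\to \X$ gives an exact sequence $K_{\X-\UU}\hookrightarrow T_{\X}\to T_{\UU}$ in $\dg^{idem}$ together with the condition that $K_{\X-\UU}$ has a compact generator. After tensoring with $T_{\Y}$, I need three things: (i) $K_{\X-\UU}\otimes T_{\Y}$ still has a compact generator, (ii) the map $K_{\X-\UU}\otimes T_{\Y}\to T_{\X}\otimes T_{\Y}$ is still fully-faithful, and (iii) the resulting square is still a pushout (with bottom-left corner $\ast\otimes T_{\Y}\simeq \ast$, using again that $-\otimes T_{\Y}$ preserves colimits). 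Item (i) is Remark \ref{productdgcategorieswithcompactgeneratorhascompactgenerator}; item (iii) follows since the monoidal structure on $\dg^{idem}$ is compatible with colimits (or from \cite[Prop.~1.6.3]{drinfeld1}); and item (ii) follows from an inspection of mapping complexes after reducing to the locally-cofibrant setting.

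For part (2), once (1) is available, both horizontal arrows in (\ref{L7}) are open immersions, with kernels $K_{\X-\UU}\otimes T_{\Y}$ and $K_{\V-\W}\otimes T_{\Y}$; the induced map between these kernels is an equivalence because the original $K_{\X-\UU}\to K_{\V-\W}$ was, and $-\otimes T_{\Y}$ preserves equivalences. What remains is to verify that (\ref{L7}) is a pushout in $\nck$. Since $\nck=(\dg^{ft})^{op}$, this amounts to showing that the associated square in $\dg^{ft}$ is a pullback. I will reduce to working with dg-algebras by writing $T_{\X}=Perf(A_{\X})$, $T_{\UU}=Perf(A_{\UU})$, $T_{\V}=Perf(A_{\V})$, $T_{\W}=Perf(A_{\W})$, $T_{\Y}=Perf(A_{\Y})$, and pass to the equivalent picture in $\dgcc$, where the statement becomes that the square of dg-categories of modules $\widehat{A_{?}\otimes A_{\Y}}$ is a pullback.

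This last step is the main obstacle, since $-\otimes T_{\Y}$ does not in general preserve limits. The key ingredient is Lemma \ref{fiberproductdg}: it tells me that the pullback square in $\dgcc$ associated to the original Nisnevich square remains a pullback after the non-full inclusion $\dgcc\subseteq \dg^{lp}$. I then exploit \cite[Theorem 7.2-(1)]{Toen-homotopytheorydgcatsandderivedmoritaequivalences}, which gives an equivalence $\widehat{A_{\X}\otimes A_{\Y}}\simeq \mathbb{R}\underline{Hom}_c(\widehat{A_{\Y}},\widehat{A_{\X}})$, together with the fact that $\mathbb{R}\underline{Hom}_c(\widehat{A_{\Y}},-)$ commutes with limits in $\dg^{lp}$ (it is a right adjoint in the enriched sense, being the internal Hom). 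Chaining these:
\begin{equation*}
\widehat{A_{\X}\otimes A_{\Y}}\simeq \mathbb{R}\underline{Hom}_c(\widehat{A_{\Y}},\widehat{A_{\V}}\times_{\widehat{A_{\W}}}\widehat{A_{\UU}})\simeq \widehat{A_{\V}\otimes A_{\Y}}\times_{\widehat{A_{\W}\otimes A_{\Y}}}\widehat{A_{\UU}\otimes A_{\Y}},
\end{equation*}
which delivers the required pullback in $\dgcc$ and hence the pushout in $\nck$. The main conceptual difficulty is precisely the need to pass through the locally presentable enlargement $\dg^{lp}$ to gain access to the limit-preserving properties of $\mathbb{R}\underline{Hom}_c$; without Lemma \ref{fiberproductdg} the identification of the pullback in $\dgcc$ with the pullback in $\dg^{lp}$ would not be available.
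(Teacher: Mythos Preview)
Your proposal is correct and follows essentially the same route as the paper's proof: the same three-item checklist for part (1) with the same references, and for part (2) the same reduction to dg-algebras followed by the chain of equivalences through $\mathbb{R}\underline{Hom}_c(\widehat{A_{\Y}},-)$, using Lemma \ref{fiberproductdg} and \cite[Theorem 7.2-(1)]{Toen-homotopytheorydgcatsandderivedmoritaequivalences} in exactly the same way. Your explanation of \emph{why} Lemma \ref{fiberproductdg} is needed (to gain access to the limit-preservation of $\mathbb{R}\underline{Hom}_c$ in the larger category) is in fact more explicit than the paper's own account.
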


To conclude this section we prove that our notion of Nisnevich squares of smooth noncommutative spaces is compatible with the classical notion for schemes.

\begin{prop}
\label{classicalnisnevichgoestoncnisnevich}
If $X$ is an affine smooth scheme of finite type over $k$ and 

\begin{equation}
\label{L1}
\xymatrix{
p^{-1}(U)\ar[r]\ar[d]& V\ar[d]^p\\
U\ar[r]^i&X
}
\end{equation}

\noindent is a Nisnevich square in $\aff$, then the induced diagram in $\nck$

\begin{equation}
\label{L2}
\xymatrix{
L_{pe}(p^{-1}(U))\ar[r]\ar[d]& L_{pe}(V)\ar[d]\\
L_{pe}(U)\ar[r]& L_{pe}(X)
}
\end{equation}

\noindent is a Nisnevich square of smooth noncommutative spaces. 
\begin{proof}

Indeed, it is immediate that both maps $L_{pe}(p^{-1}(U))\to L_{pe}(V)$ and $L_{pe}(U)\to L_{pe}(X)$ are open immersions of smooth noncommutative spaces. This is exactly the example that motivated the definition. They correspond to the quotient maps in $\dg^{idem}$

\begin{equation}
\label{jklm}
\xymatrix{
L_{pe}(X)\ar[r]& L_{pe}(X)/ L_{pe}(X)_{X-U}&\text{and}& L_{pe}(V)\ar[r]& L_{pe}(V)/ L_{pe}(V)_{V-p^{-1}(U)}
}
\end{equation}

\noindent We are left to check that:

\begin{enumerate}[1)]
\item The square in $\dg^{idem}$

\begin{equation}
\label{L5}
\xymatrix{
L_{pe}(X)\ar[r]\ar[d]& L_{pe}(U)\ar[d]\\
L_{pe}(V)\ar[r]& L_{pe}(p^{-1}(U))
}
\end{equation}

\noindent is a pullback;

\item the map $L_{pe}(X)\to L_{pe}(V)$ in $\dg^{idem}$ induces an equivalence $L_{pe}(X)_{X-U}\simeq L_{pe}(V)_{V-p^{-1}(U)}$;
\end{enumerate}

The fact that (\ref{L5}) is a pullback follows from the fact that perfect complexes satisfy descent for the étale topology (which is a refinement of the Nisnevich topology). This result was originally proven by Hirschowitz and Simpson in \cite{simpson-descente}. See also \cite{toen-vezzosi-hag2} for further details.\\

The assertion $2)$ follows from $1)$ together with the fact that both $L_{pe}(X)_{X-U}$ and $L_{pe}(V)_{V-p^{-1}(U)}$ are by definition, the kernels of the quotient maps (\ref{jklm}).
\end{proof}
\end{prop}

\begin{remark}
In fact, it can be proved that if a pullback diagram like (\ref{L1}) induces a Nisnevich square of smooth noncommutative spaces then it is a Nisnevich square in the classical sense. This can be deduced using the equivalence $L_{qcoh}(X)_{X-U}\simeq L_{qcoh}(V)_{p^{-1}(X-U)}$ together with the equivalences $L_{qcoh}(X)_{X-U}\simeq L_{qcoh}(\widehat{X}_{X-U})$ and $L_{qcoh}(V)_{p^{-1}(X-U)}\simeq L_{qcoh}(\widehat{V}_{p^{-1}(X-U)})$ where $\widehat{X}_{X-U}$, respectively, $\widehat{V}_{p^{-1}(X-U)}$, denotes the formal completion of $X$ (resp. $V$) at the closed subset $X-U$ (resp. $p^{-1}(X-U)$ (see \cite[Prop. 7.1.3]{gaitsgory-indschemes}). In particular this shows that the new notion of Nisnevich square is not really a weaker form of the original notion.
\end{remark}

\subsubsection{The Motivic Stable Homotopy Theory of Noncommutative Spaces}
\label{comparisonmap}

Now that we have an analogue for the Nisnevich topology in the noncommutative setting, compatible with the classical notion for schemes, we can finally conclude our task. We apply the same formula that produces the classical theory. We start with $\nck^{\otimes}$ and consider its free cocompletion $\mathcal{P}^{big}(\nck)$ together with the natural unique monoidal product extending the monoidal operation in $\nck$, compatible with colimits on each variable and making the inclusion $j:\nck \to \mathcal{P}^{big}(\nck)$  monoidal (see \ref{monoidalstructurepresheaves}). In particular, $j(L_{pe}(k))$ is the unit object. Next step, consider the localization $\mathcal{P}^{big}_{Nis}(\nck)$ of $\mathcal{P}^{big}(\nck)$ along the set of all edges $j(\UU)\coprod_{j(\W)} j(\V)\to j(\X)$ running over all the Nisnevich squares of smooth noncommutative space

\begin{equation}
\xymatrix{
\W\ar[r]\ar[d]& \V\ar[d]\\
\UU\ar[r]&\X
}
\end{equation}

The theory of localization for presentable $(\infty,1)$-categories \cite[5.5.4.15]{lurie-htt} implies that $\mathcal{P}^{big}_{Nis}(\nck)$ is an accessible reflexive localization of $\mathcal{P}^{big}(\nck)$. The same result, together with the fact that the Nisnevich squares are pushouts squares, implies that every representable $j(\X)$ is in $\mathcal{P}^{big}_{Nis}(\nck)$. Moreover, and thanks to the Proposition \ref{nisnevichncsquaresmonoidal}, we deduce that this localization is monoidal. Finally, and in analogy with the commutative case, we consider the localization

\begin{equation}l_{\mathbb{A}^1}^{nc}:\mathcal{P}^{big}_{Nis}(\nck)\to \hnck\end{equation}
 
\noindent taken with respect to the set of all maps

\begin{equation}
\xymatrix{
j(Id_{\X})\otimes j(L_{pe}(p)):j(\X)\otimes j( L_{pe}(\mathbb{A}_k^1))\ar[r]& j(\X)\otimes j( L_{pe}(Spec(k)))
}
\end{equation}

\noindent with $\X$ running over $\nck$. Here, $p:\mathbb{A}^1\to Spec(k)$ is the canonical projection and the tensor product is computed in $\mathcal{P}^{big}_{Nis}(\nck)$. \footnote{Of course, since $j$ is monoidal and the representable objects are Nisnevich local, this is the same as localizing with respect to the class of all maps $\xymatrix{j(\X\otimes  L_{pe}(\mathbb{A}_k^1)\ar[r]& \X\otimes  L_{pe}(Spec(k)))}$.} Again, this is an accessible reflective localization of $\mathcal{P}^{big}_{Nis}(\nck)$ and it follows immediately from the definition of the localizing set that it is monoidal. Altogether, we have a sequence of monoidal localizations

\begin{equation}
\label{formulanewton}
\xymatrix{
\nck^{\otimes}\ar[r]^{j}& \mathcal{P}^{big}(\nck)^{\otimes}\ar[r]& \mathcal{P}^{big}_{Nis}(\nck)^{\otimes} \ar[r]& \hnck^{\otimes}}
\end{equation}

\noindent and by construction, $\hnck$ is a presentable symmetric monoidal $(\infty,1)$-category and has a final object which we can identify with the image of the zero object of $\nck$ through the yoneda's map. Again, in analogy with the classical situation, we consider the universal pointing map 

\begin{equation}
()_{+}^{nc}:\hnck^{\otimes}\to \hnck_{*}^{\wedge(\otimes)}
\end{equation}

\noindent which is an equivalence because of our Convention \ref{emptyisnisnevich}: when we localize with respect to the Nisnevich topology with \ref{emptyisnisnevich} the $(\infty,1)$-category $\hnck$ becomes pointed.\\

Finally, the compatibility between the classical and the new Nisnevich squares \footnote{Recall that the collection of classical Nisnevich squares forms a basis for the Nisnevich topology} and the respective $\mathbb{A}^1$ and $L_{pe}(\mathbb{A}^1)$-localizations, we deduce the existence of uniquely determined monoidal colimit preserving functors that make the diagram homotopy commutative

\begin{equation}
\label{newtonoutravez}
\xymatrix{
\aff^{\times} \ar[d]^{j^{\otimes}}\ar[r]^{L_{pe}}& \nckmonoidal \ar[d]^{j^{\otimes}}\\
\mathcal{P}^{big}(\aff)^{\times}\ar@{-->}[r]^{(L_{pe})_{!}} \ar[d]& \mathcal{P}^{big}(\nck)^{\otimes}\ar[d]\\
Sh_{Nis}^{big}(\aff)^{\times}\ar@{-->}[r]\ar[d]^{l_{\mathbb{A}^1}^{\times}}& \ar[d]^{l_{\mathbb{A}^1}^{nc,\otimes}}\mathcal{P}_{Nis}^{big}(\nck)^{\otimes}\\
\hsch^{\times}\ar@{-->}[r] \ar[d]^{()_{+}}&\hnck^{\otimes}\\ 
\mathcal{H}(k)_{*}^{\wedge}\ar@{-->}[ru]^{\psi^{\otimes}}& 
}
\end{equation}

If we proceed according to the classical construction, the next step would be to stabilize the theory, first with respect to $S^1$ (the ordinary stabilization) and then with respect to the Tate circle. It happens that the inner properties of the noncommutative world make both these steps unnacessary.

\begin{prop}
\label{alreadystable}
The presentable pointed symmetric monoidal $(\infty,1)$-category $\hnck^{\otimes}$ is stable. Moreover, the Tate circle $\psi(\mathbb{G}_m)$ is already an invertible object.
\end{prop}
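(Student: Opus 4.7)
The plan is to prove both statements simultaneously by establishing the single equivalence $\Sigma \mathbf{1}_{\hnck} \wedge \psi(\mathbb{G}_m) \simeq \mathbf{1}_{\hnck}$, where $\mathbf{1}_{\hnck} = j(L_{pe}(\mathrm{Spec}\,k))$ is the monoidal unit and $\Sigma$ is the suspension in the pointed category. This identifies $\psi(\mathbb{G}_m)$ as the monoidal inverse of $\Sigma \mathbf{1}_{\hnck}$, giving the Tate invertibility. For stability, recall that in any pointed presentable symmetric monoidal $(\infty,1)$-category whose tensor product preserves colimits in each variable, the formula $\Sigma X \simeq X \otimes \Sigma \mathbf{1}$ holds; consequently, invertibility of $\Sigma \mathbf{1}_{\hnck}$ forces $\Sigma$ to be an autoequivalence of $\hnck$, which is the standard characterization of stability (compare Remark \ref{monoidalstabilization}).

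To prove the key equivalence, I would first invoke the Beilinson exceptional collection $\langle \mathcal{O}, \mathcal{O}(-1) \rangle$ for $\mathbb{P}^1$: by Example \ref{exceptional2}, this gives a split exact sequence in $\dg^{idem}$ whose dualization is a Nisnevich square
\[
\xymatrix{
\ast \ar[r]\ar[d] & \mathbf{1}_{\hnck} \ar[d]\\
\mathbf{1}_{\hnck} \ar[r] & j(L_{pe}(\mathbb{P}^1))
}
\]
in $\nck$. Nisnevich descent turns this into a pushout in $\hnck$ (which is already pointed by Convention \ref{emptyisnisnevich}), showing $j(L_{pe}(\mathbb{P}^1)) \simeq \mathbf{1}_{\hnck} \vee \mathbf{1}_{\hnck}$. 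Independently, the point at infinity splits the projection $\mathbb{P}^1 \to \mathrm{Spec}\,k$, so in $\mathcal{H}(k)_{\ast}^{\wedge}$ one has $\mathbb{P}^{1}_{+} \simeq S^{0} \vee (\mathbb{P}^1, \infty)$; applying the pointed monoidal colimit-preserving functor $\psi$ yields a second decomposition $j(L_{pe}(\mathbb{P}^1)) \simeq \mathbf{1}_{\hnck} \vee \psi(\mathbb{P}^1, \infty)$ in $\hnck$. Combining both decompositions with the Voevodsky identification $(\mathbb{P}^1, \infty) \simeq S^{1} \wedge \mathbb{G}_{m}$ and the monoidality of $\psi$ gives
\[
\mathbf{1}_{\hnck} \vee \bigl(\Sigma \mathbf{1}_{\hnck} \wedge \psi(\mathbb{G}_m)\bigr) \simeq \mathbf{1}_{\hnck} \vee \mathbf{1}_{\hnck},
\]
and the desired equivalence follows by cancellation of the common $\mathbf{1}_{\hnck}$-summand.

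The main obstacle is this cancellation step, which is not automatic in a general pointed presentable $(\infty,1)$-category. The plan to overcome it is to exploit the semiadditive structure of $\hnck$ coming from direct sums in $\dg^{ft}$: applying the construction of Example \ref{exceptional1} to the split exact sequences $T \to T \oplus T' \to T'$ in $\dg^{ft}$ produces Nisnevich squares which identify finite biproducts of representables with wedge sums in $\hnck$, so finite products and coproducts coincide among representables and, by presentability, everywhere. Both decompositions of $j(L_{pe}(\mathbb{P}^1))$ are moreover realized by split mono-retraction pairs (one coming from the Beilinson projections, the other from the structure map $\mathbb{P}^1 \to \mathrm{Spec}\,k$ with section $\infty$), so the cancellation should reduce to matching the two complementary summands through the biproduct coordinates provided by the semiadditive structure.
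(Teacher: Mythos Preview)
Your approach has a genuine gap at the very first step: the square you write down is \emph{not} the Nisnevich square produced by Example~\ref{exceptional2}. In that example the split exact sequence $Perf(k)\hookrightarrow L_{pe}(\mathbb{P}^1)\to Perf(k)$ dualizes, via Remark~\ref{strictexactsequencesgivenisnevich}, to a square in $\nck$ of the form
\[
\xymatrix{
Perf(k)\ar[r]\ar[d] & L_{pe}(\mathbb{P}^1)\ar[d]\\
\ast\ar[r] & Perf(k)
}
\]
with $L_{pe}(\mathbb{P}^1)$ in the $\V$--corner, not the $\X$--corner. Your square would require $L_{pe}(\mathbb{P}^1)\simeq Perf(k)\coprod_\ast Perf(k)$ in $\nck$, i.e.\ $L_{pe}(\mathbb{P}^1)\simeq Perf(k)\times Perf(k)$ in $\dg^{idem}$, which is false (the endomorphism algebra of $\mathcal{O}\oplus\mathcal{O}(-1)$ is the Kronecker quiver, not $k\times k$). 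Hence Nisnevich descent does \emph{not} give you $j(L_{pe}(\mathbb{P}^1))\simeq \mathbf{1}\vee\mathbf{1}$, and the subsequent cancellation strategy is built on sand. Your attempt to recover via semiadditivity runs into circularity: the argument that split cofiber sequences yield wedge decompositions already presupposes stability (or at least additivity), which is what you are trying to prove.

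The paper's argument sidesteps all of this with one observation you are missing: the map $L_{pe}(\infty)$ in $\nck$ corresponds to $\infty^*:L_{pe}(\mathbb{P}^1)\to Perf(k)$ in $\dg^{idem}$, and since pullback preserves the structure sheaf, this map \emph{is} the open-immersion leg $Perf(k)\to L_{pe}(\mathbb{P}^1)$ of the correct Nisnevich square above. Nisnevich descent then makes that square a pushout in $\hnck$, so $\mathrm{cofib}\bigl(j(L_{pe}(\infty))\bigr)\simeq j(Perf(k))=\mathbf{1}$ directly. This shows $\psi(\mathbb{P}^1,\infty)\simeq\mathbf{1}$ is the unit itself, hence invertible, and from $\psi(\mathbb{P}^1,\infty)\simeq \Sigma\mathbf{1}\wedge\psi(\mathbb{G}_m)$ both conclusions follow immediately. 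No second decomposition, no cancellation, no semiadditivity detour is needed.
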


Recall that in $\mathcal{H}(k)_{*}^{\wedge}$ we have an equivalence $(\mathbb{P}^1,\infty)\simeq S^1\wedge \mathbb{G}_m$ with $\mathbb{G}_m$ pointed at $1$. Since the functor $\psi^{\otimes}$ is monoidal and commutes with colimits, we also have $\psi((\mathbb{P}^1,\infty))\simeq S^1\wedge \psi(\mathbb{G}_m)$. In particular, the  Proposition \ref{alreadystable} will follow immediately from the following lemma (using the Remark \ref{monoidalstabilization}).

\begin{lemma}
The object $\psi((\mathbb{P}^1,\infty))\in \hnck^{\otimes}$ is invertible.
\begin{proof}
By definition, we have 

\begin{equation}(\mathbb{P}^1,\infty):=cofiber_{\mathcal{H}(k)}[l_{\mathbb{A}^1}(\infty:Spec(k)\to \mathbb{P}^1)]\end{equation}

\noindent where $\infty:Spec(k)\to \mathbb{P}^1$ is the point at infinity. By diagram chasing, the fact that $L_{pe}(\mathbb{P}^1)$ is a dg-category of finite type, and the fact that all the relevant maps commute with colimits we find 

\begin{equation}
\psi((\mathbb{P}^1,\infty))\simeq  l_{\mathbb{A}^1}^{nc}(cofiber_{\mathcal{P}_{Nis}(\nck)}[j(L_{pe}(\infty))])
\end{equation}

\noindent We claim that the last cofiber is the unit for the monoidal structure in $\mathcal{P}_{Nis}^{big}(\nck)$, which, because the Yoneda's functor is monoidal, corresponds to $j(Perf(k))$. To see this, we observe first that map $L_{pe}(\infty):Perf(k)=L_{pe}(k)\to L_{pe}(\mathbb{P}^1)$ in $\nck$ corresponds in fact to the pullback map $L_{pe}(\mathbb{P}^1)\to Perf(k)$ along $\infty$ in $\dg^{idem}$. Recall the existence of an exceptional collection in $L_{pe}(\mathbb{P}^1)$ generated by the sheaves $\mathcal{O}$ and $\mathcal{O}(-1)$.  Since the pullback preserves structural sheaves, the map $Perf(k)\to L_{pe}(\mathbb{P}^1)$ in $\nck$ fits in the Nisnevich square of the Example \ref{exceptional2}

\begin{equation}
\xymatrix{
Perf(k) \ar[d]\ar[r]& L_{pe}(\mathbb{P}^1)\ar[d]\\
\ast \ar[r]& Perf(k)
}
\end{equation}

\noindent dual to the split exact sequence provided by the exceptional collection. Finally, since in $\mathcal{P}_{Nis}^{big}(\nck)$ every Nisnevich square is forced to become a pushout, we have

\begin{equation}
cofiber_{\mathcal{P}_{Nis}(\nck)}[j(Perf(k)\to L_{pe}(\mathbb{P}^1))]\simeq Perf(k)
\end{equation}

\noindent which concludes the proof.
\end{proof}
\end{lemma}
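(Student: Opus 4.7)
The plan is to show directly that $\psi((\mathbb{P}^1,\infty))$ is equivalent to the unit object of $\hnck^{\otimes}$, which is automatically invertible. The strategy follows the sketch already hinted at in the excerpt: one computes the cofiber defining $(\mathbb{P}^1,\infty)$ step by step along the diagram (\ref{newtonoutravez}) and uses the Beilinson exceptional collection on $\mathbb{P}^1$ to recognise the relevant map as one half of a Nisnevich square in $\nck$, which becomes a pushout after Nisnevich localisation.

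First, I would use the definition $(\mathbb{P}^1,\infty) := \mathrm{cofiber}_{\mathcal{H}(k)_{*}^{\wedge}}\bigl[l_{\mathbb{A}^1}(\infty: Spec(k)\to \mathbb{P}^1)\bigr]$ together with the fact that $\psi^{\otimes}$ is a monoidal colimit-preserving functor fitting into the commutative diagram (\ref{newtonoutravez}). Since $\mathbb{P}^1$ is smooth and proper over $k$, the Remark \ref{extendingtononaffineschemes} applies and $\psi$ sends its image in $\hsch$ to $j(L_{pe}(\mathbb{P}^1))$. Chasing through the diagram then yields
\[
\psi((\mathbb{P}^1,\infty))\;\simeq\; l_{\mathbb{A}^1}^{nc}\Bigl(\mathrm{cofiber}_{\mathcal{P}_{Nis}(\nck)}\bigl[j(L_{pe}(\infty))\bigr]\Bigr).
\]

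Second, I would identify the map $L_{pe}(\infty): L_{pe}(k)\to L_{pe}(\mathbb{P}^1)$ in $\nck$ with (the opposite of) the pullback functor $\infty^{*}: L_{pe}(\mathbb{P}^1)\to Perf(k)$ in $\dg^{idem}$, which sends the structure sheaf to the structure sheaf. Under Beilinson's exceptional collection $\langle \mathcal{O}(-1),\mathcal{O}\rangle$, this functor is precisely the split quotient map, so we land in the Nisnevich square of Example \ref{exceptional2}. Dualising, this gives a Nisnevich square in $\nck$ of the form
\[
\xymatrix{
\ast \ar[r]\ar[d] & Perf(k) \ar[d] \\
Perf(k) \ar[r]^{L_{pe}(\infty)} & L_{pe}(\mathbb{P}^1)
}
\]

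Third, since the monoidal localisation $\mathcal{P}^{big}(\nck)\to \mathcal{P}_{Nis}^{big}(\nck)$ is defined precisely by forcing Nisnevich squares to be pushouts (and since $j$ preserves all such data by construction), the cofiber of $j(L_{pe}(\infty))$ in $\mathcal{P}_{Nis}(\nck)$ is canonically equivalent to $j(Perf(k))$, which is the unit for the monoidal structure. Applying the monoidal localisation $l_{\mathbb{A}^1}^{nc}$ preserves the unit, so $\psi((\mathbb{P}^1,\infty))$ is equivalent to the unit of $\hnck^{\otimes}$, and hence invertible.

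The main obstacle is the bookkeeping in the second step: one must carefully keep track of the duality $\nck=(\dg^{ft})^{op}$, check that the map $L_{pe}(\infty)$ produced by the functoriality of $L_{pe}$ genuinely matches the split quotient in Beilinson's decomposition (and not, say, its complementary inclusion), and verify that the subcategory cut out by $\mathcal{O}(-1)$ is indeed of finite type so that Example \ref{exceptional2} applies to give a Nisnevich square in $\nck$ rather than only an exact sequence in $\dg^{idem}$. Once this identification is rigorously made, the rest is formal from the universal properties of the successive localisations.
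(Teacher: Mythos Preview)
Your proposal is correct and follows essentially the same approach as the paper: reduce $\psi((\mathbb{P}^1,\infty))$ to the cofiber of $j(L_{pe}(\infty))$ in $\mathcal{P}_{Nis}(\nck)$, identify $L_{pe}(\infty)$ with the pullback $\infty^*$ and hence with the split quotient coming from Beilinson's exceptional collection, and then use that the resulting Nisnevich square (Example~\ref{exceptional2}) becomes a pushout after Nisnevich localisation so that the cofiber is the unit $j(Perf(k))$. The paper's proof is exactly this, differing only in presentation (it writes the Nisnevich square on the $\dg^{idem}$ side rather than dualised to $\nck$, and does not single out the bookkeeping you flag in your final paragraph).
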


It also follows that we have a canonical equivalence 

\begin{equation}\hnck^{\otimes}[(\psi(\mathbb{P}^1,\infty)^{-1})]\simeq \hnck^{\otimes}\end{equation}

\noindent and for this reason, we reset the notations to match the classical one 

\begin{equation}
\stncmonoidalk:=\hnck^{\otimes}
\end{equation}

\begin{remark}
\label{splitstablesums}
Let $\C$ be an $(\infty,1)$-category with a zero object $0$.  Recall that a \emph{split exact sequence} in $\C$ is the data of a pushout square 

\begin{equation}
\xymatrix{
A\ar[r]^i\ar[d] & B \ar[d]^p\\
0\ar[r]& C
}
\end{equation}

\noindent such that there exist maps $u: C\to B$  and $v:B\to A$ in $\C$  with $p\circ u\sim id_C$ and $v\circ i\sim id_A$. We can now see that if $\C$ is a stable $(\infty,1)$-category, the data of a split exact sequence provides an equivalence $B\simeq A\oplus C$. To see this, it is enough to check that $i\circ v + u\circ p\sim id_B$, or, equivalently, that $(id_B -  u\circ p )\sim i\circ v$. To see the last, we use the fact that $\C$ is stable and therefore the previous square is also a pullback. In this case, since $p\circ (id_B -  u\circ p)\sim (p - p\circ u\circ p)\sim (p - Id_C\circ p)\sim 0$,  we can find a factorization $\delta$(unique up to a contractible space)

\begin{equation}
\xymatrix{
B\ar@/_/[ddr]\ar@/^/[rrd]^{id_B -  u\circ p}\ar@{-->}[dr]^{\delta}&&\\
&A\ar[r]^i \ar[d] & B \ar[d]^p\\
&0\ar[r]& C
}
\end{equation}

\noindent with  $(id_B -  u\circ p) \sim i\circ \delta$. But, since $v\circ i\sim id_A$, we have $\delta \sim v\circ i\circ \delta \sim v\circ ( id_B -  u\circ p)\sim v$.
\end{remark}

\begin{remark}
\label{exceptionalsplittosums}
Let $\X$ be smooth noncommutative space whose associated dg-category $T_{\X}$ admits an exceptional collection generated by $n+1$ elements. By the Remarks \ref{exceptional1} and \ref{exceptional2}, this provides to the data of $n$ different Nisnevich coverings. These are sent to split exact sequences in $\stnck$ which we now know is stable. Using the Remark \ref{splitstablesums} we find that the image of $\X$ in $\stnck$ decomposes as a direct sum of $n+1$ copies of the unit $1= l_{\mathbb{A}^1}^{nc}(Perf(k))$. In particular the smooth noncommutative space $L_{pe}(\mathbb{P}^n)$ becomes equivalent to the direct sum $\underbrace{1\oplus ...\oplus 1}_{n+1}$ in $\stncmonoidalk$.
\end{remark}

Finally, our universal property for inverting an object in a presentable symmetric monoidal $(\infty,1)$-category ensures the existence of a unique monoidal colimit map $\mathcal{L}^{\otimes}$ extending the diagram (\ref{newtonoutravez}) to

\begin{equation}
\xymatrix{
\aff^{\times}\ar[r]^{L_{pe}^{\otimes}}\ar[d]& \nck^{\otimes}\ar[d]\\
\stmonoidalk\ar@{-->}[r]^{\mathcal{L}^{\otimes}}& \stncmonoidalk
}
\end{equation}

\noindent relating the classical stable homotopy theory of schemes with our new theory.\\

\begin{remark}
\label{usingpresheavesofspectra2}
Using the same arguments of  \ref{usingpresheavesofspectra1} we can describe the symmetric monoidal $(\infty,1)$-category $\stncmonoidalk$ using presheaves of spectra. More precisely, we can start from the $(\infty,1)$-category of smooth noncommutative spaces $\nck$ and consider the very big $(\infty,1)$-category $Fun(\nck^{op},  \widehat{\Sp})$. Using the equivalence $Fun(\nck^{op},  \widehat{\Sp})\simeq Stab(\mathcal{P}^{big}(\nck)_{\ast})$ together with the Remark \ref{monoidalstabilization} we obtain a canonical monoidal structure $Fun(\nck^{op},  \widehat{\Sp})^{\otimes}$ defined by the inversion $\mathcal{P}^{big}(\nck)_{\ast}^{\wedge(\otimes)}[(S^1)^{-1}]^{\otimes}$. 
We proceed, and perform the localizations with respect to the noncommutative version of the Nisnevich topology and $L_{pe}(\mathbb{A}^1)$. More precisely, and using the same notations as in \ref{usingpresheavesofspectra1} we localize with respect to the class of all canonical maps

\begin{equation}
 \delta_{\Sigma^{\infty}_{+}\circ j(\UU)}(K)\coprod_{\delta_{ \Sigma^{\infty}_{+}\circ j(\W)}(K)}\delta_{ \Sigma^{\infty}_{+}\circ j(\V)}(K)\to  \delta_{ \Sigma^{\infty}_{+}\circ j(\X)}(K)
\end{equation}

\noindent  with $K$ in $( \widehat{\Sp})^{\omega}$ and $\W$,$\V$,$\UU$ and $\X$ part of a Nisnevich square of noncommutative smooth spaces. For the $\mathbb{A}^1$ localization, we localize with respect to the class of all induced maps 

\begin{equation}
\delta_{ \Sigma^{\infty}_{+}\circ j(\X\otimes L_{pe}(\mathbb{A}^1))}(K)\to \delta_{ \Sigma^{\infty}_{+}\circ j(\X)}(K)
\end{equation}

\noindent  with $\X$ in $\nck$ and $K \in ( \widehat{\Sp})^{\omega}$. By the same argument, these are monoidal reflexive localizations. We denote the result as $Fun_{Nis, L_{pe}(\mathbb{A}^1)}(\nck^{op},  \widehat{\Sp})^{\otimes}$. It is a stable presentable symmetric monoidal $(\infty,1)$-category and by the Prop. \ref{alreadystable} and the universal properties involved, it is canonically monoidal equivalent to $\stncmonoidalk$.

Using this equivalence and the definition of $\nck$, we can identify an object $F\in \stncmonoidalk$ with a functor $\dg^{ft}\to  \widehat{\Sp}$ satisfying $L_{pe}(\mathbb{A}^1)$-invariance, having a descent property with respect to the Nisnevich squares and because of the convention \ref{emptyisnisnevich}, satisfying $F(0)=\ast$.
\end{remark}

\subsection{Future Works}
The previous section concludes our goals for this paper. In a second part of this work, we investigate the following insight/conjecture of Kontsevich in \cite{kontsevich1}:

\begin{conjecture}(Kontsevich)
Let $\X$ and $\Y$ be smooth noncommutative spaces over $k$ and assume $\Y$ is dualizable. Then, there is a natural equivalence of spectra
\begin{equation}
Map_{\stnc}(\X, \Y)\simeq K(T_{\X}\otimes^{\mathbb{L}}T_{\Y}^{\circ})
\end{equation}
\end{conjecture}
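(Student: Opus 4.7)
The plan is to show that the unit $\mathbf{1}=Perf(k)$ of $\stncmonoidalk$ represents nonconnective algebraic $K$-theory of dg-categories, and to reduce the general conjecture to this representability statement using the dualizability of $\Y$.

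First, I would exploit the dualizability of $\Y$. Since the monoidal functor $\nckmonoidal\to \stncmonoidalk$ preserves dualizable objects, the image of $\Y$ in $\stnck$ is dualizable with dual corresponding to the opposite dg-category $T_\Y^{op}$, because duality in $\dg^{idem,\otimes}$ is realized by $T\mapsto T^{op}$, as recalled in the paper. Consequently
\begin{equation*}
Map_{\stnck}(\X,\Y)\simeq Map_{\stnck}(\X\otimes \Y^{\vee},\mathbf{1})\simeq Map_{\stnck}(\mathcal{Z},\mathbf{1}),
\end{equation*}
where $\mathcal{Z}\in\nck$ is the smooth noncommutative space associated to $T_\X\otimes^{\mathbb{L}}T_\Y^{op}$. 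This reduces the conjecture to the assertion $Map_{\stnck}(\mathcal{Z},\mathbf{1})\simeq K(T_\mathcal{Z})$ for every $\mathcal{Z}\in\nck$.

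Second, I would use the presheaves-of-spectra model of $\stnck$ from Remark \ref{usingpresheavesofspectra2}: an object of $\stnck$ is a functor $F:\dg^{ft}\to \Sp$ that sends the zero dg-category to $\ast$, satisfies Nisnevich descent, and is $L_{pe}(\mathbb{A}^1)$-invariant, and for every $\mathcal{Z}\in\nck$ one has $Map_{\stnck}(\mathcal{Z},F)\simeq F(T_\mathcal{Z})$ by Yoneda. I would then introduce the candidate presheaf $\mathbb{K}:\dg^{ft}\to \Sp$ given by $T\mapsto K(T)$, the nonconnective $K$-theory of dg-categories (Schlichting, Blumberg--Gepner--Tabuada), and verify the three axioms: vanishing on the zero dg-category is trivial; Nisnevich descent follows from the Thomason--Waldhausen localization theorem for dg-categories, which converts the strict exact sequences $K_{\X-\UU}\hookrightarrow T_\X\to T_\UU$ and $K_{\V-\W}\hookrightarrow T_\V\to T_\W$ underlying a Nisnevich square into fiber sequences of spectra, and the identification $K_{\X-\UU}\simeq K_{\V-\W}$ then forces the resulting square of spectra to be cartesian; finally, $L_{pe}(\mathbb{A}^1)$-invariance of $\mathbb{K}$ on $\dg^{ft}$ is expected to follow from a noncommutative analog of Weibel's theorem, identifying $K$ with homotopy $K$-theory on homotopically regular dg-categories.

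Finally I would identify the resulting object $\widetilde{\mathbb{K}}\in\stnck$ with the unit $\mathbf{1}$. Both carry natural commutative algebra structures in $\stncmonoidalk$, and the unit structure of $\widetilde{\mathbb{K}}$ yields a canonical map $\mathbf{1}\to \widetilde{\mathbb{K}}$ which evaluates at $Perf(k)$ to the identity of $K(k)$; since $\stnck$ is generated under colimits by Yoneda images from $\nck$ and both sides then compute $K(T_\mathcal{Z})$ by construction, the map is an equivalence. The main obstacle will be the $L_{pe}(\mathbb{A}^1)$-invariance of nonconnective $K$-theory on all of $\dg^{ft}$: this is a noncommutative analog of Weibel's theorem for regular schemes and seems to require either replacing $K$ by homotopy $K$-theory $KH$ and separately establishing $KH\simeq K$ on dg-categories of finite type, or isolating a suitable regularity property that holds across $\dg^{ft}$ (smoothness in the sense of Toën--Vaqui\'e may not suffice in its naive form). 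This is the step where substantive new input beyond the formal machinery developed in this paper will be required.
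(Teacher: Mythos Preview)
The paper does not prove this statement. It is stated as a \emph{Conjecture} in the section ``Future Works'' and is explicitly deferred to a second part of the work; the only additional information the paper gives is that Tabuada has established the analogous result in his own framework. There is therefore no proof in the paper to compare your proposal against.

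That said, a few comments on your outline as a standalone attempt. Your reduction via dualizability is correct and is exactly how one expects to proceed. Your identification of Nisnevich descent for nonconnective $K$-theory via the Thomason--Waldhausen localization theorem is also the right mechanism. You correctly isolate the serious obstruction: $L_{pe}(\mathbb{A}^1)$-invariance of nonconnective $K$-theory across all of $\dg^{ft}$ is not a formality, and this is indeed where the real content lies.

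However, your final paragraph is circular. You want to identify the unit $\mathbf{1}$ (the localized Yoneda image of $Perf(k)$) with the presheaf $\widetilde{\mathbb{K}}:T\mapsto K(T)$, and you argue that the canonical map $\mathbf{1}\to\widetilde{\mathbb{K}}$ is an equivalence because ``both sides then compute $K(T_{\mathcal{Z}})$ by construction.'' But $\mathbf{1}$ does not compute $K(T_{\mathcal{Z}})$ by construction: the value of $Map_{\stnck}(\mathcal{Z},\mathbf{1})$ is the mapping spectrum in the \emph{localized} category, which a priori is a complicated colimit involving the Nisnevich and $\mathbb{A}^1$-localization functors applied to the representable presheaf $j(Perf(k))$. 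Showing that this agrees with $K(T_{\mathcal{Z}})$ is precisely the content of the conjecture, not something you get for free. You need an independent argument identifying the Nisnevich-and-$\mathbb{A}^1$-localization of the unit with $\widetilde{\mathbb{K}}$, and this is where the substantive work (likely a corepresentability theorem for $K$-theory in the spirit of Blumberg--Gepner--Tabuada, adapted to your specific localization) has to happen.
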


\noindent where $T_{\Y}^{\circ}$ is the dual of the dg-category $T_{\Y}$ with respect to the symmetric monoidal structure in $\nck$. G. Tabuada proved this conjecture is true in his approach (see \cite{tabuada-higherktheory}). It should also be true in our new setting and in this case this will immediately force the right adjoint $\mathcal{M}^{\otimes}$ to the map $\mathcal{L}^{\otimes}$ (which exists due to the adjoint functor theorem and is lax-monoidal because of formal abstract-nonsense - see the discussion in \ref{monoidaladjoint}) to send the unit of the monoidal structure in $\stncmonoidalk$ to the object $\mathbb{KH}\in \stk$ representing homotopy invariant algebraic $K$-theory, endowing it with a natural structure of object in $CAlg(\stk)$. In this case we can easily see that  $\mathcal{L}^{\otimes}$ factors through a new monoidal functor

\begin{equation}
\xymatrix{
\stmonoidalk \ar[r]^{\mathcal{L}^{\otimes}}\ar[d]_{(-\otimes_{1_{\stk}}\mathbb{KH})}& \stncmonoidalk\\
Mod_{\mathbb{KH}}(\stk)^{\otimes}\ar@{-->}[ru]_{\mathcal{L}^{\otimes}_{\mathbb{KH}}}& 
}
\end{equation}

\noindent where $1_{\stk}$ is the unit of the monoidal structure and the vertical map is the base-change with respect to the unit $1_{\stk}\to \mathbb{KH}$. Our main goal is to investigate to what extent this new factor is fully-faithfull.

\bibliographystyle{abbrv}	
\bibliography{biblio}

\end{document}